\definecolor{darkred}{rgb}{0.5,0,0}
\definecolor{darkgreen}{rgb}{0,0.5,0}
\definecolor{darkblue}{rgb}{0,0,0.5}
\newtheorem{theorem}{Theorem}[subsection]
\newtheorem{corollary}[theorem]{Corollary}
\newtheorem{proposition}[theorem]{Proposition}
\newtheorem{lemma}[theorem]{Lemma}
\newtheorem{lem}[theorem]{}
\theoremstyle{definition}
\newtheorem{definition}[theorem]{Definition}
\newtheorem{remark}[theorem]{Remark}
\newtheorem{example}[theorem]{Example}
\newcommand{\blem}{\begin{lem} \rm}
\newcommand{\elem}{\end{lem}}
\newcommand\B{\mathcal{B}}
\newcommand\E{\mathcal{E}}
\newcommand\M{\mathcal{M}}
\newcommand\cH{\mathcal{H}}
\newcommand\cG{\mathcal{G}}
\renewcommand\M{\mathcal{M}}
\renewcommand\S{\mathcal{S}}
\newcommand{\V}{\mathcal{V}}
\newcommand{\J}{\mathcal{J}}
\newcommand{\U}{\mathcal{U}}
\newcommand{\R}{\mathbb{R}}
\renewcommand{\H}{\mathbb{H}}
\newcommand{\RR}{\mathcal{R}}
\newcommand{\C}{\mathbb{C}}
\newcommand{\cC}{\mathcal{C}}
\newcommand{\Z}{\mathbb{Z}}
\newcommand{\PP}{\mathcal{P}}
\newcommand{\on}{\operatorname}
\newcommand{\ainfty}{{$A_\infty$\ }}
\newcommand{\red}{{\on{red}}}
\newcommand{\unred}{{\on{unred}}}
\newcommand{\Fred}{\on{Fred}}
\newcommand{\Prin}{\on{Prin}}
\newcommand{\Or}{\on{Or}}
\newcommand{\dual}{\vee}
\renewcommand{\top}{{\on{top}}}
\newcommand{\myvert}{{\on{vert}}}
\newcommand{\Edge}{\on{Edge}}
\newcommand{\Lag}{\on{Lag}}
\newcommand{\loc}{{\on{loc}}}
\newcommand{\Ver}{\on{Vert}}
\newcommand{\End}{\on{End}}
\newcommand{\s}{\on{ps}}
\newcommand{\Aut}{ \on{Aut} }
\newcommand{\Hom}{ \on{Hom}}
\newcommand{\Ind}{ \on{Ind}}
\renewcommand{\ker}{ \on{ker}}
\newcommand{\coker}{ \on{coker}}
\newcommand{\im}{ \on{im}}
\newcommand{\Spin}{ \on{Spin}}
\newcommand{\ssm}{-}
\newcommand\dirac{/\kern-1.2ex\partial} % Dirac operator
\newcommand\qu{/\kern-.7ex/} % Categorical quotients
\newcommand\lqu{\backslash \kern-.7ex \backslash} % Categorical
\newcommand\dr{r_+ \kern-.7ex - \kern-.7ex r_-}
\def\pd{\partial}
\newcommand{\labell}\label
\renewcommand{\d}{{\mbox{d}}}
\newcommand{\ol}{\overline}
\newcommand\eps{\epsilon}
\newcommand\Om{\Omega}
\newcommand{\Del}{\Delta}
\newcommand{\lan}{\langle}
\newcommand{\ran}{\rangle}
\newcommand{\ti}{\tilde}
\newcommand\pt{\on{pt}}
\newcommand\cE{\mathcal{E}}
\newcommand\cP{\mathcal{P}}
\newcommand\mE{\mathcal{E}}
\newcommand\Map{\on{Map}}
\newcommand\rank{\on{rank}}
\newcommand\ev{\on{ev}}
\newcommand\Pic{\on{Pic}}
\newcommand\Vect{\on{Vect}}
\newcommand\ul{\underline}
\renewcommand\H{\mathcal{H}}
\newcommand\bra[1]{ \lan {#1} \ran} 
\newcommand\bdefn{\begin{definition}}
\newcommand\edefn{\end{definition}}
\newcommand\bea{\begin{eqnarray*}}
\newcommand\eea{\end{eqnarray*}}
\newcommand\bcv{\left[ \begin{array}{r} }
\newcommand\ecv{\end{array} \right] }
\newcommand\bma{\left[ \begin{array} }
\newcommand\ema{\end{array} \right]}
\newcommand\ben{\begin{enumerate}}
\newcommand\een{\end{enumerate}}
\newcommand\beq{\begin{equation}}
\newcommand\eeq{\end{equation}}
\newcommand\bex{\begin{example}}
\newcommand\bsj{\left\{ \begin{array}{rrr} }
\newcommand\esj{\end{array} \right\}}
\newcommand\Id{\on{Id}}
\newcommand\cI{\mathcal{I}}
\newcommand\eex{\end{example}}
\newcommand\sx{*\kern-.5ex_X}
\def\mathunderaccent#1{\let\theaccent#1\mathpalette\putaccentunder}
\def\putaccentunder#1#2{\oalign{$#1#2$\crcr\hidewidth \vbox
to.2ex{\hbox{$#1\theaccent{}$}\vss}\hidewidth}}
\newcommand\GFuk{{\on{Fuk}^\#}}
\newcommand\Fuk{{\on{Fuk}}}
\newcommand{\bE}{\mathbb{E}}
\newcommand{\nS}{S}
\newcommand{\nE}{E}
\newcommand{\nF}{F}
\newcommand{\nxi}{\xi}
\newcommand{\dxi}{\ti{\xi}}
\newcommand{\nnS}{\hat{S}}
\newcommand{\nnE}{\hat{E}}
\newcommand{\nnF}{\hat{F}}
\newcommand{\rS}{S^{\rho}}
\newcommand{\rE}{E^{\rho}}
\newcommand{\rF}{F^{\rho}}
\newcommand{\rhS}{\hat{S}^{\rho}}
\newcommand{\rhE}{\hat{E}^{\rho}}
\newcommand{\rhF}{\hat{F}^{\rho}}
\newcommand{\dS}{\ti{S}}
\newcommand{\dE}{\ti{E}}
\newcommand{\dF}{\ti{F}}
\newcommand{\gluing}{\tau}
\newcommand{\disk}{\on{disk}}
\newcommand{\oS}{\hat{S}_{\on{d}}}
\newcommand{\oE}{\hat{E}_{\on{d}}}
\newcommand{\oF}{\hat{F}_{\on{d}}}
\newcommand{\iS}{\hat{S}_{\on{c}}}
\newcommand{\iE}{\hat{E}_{\on{c}}}
\newcommand\cU{\mathcal{U}}
\newcommand{\DD}{\mathcal{D}}
\newcommand\cV{\mathcal{V}}
\newcommand{\GG}{\mathcal{G}}
\renewcommand{\SS}{\mathcal{S}}
\renewcommand\sharp{\setlength{\unitlength}{0.00013333in}
\begin{picture}(688,703)(0,-10)
\path(244,644)(244,44)
\path(444,644)(444,44)
\path(44,444)(644,444)
\path(644,244)(44,244)
\end{picture}
}
\newcommand\sDel{
\setlength{\unitlength}{0.00005333in}
{\renewcommand{\dashlinestretch}{30}
\begin{picture}(1224,1078)(0,-10)
\path(12,12)(1212,12)(612,1051)(12,12)
\end{picture}
}}
\begin{document}

\setcounter{page}{1}
\setcounter{section}{0}

\title{Orientations for pseudoholomorphic quilts}

\author{Katrin Wehrheim} 

\address{Department of Mathematics,
UC Berkeley, CA 94720-3840.
{\em katrin@math.berkeley.edu}}

\author{Chris Woodward}
\address{Department of Mathematics, 
Rutgers University,
Piscataway, NJ 08854.
{\em E-mail address: ctw@math.rutgers.edu}}

\thanks{Partially supported by NSF grants CAREER 0844188 and DMS 0904358}

\begin{abstract}  
We construct coherent orientations on moduli spaces of
pseudoholomorphic quilts and determine the effect of various gluing
operations on the orientations.  We also investigate the behavior of
the orientations under composition of Lagrangian correspondences.
\end{abstract} 

\maketitle

\tableofcontents

\section{Introduction}  
\label{coherent}

In previous work on quilted Floer cohomology \cite{ww:quilts},
\cite{ww:quiltfloer}, \cite{ww:isom} we associated to a sequence of
Lagrangian correspondences between symplectic manifolds satisfying
certain conditions a {\em quilted Floer cohomology group}.  The
boundary operator in quilted Floer theory is defined by a signed count
of isolated {\em quilted pseudoholomorphic cylinders} consisting of
collections of pseudoholomorphic strips with Lagrangian seam
conditions, analogous to the way that Morse homology is defined by a
signed count of gradient trajectories.  In the case of Morse homology
the signs are derived from orientations on the spaces of Morse
trajectories induced by choices of orientations on stable manifolds
for each critical point and an overall orientation on the manifold.
In this paper we construct coherent orientations on moduli spaces of
pseudoholomorphic quilts by auxiliary choices similar to those in the
Morse case.  

The main result, for a single quilted domain, is the following: Let
$\ul{S}$ be a {\em quilted surface}.  Such a surface is obtained from
a collection of {\em patches} $(S_p,j_p), \ p \in \PP$, complex
surfaces with strip-like ends, by gluing along boundary components.
Let $\ul{M}$ be a collection of {\em symplectic labels} for the
patches
$$ \ul{M} = (M_p, \ p \in \PP) $$ 
assigning to each patch $S_p$ a symplectic manifold $M_p$, equipped
with compatible almost structures $J_p: TM_p \to TM_p$.  Let $\ul{L}$
be a collection of {\em Lagrangian seam and boundary conditions}: for each
seam or boundary component $\sigma \subset \ul{S}_{p_-} \cap
\ul{S}_{p_+}$ a Lagrangian seam or boundary condition
$$ \ul{L} = (L_\sigma \subset M_{p_-}^-
\times M_{p_+} ) $$ 
where $M_{p_\pm}$ is a point if $\sigma$ represents a boundary
component.  Suppose that $\ul{L}$ that the components of $\ul{L}$ are
equipped with relative spin structures.  Let $\ul{x}$ denote a
collection of generalized intersection points for the quilted ends and
$\M(\ul{M},\ul{L},\ul{x})$ the space of pseudoholomorphic quilts
\begin{equation} \label{Jhol} \ul{u} = (u_p: S_p \to M_p, \ \ J_p \d u_p = \d u_p j_p \ \ p \in
\PP) \end{equation} 
with domain $\ul{S}$, targets $\ul{M}$, Lagrangian seam and boundary
conditions $\ul{L}$, and limits $\ul{x}$.  We suppose that almost
complex structures have been chosen so that the moduli space
$\M(\ul{M},\ul{L},\ul{x})$ is regular; that is, is cut out
transversally by the Cauchy-Riemann equation so that the tangent space
at $u \in \M(\ul{M},\ul{L},\ul{x})$ is the kernel of a surjective
Fredholm operator denoted $D_u$:
$$ T_u \M(\ul{M},\ul{L},\ul{x}) \cong \ker(D_u) .$$

\begin{figure}[ht]
\begin{picture}(0,0)
\includegraphics{quilted.pstex}
\end{picture}
\setlength{\unitlength}{2693sp}
\begingroup\makeatletter\ifx\SetFigFont\undefined
\gdef\SetFigFont#1#2#3#4#5{
  \reset@font\fontsize{#1}{#2pt}
  \fontfamily{#3}\fontseries{#4}\fontshape{#5}
  \selectfont}
\fi\endgroup
\begin{picture}(2969,3419)(2819,-3458)
\put(5076,-2401){\makebox(0,0)[lb]{${M_2}$}}
\put(3511,-981){\makebox(0,0)[lb]{${M_1}$}}
\put(3026,-1501){\makebox(0,0)[lb]{${L_1}$}}
\put(3601,-1751){\makebox(0,0)[lb]{${L_{12}}$}}
\put(4300,-2106){\makebox(0,0)[lb]{${L_2}$}}
\put(4401,-981){\makebox(0,0)[lb]{${M_3}$}}
\put(5551,-1801){\makebox(0,0)[lb]{${L_2'}$}}
\put(4546,-1506){\makebox(0,0)[lb]{${L_{23}}$}}
\end{picture}
\caption{Lagrangian boundary conditions for a quilt}
\label{bc quilt}
\end{figure}

Our goal is to orient the moduli space.  That is, we wish to provide
the top exterior power of the tangent space, isomorphic to the
determinant line of the Fredholm operator, with a system of non-zero
elements
$$ o_u \in \Lambda^{\top}( T_u \M(\ul{M},\ul{L},\ul{x})) \cong
\det(D_u) .$$
We then investigate the signs of various gluing operations.  The main
result is:

\begin{theorem} \label{main}   Suppose that 
$\M(\ul{M},\ul{L},\ul{x})$ is regular and $\ul{L}$ are relatively spin
  as above.  Then $\M(\ul{M},\ul{L},\ul{x})$ admits a canonical
  orientation.  The operations of gluing along strip-like ends, gluing
  at boundary nodes, and composition of seam conditions act on
  determinant lines by universal signs (to be specified).
\end{theorem}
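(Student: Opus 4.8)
The plan is to reduce the statement to the behavior of determinant line bundles of real Cauchy--Riemann operators on the patches, and then to build the orientation by cutting the quilted domain $\ul{S}$ into elementary pieces and invoking a linear gluing theorem; coherence and the universal signs are then a bookkeeping exercise organized around the additivity of the Fredholm index.

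First I would record that $\det(D_{\ul u}) = \Lambda^{\top}\ker(D_{\ul u}) \otimes (\Lambda^{\top}\coker(D_{\ul u}))^{\dual}$ is homotopy invariant, so that over any continuous family of admissible linearized operators the determinant lines form a line bundle; in particular they form a line bundle over $\M(\ul M,\ul L,\ul x)$, and orienting the moduli space means trivializing this bundle. Since the space of admissible operators with fixed domain $\ul S$, targets $\ul M$, and seam/boundary conditions $\ul L$ is convex, hence contractible, it suffices to produce a consistent orientation on one representative in each homotopy class of linearized data; the content is to do this \emph{coherently}, i.e.\ compatibly with gluing.

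The main construction runs as follows. The analytic input is the linear gluing theorem: if an operator $D$ on $\ul S$ is obtained by gluing operators $D'$ and $D''$ along a matched strip-like end (gluing parameter large), there is a canonical isomorphism $\det(D) \cong \det(D') \otimes \det(D'')$, well defined up to a sign depending only on the chosen ordering and on $\ind(D'),\ind(D'')$. I would then decompose $\ul S$ by cutting along all strip-like ends, obtaining (i) half-infinite \emph{strip} pieces, one per end, carrying the linearized operator at the generalized intersection point $\ul x$; and (ii) compact pieces which, after further degeneration (shrinking handles and interior boundary circles), reduce to disks and spheres with at most one strip-like end. One orients the moduli space by fixing once and for all an orientation of $\det$ of each strip piece --- equivalently, an orientation datum attached to each generalized intersection point, in analogy with stable-manifold choices in Morse theory --- and, for the cap pieces, by using the relative spin structures on the components of $\ul L$. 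Orienting $\det$ of a disk with Lagrangian boundary condition from a relative spin structure is precisely the de Silva / Fukaya--Oh--Ohta--Ono mechanism; here it is applied seam by seam, using that each seam condition $L_\sigma \subset M_{p_-}^- \times M_{p_+}$ is relatively spin. Gluing these choices back with the linear gluing isomorphisms defines $o_{\ul u}$.

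It then remains to prove coherence and extract the universal signs. Any two decompositions of $\ul S$ are connected by finitely many elementary moves --- transposing two glued ends, re-associating a triple gluing, sliding a cut past a cap or past an interior node --- and for each move the change in the composite gluing isomorphism is a universal sign, computed from $\ind(D) = \ind(D') + \ind(D'')$ together with the Koszul rule on tensor products of determinant lines; phrasing ``$\det$'' as a symmetric monoidal functor on an appropriate gluing category makes this systematic and yields at once the asserted signs for gluing along strip-like ends and at boundary nodes. The genuinely new and, I expect, hardest ingredient is the sign for \emph{composition of seam conditions}: one compares a quilt with adjacent patches $M_p,M_q$ separated by a seam $L_{pq}\subset M_p^-\times M_q$ (with further seams to the outside) against the quilt in which the middle patch is deleted and the two neighboring seams are replaced by the geometric composition. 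The bridge is the folding/unfolding identification of a quilt with a single map into a product with diagonal boundary condition. I would (1) check that relative spin structures transform correctly under folding --- those of $L\times L'$, of the diagonal, and of a graph $\graph(\phi)$ --- so the two sides carry comparable orientation data; (2) let the width of the middle patch go to zero, identify its contribution as a CR operator on a thin strip with canonically trivial determinant line, and compute the sign of that trivialization against the chosen orientations; (3) conclude that the induced map on determinant lines is multiplication by a universal sign depending only on $\dim M_p$, $\dim M_q$ and the parities of the relevant indices. Combining (1)--(3) with the coherence statement completes the proof.
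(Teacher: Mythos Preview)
Your outline is broadly correct and matches the paper's architecture: cap off strip-like ends with once-punctured disks carrying a chosen orientation datum at each generalized intersection point (the analog of stable-manifold orientations), orient the resulting compact problem via relative spin, and track signs under gluing.  Two points of divergence are worth flagging.

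First, for the genuinely quilted case you say the relative-spin orientation is ``applied seam by seam''; this is underspecified, since a seam condition $L_\sigma \subset M_{p_-}^- \times M_{p_+}$ couples two patches and is not a boundary condition for any single Cauchy--Riemann operator on a patch.  The paper's device (Proposition~\ref{noends}) is to first deform all seam conditions, through totally real subbundles, to \emph{split} form $F_\sigma^{\on{split}} = F_\sigma^- \times F_\sigma^+$; after that the quilted operator decomposes as a direct sum over patches and the unquilted construction applies patchwise.  Any two such deformations are homotopic, so the result is well defined.  Your folding idea could be made to work, but the split-form deformation is cleaner and avoids having to organize a global fold of the whole quilt.

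Second, your treatment of composition of seam conditions via letting the middle strip width go to zero is more analytic than what the paper actually does for orientations.  The paper (Proposition~\ref{quiltcyl}) stays entirely at the linear level: there is a canonical homotopy of totally real subbundles from $F_{01} \oplus \Delta_1 \oplus F_{12}$ to (a permutation of) $F_{02} \oplus \Delta_1^\perp \oplus \Delta_1$ inside the Lagrangian Grassmannian, and after this deformation the extra $(\Delta_1,\Delta_1^\perp)$ factor is a trivial-index strip with its canonical orientation, so the composed and uncomposed determinant lines agree on the nose.  No width-to-zero limit is needed for the orientation comparison; the analytic strip-shrinking enters only later (Proposition~\ref{collapse}) to identify the actual moduli spaces.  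Finally, the explicit universal signs (Theorem~\ref{gluingsigns}) are not derived from abstract Koszul bookkeeping alone: the paper reduces, via commuting pinching/gluing diagrams (Figures~\ref{sixterm}, \ref{pinchglue}), to model computations on disks and annuli with trivial bundles, and reads off the signs there.  The Koszul rule enters, but only after this reduction to elementary pieces.
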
 

In particular, under suitable monotonicity assumptions we obtain
versions of Lagrangian Floer cohomology defined over the integers and
the Fukaya category, described in Sections \ref{hfsec} and
\ref{dfsec} respectively.  A family version is given in Theorem
\ref{mainres} below.

The orientations are constructed as follows.  For each generator of
the cochain complex we fix an orientation on a certain determinant
line associated to a once-marked disk with a path of Lagrangian
subspaces along the boundary.  We then show that the orientations
constructed in this way have good gluing properties: the quilted Floer
boundary operator squares to zero, and the composition theorem of
\cite{ww:isom} holds over the integers.  The construction is a
generalization of the work of several authors already in the
literature.  For periodic Floer trajectories the construction is given
in Floer-Hofer \cite{fh:co}, while for pseudoholomorphic disks the
construction is outlined in Fukaya-Oh-Ohta-Ono \cite{fooo}, and
generalized in Ekholm-Etnyre-Sullivan \cite{ek:or}.  In the setting of
Fukaya categories orientations are constructed in Seidel's book
\cite{se:bo}.  Most of the proofs in this paper are slight
modifications of proofs in one of these sources, and so the paper
should be considered largely expository.

The present paper is an updated and more detailed version of a paper
the authors have circulated since 2007. The authors have unreconciled
differences over the exposition in the paper, and explain their points
of view at
\href{https://math.berkeley.edu/~katrin/wwpapers/}{https://math.berkeley.edu/$\sim$katrin/wwpapers/}
resp.
\href{http://christwoodwardmath.blogspot.com/}{http://christwoodwardmath.blogspot.com/}. The
publication in the current form is the result of a mediation.

\section{Orientations for Cauchy-Riemann operators}

In this section we construct orientations for determinant lines of
Cauchy-Riemann operators.  Most of this material is standard:
Knudsen-Mumford \cite{km:det} study determinant lines of complexes
with a view towards orienting moduli spaces of curves; Quillen
\cite{qu:de} introduced determinant lines of Cauchy-Riemann operators;
Segal gave a construction of a determinant line bundle over the space
of Fredholm operators published eventually in \cite[Appendix
  D]{se:def}.  This construction is described in more detail in Huang
\cite[Appendix D]{hu:2d}, see also Freed \cite{fr:det}; Determinant
lines for families of Cauchy-Riemann operators occurring in symplectic
geometry are studied in McDuff-Salamon \cite[Appendix A.2]{ms:jh},
Seidel \cite[Section 11]{se:bo}, and Solomon \cite{so:lb} sometimes
with different conventions.

\subsection{Determinant lines}

We begin with a review of the construction and properties of the
determinant line bundle over the space of Fredholm operators.  Let
$V,W$ be real Banach spaces.  Let $\Fred(V,W)$ be the space of
Fredholm operators $D: V \to W$, that is, operators with finite
dimensional kernel and cokernel
$$ \ker(D) = \{ v \in V \ | \ D(v) = 0 \}, \quad 
\coker(D) = W / \{ D(v) \ |  \ v \in V \} .$$
The image of a Fredholm operator is necessarily closed.

\begin{definition}  
\begin{enumerate}
\item 
 {\rm (Indices)} The {\em index} of a Fredholm operator $D: V \to W$ is the
  integer
$$\Ind(D) = \dim(\ker(D)) - \dim(\coker(D)).$$
\item {\rm (Determinant lines)} The {\em determinant line} of a
  Fredholm operator $D: V \to W$ is the one-dimensional vector space
$$\det(D) = \Lambda^{\max}(\coker(D)^\dual) \otimes
\Lambda^{\max}(\ker(D)) $$
where $\coker(D)^\dual := \Hom(\coker(D),\R)$ is the dual of $\coker(D)$
and $\Lambda^{\max}$ denotes the top exterior power. 
\end{enumerate} 
\end{definition} 

\begin{remark} 
\begin{enumerate}
\item {\rm (Behavior under direct sums)} Let $V_j,W_j$ be real Banach
  spaces for $j = 1,2$ and $D_j:V_j \to W_j$ Fredholm operators.
  Denote by
$$D_1 \oplus D_2: V_1 \oplus V_2 \to W_1 \oplus W_2$$
the direct sum of the operators $D_1$ and $D_2$.  Equality of indices
$$\Ind(D_1 \oplus D_2) = \Ind(D_1) + \Ind(D_2) $$
holds and there is a canonical isomorphism of determinant lines
\begin{equation}
\label{caniso}
 \det(D_1 \oplus D_2) \to \det(D_1) \otimes \det(D_2) .\end{equation}
Explicitly let
$$\{ v_{k,i}, i = 1,\ldots, a_k := \dim(\ker(D_k)) \} \subset \ker(D_k) $$
$$ \{ w_{k,i}^\dual, i =
1,\ldots, b_k:= \dim(\coker(D_k)) \} \subset \coker(D_k)^\dual$$ 
be bases for $ k = 1,2$.  The isomorphism \eqref{caniso} is defined by
\begin{multline} \label{tens}
\left( \wedge_{i=1}^{b_2} w_{2,i}^\dual
\wedge \wedge_{i=1}^{b_1} w_{1,i}^\dual \right) \otimes \left( \wedge_{i=1}^{a_1} v_{1,i} \wedge
\wedge_{i=1}^{a_2} v_{2,i} \right)  \\
\mapsto (-1)^{\dim(\coker(D_2))\Ind(D_1)}  
 \left( \wedge_{i=1}^{b_1} w_{1,i}^\dual \otimes \wedge_{i=1}^{a_1} v_{1,i} \right)
\otimes \left( \wedge_{i=1}^{b_2} w_{2,i}^\dual \otimes \wedge_{i=1}^{a_2} v_{2,i} \right) .
\end{multline}
The isomorphism \eqref{caniso} is associative and graded commutative
in the following sense: The composition 
\begin{equation} \label{signs}
 \det(D_2) \otimes \det(D_1) \to \det(D_2 \oplus D_1) \to \det(D_1
\oplus D_2) \to \det(D_1) \otimes \det(D_2), \end{equation}
where the middle map is induced by exchange of summands, agrees with
the map 
$$  \det(D_2) \otimes \det(D_1) \to \det(D_1) \otimes \det(D_2) $$
induced by exchange of factors by a sign $(-1)^{\Ind(D_1) \Ind(D_2)}$.
\item {\rm (Determinant lines in finite dimensions)} Let $D:V\to W$ be
  a linear operator on finite dimensional spaces $V,W$.  There is a
  canonical isomorphism to the determinant of the trivial operator
  from $V$ to $W$,
\begin{equation} \label{tD}
t_D: \det(D) \to \det(0)=\Lambda^{\max}(W^\dual) \otimes \Lambda^{\max}(V) .
\end{equation} 
To define the map \eqref{tD} explicitly, choose bases
$$ \{ e_1,\ldots,e_n \} \subset V, \quad \{ f_1,\ldots,f_m  \} \subset  W  $$ 
so that
$$D(e_j) = f_j, \ j=1,\ldots,k, \quad D(e_j) = 0, \ j = k+1,\ldots,n .$$ 
Let $f_1^\dual,\ldots,f_m^\dual$ be the basis for $W^\dual$ dual to
$f_1,\ldots, f_m \in W$.
Define
$$
 t_D ( (f_n^\dual \wedge \ldots \wedge f_{k+1}^\dual) \otimes (e_{k+1} \wedge
\ldots  \wedge e_m)) := (f_n^\dual \wedge \ldots \wedge 
f_1^\dual) \otimes (e_1\wedge \ldots \wedge e_m) .
$$
Note that $t_D$ is independent of the choice of bases $e_i,f_j$.
\end{enumerate} 
\end{remark} 

\begin{remark}  {\rm (Determinant line bundles)} 
  For real Banach spaces $V,W$ let
$$\det(V,W) \to \Fred(V,W), \quad \det(V,W)_D := \det(D)$$
be the {\em determinant line bundle} whose fiber $\det(V,W)_D$ over
$D$ is the one-dimensional vector space $\det(D)$.  The line bundle
$\det(V,W)$ has a topological structure uniquely determined by the
following conditions; see for example Zinger \cite{zinger:det}:
\begin{enumerate} 
\item for finite dimensional $V,W$, the trivialization \eqref{tD} is continuous; 
\item the isomorphism for a direct sum in \eqref{caniso} defines a
  continuous isomorphism from $\det(V_1,W_1) \otimes \det(V_2,W_2)$ to
  the pullback of $\det(V_1 \oplus V_1,W_1 \oplus W_2)$ under
$$\Fred(V_1,W_1) \times \Fred(V_2,W_2) \to \Fred(V_1 \oplus V_1, W_1
  \oplus W_2) ;$$
\item on the locus of surjective operators $\Fred^{\on{sur}}(V,W)
  \subset \Fred(V,W)$, the determinant line $\det(V,W)$ is isomorphic
  to the top exterior power of the bundle given by the kernel via the
  canonical isomorphism 
$$\det(D) \cong \Lambda^{\on{max}}(\ker(D)), \quad D \in
  \Fred^{\on{sur}}(V,W) .$$
\end{enumerate} 
Different conventions give rise to different topologies on the space
of determinant lines; the resulting determinant line bundles are
isomorphic topologically, but via non-obvious isomorphisms.

\label{families}
The construction of determinant lines works in families: For a
topological space $X$ consider Fredholm morphisms $\ti{D}:\ti{V}\to
\ti{W}$ of Banach vector bundles $\ti{V}\to X$, $\ti{W}\to X$.  The
determinant line bundle of $\ti{D}$ is a line bundle over $X$
$$ \det(\ti{D}) \to X, \quad \det(\ti{D})_x := \det(\ti{D}| \ti{V}_x
\to \ti{W}_x) $$
with fibers $\det(\ti{D})_x$ the determinant lines of the restriction
of $\ti{D}$ to fibers.  In particular any homotopy of Fredholm
operators $\ti{D} = (D_t)_{ t \in [0,1]}$ induces a determinant line
bundle $\det(\ti{D})$ over $X = [0,1]$.  Trivializing $\det(\ti{D})$
induces an isomorphism of determinant lines $\det(D_0) \to \det(D_1)$.
We refer to this throughout the text as an {\em isomorphism of
  determinant lines induced by a deformation of
  operators}. 
\end{remark} 

\subsection{Orientations for Fredholm operators}

By definition an orientation for a Fredholm operator is an orientation
of the corresponding determinant line.  There are natural
constructions of orientations on duals and sums of Fredholm operators.

\begin{definition} {\rm (Orientations for Fredholm operators)} 
\begin{enumerate} 
\item 
Let $V$ be a finite dimensional real vector space, and
$\Lambda^{\max}(V)$ its top exterior power.  An {\em orientation} for
$V$ is a component of $\Lambda^{\max} V \setminus \{0 \}$, that is, a
non-vanishing element of $\Lambda^{\max} V$ up to homotopy.
Denote by 
$$ \Or(V) := ( \Lambda^{\max} \setminus \{ 0 \}) / \R_{> 0 } $$
the space of orientations.
\item 
An {\em oriented vector space} is a pair $(V,o)$ of a vector space $V$
and an orientation $o \in \Or(V)$.  Given an oriented vector space
$(V,o)$ of dimension $n$, we say that a basis $e_1,\ldots,e_n$ of $V$
is {\em oriented} if
$$ o = \R_{> 0} ( e_1 \wedge \ldots \wedge e_n) \in \Or(V) $$
defines the orientation $o$ on $V$.  
\item
Let $V$ and $W$ be finite dimensional vector spaces.  A linear
isomorphism $T: V \to W$ induces a map on orientations 
$$\Or(T): \Or(V)
\to \Or(W) .$$  
If $V,W$ are oriented then the map $T$ is orientation preserving resp. 
reversing  if $\Or(T)$ is orientation preserving resp. reversing. 
\item An {\em orientation} of a Fredholm operator $D: V \to W$ between
  real Banach spaces $V,W$ is an orientation of the one-dimensional
  vector space given by its determinant line $\det(D)$.
\end{enumerate} 
\end{definition} 

\begin{remark}  \label{duals}
\begin{enumerate} 
\item {\rm (Orientations on duals)} An orientation for a finite
  dimensional vector space $V$ induces an orientation for the dual
  $V^\dual$.  Explicitly, let $e_1,\ldots,e_n, n = \dim(V)$ be an
  oriented basis for $V$ and $e_1^\dual,\ldots,e_n^\dual$ the dual
  basis for $V^\dual$.  Give $V^\dual$ the orientation defined by
\begin{equation} \label{backwards}
o_{V^\dual} := [ e_n^\dual \wedge
\ldots \wedge e_1^\dual \in \Lambda^{\max}(V^\dual)]
.\end{equation} 
Note the reverse order.  Identify $V$ with $V^\dual$ by an inner
product $B: V \times V \to \R$:
$$L: V \to V^\dual, \quad v \mapsto B(v,\cdot) .$$
The orientation on $V$ relates to the pull-back orientation on
$V^\dual$ by
$$ L^* o_{V^\dual} = (-1)^{\dim(V)(\dim(V) - 1)/2} o_V .$$  
This convention is opposite to the convention of \cite{ek:or}.
\item {\rm (Orientations on direct sums)} Orientations on finite
  dimensional vector spaces $V,W$ induce an orientation on the direct
  sum $V \oplus W$ as follows.  Let 
$$ \{ e_1,\ldots,e_n \} \subset V, \quad  \{ f_1,\ldots,f_m \} \subset W $$
be oriented bases.  Define on the sum $V \oplus W$ on the orientation
given by
$$e_1 \wedge \ldots \wedge e_n \wedge f_1 \wedge \ldots \wedge f_m \in
\Lambda^{\max}(V \oplus W)
%\setminus \{0 \})/\R_+ 
.$$
The isomorphism $i : V \oplus W \to W \oplus V$ given by transposition
acts on orientations 
$$ o_{V \oplus W} = (-1)^{\dim(V)\dim(W)} i^* o_{W \oplus V} .$$
\item {\rm (Orientation for the identity)} For finite-dimensional
  $V,W$, orientations on $V$ and $W$ induce an orientation $o_0$ on
  $\det(0)$.  By \eqref{tD}, $o_0$ induces an orientation $o_D$ on
  $\det(D)$.  By convention \eqref{backwards} $o_D$ is compatible with
  the canonical orientation on $\det({\rm Id})\cong\R$ for the
  identity operator $D = {\rm Id}$ if $V=W$.
\item {\rm (Orientation double cover)} For real Banach spaces $V,W$
  let 
$$\Fred^+(V,W) = \{ (D,o) \ | \ D: V \to W \ \text{Fredholm}, \ o \in
  \Or(D) := \det(D)^\times/\R_>0 \} $$
denote the space of Fredholm operators equipped with orientations of
their determinant bundles $\det(D)$.  Thus 
$$\Fred^+(V,W) \to \Fred(V,W), \quad (D,o) \mapsto D $$
is a double cover.  The pull-back of the determinant line bundle to
$\Fred^+(V,W)$ is automatically orientable.
\end{enumerate} 
\end{remark} 

\label{oriented}
\label{indices}

\begin{example} \label{diffs} {\rm (Orientations induced by difference maps)}  
The following example of orientations for difference maps will be used
later.  Consider the map
$$ D:  \ \R \oplus \R \to \R, \quad (x_1,x_2) \mapsto x_1 - x_2 .$$
The kernel and cokernel of $D$ are 
$$ \ker(D) = \{ (x,x) \ | \ x \in \R \}, \quad \coker(D) = \{ 0 \} .$$
Choose standard bases for $\R \oplus \R, \R$:
$$ \{ e_1 = (1,0),e_2 = (0,1) \} \subset \R^2, \quad \{ f = 1 \} \subset \R $$ 
The isomorphism \eqref{tD} identifies
$$ e_1 + e_2 \mapsto 2f^\dual \wedge (e_1 - e_2) \wedge (e_1 +
e_2) $$
and so induces the standard orientation on the diagonal $\ker(D)$.  On
the other hand, consider the map
$$ D^-:  \ \R \oplus \R \to \R, \quad (x_1,x_2) \mapsto x_2 - x_1 .$$
The isomorphism \eqref{tD} in this case identifies 
$$e_1 + e_2 \mapsto -2f^\dual \wedge (e_1 - e_2) \wedge (e_1 + e_2) $$
and so one obtains the opposite orientation on $\ker(D^-)$ from that
on $\ker(D)$.
\end{example}

\subsection{Cauchy-Riemann operators}
\label{cr} 

Our terminology for Cauchy-Riemann operators follows that of
McDuff-Salamon \cite{ms:jh}; in particular, the Cauchy-Riemann
operators arising in pseudoholomorphic curve theory are {\em real}
Cauchy-Riemann operators in the sense that the zero-th order term is
not complex-linear.  

\subsubsection{Cauchy-Riemann operators on surfaces with
  boundary} 

Let $S$ be a compact surface with boundary.

\begin{definition}
\begin{enumerate} 
\item {\rm (Bundles with boundary condition)} A {\em bundle with
  boundary condition} for $S$ is a complex vector bundle $E$ (given as
  a real vector bundle $E \to S$ together with an operator $J_E: E \to E,
  J_E^2 = - \on{id}$) over $S$ with a maximally totally real subbundle
  $F\subset E|_{\partial S}$; that is,
$$ F \cap J_E( F) = \{ 0 \}, \ \ \ \rank_\R(F) = \rank_\R(E)/2 =
  \rank_\C(E) .$$
For each component $\partial S_b \subset \partial S$ we denote by
$F_b$ the restriction of $F$ to $\partial S_b$.  Given $E = (E,J_E)$
we denote by $E^- = (E,- J_E)$ over $S$ the bundle obtained by
reversing the complex structure.  Denote by $F^-$ the bundle $F$
considered as a totally real sub-bundle of $E^-$.
\item {\rm (Forms with boundary condition)} Let $\Omega^k(E)$ denote
  the space of $k$-forms with values in $E$ for integers $k \ge 0$.
  For $k = 0$ let
$$\Omega^0(E,F)= \{ \xi \in \Omega^0(E) \ | \ \xi |_{\partial S} \in \Omega^0(\partial S,F) 
\} $$ 
denote the space of sections of $E$ with boundary values in $F$.
\item {\rm (Dolbeault forms)} Suppose that $S$ is equipped with a
  complex structure.  Let $\Omega^{k,l}(E)$ denote the forms of type
  $k,l$ for integers $k,l$ with values in $E$.  Thus
$$ \Omega^j(E) = \bigoplus_{k + l = j} \Omega^{k,l}(E) .$$
\end{enumerate} 
\end{definition}

\begin{definition} {\rm (Cauchy-Riemann operators)} 
\begin{enumerate} 
\item An operator $D_{E,F}: \Omega^0(E,F) \to \Omega^{0,1}(E)$ is a
  {\em Cauchy-Riemann operator} if it is complex linear and satisfies
  the Leibniz rule
$$ D_{E,F}( f \xi) = f D_{E,F}(\xi) + (\ol{\partial}f )(\xi), \quad 
\forall f\in C^\infty(S,\C), \ \xi\in\Om^0(E,F) .$$
\item A {\em real Cauchy-Riemann operator} is the sum of a
  Cauchy-Riemann operator with a zeroth order term taking values in
  $\End_\R(E)$.  
\item Consider trivial bundles with rank $n$ for some integer
  $n \ge 0$ given by 
$$
 E = S \times \C^n, \quad F = \partial S \times \R^n .$$
The {\em trivial Cauchy-Riemann operator} is the
  operator $D_{E,F}$ defined by
$$ D_{E,F}( f \otimes \xi) = (\ol{\partial} f) \otimes \xi .$$
%the
%  usual formula, that is, $(\xi_1,\ldots,\xi_n) \mapsto
%  (\ol{\partial}\xi_1,\ldots, \ol{\partial}\xi_n)$ in the given
%  trivialization.
\item 
{\rm (Adjoint Cauchy-Riemann operator)} Let $D_{E,F}$ denote a real
Cauchy-Riemann operator acting on sections of $E$ with boundary values
in $F$.  The cokernel of $D_{E,F}$ can be identified with the kernel
of the adjoint $D_{E,F}^*$.  The operator $D_{E,F}^*$ is a real
Cauchy-Riemann operator acting on sections of 
$$(E \otimes (T S))^* =
\Hom( E^- \otimes TS^-,\C). $$ 
The sections are required to have boundary values in the subbundle
$(F \otimes T(\partial S))^{\on{ann}}$, the real sub-bundle of
$E^* \otimes (TS)^*$ whose evaluations on $F \otimes T(\partial S)$
vanish.
\end{enumerate}  
\end{definition} 

\begin{remark}  The set of all Cauchy-Riemann operators is an affine space modelled on
$\Omega^{0,1}(S,\End(E))$ 
 in the sense of $D_{E,F}^0$ and $D_{E,F}^1$ are two such operators 
then 
$$ (D_{E,F}^0 - D_{E,F}^1) \sigma = \alpha \wedge \sigma, \forall
\sigma \in \Omega^0(E,F), \quad \text{ for some} \ \alpha \in
\Omega^{0,1}(S, \End(E)) .$$
The set of all real Cauchy-Riemann operators forms an affine space
modelled on $\Omega^{0,1}(S) \otimes_\R \End_\R(E) $.  In particular
both spaces are contractible.
\end{remark}  

The Riemann-Roch theorem generalizes to Cauchy-Riemann operators on
compact surfaces with boundary as follows; see for example
\cite[Appendix]{ms:jh}.

\begin{definition} 
%\begin{enumerate}  
%\item 
{\rm (Euler characteristic and Maslov index)} For any compact surface
with boundary $S$ denote by
$$H^j(S) = \on{ker}(\d^j)/\on{im}(\d^{j+1}), \quad \d^j : \Omega^j(S) \to \Omega^{j+1}(S) $$
the $j$-th de Rham cohomology of $S$ for integers $j \ge 0$.  Let
$$\chi(S) = \dim H^0(S) - \dim
H^1(S) + \dim H^2(S) $$ 
denote the Euler characteristic of $S$.  Let $I(E,F) \in \Z$ be the
Maslov index of the pair $(E,F)$, as in \cite[Appendix]{ms:jh}.  For
$S$ without boundary, the index $I(E,F)$ is twice the Chern number,
$$ I(E,\emptyset) = \int_S c_1(E) .$$
On the other hand, for $E$ trivial, $F$ is the sum of the winding
numbers of the boundary conditions around the boundary components,
considered as paths in the Grassmannian of totally real subspaces.
\end{definition} 

\begin{proposition}  \label{rrthm} 
%\begin{enumerate} 
%\item 
{\rm ( Riemann-Roch for surfaces with boundary
  \cite[Appendix]{ms:jh})} For any Cauchy-Riemann operator $D_{E,F}$ on
a surface with boundary $S$,
\begin{equation} \label{rr}
 \Ind(D_{{E},{F}}) = \rank_\R(F) \chi({S}) + I(E,F) . \end{equation}
%
%
%\item {\rm (Serre duality for surfaces with boundary)}
%and
%
%\begin{align} \label{adj}
%\Ind(D_{E,F}^*) & = \rank_\R((F \otimes T(\partial S))^{\on{ann}})\chi(S) 
%+ I(E^* \otimes (T S)^*, (F \otimes T(\partial S))^{\on{ann}} ) \\
%&= \rank_\R(F) \chi(S) - \rank_\C(E^*) \cdot 2 \chi(S) - I(E,F ) .
%\end{align}
%
%\end{enumerate} 
\end{proposition}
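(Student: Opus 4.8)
The plan is to reduce \eqref{rr} to the classical Riemann--Roch theorem on a \emph{closed} surface by a doubling construction. First I would dispose of the auxiliary choices. The zeroth order term of a real Cauchy--Riemann operator is a compact operator between the relevant Sobolev completions, so it does not change the Fredholm index; hence $\Ind(D_{E,F})$ equals the index of the underlying complex-linear Cauchy--Riemann operator. The space of complex-linear Cauchy--Riemann operators on a fixed bundle with boundary condition $(E,F)$ is affine, hence connected, as is the space of complex structures and metrics on $S$; since the index is a deformation invariant (cf. the discussion of families of operators above), it depends only on the topological type of $S$ and the isomorphism class of $(E,F)$. Thus I may assume $D_{E,F}$ is the $\ol\partial$-operator of some holomorphic structure on $E$.

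Next I would form the double. Let $\bar S$ be $S$ with the conjugate complex structure and $\hat S = S \cup_{\partial S} \bar S$ the closed surface glued along the boundary; since $\partial S$ is a disjoint union of circles, $\chi(\hat S) = 2\chi(S)$. The condition $F \cap J_E F = \{0\}$, $F \oplus J_E F = E|_{\partial S}$ exhibits $E|_{\partial S}$ as the complexification of the real bundle $F$, so $E|_{\partial S}$ and $E^-|_{\partial S}$ are canonically identified by complex conjugation; clutching $E$ over $S$ to $E^-$ over $\bar S$ along this identification yields a complex vector bundle $\hat E \to \hat S$ with $\rank_\C \hat E = \rank_\C E = \rank_\R F$. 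The operator $D_{E,F}$ and the complex-conjugate $\ol\partial$-operator on $\bar S$ patch to a complex-linear Cauchy--Riemann operator $\hat D$ on $\hat E$ --- the boundary condition $F$ is exactly what makes the two operators glue to a smooth elliptic operator across the seam $\partial S$. The antiholomorphic involution $\sigma$ of $\hat S$ swapping the halves lifts to a conjugate-linear involution of $\hat E$ with fixed bundle (over $\partial S$) equal to $F$ and commuting with $\hat D$. Restriction to $S$ identifies $\ker D_{E,F}$ with the $\sigma$-invariant part of $\ker \hat D$, and a conjugate-linear involution $\tau$ ($\tau^2 = \on{id}$) on a finite dimensional complex vector space $U$ satisfies $U = U^\tau \otimes_\R \C$, so $\dim_\R U^\tau = \dim_\C U$. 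Applying the same reasoning to the adjoint --- whose cokernel interpretation was recalled above, with $D_{E,F}^*$ doubling to $\hat D^*$ --- gives $\dim_\R \coker D_{E,F} = \dim_\C \coker \hat D$. Hence $\Ind(D_{E,F}) = \Ind_\C(\hat D)$.

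Finally I would invoke Riemann--Roch on $\hat S$: $\Ind_\C(\hat D) = \langle c_1(\hat E), [\hat S]\rangle + \rank_\C(\hat E)\,\chi(\hat S)/2$. The remaining point is the identification $\langle c_1(\hat E), [\hat S]\rangle = I(E,F)$; this is the characterizing property of the Maslov index of a totally real boundary condition: for trivial $E$ it is the sum of the winding numbers of the boundary loops in the totally real Grassmannian, and additivity of both $c_1$ of the double and $I(E,F)$ under direct sums, pinned down by the model computation on a disk with a winding boundary condition, forces the equality in general. Together with $\chi(\hat S) = 2\chi(S)$ and $\rank_\C \hat E = \rank_\R F$ this gives $\Ind(D_{E,F}) = I(E,F) + \rank_\R(F)\chi(S)$.

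I expect the main obstacle to be the bookkeeping in the doubling step: checking that the clutched operator $\hat D$ is genuinely a smooth elliptic Cauchy--Riemann operator across the seam, that the induced involution acts on kernel and cokernel as claimed (the cokernel side being cleanest via the adjoint Cauchy--Riemann operator described above), and the identification $\langle c_1(\hat E),[\hat S]\rangle = I(E,F)$, which ultimately rests on the homotopy classification of totally real subbundles. An alternative that localizes the difficulty elsewhere would be: observe $E$ is trivial since $S$ retracts onto a graph, split $F$ into rank-one pieces using that $\pi_1$ of the totally real Grassmannian is generated by a rotation in one coordinate, reduce to rank one by additivity, verify the formula directly on a disk with a winding boundary condition, and assemble a general $S$ from such pieces by an excision/gluing argument; the essential content then sits in the splitting and gluing steps.
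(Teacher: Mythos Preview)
The paper does not prove this proposition; it is stated with a citation to McDuff--Salamon \cite[Appendix]{ms:jh} and used as input. Your doubling argument is correct and is one of the standard routes to the result: the reduction to a complex-linear operator, the clutching of $(E,F)$ to $\hat E$ over the closed double $\hat S$ via the real structure determined by $F$, the identification of $\ker D_{E,F}$ and $\coker D_{E,F}$ with the fixed parts of $\ker \hat D$ and $\coker \hat D$ under the conjugate-linear involution, and the identity $\langle c_1(\hat E),[\hat S]\rangle = I(E,F)$ are all sound. The one place where some care is genuinely needed is, as you flag, the smoothness of $\hat D$ across the seam; this is handled by choosing the holomorphic structure on $E$ and the metric near $\partial S$ so that the reflection is an honest antiholomorphic isometry in a collar.

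For comparison, the cited reference (McDuff--Salamon) proceeds essentially along your \emph{alternative} line: trivialize $E$ over $S$ (possible since $S$ retracts to a one-complex), reduce the Maslov index computation to the boundary loops in the totally real Grassmannian, split into rank-one summands, verify the formula directly on a disk with prescribed winding number, and assemble general $S$ from disks via a linear gluing theorem for indices. That approach localizes the analytic content in the gluing step and the explicit disk computation, whereas your doubling argument offloads the analysis onto classical Riemann--Roch on a closed surface and pushes the topological content into the identity $c_1(\hat E)[\hat S]=I(E,F)$. Both are standard; the doubling route is arguably cleaner conceptually, while the gluing route is more in keeping with the toolkit developed elsewhere in this paper.
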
  

%\begin{remark} 
%The index identities
%%
%$$I(TS,T(\partial S)) = 2 \chi(S), \ \ I(E^*,F^{\on{ann}}) =
%-I(E,F) $$
%
%show consistency of duality \eqref{adj} with $\Ind(D_{E,F}) = -
%\Ind(D_{E,F}^*) $.  Indeed, on a disk, duality changes a Maslov index
%$I(E,F)$ into a Maslov index $-2\rank_\R(F) - I(E,F)$, while on the
%annulus the pair $(TS,T(\partial S))$ is trivial so there is no shift.
%If $E,F$ are trivial and $D_{E,F}$ is the trivial Cauchy-Riemann
%operator then $\ker(D_{E,F})$ is the space of constant functions with
%values in the intersection $\cap_{b \in \pi_0(\partial S)} F_b$.
%\end{remark} 

\subsubsection{Cauchy-Riemann operators on surfaces with
strip-like ends} 

\begin{definition} \label{surfstrip} \ 
\begin{enumerate} 
\item A {\em surface with strip-like ends} consists of the following data:
\begin{enumerate}
\item 
a compact surface $\ol{S}$ with boundary 
$$\pd\ol{S}= C_1 \sqcup \ldots
\sqcup C_m $$ 
and $d_n \ge 0$ distinct points 
$$z_{n,1},\ldots,z_{n,d_n}\in C_n$$ 
in cyclic order on each boundary circle $C_n\cong S^1$.  We will use
the indices on $C_n$ modulo $d_n$.  The index set for the marked
points is denoted
$$ \E=\E(S) :=\bigl\{ e=(n,l) \,\big|\, n\in\{1,\ldots,m\},
l\in\{1,\ldots,d_n\} \bigr\} ;
$$
We use the notation $e\pm 1:=(n,l\pm 1)$ for the cyclically adjacent
index to $e=(n,l)$.  Denote by
$$I_e :=I_{n,l}\subset C_n$$ 
the component of $\partial S$ between $z_e=z_{n,l}$ and
$z_{e+1}=z_{n,l+1}$.  However, $\partial S$ may also have compact
components $I=C_n\cong S^1$.
\item 
a complex structure $j_S$ on $S:=\ol{S}\setminus\{z_e \,|\, e\in \E
\}$;
\item
a set of {\em strip-like ends} for $S$, that is a set of embeddings
with disjoint images
$$ \eps_e : \R^\pm \times [0,\delta_e] \to S $$
for all $e\in\cE$ such that 
\begin{eqnarray*}
\eps_{e}(\R^\pm\times\{0,\delta_e\}) &\subset& \partial S \\ 
\lim_{s \to \pm \infty}(\eps_{e}(s,t)) &=& z_e \\
\eps_{e}^*j_S &=& j_0 \end{eqnarray*}
where $j_0$ is the canonical complex structure on the half-strip
$\R^\pm \times[0,\delta_e]$ of width\footnote{ Note that here, by a
  conformal change of coordinates, we can always assume the width to
  be $\delta_e=1$. The freedom of widths will only become relevant in
  the definition of quilted surfaces with strip-like ends.  }
$\delta_e>0$.  Denote the set of incoming resp. outgoing ends
$$ \E_\pm := \E_\pm(S) := \{ \eps_{e}: \R^\mp \times [0,\delta_e] \to
S \} .$$
\item An ordering of the set of (compact) boundary components of
  $\ol{S}$ and orderings 
$$ \E_-=(e^-_1,\ldots, e^-_{N_-}), \quad 
  \E_+=(e^+_1,\ldots, e^+_{N_+})$$ 
  of the sets of incoming and outgoing ends.  Here
  $e^\pm_i=(n^\pm_i,l^\pm_i)$ denotes the incoming or outgoing end at
  $z_{e^\pm_i}$.
\end{enumerate} 
\item Let $S$ be a surface with strip-like ends, and $E,F$ a pair of
  vector bundles as in Definition 4.1.1 of \cite{we:co}.  The bundle
  $E$ admits a trivialization with fiber $E_e$ over each strip like
  end $e$, and $F \subset E | \partial S$ is a totally real sub-bundle
  constant on the strip-like ends with fibers $F_e =
  (F_{e,+},F_{e,-})$.  A real Cauchy-Riemann operator $D_{E,F}$ for
  $(E,F)$ is {\em asymptotically constant} if the following condition
  is satisfied: on each strip-like end $e\in\cE(S)$ there exists a
  time-dependent operator
$$\cH_e: [0,1] \to \End_\R(E_e)$$
such that the operator $D_{\rE,\rF}$ on sections $(\eps_e)_*\xi,
\xi:\R^\pm\times[0,1]\to E_e$ has asymptotic limit given by the
following operator:
 \begin{equation} \label{asymlimit}
\tfrac 12 \bigl( \d\xi + i_{E_e} \circ \d\xi \circ j \bigr) + \bigl(
(\cH_e\xi)\d s - (i_{E_e}\circ\cH_e\xi) \d t \bigr)
\end{equation}
where $i_{E_e}$ and $j$ denote the complex structures on $E_e$ and
$\R^\pm\times[0,1]$ respectively, and $\d$ is the trivial connection
on the trivial bundle $E_e$ over $\R^\pm\times[0,1]$.  More precisely,
the difference between $\eps_e^* \bigl( D_{E,F} (\eps_e)_*\xi \bigr)$
and \eqref{asymlimit} is a zero-th order operator that approaches $0$
uniformly in all derivatives in $t$ as $ s \to \infty$.
\item An asymptotically constant Cauchy-Riemann operator $D_{E,F}$ is
  {\em non-degenerate} if the operator
$$\partial_t + \cH_e: \Omega^0([0,1];E_e,F_e) \to
  \Omega^0([0,1];E_e) $$
has trivial kernel.  Any non-degenerate, asymptotically constant
operator $D_{E,F}$ is Fredholm on suitable Sobolev completions; see
for example Lockhart-McOwen \cite{loc:ell} for the case of surfaces
with cylindrical ends.
\end{enumerate} 
\end{definition} 

\begin{remark}  {\rm (Non-degeneracy of the trivial operator)}  Suppose that $E,F$ are trivial and for each end $e \in \mE(S)$ the subspaces
$F_{b_0},F_{b_1}$ for the components $b_0,b_1$ of $\partial S$ meeting
  $e$ are transversal, that is, $F_{b_0} \cap F_{b_1} = \{ 0 \}$.
  Then the trivial Cauchy-Riemann operator $D_{E,F}$ is
  non-degenerate:
$$ \ker(\partial_t) = F_{b_0} \cap F_{b_1} = \{ 0 \} .$$  
Furthermore if in addition the surface is a strip then the kernel and
cokernel are trivial:
$$ S \cong \R \times [0,1]  \quad \implies \quad 
 \ker(D_{E,F}) = \{ 0 \},  \ \ \coker(D_{E,F}) = \{ 0 \} .$$
\end{remark}  

\subsubsection{Cauchy-Riemann operators on nodal surfaces}

\label{nodalsec}

\begin{definition} {\rm (Cauchy-Riemann operators on nodal surfaces)} 
\begin{enumerate}   
\item A {\em nodal surface $\nS$ (with boundary and strip-like ends)}
consists of
\begin{enumerate}
\item A surface with strip-like ends ${\rS}$ (here the superscript
  $\rho$ is used to indicate the surface ``with nodes resolved'') with
  boundary $\partial {\rS}$, 
\item An collection of {\em interior nodes}: pairs
$$ 
Z = \{ \{ z_1^-,z_1^+ \},\ldots, \{z_r^-,z_r^+ \} \} $$
of distinct interior points of ${\rS}$;
\item A collection of {\em boundary nodes}: ordered pairs 
$$ W = \{ ( w_1^-,w_1^+ ), \ldots, (w_s^-,w_s^+) \} $$
of distinct boundary points of ${\rS}$; and
\item An {\em ordering} of the set $\{ I_i \} \cup \{ w_j \} \cup \{
  e_k \}$ of boundary components $I_i$, boundary nodes $w_j$, and
  strip-like ends $e_k$.
\end{enumerate}
Note that $S^\rho$ is the normalization (resolution of singularities)
of $S$. 
\item A {\em complex vector bundle} $\nE \to\nS$ on a nodal surface
  with boundary consists of
\begin{enumerate}
\item a complex vector bundle ${\rE} \to {\rS}$;
\item isomorphisms $\rE_{z_i^+} \to \rE_{z_i^-}$ and $\rE_{w_i^+} \to
  \rE_{w_i^-}$ for each interior node $z_i^\pm$ and boundary node
  $w_i^\pm$; and 
\item a trivialization $\rE|_{\im\eps_{e}}\cong 
\nE_e\times(\R^\pm\times[0,1])$ for each strip-like end $e\in\cE(\rS)$.
\end{enumerate}
\item  
A {\em totally real boundary condition} ${F}$ for $E \to S$ is a
totally real subbundle $\rF \subset {\rE} |_{\partial S}$ such that:
\begin{enumerate}
\item  The identifications of the fibers at the boundary nodes 
induce isomorphisms $\rF_{w_i^+}\to \rF_{w_i^-}$;
\item $\rF$ is maximally totally real, that is 
$\rank_\R(\rF) = \rank_\C({\rE})$;
\item In the trivialization over each strip-like end $e\in\cE(\rS)$,
  the subspaces 
$$\rF_{\eps_e(s,0)}=\nF_{e,0} \subset E_e, \quad
  \rF_{\eps_e(s,1)}=\nF_{e,1} \subset E_e$$
are constant along $s\in\R^\pm$.  These subspaces form a transverse
pair $\nF_{e,0}\oplus\nF_{e,1}=E_e$.
\end{enumerate}
\item Let $\nE \to \nS$ be a complex vector bundle on a nodal surface
  $\nS$ with totally real boundary condition $F$.  A {\em real
  Cauchy-Riemann operator} $D_{\nE,\nF}$ for $(\nS,\nE,\nF)$ is
  an operator
$$ D_{\nE,\nF}: \Omega^0(\nE,\nF) \to
\Omega^{0,1}(\nE,\nF), \ \ \ \sigma \mapsto D_{{\rE},\rF} \sigma $$
defined in terms of a real Cauchy-Riemann operator $D_{{\rE},{\rF}}$
on $\rS$ with values in $\rE$ and boundary conditions in $\rF$.  Here
we set $\Omega^{0,1}(\nE,\nF) := \Omega^{0,1}({\rE},\rF)$ and define
$\Omega^0(\nE,\nF) \subset \Omega^0({\rE},{\rF})$ as the kernel of the
surjective map
\begin{align} \label{deltamap}
 \delta:\quad
 \Omega^0({\rE},\rF) \;&\longrightarrow\qquad\qquad\quad\;\;
\bigoplus_i \rE_{z_i^+} \oplus
\bigoplus_j \rF_{w_j^+} \\
\sigma \quad&\longmapsto\; \bigoplus_i ( \sigma(z_i^+) - \sigma(z_i^-) ) \oplus
\bigoplus_j ( \sigma(w_j^+) - \sigma(w_j^-) ) .
\end{align}
\item The family versions of the above definitions are as follows.  A
  {\em family of nodal surfaces} $S \to B$ is a smooth family $S^\rho
  \to B$ of complex surfaces (compact, possibly with boundary) over a
  smooth, open base $B$, together with nodes $Z,W\subset (S^\rho)^2$
  varying smoothly over $B$.  A {\em family of complex vector bundles}
  $E \to S$ is a complex vector bundle $ E^\rho \to S^\rho$, together
  with smoothly varying identifications of the fibers at the nodes and
  constant trivializations on the strip-like ends.  A {\em family of
    totally real boundary conditions} $F \to \partial S$ consists of a
  totally real boundary condition $F^\rho \to \partial S^\rho$ that is
  constant in the trivializations on the strip-like ends.  A family of
  real Cauchy-Riemann operators $D_{E,F}$ for the families $(S,E,F)\to
  B$ is a family of real Cauchy-Riemann operators $D_{b}$ for
  $(S_b,E_b,F_b)$, varying smoothly with $b\in B$.
\end{enumerate}
\end{definition} 

\begin{remark} {\rm (Unreduced and reduced Cauchy-Riemann operators)}  
The determinant line $\det(D_{\nE,\nF})$ for a Cauchy-Riemann operator
$D_{E,F}$ over a nodal surface $\nS$ is isomorphic to the determinant
$\det(D_{\rE,\rF})$ for the corresponding operator over the smooth
surface $\rS$ with resolved nodes by the following construction:
Consider the ``unreduced'' operator
$$ D_{\nE,\nF}^{\unred}: \Omega^0({\rE},\rF) \to \bigoplus_i
\rE_{z_i^+} \oplus \bigoplus_j \rF_{w_j^+} \oplus \Omega^{0,1}({\rE},\rF) , \ \ \
\sigma\mapsto(\delta(\sigma),D_{{\rE},\rF}\sigma)$$
where $\delta$ is the operator of \eqref{deltamap}.  The kernel and
cokernel are canonically isomorphic to those of $D_{\nE,\nF}$.  The
isomorphisms define an isomorphism of determinant lines
\begin{equation} \label{unred}
 \det(D_{\nE,\nF}) \to \det(D_{\nE,\nF}^\unred) .\end{equation}
%
%From the direct sum decompositions
%%
%\begin{align*} 
%\Omega^0({\rE},\rF) &\cong \im(D_{{\rE},\rF}) \oplus \ker(D_{{\rE},\rF}), \\ 
%\Omega^{0,1}({\rE},\rF) &\cong \im(D_{{\rE},\rF}) \oplus \coker(D_{{\rE},\rF}) 
%\end{align*}
%
From this we construct the ``reduced operator''
\begin{equation} \label{reduced}
 D^\red_{\nE,\nF} : \ker(D_{{\rE},\rF}) \to \bigoplus_i \rE_{z_i^+}
\oplus \bigoplus_j \rF_{w_j^+} \oplus \coker(D_{{\rE},\rF}), \ \ \sigma
\mapsto (\delta(\sigma),0) .\end{equation}
The kernel and cokernel of $D^\red_{\nE,\nF}$ are canonically
isomorphic to those of $D_{\nE,\nF}^\unred$.  The isomorphisms define
an isomorphism of determinant lines
\begin{equation} \label{unredred}
 \det(D_{\nE,\nF}^\unred) \to \det(D_{\nE,\nF}^\red) .\end{equation}
Since the domain and codomain of $D^\red_{\nE,\nF}$ are
finite dimensional, we have by \eqref{tD} a canonical isomorphism 
\begin{equation} \label{nodal}
  \det(D^\red_{\nE,\nF}) 
\to \Lambda^{\max} \Bigl( \bigoplus_i
\rE_{z_i^+} \oplus \bigoplus_j \rF_{w_j^+}\Bigr)^\dual \otimes
\det(D_{{\rE},\rF}) .\end{equation} 
Hence orientations on $D_{{\rE},\rF}$ and the fibers $\rE_{z_i^+},
\rF_{w_j^+}$ induce an orientation on $D_{\nE,\nF}$.  A similar
isomorphism holds when a surface $S$ and bundles $E,F$ are obtained
from another nodal surface $\nnS$ and bundles $\nnE,\nnF$ by resolving
some subset of the nodes of $\nnS$.  That is, $\nnS$ is obtained by
removing some subset of the sets of interior and boundary nodes $Z,W$
so that some nodal points of $\nnS$ are replaced by pairs of points in
$S$, and $\nnE,\nnF$ are the bundles obtained by pullback under $\nnS
\to S$.

In the case that the ordering of the boundary nodes and components is
such that the boundary nodes are ordered first ($w_i$ appears before
$p_j$, for each $i,j$) we take the orientation from the previous
paragraph to be the orientation of $D_{\nE,\nF}$.  In general, an
orientation of $D_{\nE,\nF}$ is defined by the orientation from the
previous paragraph times the sign arising from permuting the
determinant lines $\Lambda^{\max}(F_{w_i})$ of the boundary nodes so
that they appear before the determinant lines for the boundary
components.
\end{remark} 

\begin{example}  \label{diffs2} {\rm (Orientation for the trivial bundle over a nodal disk)}  
  Continuing Example \ref{diffs}, suppose that $S$ is a nodal surface
  consisting of two disks joined with a single boundary node
  $(w_-,w_+)$. Also suppose that the ordering of the disks inducing
  the ordering of $(w_-,w_+)$.  Equip $S$ with the trivial bundles
  $E,F$.  Then the reduced operator is
$$
 D^\red_{\nE,\nF} :(x_1,x_2) \mapsto x_1 - x_2. $$ 
Thus the reduced operator has kernel equal to the diagonal 
$$ \on{ker}( D^\red_{\nE,\nF}) = \{ (x,x)  \ | \ x \in F \}  \subset \nF
\oplus \nF .$$  
By \ref{diffs}, the determinant line $ \det(D^\red_{\nE,\nF}) $
inherits the orientation of $\det(\nF)$ times $(-1)^{\rank(\nF)}$.  If
the ordering of the boundary components and boundary node is (first
component, boundary node, second component), then the orientation
induced on $D_{\nE,\nF}$ is the standard one.
\end{example} 

\begin{remark}  The class of Cauchy-Riemann operators is 
closed under the following operations:
\begin{enumerate} 
\item {(\rm Conjugates)} Let $(E,F)$ be a bundle with boundary
  condition over $S$. Let $E^-$ the complex conjugate of $E$, and
  $F^-$ the subspace $F$ considered as a totally real subspace of $F$.
  Let $S^-$ denote the surface $S$ with complex structure $\ol{j} = -
  j$.  Given a Cauchy-Riemann operator $D_{E,F}$ the first order part
  of $D_{E,F}$ is complex linear with respect to the dual complex
  structures $-J, -j$ and defines a Cauchy-Riemann operator
  $D_{E^-,F^-}$ on the dual $(E^-,F^-)$.
\item {\rm (Direct Sums)} Let $(E_k,F_k), k = 0,1$ be bundles with
  real boundary conditions over a surface $S$, and 
$$(E,F) = (E_0,F_0) \oplus (E_1,F_1) .$$  
Let $D_{E_k,F_k}, k = 0,1$ are Cauchy-Riemann operators for the
components.  The direct sum
$$D_{E,F} = D_{E_0,F_0} \oplus D_{E_1,F_1} $$ 
is a Cauchy-Riemann operator for the direct sum.
\item {\rm (Disjoint Unions)} Let $(E_k,F_k)$ denoted bundles with
  totally real boundary condition over surfaces $S_k$ for $k = 0,1$.
  Then 
$$(E,F) = (E_0,F_0) \sqcup (E_1,F_1)$$ 
is a bundle with totally real boundary condition over $S = S_0 \sqcup
S_1$. Then the space of forms $\Omega^0(E,F)$ is naturally isomorphic
to the direct sum of the $\Omega^0(E_k,F_k)$.  If $D_{E_k,F_k}, k =
0,1$ are Cauchy-Riemann operators for the components then the direct
sum $D_{E,F} = D_{E_0,F_0} \oplus D_{E_1,F_1} $ is a Cauchy-Riemann
operator for the disjoint union.
\end{enumerate} 
\end{remark} 
Now we turn to quilted surfaces.  We could allow nodes in the
following definition, but have no need for nodal quilted surfaces and so that extension
is left to the interested reader. 

\begin{definition} 
{\rm (Quilted surfaces)}
A {\em quilted surface} $\ul{S}$ with strip-like ends consists of the
following data:
\ben 
\item A collection of {\em patches} $(S_p)_{p \in \cP}$ indexed by a
  set $\cP$, so that each patch $S_p$ is a surface with strip-like
  ends.  Each $S_p$ carries a complex structures $j_p$ and has
  strip-like ends $(\eps_{p,e})_{e\in \E(S_p)}$ of widths
  $\delta_{p,e}>0$.  Each end has limit equal to a marked point
$$\lim_{s \to   \pm \infty} \eps_{p,e}(s,t) = :z_{p,e} \in \partial\overline{S}_p .$$
Denote by $I_{p,e}\subset\partial S_p$ the noncompact boundary
component between $z_{p,e-1}$ and $z_{p,e}$.
\item 
A collection of {\em seams} $\S$.  Each seam $\sigma \in \S$ is a
pairwise disjoint $2$-element subset of the set of patches and
boundary components:
$$ \sigma \subset \bigcup_{p \in \cP} \{ p \} \times \pi_0(\partial S_p) . $$
We write
$$ \sigma = \{ (p_-(\sigma),I_{\sigma,-}),(p_+(\sigma),I_{\sigma,+})
\} $$
recording the patches and components of the boundary that are
identified.  For each $\sigma \in \S$, a diffeomorphism of boundary
components 
$$\varphi_\sigma: \partial S_{p_-(\sigma)} \ni I_{\sigma,-} \overset{\sim}{\to}
I_{\sigma,+} \subset \partial S_{p_+(\sigma)}  $$
is given and supposed to satisfy the conditions:
\begin{enumerate}
\item {\it real analytic:} Every point $z\in I_\sigma$ has an open
  neighborhood $\U\subset S_{p_-(\sigma)}$ on one side of the seam
  such that $\varphi_\sigma|_{\U\cap I_\sigma}$ extends to an
  antiholomorphic embedding on the other side:
$$\psi_z:\U\to S_{p_+(\sigma)}, \quad \psi_z^* j_{p_+(\sigma)} = -
  j_{p_-(\sigma)} .$$
In particular, this condition forces $\varphi_\sigma$ to reverse the
orientation on the boundary components.
\item {\it compatible with strip-like ends :} Let $I_{\sigma}$ (and
  hence $I_{\sigma}'$) be noncompact, i.e.\ lie between marked points,
  $I_{\sigma}=I_{p_\sigma,e_\sigma}$ and
  $I_{\sigma}'=I_{p_\sigma',e_\sigma'}$.  We require that
  $\varphi_\sigma$ matches up the end $e_\sigma$ with $e_\sigma'-1$
  and the end $e_\sigma-1$ with $e_\sigma'$.  That is,
  $\eps_{p_\sigma',e_\sigma'}^{-1}\circ\varphi_\sigma\circ\eps_{p_\sigma,e_\sigma-1}$
  maps $(s,\delta_{p_\sigma,e_\sigma-1}) \mapsto (s,0)$ if both ends
  are incoming, or it maps $(s,0)\mapsto
  (s,\delta_{p_\sigma',e_\sigma'})$ if both ends are outgoing.  We
  disallow matching of an incoming with an outgoing end, and the
  condition on the other pair of ends is analogous.
\end{enumerate}
\end{enumerate}
Given a quilted surface with strip-like ends $\ul{S}$ as above:
\begin{enumerate} 
\item The {\em true boundary components} $I_b\subset\partial S_{p_b},
  b \in \B$ are those that are not identified with another boundary
  component of $\ul{S}$.  Let $\B$ denote the set of true boundary
  components, and for each $b \in \B$ let $p_b$ denote the patch and
  $I_b$ the component.
\item The {\em quilted ends} 
$$\ul{e}\in \E(\ul{S})=\E_-(\ul{S})\sqcup
  \E_+(\ul{S})$$ 
consist of a maximal sequence
$$\ul{e}=(p_i,e_i)_{i=1,\ldots,n_{\ul{e}}}$$ 
of ends of patches.  The boundaries of each end of each patch are
identified
$$\eps_{p_i,e_i}(\cdot,\delta_{p_i,e_i}) \cong
\eps_{p_{i+1},e_{i+1}}(\cdot,0)$$
via some seam $\phi_{\sigma_i}$.  The end sequence could be cyclic,
i.e.\ with an additional identification
$\eps_{p_n,e_n}(\cdot,\delta_{p_n,e_n}) \cong
\eps_{p_{1},e_{1}}(\cdot,0)$
via some seam $\phi_{\sigma_n}$.  Otherwise the end sequence is
noncyclic, i.e.\ $\eps_{p_1,e_1}(\cdot,0)\in I_{b_0}$ and
$\eps_{p_n,e_n}(\cdot,\delta_{p_n,e_n})\in I_{b_n}$ take values in
some true boundary components $b_0,b_n\in\B$.
\item The ends $\eps_{p_i,e_i}$ of patches in a quilted end $\ul{e}$
  are either all incoming, $e_i\in \E_-(S_{p_i})$, in which case we
  call the quilted end {\em incoming}, $\ul{e}\in\cE_-(\ul{S})$, or
  they are all outgoing, $e_i\in \E_+(S_{p_i})$, in which case we call
  the quilted end {\em outgoing}, $\ul{e}\in\cE_+(\ul{S})$.
\end{enumerate}
We assume, as part of the definition, that orderings of the patches
and of the boundary components of each $\ol{S}_k$, orderings
$\E_\pm(\ul{S})=(\ul{e}^\pm_1,\ldots,\ul{e}^\pm_{N_\pm(\ul{S})})$ of
the quilted ends are given.
\end{definition} 

\begin{definition}  {\rm (Cauchy-Riemann operators for quilted surfaces with strip-like ends)} 
\ben
\item {\rm (Boundary and seam conditions)} A collection of {\em
  bundles with totally real boundary and seam conditions} is a pair
  $(\ul{E},\ul{F}) \to \ul{S}$ consisting of a family of complex
  vector bundles over the components $\ul{E}$ together with totally
  real subbundles $\ul{F}$ over the boundary components and seams.
  That is, for each seam $\sigma$, the corresponding component
  $F_\sigma$ is a totally real subspace of the restriction of
  components of $\ul{E}$:
$$ 
F_\sigma \subset 
 E^-_{p_+(\sigma)} | (\partial
  S_{p_+(\sigma)})_\sigma \times E_{p_-(\sigma)} | (\partial
  S_{p_-(\sigma)})_\sigma $$ 
of the bundles $E_{p_\pm(\sigma)}$ on the patches $S_{p_\pm(\sigma)}$
adjacent to $\sigma$.
\item {\rm (Quilted Cauchy-Riemann operators)} A {\em quilted
  Cauchy-Riemann operator} for $(\ul{E},\ul{F})$ is a collection
of Cauchy-Riemann operators 
$$D_{\ul{E},\ul{F}} =  ( D_p, p \in \PP ) $$
on the patches $S_p, p \in \PP$, acting on the space of sections with
the given boundary and seam conditions.  \een
\end{definition} 

\subsection{Gluing of Cauchy-Riemann operators} 
\label{crglue}

Nodal surfaces with strip-like ends can be glued along the ends, or at
interior or boundary nodes.  In this section we explain the
corresponding gluing operators on Cauchy-Riemann operators.  First we
explain the behavior of determinant lines under gluing of strip-like
ends.

\begin{definition} {\rm (Gluable ends)}  Let $\nS$ be a surface with strip-like ends.
Let $\nE \to \nS$ be a complex vector bundle and $\nF \to \partial
\nS$ a totally real boundary condition.  Let $D_{E,F}$ be a real
Cauchy-Riemann operator.  Let $e_+\in\cE_+(\nS)$ and
$e_-\in\cE_-(\nS)$ be an outgoing resp.\ incoming end.  Suppose a
complex isomorphism is given that maps the totally real boundary
conditions on the ends:
\begin{equation} \label{fiberid} E_{e_+} \to E_{e_-}, \quad F_{e_+,k} \mapsto F_{e_-,k} , \quad k \in \{ 0
, 1 \} \end{equation}
We say that the ends $e_\pm$ are {\em gluable} if the asymptotic
limits \eqref{asymlimit} of $D_{E,F}$ on the ends $e_\pm$ are equal,
after the identification of fibers \eqref{fiberid}.
\end{definition}  

\begin{definition} \label{gsurf} {\rm (Glued surface and Cauchy-Riemann operator)}  
Let $\nS$ be a surface with gluable ends $e_\pm$ equipped with bundles
$\nE,\nF$ and a gluable Cauchy-Riemann operator $D_{\nE,\nF}$.
\ben
\item Let $\dS=\#^{e_-}_{e_+}(S)$ be the {\em glued surface} formed by
  gluing the ends of $\nS$.  That is, consider a pair of ends
$$\eps_{e_+}(\R^+\times[0,1])\cup
  \eps_{e_-}(\R^-\times[0,1])\subset\nS.$$ 
  Replacing these ends by a strip $[-\tau,\tau]\times[0,1]$ depending
  on a gluing parameter $\tau>0$, where $\{\pm\tau\}\times[0,1]$ is
  identified with $\eps_{e_\mp}(\{0\}\times[0,1])$, gives a surface
  $\dS$ with two fewer ends, after a choice of a new ordering on the
  boundary components and strip-like ends.  See also Section 4.1 of
  \cite{we:co} and Definition 4.1.1 of \cite{we:co}.
\item 
  Let $\dE, \dF$ be the complex vector bundle and totally real
  boundary condition over $\dS$ that arise from gluing $\nE,\nF$ via
  the isomorphism $E_{e_+}\cong E_{e_-}$ on the middle strip.  Let
  $\rho_\pm$ be cutoff functions on the strip-like ends with $\rho_+ +
  \rho_- = 1$.  Given a section $\ti{\sigma}$ of $\dE$ define a
  section $\sigma$ of $\nE$ by 
$$ \sigma = \ti{\sigma} \ \text{on} \ \dS \backslash \eps_\pm( \pm (
  0,\infty) \times [0,1]), \quad \sigma = \rho_\pm \ti{\sigma}
  \ \text{on} \ {\eps_\pm( \pm ( 0,\infty) \times [0,1])} .$$
Given $D_{E,F}$ define a {\em glued real Cauchy-Riemann operator}
$D_{\dE,\dF}$ for $(\dS,\dE,\dF)$ by defining $D_{\dE,\dF}
\ti{\sigma}$ to be the section of $\nE$ obtained from 
$D_{\nE,\nF}
\sigma$ 
by adding together the forms on the strip-like ends:
\begin{equation} \label{adding}
 D_{\dE,\dF} \ti{\sigma} = \pi_* D_{\nE,\nF} \sigma \end{equation}
where 
$$\pi: S \backslash \eps_\pm( \pm ( 0,\infty) \times [0,1]) \to
\dS$$ 
is the gluing map, and $\pi_*$ is integration over the fibers
$$ \pi_* \eta (z) = D_{z_-} \pi_* \eta(z_-) + D_{z_+} \pi_* \eta(z_+)
, \quad \pi^{-1}(z) = (z_-, z_+) .$$
\een
\end{definition} 

\begin{proposition}  {\rm (Identification of indices and determinant lines under
gluing strip-like ends)} \label{glueprop} Suppose that $D_{\dE,\dF}$
  is obtained from $D_{\nE,\nF}$ by gluing strip-like ends.  Then
  there is an equality of indices $ \Ind(D_{\nE,\nF}) = \Ind (
  D_{\dE,\dF}) $ and a canonical isomorphism of determinant lines
\begin{equation} \label{linearglue}
 \det(D_{\nE,\nF}) \to \det( D_{\dE,\dF}) .
\end{equation}
\end{proposition}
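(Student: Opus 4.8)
The plan is to establish the index equality and the determinant-line isomorphism by a standard linear gluing argument, realizing the glued operator as a deformation of a direct sum of the original operator with an invertible model operator on the neck.

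First I would set up the model: on a long strip $[-\tau,\tau]\times[0,1]$ (or $\R\times[0,1]$), the asymptotic operator $\partial_t+\cH_e$ on the gluable ends has trivial kernel by the non-degeneracy hypothesis, hence the translation-invariant Cauchy-Riemann operator $D_0$ on the infinite strip $\R\times[0,1]$ with those boundary conditions is invertible; both its kernel and cokernel vanish, so $\det(D_0)$ is canonically trivialized by $1\in\R$. The key analytic input is a \emph{pregluing/gluing} construction: for $\tau$ large, the operator $D_{\dE,\dF}$ on the glued surface $\dS$ is uniformly invertible on the complement of the kernel/cokernel of $D_{\nE,\nF}$, and one obtains an approximate right inverse by patching the right inverse of $D_{\nE,\nF}$ with that of $D_0$ using the cutoff functions $\rho_\pm$ from Definition \ref{gsurf}. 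This is exactly the linear-gluing statement of McDuff--Salamon \cite[Appendix A.2]{ms:jh} or Seidel \cite[Section 11]{se:bo}; I would cite it rather than reprove it. The upshot is an isomorphism $\ker(D_{\nE,\nF})\oplus\ker(D_0)\cong\ker(D_{\dE,\dF})$ and similarly for cokernels — since $\ker(D_0)=\coker(D_0)=0$, this already gives $\Ind(D_{\nE,\nF})=\Ind(D_{\dE,\dF})$.

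Next, to get the determinant-line isomorphism \eqref{linearglue} I would phrase the gluing as a one-parameter family. Consider the family of operators $(D^{\gluing})_{\gluing\in[\tau_0,\infty]}$ on a fixed underlying surface, where for finite $\gluing$ one has the glued operator $D_{\dE,\dF}$ with neck length $\gluing$, and in the limit $\gluing=\infty$ one has the broken configuration, i.e.\ the direct sum $D_{\nE,\nF}\oplus D_0$ (with $D_0$ now on the two split half-infinite strips, whose determinant is canonically $\R$). This is a continuous family of Fredholm operators over a contractible parameter interval, so by the determinant line bundle of families (Remark \ref{families}) it has a canonical trivialization, yielding
$$\det(D_{\dE,\dF})\;\cong\;\det(D_{\nE,\nF}\oplus D_0)\;\xrightarrow{\;\eqref{caniso}\;}\;\det(D_{\nE,\nF})\otimes\det(D_0)\;\cong\;\det(D_{\nE,\nF}),$$
where the last arrow uses the canonical trivialization $\det(D_0)\cong\R$. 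One should check that the isomorphism \eqref{caniso} applied here carries no extra sign, since $\Ind(D_0)=0$ and $\coker(D_0)=0$, so the sign $(-1)^{\dim(\coker(D_2))\Ind(D_1)}$ in \eqref{tens} is trivial; this makes the composite canonical.

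\textbf{The main obstacle} is verifying that the resulting isomorphism is genuinely \emph{canonical} — independent of the choices of gluing parameter $\tau_0$, of cutoff functions $\rho_\pm$, and of the particular gluing construction used to identify kernels and cokernels. The standard way around this is to observe that any two such choices are connected by a path within the space of Fredholm operators (varying the cutoffs, or varying $\tau_0$, through a homotopy), and the determinant line bundle over this path of choices is canonically trivial because the relevant parameter space is contractible; hence the induced isomorphisms agree. A secondary point of care is compatibility: one should note that this isomorphism is functorial with respect to further gluings and respects direct sums and conjugation, so that it can be composed consistently later in the paper; I would remark on this but defer the detailed verification, as it reduces to the associativity and graded-commutativity of \eqref{caniso} already recorded in the Remark following the definition of determinant lines.
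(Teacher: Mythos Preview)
Your overall strategy---cut-and-paste approximate solutions, project onto the actual kernel, invoke invertibility of the asymptotic operator for the estimates---is correct and is what the paper does. But your framing of the $\gluing\to\infty$ limit as the direct sum $D_{\nE,\nF}\oplus D_0$ is confused: as the neck length goes to infinity, the glued surface $\dS$ simply degenerates back to the original surface $\nS$ with its two strip-like ends, and the limiting operator is $D_{\nE,\nF}$ itself. There is no extra infinite-strip component in the broken configuration. The invertibility of the model operator on the strip is indeed the crucial analytic input, but it enters through the uniform elliptic estimates on the neck region (guaranteeing that glue-and-project is an isomorphism for large $\gluing$), not as a separate Fredholm summand. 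Once you drop the spurious $D_0$, your family argument over $[\tau_0,\infty]$ also runs into the difficulty that the underlying Banach spaces genuinely change with $\gluing$, so ``continuous family of Fredholm operators'' needs interpretation; the cleanest fix is precisely to construct the kernel and cokernel isomorphisms directly.

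The paper's proof does exactly this: given $\xi\in\ker(D_{\nE,\nF})$, cut it off on the ends using $\rho_\pm$ to obtain an approximate kernel element $\sharp_\gluing\xi$ on $\dS$, then orthogonally project onto $\ker(D_{\dE,\dF})$; for $\gluing$ sufficiently large this is an isomorphism (with the analysis deferred to a reference). The cokernels are handled by passing to adjoints and repeating the construction. This gives the determinant-line isomorphism immediately from the kernel/cokernel isomorphisms, without needing to set up a family or invoke \eqref{caniso}. Your discussion of canonicity (independence of $\tau_0$ and cutoffs via contractibility of the choice space) is correct and is implicit in the paper's ``canonical up to deformation'' phrasing.
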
 

\begin{proof}    For simplicity we assume that the surface is unquilted. 
 For sufficiently large $\tau$ there exist isomorphisms 
$$ \ker( D_{\nE,\nF}) \overset{\sim}{\to} \ker( D_{\dE,\dF}), \quad
 \coker( D_{\nE,\nF}) \overset{\sim}{\to} \coker( D_{\dE,\dF}) $$
defined as follows. Given a section $\nxi$ in the kernel of
$D_{\nE,\nF}$, one may use cutoff functions on $[-\tau,\tau]$ to glue
it together to a section $\dxi =\sharp_\tau \nxi$ of $\dE \to \dS$
with boundary conditions in $\dF$.  Explicitly
$$ \dxi = \nxi \ \text{on} \ \dS \backslash \eps_\pm( \pm ( 0,\infty)
\times [0,1]), \quad \dxi = \rho_\pm \nxi \ \text{on} \ {\eps_\pm(
  \pm ( 0,\infty) \times [0,1])} .$$
Then $\dxi$ is an approximate zero of $D_{\dE,\dF}$.  Gluing followed
by orthogonal projection onto the kernel of $D_{\dE,\dF}$ defines, for
$\tau$ sufficiently large, the isomorphism, see \cite[Section
  5.3]{mau:gluing} for details of the analysis.  The construction for
the cokernels follows by identifying the cokernels of $D_{\nE,\nF}$
and $D_{\dE,\dF}$ with the kernels of their adjoints.  Gluing of
Cauchy-Riemann operators on quilted surfaces along quilted ends is
similar.
\end{proof} 

Next we describe the behavior of determinant lines under deformation
of nodes.  The story here is analogous to the one in algebraic
geometry, where one has a long exact sequence in homology induced from
the short exact sequence of sheaves induced by the normalization.

\begin{definition}\label{gluing}  {\rm (Deformation of a node)}  Consider an interior node of $S$ represented by a pair $z^\pm \in
S^\rho$, and $\gluing \in \R_{> 0} + [0,1]i$.  
\ben
\item {\rm (Deformed surface)} Let $\dS$ be the (possibly still nodal)
  {\em deformed surface} with strip-like ends obtained by deforming
  the node.  Thus $\dS$ is the surface obtained gluing punctured disks
  around $z^\pm$ using the map $z \mapsto \exp(2\pi \gluing)/z$.
  Denote by
$$s + it = \ln(z)/\pi - \gluing$$ 
the coordinates on the cylindrical neck $[-|\gluing|,|\gluing|] \times
S^1$.  In the case of a boundary node, we require that the gluing
parameter $\gluing$ is real and glue together half-disks by $z \mapsto
\exp(2\pi \gluing)/z$ and identify the neck with $[-\gluing,\gluing]
\times [0,1]$ with coordinates $s + it$.  See Figure \ref{deformed},
in which the glued disks/neck regions are shaded.
\begin{figure}[ht]
\includegraphics[width=5in]{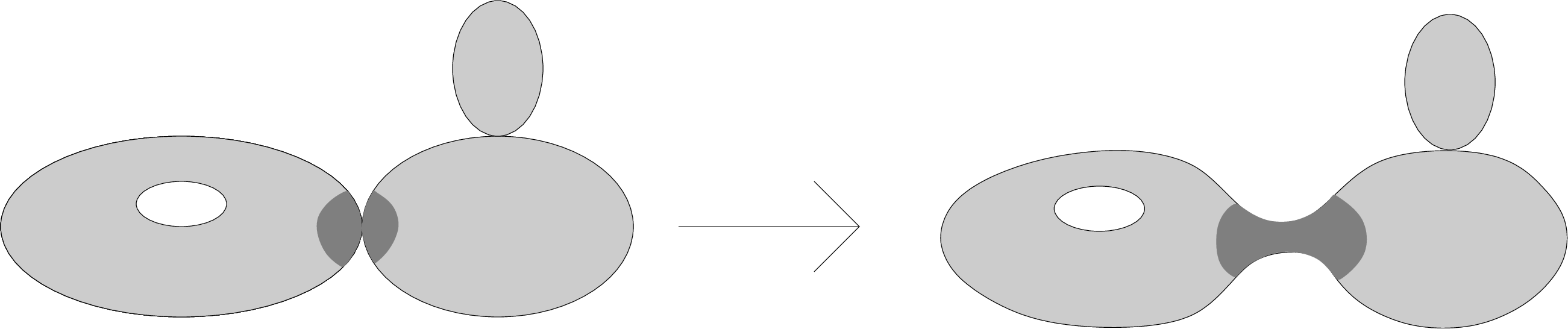} 
\caption{Deformation of a boundary node} 
\label{deformed}
\end{figure} 

\item {\rm (Deformed vector bundles and Cauchy-Riemann operators)} Let
  $\dE,\dF$ denote the vector bundles over $\dS,\partial \dS$ obtained
  by gluing in the trivial bundles
$$(E_z,F_z)=(E^\gluing_{z^-},F^\gluing_{z^-})=(E^\gluing_{z^+},F^\gluing_{z^+})$$
  in the fixed trivialization over the (half)disks around $z^\pm$.
  Using cutoff functions, one constructs from $D_{E,F}$ a family of
  real Cauchy-Riemann operators 
$$D_{\dE,\dF}: \Omega^0(\dE,\dF) \to \Omega^{0,1}(\dE) $$ 
for $(\dS,\dE,\dF)$ similar to the construction of \eqref{adding}.
Each operator $D_{\dE,\dF}$ in the family is equal to $D_{E,F}$ away
from the gluing region and approaches the trivial operator on the neck
in the limit $\gluing \to \infty$.  \een
\end{definition} 

Note that the conformal structure of $\dS$ depends on the value of the
gluing parameter $\gluing$, as well as the choices of local
coordinates $\R^\pm\times S^1$ or $\R^\pm\times[0,1]$ on punctured
neighborhoods of $z^\pm$.  In addition, to obtain a surface with
strip-like ends in our sense one has to choose a new ordering on the
nodes and possibly the boundary components of $\dS$.

The following gluing result is the basic result used in the
identification of determinant lines of the deformed Cauchy-Riemann
operator with the determinant line of the original.  The result is a
slight modification of \cite[Lemma 3.1]{ek:or}.  We suppose that the
node is on the boundary; the interior case is similar.  We also assume
for simplicity that $\dS$ is smooth.

\begin{theorem} {\rm (Long exact sequence in homology for surfaces with boundary
and strip-like ends)} \label{fourtermthm} Let $\dS,\dE,\dF$ be a
  deformation of a node from $S,E,F$ obtained from the resolved
  surface and bundles $\rS,\rE,\rF$ by identifying small balls around
  the node.  For sufficiently large values of the gluing parameter
  $\gluing$ there is an exact sequence
\begin{equation} \label{fourterm} 
 0 \to \ker(D_{\dE,\dF}) \stackrel{\iota}{\to} \ker(D_{{\rE},\rF})
\stackrel{D_{\dE,\dF}^\red}{\to} F_{z} \oplus \coker(D_{{\rE},\rF})
\to \coker(D_{\dE,\dF}) \to 0
\end{equation} 
such that in the limit $\gluing \to \infty$, the middle map
$D_{\dE,\dF}^\red$ converges to $D_{\nE,\nF}^\red$ from
\eqref{reduced}.
\end{theorem}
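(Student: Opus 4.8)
The plan is to run the standard linear-gluing analysis and to recognize \eqref{fourterm} as the finite-$\tau$ avatar of a snake-lemma sequence.

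\textbf{The model at $\tau=\infty$.} The short exact ``normalization'' sequence $0\to\Omega^0(\nE,\nF)\to\Omega^0(\rE,\rF)\xrightarrow{\ \delta\ }F_z\to 0$ from \eqref{deltamap}, together with the operators $D_{\nE,\nF}$, $D_{\rE,\rF}$ and $0\colon F_z\to 0$ (recall $\Omega^{0,1}(\nE,\nF)=\Omega^{0,1}(\rE,\rF)$), yields by the snake lemma the exact sequence $0\to\ker D_{\nE,\nF}\to\ker D_{\rE,\rF}\xrightarrow{\ \delta\ }F_z\to\coker D_{\nE,\nF}\to\coker D_{\rE,\rF}\to 0$; splicing its last three terms through the canonical isomorphisms $\coker D_{\nE,\nF}\cong\coker D^{\unred}_{\nE,\nF}\cong\coker D^{\red}_{\nE,\nF}$ of \eqref{unred} and \eqref{unredred} reproduces \eqref{fourterm} with middle map $D^{\red}_{\nE,\nF}$ of \eqref{reduced}. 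So it suffices to show that for $\tau$ large one may substitute the glued operator $D_{\dE,\dF}$ for $D_{\nE,\nF}$ throughout, up to canonical isomorphism and an error in the middle map that vanishes as $\tau\to\infty$.

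\textbf{The gluing construction.} The maps in \eqref{fourterm} are built by pregluing. Over the neck $[-\tau,\tau]\times[0,1]$ the operator $D_{\dE,\dF}$ converges to the trivial operator $\bar\partial$ on the strip with the \emph{constant} totally real condition $F_z$ on both boundary lines --- the degenerate, non-Fredholm case, whose asymptotic operator $\partial_t$ has kernel $F_z\neq 0$. The non-decaying constant mode along the neck records the matching datum $\sigma(w^+)\in F_z$ and is exactly what produces the $F_z$-summand; on its complement the neck operator is uniformly invertible. Concretely, for $\sigma\in\ker D_{\rE,\rF}$ preglue by cutoff functions as in Definition \ref{gluing}(b) and \eqref{adding} to a section $\#_\tau\sigma$ of $\dE$; then $D_{\dE,\dF}(\#_\tau\sigma)$ is supported near the neck and, to leading order, is a fixed localized profile carrying the mismatch $\delta(\sigma)=\sigma(w^+)-\sigma(w^-)\in F_z$. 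Passing $D_{\dE,\dF}(\#_\tau\sigma)$ through a uniformly bounded right inverse modulo $\coker D_{\dE,\dF}$ and reading off the obstruction defines the $F_z$- and $\coker D_{\rE,\rF}$-components of the middle map, which as $\tau\to\infty$ converges to $D^{\red}_{\nE,\nF}(\sigma)=(\delta(\sigma),0)$. The map $\iota$ is ``restrict off the neck and $L^2$-project onto $\ker D_{\rE,\rF}$'' --- well defined since a kernel element of $D_{\dE,\dF}$ differs on the neck from a constant in $F_z$ by a term decaying exponentially in $\tau$ --- and $F_z\oplus\coker D_{\rE,\rF}\to\coker D_{\dE,\dF}$ is assembled from the connecting homomorphism and from pregluing-and-projecting representatives of $\coker D_{\rE,\rF}\cong\ker D^{*}_{\rE,\rF}$ via the adjoint operator $D^{*}_{\rE,\rF}$.

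\textbf{Exactness and the main obstacle.} Two inputs finish the argument. Index bookkeeping: a boundary-node deformation fills in a neck, so $\chi(\dS)=\chi(\rS)-1$ while $I(\dE,\dF)=I(\rE,\rF)$ (the bundle is glued with trivial data); hence Proposition \ref{rrthm} gives $\Ind D_{\dE,\dF}=\Ind D_{\rE,\rF}-\rank_\R(F_z)$, so the alternating sum of dimensions in \eqref{fourterm} vanishes and it is enough to verify that it is a complex, that $\iota$ is injective and the last map surjective, and that it is exact at one middle term. The uniform gluing estimate: for $\tau$ large there are a $\tau$-independent $c>0$ and a finite-dimensional approximate kernel $\K_\tau\subset\Omega^0(\dE,\dF)$, spanned by preglued elements of $\ker D_{\rE,\rF}$ together with genuine elements of $\ker D_{\dE,\dF}$, with $\|D_{\dE,\dF}\eta\|\ge c\|\eta\|$ for $\eta\perp\K_\tau$ and likewise for the adjoint; from this the required facts follow by the standard arguments (a kernel element of $D_{\dE,\dF}$ cannot be concentrated entirely in the neck, so $\iota$ is injective; dually the last map is surjective; vanishing of the obstruction attached to $\#_\tau\sigma$ lets it be corrected within $\ker D_{\dE,\dF}$, giving exactness at $\ker D_{\rE,\rF}$). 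This uniform estimate --- proved by the usual rescaling-and-contradiction argument using that over the neck $D_{\dE,\dF}$ converges to $\bar\partial$, invertible off the $F_z$-valued constants, and elsewhere to $D_{\rE,\rF}$ --- is the one genuinely substantive step; it is the ``slight modification of \cite[Lemma 3.1]{ek:or}'' referred to in the statement, carried out in detail in \cite[Section~5.3]{mau:gluing}, and everything else is bookkeeping. The interior-node case is identical with $[0,1]$ replaced by $S^1$ and $F_z$ by $E_z$, and the quilted case reduces to it since a quilted Cauchy--Riemann operator is a direct sum over patches and gluing at a node of one patch does not meet the seam conditions.
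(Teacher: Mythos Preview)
Your proposal is correct and follows essentially the same route as the paper: preglue via cutoff functions, identify the $F_z$-summand as the constant neck mode carrying the mismatch $\delta(\sigma)$, invoke the uniform estimate on the complement of the preglued kernel (the paper proves this by the same rescaling-and-contradiction argument you sketch, using weighted Sobolev spaces with weight \eqref{zeta}), and use the Riemann--Roch index drop \eqref{drop} to close the bookkeeping. The paper organizes the endgame slightly differently --- it writes $D_{\dE,\dF}$ in block form with respect to the splittings $V_\tau\oplus\ker D_{\rE,\rF}$ and $U_\tau\oplus F_z\oplus\coker D_{\rE,\rF}$ and defines $D^{\red}_{\dE,\dF}$ as the Schur complement $-\ti D_{21}\ti D_{11}^{-1}\ti D_{12}+\ti D_{22}$, which makes the identifications $\ker D_{\dE,\dF}\cong\ker D^{\red}_{\dE,\dF}$ and $\coker D_{\dE,\dF}\cong\coker D^{\red}_{\dE,\dF}$ immediate --- whereas you phrase the same content as obstruction-reading plus an Euler-characteristic-zero exactness argument; these are equivalent and your snake-lemma framing at $\tau=\infty$ is exactly the paper's Remark on the normalization long exact sequence.
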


\begin{remark} {\rm (Relation to the long exact sequence for a normalization)} 
The sequence \eqref{fourterm} is a real version of a standard long
exact sequence in algebraic geometry.  Namely suppose $\bE$ is a
vector bundle on a nodal curve $S$ with a node at $z$, and $\pi:
S^\rho \to S$ is the normalization of $S$ at $z$.  There is a short
exact sequence of sheaves given by
$$ 0 \to \bE \to \pi_* \pi^* \bE \to \iota_{z,*} \iota_{z}^* \bE \to 0 $$
where abusing notation $\iota_{z,*} \iota_z^* \bE$ denotes the
skyscraper sheaf with fiber $\bE_z$.  The short exact sequence induces
a long exact sequence of cohomology groups, described in terms of
linearized Cauchy-Riemann operators as
$$ 0 \to \ker(D_{\bE}) \to \ker(D_{\bE^\rho}) \to \bE_{z} \to
\coker(D_{\bE}) \to \coker(D_{\bE^\rho}) \to 0 .$$
See for example \cite[(1)]{be:gw}.
\end{remark} 

\begin{proof}[Proof of Theorem \ref{fourtermthm}] 

First we construct suitable Sobolev spaces on the glued surface
$\dS_{}$, depending on the gluing parameter $\gluing$.  We will
require a nested sequence of cutoff functions on the neck region of
$\ti{S}$ for which we introduce the following notation.  For each
integer $n \in [2,6]$ let $\beta_n$ be a family of smooth functions on
$\dS_{}$ depending on $\tau$ with the following properties:
\begin{enumerate} 
\item $\beta_n$ is supported on the part of the neck parametrized by
  $[-n\gluing/7,n\gluing/7] \times [0,1]$;
\item $\beta_n$ is equal to $1$ on $[-(n-1)\gluing/7,(n-1)\gluing/7]
  \times [0,1]$, and takes values in $[0,1]$ elsewhere;

\item $\beta_n$ has first derivative bounded by $C/\gluing$ for some
  constant $C> 0$.
\end{enumerate} 
For $\delta \in (-1,0) $ consider the weight function
\begin{equation} \label{zeta} \zeta_\gluing \in C^\infty(\dS_{}), \ \ \ \zeta_\gluing = (1-
 \beta_6) + \beta_6 ( e^{\delta (s - \gluing)}+ e^{-\delta (s + \gluing)}
) .\end{equation}
The second term is well-defined since $\beta_6$ is supported on the
neck.  Let $W^{1,2}_{\delta}(\dE_{},\dF_{})$ be the Sobolev space with
weight function $\zeta_\gluing$.  This is the space of $W^{1,2}_\loc$
functions with finite weighted norm
$$ \Vert \xi \Vert_{W^{1,2}_{\delta}(\dE_{},\dF_{})} = \Vert
\zeta_\gluing \xi \Vert_{W^{1,2}(\dE_{},\dF_{})} .$$
The Cauchy-Riemann operator $D_{\dE_{},\dF_{}}$ is Fredholm on this
Sobolev space by standard results.

With these Sobolev spaces defined we study the kernel and the cokernel
of the linearized Cauchy-Riemann operator on the glued surface.  We
have an exact sequence of Banach spaces defined by the linearized
\begin{equation} \label{exact} 0 \to \ker(D_{\dE_{},\dF_{}}) \to
\Omega^0(\dE_{},\dF_{}) \to \Omega^{0,1}(\dE_{}) \to
\coker(D_{\dE_{},\dF_{}}) \to 0 \end{equation}
where the middle terms are the $W^{1,2}_\delta$ resp.
$W^{0,2}_\delta$ spaces defined above.  We wish to show that
\eqref{exact} is equivalent, modulo stabilization, to an exact
sequence of finite-dimensional spaces.  More precisely, we show that
the middle map in \eqref{exact} is uniformly bounded from below on a
space isomorphic to the complement of the kernel of the Cauchy-Riemann
operator for the normalization.  

Next we identify the kernel of the operator on the normalization with
a subspace of sections on the deformed bundle.  For any $\xi \in
\Omega^0(E,F)$ we denote by $(1- \beta_2)\xi \in
\Omega^0(\dE_{},\dF_{})$ the section obtained by multiplying by the
cutoff function $(1- \beta_2)$ and using the identification of
$\ti{E}_{}$ and $E$ away from the neck.  The map
\begin{equation}  \label{thismap} \ker(D_{\rE,\rF}) \to (1- \beta_2)\ker(D_{\rE,\rF}), \ \ \xi
\mapsto (1 - \beta_2)\xi \end{equation}
is an isomorphism, since by analytic continuation no element is
supported on the neck.  We identify $\ker(D_{\rE,\rF})$ with its image
under the map \eqref{thismap}.

The first map in the four-term sequence \eqref{fourterm} is now
defined by orthogonal projection.  Namely let $V_\gluing$ denote the
$W^{0,2}_\delta$-orthogonal complement of $\ker(D_{\rE,\rF})$.  Define
the first map in \eqref{fourterm} to be the composition
$$ \ker(D_{\dE,\dF}) \to \ker(D_{{\rE},\rF}) \subset
\Omega^0(\dE_{},\dF_{})$$
of inclusion and projection along $V_\gluing$.

To find the second map in the four-term sequence \eqref{fourterm}, we
find an approximate description of the image of $V_\gluing$ under the
linearized operator $D_{\dE_{},\dF_{}}$ for the glued surface.  

\vskip .05in

\noindent {\em Claim: For $\delta \in (-1,0)$, the restriction of
  $D_{\dE_{},\dF_{}}$ to $V_\gluing$ is uniformly right invertible,
  that is, there exist constants $C$ and $\gluing_0$ such that for
  $\gluing > \gluing_0$,
\begin{equation} \label{claim}
C \Vert \xi \Vert_{W^{1,2}_\delta} \leq \Vert D_{\dE_{},\dF_{}} \xi
\Vert_{L^2_\delta}, \ \forall \xi \in V_\gluing .\end{equation}
}

Suppose otherwise.  There exists a sequence $ \gluing(\alpha) \to
\infty, \xi(\alpha) \in V_{\gluing(\alpha)}$ with
\begin{equation} \label{collect}
\Vert \xi(\alpha) \Vert_{W^{1,2}_\delta} =1, \ \ \ \lim_{\alpha \to
 \infty} \Vert D_{\dE_{},\dF_{}} \xi(\alpha)
 \Vert_{L^2_\delta} = 0 .\end{equation}
Denote by $S^\circ$ resp. $E^\circ,F^\circ$ the surface with
strip-like ends obtained by removing the node $z$, resp. the fibers
$E_z,F_z$.  Let 
$$S^{\nu} \cong [ -\gluing,\gluing ] \times [0,1] ,\quad E^\nu = E|
S^\nu, \quad F^\nu = F^\nu | S^\nu $$
be the neck and bundles restricted to the neck.  We split
$\xi(\alpha)$ into sections supported away from and on the neck, and
apply elliptic estimates for $S^\circ, S^\nu$ to obtain a
contradiction.  First note that the kernel of $D_{\rE,\rF}$ may be
identified with the kernel of $D_{E^\circ,F^\circ}$ for any Sobolev
weight $\delta \in (0,-1)$.  Indeed, we may identify $S$ locally with
the half-space $\H$.  We assume that our Sobolev spaces on $S$ use a
measure that is locally the pull-back of the standard measure on $\H$.
The conformal transformation $ (s,t) \mapsto \exp(-s - i\pi t)$ maps
the infinite strip $\R \times [0,1]$ to $\H$.  The pull-back of the
canonical measure on $\H$ is $\pi e^{-2s} \d s \d t$.  With our
conventions, this pullback is the measure with Sobolev weight $\delta
= -1$.  Thus pullback gives an identification of the kernel
$$W^{1,2}_{-1}(E^\circ,F^\circ)
\supset 
\ker(D_{E^\circ,F^\circ}) \to \ker(D_{\rE,\rF}) \subset W^{1,2}(\rE,\rF) .$$
Elliptic regularity gives an identification with the kernel of
$D_{\rE,\rF}$ on $W^{k,2}(\rE,\rF)$ for any $k \ge 1$.  The operator
$D_{E^\circ,F^\circ}$ is Fredholm for weights $\delta$ not in the
spectrum $\Z$ of the limiting operator on the strip-like ends, see
e.g. \cite{loc:ell}.  The kernel $\ker(D_{E^\circ,F^\circ})$ is
unchanged by any non-negative perturbation of Sobolev weight not
passing through the spectrum of the limiting operator:
$$ ( [ \delta_1,\delta_2 ] \cap \on{Spec}(\partial_t + \cH_e) =
\emptyset , \ \ \forall e \in \cE) \implies
(\ker(D_{E^\circ,F^\circ})_{W^{1,2}_{\delta_1}} =
\ker(D_{E^\circ,F^\circ})_{W^{1,2}_{\delta_2}} ) .$$
Hence 
$$ \ker(D_{E^\circ,F^\circ})_{W^{1,2}_{-1}} =
\ker(D_{E^\circ,F^\circ})_{W^{1,2}_{\delta}}, \quad \forall \delta \in
(-1,0) .$$
Let $\beta_2$ denote the cutoff function introduced at the beginning
of the proof.  Since the operator $D_{\ti{E},\ti{F}}$ is equal to
$D_{E^\circ,F^\circ}$ away from the neck and approaches
$D_{E^\nu,F^\nu}$ on the neck, we have for some constant $C > 0$
independent of $\alpha$,
\begin{eqnarray*}
\Vert \xi(\alpha) \Vert_{\dE} &\le& C \Vert (1 - \beta_2) \xi(\alpha)
\Vert_{E^\circ} + C \Vert \beta_2 \xi(\alpha) \Vert_{E^\nu} \\ &\le &
C \Vert D_{E^\circ,F^\circ} (1- \beta_2) \xi(\alpha) \Vert_{E^\circ} +
C \Vert \on{proj}_{\ker(D_{E^\circ,F^\circ})} (1 - \beta_2)
\xi(\alpha) \Vert_{E^\circ}\\ && + C \Vert D_{E^\nu,F^\nu} \beta_2
\xi(\alpha) \Vert_{E^\nu} \\ &\to& 0.
\end{eqnarray*} 
This is a contradiction.  The first inequality follows from
comparability of the norms on $E^\circ$, $E^\nu$, and $E$, the second
inequality combines the elliptic estimates for $(E^\circ,F^\circ)$ and
$(E^\nu,F^\nu)$.  The last limit uses the bound on the derivative of
$\beta_2$ and the fact that $D_{\dE,\dF} \xi(\alpha) \to 0$.  This
proves the claim.
%
%Next we find an approximate description of the image of $V_\gluing$
%under $D_{\dE_{},\dF_{}}$.  
\vskip .05in

We continue with the construction of the second map in the four-term
sequence.  Identify $\coker(D_{\rE,\rF})$ with the
$W^{1,2}_\delta$-perpendicular of $\on{im}(D_{\rE,\rF})$.  Also
identify $E$ and $E^\circ$ away from the neck.  Let $\beta_4$ denote
the cutoff function introduced above, and define an injection for
$\gluing$ sufficiently large
$$ \coker(D_{\rE,\rF}) \to \Omega^{0,1}(\dE_{},\dF_{}), 
\ \ \ \xi \mapsto \beta_4\xi ;$$
let $\beta_4\coker(D_{\rE,\rF}) $ denote its image.  Let $\ul{F_z}$
the subspace of $\Omega^{0,1}(\dE_{},\dF_{})_{L^2_\delta}$ consisting
of one-forms equal on the neck to $ f (\d s - i \d t)$ for some $ f
\in F_z$.  By multiplying by $\beta_4$ gives a finite-dimensional
subspace of $\Omega^{0,1}(\dE_{},\dF_{})_{L^2_\delta}$, isomorphic to
$F_z$ by evaluation at a point $z_{\on{mid}}$ at the mid-point of the
neck:
$$ \ul{F_z} \cong F_z, \quad \xi \mapsto \xi(z_{\on{mid}}) .$$
For $\gluing$ sufficiently large, the sum $ (1 - \beta_4)
\coker(D_{\rE,\rF}) + \beta_4 \ul{F_z}$ is direct, since the
intersection is trivial.  Let
$$U_\gluing := ( (1 - \beta_4) \coker(D_{\rE,\rF}) + \beta_4 \ul{F_z}
)^\perp \subset \Omega^{0,1}(\dE_{},\dF_{})$$
denote the $W^{1,2}_\delta$-perpendicular.  Let
$$\pi_\gluing: \Omega^{0,1}(\dE_{},\dF_{}) \to U_\gluing$$
denote the projection.  

\vskip .05in

\noindent {\em Claim: The operator
$\pi_\gluing \circ
D_{\dE_{},\dF_{}}: V_\gluing \to U_\gluing$
is an isomorphism with uniformly bounded right inverse, for $\gluing$
sufficiently large.}

\vskip .05in

Suppose otherwise.  Then there is a sequence
$$\gluing(\alpha) \to \infty, \quad \xi(\alpha) \in
V_{\gluing(\alpha)},  \quad \zeta(\alpha) \in U_{\gluing(\alpha)} 
$$ 
with
\begin{equation} 
\label{list} 
\Vert \xi(\alpha) \Vert_{V_{\gluing(\alpha)}} 
= \Vert \zeta(\alpha) \Vert_{U_{\gluing(\alpha)}}
=1, \quad (
D_{\dE_{\gluing(\alpha)},\dF_{\gluing(\alpha)}} \xi(\alpha),
\zeta(\alpha) ) \to 0.
\end{equation}
The pairing of $ D_{\dE_{\gluing(\alpha)},\dF_{\gluing(\alpha)}}
\xi(\alpha) $ with any sequence of elements 
$$ ( 1- \beta_4)
\zeta(\alpha) + \beta_4 \zeta(\alpha)' \in  (1 - \beta_4)
\coker(D_{\rE,\rF}) + \beta_4 \ul{F_z} $$ 
of norm one approaches zero since the cut-off functions are slowly
varying.  This convergence implies $\Vert
D_{\dE_{\gluing(\alpha)},\dF_{\gluing(\alpha)}} \xi(\alpha) \Vert \to
0$ which contradicts \eqref{claim}.  The claim follows.

The discussion above shows that we have splittings
\begin{equation} \label{split1}  \Omega^0(\dE_{},\dF_{}) \cong 
V_\gluing \oplus \ker(D_{\rE,\rF}) , \quad 
 \Omega^{0,1}(\dE_{}) \cong U_\gluing \oplus F_{z} \oplus
 \coker(D_{{\rE},\rF}) .\end{equation}
By \eqref{exact} and \eqref{split1} for $\gluing$ sufficiently large
there is an exact sequence
\begin{equation} \label{infdim} 
 0 \to \ker(D_{\dE_{},\dF_{}}) \to V_\gluing \oplus
 \ker(D_{{\rE},\rF}) \to U_\gluing \oplus F_{z} \oplus
 \coker(D_{{\rE},\rF}) \to \coker(D_{\dE_{},\dF_{}}) \to 0
 .\end{equation}
We obtain from this sequence of Banach spaces an exact sequence of
finite-dimensional spaces as follows.  By the Riemann-Roch theorem for
surfaces with boundary \eqref{rr}, the index of the deformed
Cauchy-Riemann operator is
\begin{equation} \label{drop}
 \Ind(D_{\dE_{},\dF_{}}) = \Ind(D_{{\rE},\rF}) -
 \dim(F_z) \end{equation}
since the Euler characteristic of the glued surface is one less than
the resolved surface.  The identity \eqref{drop} implies that the
restriction of $\pi_\gluing \circ D_{\dE_{},\dF_{}}$ to $V_\gluing$ is
an isomorphism onto $U_\gluing$.  Let $\ti{D}_{ij}, i,j = 1,2$ denote
the components of $D_{\dE_{},\dF_{}}$ with respect to the splittings
\eqref{split1}.  The kernel of $D_{\dE_{},\dF_{}}$ consists of pairs
$(\xi_1,\xi_2)$ such that
$$ \xi_1 = - \ti{D}_{11}^{-1} \ti{D}_{12} \xi_2, \ \ \ (- \ti{D}_{21}
\ti{D}_{11}^{-1} \ti{D}_{12} + \ti{D}_{22}) \xi_2 = 0 .$$
Define
$$ D_{\dE_{},\dF_{}}^{\on{red}} := - \ti{D}_{21} \ti{D}_{11}^{-1}
\ti{D}_{12} + \ti{D}_{22} .$$ 
We have an identification
$$ \ker(D_{\dE_{},\dF_{}}) \to \ker(D_{\dE_{},\dF_{}}^{\on{red}}),
\ \ \ \xi_2 \mapsto ( - \ti{D}_{11}^{-1} \ti{D}_{12} \xi_2,\xi_2) .$$
The image of $D_{\dE_{},\dF_{}}$ consists of pairs $(\eta_1,\eta_2)$
such that $ \eta_2 - D_{21} D_{11}^{-1} \eta_1$ lies in the image of
$D_{\dE_{},\dF_{}}^{\on{red}}$.  The inclusion of $F_{z} \oplus
\coker(D_{{\rE},\rF})$ into $ U_\gluing \oplus F_{z} \oplus
\coker(D_{{\rE},\rF})$ induces an identification of cokernels of
$D_{\dE_{},\dF_{}}$ and $D_{\dE_{},\dF_{}}^{\on{red}}$.  Applying this
identification to \eqref{infdim} gives the desired exact sequence.

To compute the limit of the middle operator in the limit of large
gluing parameter, note that the component of
$D_{\dE_{},\dF_{}}^{\on{red}}$ in $F_z$ is given asymptotically by
projecting $ D_{\dE_{},\dF_{}} ((1 - \beta_2)\xi)$ onto $\beta_4
\ul{F}_z$.  We have
$$ D_{\dE_{},\dF_{}} ((1 - \beta_2)\xi) \to - (\partial_s \beta_2)\xi
(\d s + i \d t) .$$
Pairing with $f \in \ul{F}_z$ gives the difference of evaluation maps
$\xi(z_+) - \xi(z_-)$ paired with $f$.  It follows that the limit is
$$ \lim_{\gluing \to \infty} D_{\dE_{},\dF_{}}^{\red} \xi = (\xi(z_+)
- \xi(z_-),0) = D_{E,F}^{\red} \xi.$$
\end{proof}  

\begin{corollary}  \label{deformcor} \label{detiso2}
{\rm (Isomorphism of determinant lines induced by deformations of
  nodes)} Let $D_{\dE, \dF}$ be the operator obtained from a
Cauchy-Riemann operator $D_{\nE,\nF}$ by deforming a node.  There is a
canonical up to deformation {\em gluing isomorphism}
$\det(D_{\nE,\nF}) \to \det(D_{\dE,\dF}).$
\end{corollary}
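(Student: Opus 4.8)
The plan is to obtain the gluing isomorphism as the ``determinant'' of the long exact sequence produced by Theorem \ref{fourtermthm}, using the standard fact (Knudsen--Mumford \cite{km:det}) that an exact sequence $0 \to A_1 \to \dots \to A_n \to 0$ of finite-dimensional real vector spaces canonically trivialises the alternating tensor product $\bigotimes_k \Lambda^{\max}(A_k)^{(-1)^k}$; one then reconciles the result with the identification of $\det(D_{\nE,\nF})$ with $\Lambda^{\max}(F_z)^\dual \otimes \det(D_{\rE,\rF})$ furnished by \eqref{unred}, \eqref{unredred}, \eqref{nodal}.

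In detail, reduce first to the situation treated by Theorem \ref{fourtermthm}: a single boundary node $w=(z^-,z^+)$ with $\dS$ smooth (the interior case, and the case of a still-nodal $\dS$, are handled by the evident modifications, replacing the fibre $F_z$ by the corresponding fibre $E_z$ and resolving the remaining nodes as in the remark preceding Example \ref{diffs2}). For $\gluing>\gluing_0$, apply the algebraic fact to the four-term sequence \eqref{fourterm}; grouping terms and using \eqref{caniso} to split $\det(F_z \oplus \coker(D_{\rE,\rF}))$, this yields a canonical isomorphism
\[
 \det(D_{\dE,\dF}) \otimes \Lambda^{\max}(F_z) \;\overset{\sim}{\to}\; \det(D_{\rE,\rF}),
\]
hence $\det(D_{\dE,\dF}) \overset{\sim}{\to} \det(D_{\rE,\rF}) \otimes \Lambda^{\max}(F_z)^\dual$. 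On the other hand \eqref{unred}, \eqref{unredred}, \eqref{nodal} give $\det(D_{\nE,\nF}) \overset{\sim}{\to} \Lambda^{\max}(F_z)^\dual \otimes \det(D_{\rE,\rF})$. Composing these (and reordering the two tensor factors by the conventions of Remark \ref{duals} and \eqref{caniso}) produces the asserted gluing isomorphism $\det(D_{\nE,\nF}) \to \det(D_{\dE,\dF})$.

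For the phrase ``canonical up to deformation'': every ingredient in the construction of \eqref{fourterm} in the proof of Theorem \ref{fourtermthm} --- the weight $\zeta_\gluing$, the cutoffs $\beta_n$, the complements $V_\gluing,U_\gluing$ and their projections --- depends continuously on $\gluing\in(\gluing_0,\infty)$ and on auxiliary cutoff choices ranging over a contractible set. These therefore assemble into a continuous family of exact sequences over a connected base, and the induced isomorphism of determinant lines varies continuously, so any two admissible choices give homotopic isomorphisms. Equivalently, the family $(D_{\dE,\dF})$ over the connected parameter space carries a determinant line bundle which the above construction trivialises, so the resulting map $\det(D_{\nE,\nF}) \to \det(D_{\dE,\dF})$ is an isomorphism of determinant lines induced by a deformation of operators.

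The one genuine difficulty I anticipate is convention-bookkeeping: one must check that the sign produced by the algebraic trivialisation of \eqref{fourterm}, combined with the Koszul signs of \eqref{caniso}, the dual-ordering convention \eqref{backwards}, and the boundary-node ordering convention of the remark preceding Example \ref{diffs2}, is the intended one --- in particular that it makes the gluing isomorphism agree, in the limit $\gluing\to\infty$, with the one built directly from $D_{\nE,\nF}^\red$ via \eqref{nodal}. This should follow because the determinant-line isomorphism attached to an exact sequence depends continuously on the maps appearing in it, and by the final assertion of Theorem \ref{fourtermthm} the middle map $D_{\dE,\dF}^\red$ of \eqref{fourterm} converges to $D_{\nE,\nF}^\red$ as $\gluing\to\infty$; hence the two constructions are deformation-equivalent, which is exactly what is claimed.
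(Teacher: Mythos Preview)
Your proposal is correct and is essentially the paper's proof, phrased in slightly more abstract language. The paper proceeds by noting that the four-term sequence \eqref{fourterm} identifies $\ker(D_{\dE,\dF})\cong\ker(D_{\dE,\dF}^{\red})$ and $\coker(D_{\dE,\dF})\cong\coker(D_{\dE,\dF}^{\red})$, hence $\det(D_{\dE,\dF})\cong\det(D_{\dE,\dF}^{\red})$; then uses the convergence $D_{\dE,\dF}^{\red}\to D_{\nE,\nF}^{\red}$ from Theorem~\ref{fourtermthm} to deform to $\det(D_{\nE,\nF}^{\red})$, and finally invokes \eqref{unred}, \eqref{unredred} to reach $\det(D_{\nE,\nF})$. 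Your use of the Knudsen--Mumford trivialisation of an exact sequence is exactly the same manoeuvre repackaged, and your handling of the ``canonical up to deformation'' clause (continuity in $\gluing$ and in the auxiliary cutoff choices) matches the paper's use of the homotopy of reduced operators.
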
 

\begin{proof} 
The existence of the exact sequence is equivalent to the existence of
isomorphisms
\begin{equation} \label{isos}
 \ker(D_{\dE,\dF}^\red) \to \ker(D_{\dE,\dF}), \ \ \
\coker(D_{\dE,\dF}^\red) \to \coker(D_{\dE,\dF})
.\end{equation}
These induce an isomorphism of determinant lines
\begin{equation} \label{Dred}  
\det(D_{\dE,\dF}) \to \det(D_{\dE,\dF}^\red).
\end{equation}
The homotopy of Theorem \ref{fourtermthm} induces an isomorphism of
determinant lines $\det(D^\red_{\nE,\nF}) \to \det(D_{\dE,\dF}^\red)
.$ Combining this with \eqref{Dred}, \eqref{unredred}, and
\eqref{unred} proves the corollary.
\end{proof}

%\vskip .1in {\noindent \em Iterated gluing}.  
\label{iterated} 
Next we show that the gluing maps of Proposition \ref{glueprop} and
Corollary \ref{deformcor} satisfy an {\em associativity} property:

\begin{proposition} \label{assoc} {\rm (Associativity of gluing)}   Let 
$\nS$ be a nodal surface with strip-like ends and $\dS$ the surface
  obtained by one of the following:
\begin{enumerate} 
\item deforming two nodes $\ul{w}_0,\ul{w}_1$, or
\item deforming one node $\ul{w}$ and gluing two strip-like ends
  $e_-,e_+$, or
\item gluing two pairs of strip-like ends $e_{0,\pm}, e_{1,\pm}$.
\end{enumerate} 
Suppose that $D_{\dE,\dF}$ is obtained from $D_{\nE,\nF}$ by deforming
the nodes.  Then the resulting gluing isomorphisms $\det(D_{\nE,\nF})
\to \det(D_{\dE,\dF})$ are independent of the order of
deformation/gluing.
\end{proposition}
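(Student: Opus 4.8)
The plan is to reduce the associativity statement to a statement about a single model gluing region, where everything is explicit, and then use the fact that the gluing isomorphisms of Proposition~\ref{glueprop} and Corollary~\ref{deformcor} are all constructed by the same recipe: cutoff-function gluing of approximate solutions followed by orthogonal projection, with the finite-dimensional bookkeeping handled by the reduced operators of \eqref{reduced}. First I would observe that in each of the three cases the two gluing/deformation operations act on disjoint regions of $\nS$: the two nodes $\ul w_0, \ul w_1$ have disjoint neighborhoods, a node and a pair of glued ends have disjoint neighborhoods, and two distinct pairs of strip-like ends have disjoint neighborhoods. Consequently the cutoff functions used to define the first gluing can be chosen supported away from the second gluing region, and vice versa. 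This ``locality of the gluing construction'' is the conceptual heart of the argument.

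Concretely, I would carry it out as follows. For case (a), let $z_0^\pm, z_1^\pm$ be the two nodes. The reduced operator for deforming both nodes is, by the construction preceding \eqref{nodal} (or rather its two-node analogue), the map
$$ D^\red_{\nE,\nF}\colon \ker(D_{\rE,\rF}) \to F_{z_0} \oplus F_{z_1} \oplus \coker(D_{\rE,\rF}), \quad \xi \mapsto (\delta_0\xi, \delta_1 \xi, 0), $$
where $\delta_i$ is the difference-of-evaluations map at the $i$-th node. Deforming $\ul w_0$ first and then $\ul w_1$ corresponds to first splitting off the $F_{z_0}$ factor and then the $F_{z_1}$ factor; deforming in the other order splits them off in the reverse order. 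The induced isomorphisms of determinant lines therefore differ only by the canonical isomorphism $\det(0_{F_{z_0} \oplus F_{z_1}}) \to \det(0_{F_{z_1} \oplus F_{z_0}})$ of \eqref{caniso}, which is exactly the reordering sign already built into the convention at the end of the ``Unreduced and reduced'' remark. Since the \emph{same} reordering convention is applied to $D_{\dE,\dF}$ regardless of the order in which the nodes are resolved, the two composite isomorphisms $\det(D_{\nE,\nF}) \to \det(D_{\dE,\dF})$ agree. The key technical input is that for $\gluing$ large enough the four-term sequences of Theorem~\ref{fourtermthm} for the two nodes can be run simultaneously: the estimate \eqref{claim} holds with the complement $V_\gluing$ taken orthogonal to $\ker(D_{\rE,\rF})$, and the two families of cutoffs $\beta_n$ localized near the two necks do not interact, so the proof of Theorem~\ref{fourtermthm} goes through verbatim with $F_z$ replaced by $F_{z_0} \oplus F_{z_1}$.

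Cases (b) and (c) are handled the same way, using Proposition~\ref{glueprop} in place of (or in addition to) Corollary~\ref{deformcor}. For case (c) there is no $F_z$ factor at all — gluing strip-like ends does not change the index and identifies kernels and cokernels directly — so the two composite isomorphisms $\det(D_{\nE,\nF}) \to \det(D_{\dE,\dF})$ are literally equal once one checks that the glued-up approximate kernel elements and the orthogonal projections can be constructed with cutoffs supported in the respective neck strips $[-\tau_0,\tau_0]\times[0,1]$ and $[-\tau_1,\tau_1]\times[0,1]$; independence of the order is then immediate because the two gluing operations commute as maps on sections. For case (b) one combines the (sign-free) end-gluing isomorphism with the node-deformation isomorphism; again the relevant cutoffs are supported in disjoint regions, so the composite does not depend on whether one glues the ends first or deforms the node first. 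In all three cases I would phrase the final step as: the gluing isomorphism depends only on the germ of $D_{\dE,\dF}$ near the gluing region together with the homotopy class of cutoff data, and two disjoint gluing regions give commuting data.

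The main obstacle I anticipate is not conceptual but bookkeeping: making precise that ``the gluing isomorphism is local and canonical up to deformation'' in a way that licenses interchanging the two operations. One has to be careful that the Sobolev weight $\delta \in (-1,0)$ and the gluing parameters $\gluing_0, \gluing_1$ can be chosen uniformly so that both right-invertibility claims (the two ``Claims'' in the proof of Theorem~\ref{fourtermthm}) hold at once, and that the orthogonal complements $V_\gluing, U_\gluing$ for the two-node problem restrict correctly to the one-node problems as one gluing parameter is sent to infinity first. Once this ``simultaneous gluing'' version of Theorem~\ref{fourtermthm} is in hand — which is a routine but slightly tedious adaptation, since the two necks are at bounded distance from each other and all estimates are local — the associativity statement follows by tracking the reordering signs in \eqref{caniso} and observing they are symmetric in the two nodes. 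I would therefore structure the write-up so that the only real work is establishing the simultaneous four-term (or six-term) exact sequence, after which associativity is a formal consequence of the graded-commutativity of \eqref{caniso} already recorded in \eqref{signs}.
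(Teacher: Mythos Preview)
Your proposal is correct and follows essentially the same approach as the paper: establish a simultaneous two-node version of the four-term exact sequence of Theorem~\ref{fourtermthm} using that the cutoff data near the two necks are disjointly supported, then show that each of the two ordered composites agrees with the isomorphism induced by the simultaneous deformation. The paper packages this as a commutative-diagram argument (comparing the one-node reduced operator $D_{E^\delta,F^\delta}^{\red}$ with the two-node reduced operator $D_{E,F}^{\red}$, commuting up to a small error irrelevant for orientations), but the substance---locality of the gluing construction plus the simultaneous exact sequence \eqref{fourterm2}---is exactly what you describe.
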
 

\begin{proof}  We consider only the case of two boundary nodes $z,z'$; the cases of
interior nodes, strip-like ends, and mixed cases are similar but
easier.  We claim that if $\delta$ denotes the deformation of $z$ and
$\delta'$ the deformation of $z'$ then the diagram
\begin{equation} \label{square}
 \begin{diagram} 
\node{\det(D_{E,F})} \arrow{s} \arrow{e} \node{\det(D_{E,F^{\delta}})} 
\arrow{s} \\
\node{\det(D_{E,F^{\delta'}})} \arrow{e} \node{\det(D_{E,F^{\delta,\delta'}})} 
\end{diagram} \end{equation}
commutes.  The proof is a minor modification of e.g.  \cite[Lemma
  3.5]{ek:or}.  Simultaneous deformation of the two nodes leads to an
exact sequence
\begin{equation} \label{fourterm2} 
 0 \to \ker(D_{E,F^{\delta,\delta'}}) \to \ker(D_{E,F^{\rho,\rho'}})
\to F_{z} \oplus F_{z'} \oplus
\coker(D_{E,F^{\rho,\rho'}}) \to \coker(D_{E,F^{\delta,\delta'}}) \to
0.
\end{equation} 
Together with the identification of $D_{E,F}$ with the reduced operator
in \eqref{reduced}, this induces an isomorphism
\begin{equation} \label{composedet}
 \det(D_{E,F}) \to \det(D_{E,F^{\delta,\delta'}})
.\end{equation}
We claim that this isomorphism is equal to the isomorphism given by
going either way around the square \eqref{square}.  To prove the claim
consider the diagram
\dgARROWLENGTH=.1cm
$$ \begin{diagram} 
 \node{\ker(D_{E,F^{\delta,\rho'}})}
\arrow{e,t}{\on{Id}}
\node{\ker(D_{E,F^{\delta,\rho'}})}
\arrow{e,t}{D_{E^\delta,F^\delta}^{{\red}}}
 \arrow{s} 
\node{F_{z} \oplus
\coker(D_{E,F^{\delta,\rho'}})} 
\arrow[1]{e,t}{\on{Id}}
\node{F_{z} \oplus
\coker(D_{E,F^{\delta,\rho'}})} 
\arrow{s}  \\
 \node{\ker(D_{E,F^{\delta,\delta'}})}
\arrow{e} \arrow{n} 
\node{\ker(D_{E,F^{\rho,\rho'}})}
\arrow{e,t}{D_{E,F}^{\red}} 
\node{F_{z} \oplus F_{z'} \oplus
\coker(D_{E,F^{\rho,\rho'}})} 
\arrow[1]{e} \arrow{n}
\node{\coker(D_{E,F^{\delta,\delta'}})} 
\end{diagram} .$$
\dgARROWLENGTH=1cm For fixed gluing parameters $\gluing,\gluing'$ the
diagram commutes up to a small error term which is irrelevant for the
purposes of orientations.
 By approximate commutativity
of the diagram the composition of the top and right maps in
\eqref{square} is equal up to homotopy to \eqref{composedet}.  A
similar argument shows the same for the composition of the two maps on
the other side of \eqref{square}.  This completes the proof.
\end{proof}

The existence of the gluing isomorphisms of determinant lines can be
phrased in the following more conceptual way, following the discussion
in \cite{Ainfty}: 

\begin{definition} {\rm (Decomposed spaces)}  
 Let $\GG$ be a partially ordered set with partial order $\leq$.  Let
 ${B}$ be a Hausdorff paracompact space.  A {\em $\GG$-decomposition}
 of ${B}$ is a locally finite collection of disjoint locally closed
 subspaces ${B}_\Gamma, \Gamma \in \GG$ each equipped with a smooth
 manifold structure of constant dimension $\dim({B}_\Gamma)$, such
 that
$$ {B} = \bigcup_{\Gamma \in \GG} {B}_\Gamma $$
and 
$$ {B}_\Gamma \cap \ol{{B}_{\Gamma'}} \neq \emptyset \iff {B}_{\Gamma}
\subset \ol{{B}_{\Gamma'}} \iff \Gamma \leq \Gamma' .$$
The {\em dimension} of a $\GG$-decomposed space ${B}$ is
$$ \dim {B} = \sup_{\Gamma \in \GG} \dim( {B}_\Gamma) .$$
The {\em stratified boundary} $\partial_s {B}$ resp. {\em stratified
  interior} $\on{int}_s {B}$ of a $\GG$-decomposed space ${B}$ is the
union of pieces 
$$
 \partial_s B = \bigcup_{\dim({B}_\Gamma) < \dim({B})}
{B}_\Gamma, \quad
 \on{int}_s B = \bigcup_{\dim({B}_\Gamma) = \dim({B})}
{B}_\Gamma .$$
An {\em isomorphism} of $\GG$-decomposed spaces ${B}_0,{B}_1$ is a
homeomorphism ${B}_0 \to {B}_1$ that restricts to a diffeomorphism on
each piece $B_{0,\Gamma}$.
\end{definition}  

\begin{example} {\rm (Cone construction)}  
Let ${B}$ is a $\GG$-decomposed space.  The {\em cone} on ${B}$
$$ C{B} := \left( {B} \times [0,\infty) \right) / \left( (r,0) \sim (r',0), r,r' \in {B} 
\right)  $$
has a natural $\GG$-decomposition with 
$$ (C{B})_\Gamma = C({B}_\Gamma), \quad \dim(C{B}) = \dim({B}) + 1 .$$
More generally, if ${B}$ is a $\GG$-decomposed space equipped with a
locally trivial map $\pi$ to a manifold $A$, the {\em cone bundle} on
${B}$ is the union of cones on the fibers, that is,
$$ C_A {B} := \left( {B} \times [0,\infty) \right) / \left( (r,0) \sim (r',0), \pi(r) = \pi(r')
  \in {B} \right),
 $$
is again a $\GG$-decomposed space with dimension $\dim(C_A {B}) =
\dim({B}) + 1$.  
\end{example} 

\begin{definition} \label{stratified}
\begin{enumerate}
\item {\rm (Stratified spaces)} A decomposition ${B} = \cup_{\Gamma
  \in \GG} {B}_\Gamma$ of a space ${B}$ is a {\em stratification} if
  the pieces ${B}_\Gamma$ fit together in a nice way: Given a point
  $r$ in a piece ${B}_\Gamma$ there exists an open neighborhood $U$ of
  $r$ in ${B}$, an open ball $V$ around $r$ in ${B}_\Gamma$, a
  stratified space $L$ (the {\em link} of the stratum) and an
  isomorphism of decomposed spaces $\phi: V \times CL \to U $ that
  preserves the decompositions.  That is, $\phi$ restricts to a
  diffeomorphism $\phi_{\Gamma'}$ from each piece $(V \times
  CL)_{\Gamma'}$ of $V \times CL$ to a piece $U \cap {B}_{\Gamma'}$.
  A {\em stratified space} is a space equipped with a stratification.
\item {(\rm Families of quilted surface)} Let ${B} = \cup_{\Gamma \in
  \GG} {B}_\Gamma $ be a stratified space.  A {\em family of quilted
  surfaces with strip like ends} over ${B}$ is a stratified space $\SS
  = \cup_{\Gamma \in \GG} \SS_\Gamma $ equipped with a
  stratification-preserving map to ${B}$ such that each $\SS_\Gamma
  \to {B}_\Gamma$ is a smooth family of quilted surfaces with fixed
  type, and furthermore local neighborhoods of $\SS_\Gamma$ in $\SS$
  are given by the gluing construction of Definition \ref{gluing}:
  there exists 
\begin{enumerate} 
\item a neighborhood $U_\Gamma$ of $\SS_\Gamma$, 
\item a projection $\pi_\Gamma: U_\Gamma \to {B}_\Gamma$, and
\item a map $\delta_\Gamma: U_\Gamma \to (\R_{\ge 0})^m \times \C^n$
\end{enumerate} 
  such that if $r \in {B}_\Gamma$ then $\ul{S}_r$ is obtained from
  gluing $\ul{S}_{\pi_\Gamma(r)} $ with gluing parameters
  $\delta_\Gamma(r)$.
\item {\rm (Families of bundles)} A {\em family of complex bundles
  with totally real boundary and seam conditions} is a collection $
  (\ul{E},\ul{F}) = (\ul{E}_b,\ul{F}_b )_{b \in B}$ of complex bundles
  with totally real boundary and seam conditions, such that for each
  $b \in B$, the nearby bundles are given by the gluing construction
  of Definition \ref{gluing}.
\end{enumerate} 
\end{definition} 

In other words, for a family of quilted surfaces with strip-like ends,
degeneration as one moves to a boundary stratum is given by
neck-stretching.  

\begin{proposition}  \label{doublecover}
{\rm (Orientation double cover of a family with nodal degeneration)}
Let $\ul{S}_b, \ul{E}_b, \ul{F}_b, b \in B$ be a family of complex
vector bundles with totally real boundary conditions on quilted
surfaces with strip-like ends over a stratified space $B$.  Then the
collection of determinant lines $\det(D_{\ul{E},\ul{F},b}), b \in B$
has the structure of a topological line bundle over $B$.
\end{proposition}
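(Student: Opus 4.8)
The plan is to assemble the line bundle from local trivializations, one near each stratum, using the family determinant line bundle over open strata as the building block and the gluing isomorphisms of Proposition~\ref{glueprop} and Corollary~\ref{deformcor} as the transition data.

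First I would treat a top-dimensional piece $B_\Gamma \subset \on{int}_s B$: there $\ul{S}_b,\ul{E}_b,\ul{F}_b$ is a smooth family of quilted surfaces of fixed type carrying a smooth family of quilted Cauchy--Riemann operators, so by the families construction recalled in Remark~\ref{families} the determinant lines $\det(D_{\ul{E},\ul{F},b})$, $b \in B_\Gamma$, already form a topological line bundle $\det(D_{\ul{E},\ul{F}})|_{B_\Gamma}$. To extend over a neighbourhood of a deeper stratum $\SS_\Gamma$, I would invoke the local structure from Definition~\ref{stratified}: the neighbourhood $U_\Gamma$, the projection $\pi_\Gamma\colon U_\Gamma \to B_\Gamma$, and the gluing-parameter map $\delta_\Gamma\colon U_\Gamma \to (\R_{\ge 0})^m \times \C^n$. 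For $r \in U_\Gamma$ the quilt $\ul{S}_r$ is obtained from $\ul{S}_{\pi_\Gamma(r)}$ by deforming the relevant nodes and gluing the relevant strip-like ends with parameters $\delta_\Gamma(r)$, so iterating the isomorphisms of Proposition~\ref{glueprop} and Corollary~\ref{deformcor} gives an isomorphism
$$ \Phi_r\colon\ \det\bigl(D_{\ul{E},\ul{F},\pi_\Gamma(r)}\bigr)\ \overset{\sim}{\longrightarrow}\ \det\bigl(D_{\ul{E},\ul{F},r}\bigr), $$
independent of the order of the operations by Proposition~\ref{assoc}. Since the gluing isomorphisms are built from cutoff functions and orthogonal projections depending continuously on the gluing parameters, and by Theorem~\ref{fourtermthm} extend continuously to the limiting parameter value corresponding to $\SS_\Gamma$ itself, the family $(\Phi_r)_{r \in U_\Gamma}$ is continuous; transporting $\pi_\Gamma^*\bigl(\det(D_{\ul{E},\ul{F}})|_{B_\Gamma}\bigr)$ along $\Phi$ then makes $\{\det(D_{\ul{E},\ul{F},r})\}_{r \in U_\Gamma}$ into a topological line bundle over $U_\Gamma$.

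Next I would check that these local bundles glue. Using paracompactness of $B$ and local finiteness of the stratification, the $U_\Gamma$ can be arranged into a locally finite cover. On an overlap $U_\Gamma \cap U_{\Gamma'}$ a point $r$ lies in some stratum $B_{\Gamma''}$ with $\Gamma'' \ge \Gamma, \Gamma'$, and $\ul{S}_r$ is obtained from both $\ul{S}_{\pi_\Gamma(r)}$ and $\ul{S}_{\pi_{\Gamma'}(r)}$ by gluing subsets of the same collection of nodes and ends (the family over $B_\Gamma$ near $\SS_{\Gamma'} \cap U_\Gamma$ being itself glued from the family over $B_{\Gamma'}$); associativity of gluing then identifies the two trivializations of $\det(D_{\ul{E},\ul{F},r})$ up to a transition function continuous in $r$ and valued in $\R^\times$. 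Hence the trivializations satisfy the cocycle condition and patch to a global topological line bundle $\det(D_{\ul{E},\ul{F}}) \to B$. I expect the main obstacle to be exactly this compatibility step: the iterated gluing isomorphisms are a priori only canonical up to deformation, so one must argue --- using Proposition~\ref{assoc} and the limiting statement of Theorem~\ref{fourtermthm}, together with the contractibility of the relevant spaces of gluing parameters --- that they can be chosen consistently enough on overlaps for the transition functions to be genuinely continuous.
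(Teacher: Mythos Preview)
Your argument is essentially correct and follows the same underlying strategy as the paper: use the gluing isomorphisms of Corollary~\ref{deformcor} (and Proposition~\ref{glueprop}) as local trivializations, and invoke Proposition~\ref{assoc} for compatibility on overlaps. The paper's proof is a two-line version of exactly this.

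There is one genuine difference worth noting. You construct the line bundle directly and then, quite rightly, flag as the main obstacle that the gluing isomorphisms are only canonical \emph{up to deformation}, so one must work to make the transition functions honestly continuous with values in $\R^\times$. The paper sidesteps this entirely by first assembling the \emph{orientation double cover}
\[
\Or(D_{\ul{E},\ul{F}}) \;=\; \bigcup_{b\in B}\det(D_{\ul{E},\ul{F},b})^\times/\R_{>0},
\]
whose fibers are two-point sets. For this object, ``canonical up to deformation'' is already enough: a deformation of an isomorphism of one-dimensional real vector spaces cannot change the induced map on orientations, so the local trivializations of the double cover are automatically well-defined and locally constant, hence continuous. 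The determinant line bundle is then recovered as the line bundle associated to this $\Z_2$-cover. This trick dissolves precisely the difficulty you anticipated, and is the main thing your write-up would gain by adopting.
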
 

\begin{proof}   Proposition \ref{assoc} shows that the isomorphisms of Corollary \ref{deformcor}
define local trivializations of 
\begin{equation} 
\label{doublecov}
\Or(D_{\ul{E},\ul{F}}) := \bigcup_{b \in B}
\Or(D_{\ul{E}_b,\ul{F}_b}), \quad \Or(D_{\ul{E}_b,\ul{F}_b}) =
\det(D_{\ul{E},\ul{F},b})^\times/\R_{> 0} .\end{equation}
Since each fiber has two components, the bundle \eqref{doublecov} is a
double cover of $B$.  The determinant line is the associated line
bundle to the double cover and so inherits a topological structure.
\end{proof} 

\section{Relative non-abelian cohomology}
\label{Gstructures}

The construction of orientations for pseudoholomorphic maps with
Lagrangian boundary conditions depends on the existence of a structure
on the Lagrangians called a {\em relative spin structure} as
introduced by Fukaya-Oh-Ohta-Ono \cite{fooo}.  In this section, we
give a description of these groups in somewhat greater generality.  In
the latter case the discussion is equivalent to the one introduced in
\cite{fooo}, but avoids triangulations.  A more general notion of
relative pin structures that does not require orientability of the
Lagrangians is developed in Solomon \cite{so:lb}.

\subsection{Principal bundles and non-abelian cohomology} 

\begin{definition} 
\begin{enumerate} 
\item {\rm (First non-abelian cohomology)} Let $G$ be a Lie group and
  $M$ a smooth manifold. Let $\mathcal{U} = \{ U_i, i \in I \}$ be an
  open cover of $M$.  For integers $j \ge 0$ let
$$C^j(\cU,G) = ( g_{i_0,\ldots,i_j}: U_{i_0} \cap \ldots \cap U_{i_j}
\to G )_{i_0,\ldots,i_j} $$
be the space of cochains of degree $j$ and $\partial$ the coboundary
operator defined by
$$ \partial: \ C^j(\cU,G) 
\to C^{j+1}(\cU,G),\ (\partial
  g)_{i_0,\ldots,i_{j+1}} = \prod_{k=0}^{j+1}
  g_{i_0,\ldots,\widehat{i_k},\ldots,i_{j+1}}^{(-1)^k}.$$
The groups $ C^j(\cU,G) $ form a complex in the following sense.
Consider the space of {\em one-cycles}
$$ Z^1(\cU,G)
 := \ker(\partial|_{C^1}: C^1(\cU,G) \to C^{2}(\cU,G))
.$$
Then $C^0(\cU,G)$ acts on the left on $Z^1(\cU,G)$ by the formula
$$ (h,g)\mapsto hg, \quad (hg)_{i_0,i_1} := h_{i_0} g_{i_0,i_1}
h_{i_1}^{-1} .$$
The zeroth and first non-abelian cohomology groups are 
$$ H^0(\cU,G) := Z^0(\cU,G), \quad H^1(\cU,G) := C^0(\cU,G) \backslash
Z^1(\cU,G). $$
Any refinement $\cV \to \cU$ induces maps $H^1(\cU,G) \to H^1(\cV,G)$
for $ j =0 ,1$.  Denote by
$$ H^k(M,G) = \varprojlim_{\cU} H^k(\cU,G), \quad  k = 0,1 $$
the limit over refinements.  For $G$ abelian, all cohomology groups
$H^j(M,G), j = 0,1,2, \ldots$ are well-defined in a similar way.  
\item {\rm (Long exact sequence)} If $A \subset G$ is an abelian
  subgroup then there is a long exact sequence of pointed sets
\begin{equation} \label{long}
 \ldots H^0(M,G/A) \to H^1(M,A) \to H^1(M,G) \to 
H^1(M,G/A) \to
 H^2(M,A).
\end{equation}
That is, $H^1(M,A)$ acts transitively on the kernel of $H^1(M,G) \to
H^1(M,G/A)$, and the set-theoretic kernel of the connecting
homomorphism $H^1(M,G/A) \to H^2(M,A)$ is equal to the image of
$H^1(M,G)$.
\item {\rm (Characteristic class)} The image of a class in
  $H^1(M,G/A)$ under the connecting homomorphism $c: H^1(M,G/A) \to
  H^2(M,A)$ in \eqref{long} is called the {\em characteristic class}.
\item {\rm (Chern class)} As an example of the previous item, consider
  the exact sequence
$$1 \to \Z \to \R \to S^1 \to 1 $$
where $1$ denotes the trivial group.  In this case, there exists an
isomorphism with the Picard group
$$H^1(M,S^1) \to \Pic(M)$$
of isomorphism classes of line bundles.  The characteristic class map
$H^1(M,S^1) \to H^2(M,\Z)$ is equivalent to the first Chern class
$$c_1: \Pic(M) \to H^2(M,\Z) .$$
\item {\rm (Relative cohomology for a group homomorphism)} Let $\cU$
  be an open cover of $M$ as above, $A \subset G$ an abelian subgroup and
  $\phi: G \to G/A$ the projection, the last map in the exact sequence
$$ 1 \to A \to G \to G/A \to 1 .$$
Let $z \in Z^1(\cU,G/A)$ be a cocycle.  A {\em $G$-structure} on $z$
is a cocycle $a \in C^1(\cU,G)$ with $\phi_*(a) = z$; an isomorphism
from $a$ to $a'$ is an element $b \in C^0(\cU,A)$.  Let $H^1(\cU,G,z)$
denote the set of isomorphism classes of \v{C}ech $G$-structures and
$$H^1(M,G,z) = \lim_{\to} H^1(\cU,G,Z) $$
the direct limit over open covers.  The obstruction to
admitting a $G$-structure is the characteristic class in \eqref{long}.
\item {\rm (Relative cohomology for a map)} Let $G$ be an abelian Lie
  group and $f: M \to N$ a smooth map of manifolds. Suppose the open
  cover $\U$ on $M$ is a refinement of the pull-back $f^* \V$ of the
  open cover $\V$ on $N$.  Let $\psi: \U \to f^* \V$ be a morphism of
  open covers; that is, for each $U \in \U$ an element $\psi(U) \in
  \V$ such that $f(U) \subset \psi(U)$.  Pull-back gives a morphism of
  cochain groups
$$ \psi^* : C^j(\V,G) \to C^j(\U,G) .$$
For non-negative integers $j$ define
$$ C^j(\psi,G) := C^j(\U,G) \times C^{j+1}(\V,G), \ \ \partial(a,b) =
( (\partial a) \cdot ( \psi^*b)^{(-1)^j}, \partial b). $$
The space $C^{j-1}(\psi,G)$ acts on the space of cocycles
$Z^j(\psi,G)$.  Let
$$H^1(\psi,G) := Z^1(\psi,G) \backslash C^0(\psi,G) .$$
Let
$$H^1(f,G) = \lim_{\to} H^1(\psi,G) $$
denote the limit over morphisms of open covers $\psi$; standard
arguments (see e.g. \cite[Theorem 2.4.1]{shah:rg}) show that $H^1(f,G)
= H^1(\psi,G)$ where $\psi$ is any morphism of good covers.
\item {\rm (Relative cohomology for a map of manifolds and a
  homomorphism of groups)} Let $f: M \to N$ be a smooth map of smooth
  manifolds, and $\cU, \cV$ open covers. A {\em morphism of open
    covers} $\psi: \cU \to \cV$ is an assignment of an element
  $\psi(U) \in \cV$ for every $U \in \cU$ such that $f(U) \subset
  \psi(U)$.  Let $A \subset G$ a closed central subgroup of a Lie
  group $G$ and $\phi: G \to G/A$ the projection.  Let
$$z \in C^1(\cU,G/A), \quad \partial z = 0 \in C^2(\cU,G/A) $$ 
be a cocycle.  A {\em \v{C}ech relative $G$-structure} on $z$ is a
cocycle
$$(a,b) \in Z^1(\psi,G), \quad \phi_*(a,b) = (z,0) .$$
An isomorphism from $(a,b)$ to $(a',b')$ is an element $h \in
C^0(f,A)$ with $h(a,b) = (a',b')$.  Denote by $H^1(\psi,G,z)$ the
space of isomorphism classes of relative $G$-structures and
$$H^1(f,G,z) = \lim_{\to} H^1(\psi,G,z) $$
the inverse limit over morphisms of open covers.
\end{enumerate} 
\end{definition} 

The above constructions in \v{C}ech cohomology can be connected to
principal bundles as follows.

\begin{definition}
\begin{enumerate} 
\item {\rm (Principal bundles)} A principal $G$-bundle over a smooth
  manifold $M$ consists of a smooth right $G$-manifold $P$ together
  with a projection $\pi: P \to M$ such that $G$ acts freely
  transitively on the fibers of $\pi$ and is locally trivial in the
  following sense: for any $m \in M$, there exists an open
  neighborhood $U$ of $m$ and a $G$-equivariant diffeomorphism 
$$\tau: \pi^{-1}(U) \to U \times G, \quad \pi_1 \circ \tau = \pi$$
where $\pi_1: U \times G \to U$ is projection on the first factor.  An
{\em isomorphism} of $G$-bundles $P_1,P_2$ is a $G$-equivariant
diffeomorphism from $P_1$ to $P_2$ that induces the identity on $M$.
Let 
$$\Prin(M,G) = \{ P \to M \}/ \sim $$ 
denote the set of isomorphism classes of $G$-bundles over $M$.  Then
$\Prin(M,G)$ is canonically in bijection with $H^1(M,G)$ via the map 
given by gluing: 
$$ [ \psi_{ij} \in C^1(\cU,G) ] \mapsto \sqcup_{U_i \in \cU} (U_i
\times G)/ (u_i,p)\sim (u_j, \psi_{ij}(u)p ) .$$
\item {\rm (Relative $G$-structures)} A {\em relative $G$-structure}
  on a $G/A$-bundle $Q \to M$ trivial on a open cover $\cU$ relative
  to a map $f: M \to N$ is given by a morphism of open covers $\psi:
  \cU \to \cV$ and a relative $G$-structure $(a_1,a_2) \in
  C^1(\psi,G)$ on a cocycle $z \in C^1(\cU,G/A)$ representing $Q$.
  The class
$$b(Q) = [a_2] \in H^2(N,A)$$
is the {\em background class} of the relative $G$-structure on $Q$.
An {\em isomorphism} of relative $G$-structures $(a_1,a_2),
(a_1',a_2')$ is a zero cycle 
$$w \in C^0(\cU,A), \quad w (a_1,a_2) = (a_1',a_2') .$$
\item {\rm (Relative $\Spin$-structures)} Recall that for $r = 2$
  resp. $r > 2$ the special orthogonal group $SO(r)$ has fundamental
  group $\Z$ resp. $\Z_2$.  The spin group $\Spin(r)$ is the canonical
  double cover of $SO(r)$ and its universal cover for $r > 2$:
$$ 1 \to \Z \ \text{resp.} \  \Z_2 \to \Spin(r) \to SO(r) \to 1 .$$   
A {\em relative spin structure} on an $SO(r)$-bundle $Q \to M$
relative to a morphism of covers $\psi$ is a relative
$\Spin(r)$-structure on a cocycle representing $Q$.  For $f: M \to N$
a smooth map, let
$$H^1(f,\Spin(r),E) = \{ (a_1,a_2) \in C^1(f, \Spin(r)) \ | \ (a_1,a_2) \ \text{represents} \ E \}/ \sim $$
denote the set of isomorphism classes of relative spin structures on
$E$.
\end{enumerate}
\end{definition}  

\begin{remark} {\rm (Relative spin structures as relative trivializations of the second Stiefel-Whitney class)}  
  In concrete terms, a relative spin structure is a lift of the
  transition maps $\psi_{ij} : U_i \cap U_j \to SO(r)$ of the bundle
  $Q$ to a collection
$$\hat{\psi}_{ij}: U_i \cap U_j \to \Spin(r) $$ 
of transition maps with values in $\Spin(r)$ that satisfy the cocycle
condition up to the boundary of a $\Z_2$-valued cochain pulled back
from a two-chain $\delta_{ijk} \in C^2(N,\Z_2)$ on $N$:
$$ \hat{\psi}_{jk} \hat{\psi}_{ik}^{-1} \hat{\psi}_{ij}
 = f^* \delta_{ijk} .$$
Since 
$$ [ \hat{\psi}_{jk} \hat{\psi}_{ik}^{-1} \hat{\psi}_{ij} ] = w_2(E) \in H^2(M,\Z_2) $$
is the second Stiefel-Whitney class of $E$, this description shows
that there is a natural bijection from $H^1(f,\Spin(r),E)$ to the set
of isomorphism classes of trivializations of the image of the second
Stiefel-Whitney class $w_2(E)$ in $C^2(f,\Z_2)$.
\end{remark} 

\noindent There is a topology definition of relative $G$-structures in
terms of classifying spaces.

\begin{definition} 
\begin{enumerate} 
\item {\rm (Homotopy $G$-structures)} Let $EG \to BG$ be a universal
  $G$-bundle for $G$, and $[M,BG]$ the set of homotopy classes of
  continuous maps to $BG$, canonically in bijection with $\Prin(M,G)$.
  Let $A \subset G$ be an abelian subgroup.  Let $\phi$ be a map from
  $M$ to $B(G/A)$. A {\em homotopy $G$-structure} on $\phi$ is a lift
  to $BG$.
\item {\rm (Homotopy relative $G$-structures)} Let $A \subset G$ be an
  abelian subgroup and $f: M \to N$ a smooth map.  Since $A$ is
  abelian, we have a natural $A$-bundle defined by
$$EA \times_A EA \to BA \times BA .$$  
The corresponding classifying map 
$$ m: BA \times BA \to BA, \quad m^*EA \cong (EA \times_A EA) $$
gives $BA$ the structure of an $H$-space.  Let $B^2 A$ denote the
classifying space of $BA$.  Consider a $G/A$-bundle $Q \to M$ with a
classifying map $\phi: M \to B(G/A)$.  A {\em homotopy relative
  $G$-structure} on $\phi$ is a homotopy class of a pair
$$\beta: N \to B^2A, \quad \alpha: M \to f^* \beta^* E(BA))
\times_{BA} BG $$
where $\alpha$ is a section of the $BG$-bundle associated to the
$BA$-bundle pulled back from $\beta$, such that the associated section
of the trivial $B(G/A)$-bundle is the given classifying map $\phi$ for
$Q$.
\end{enumerate} 
\end{definition} 

The space of homotopy relative $G$-structures is in one-to-one
correspondence with \v{C}ech relative versions; this seems to be a
special case of \cite[Theorem 1]{baez:class}.  See also Shahbahzi
\cite{shah:rg} for a discussion of relative gerbes in the abelian
case.  The following proposition connects the definitions above with
that of Fukaya et al \cite{fooo}:

\begin{proposition} \label{realiz}
\begin{enumerate} 
\item Suppose that $Q \to M$ is a $G/A$-bundle and $R \to N$ a
  $G/A$-bundle with characteristic class $ c(Q) = f^* c(R) .$ There is
  a one-to-one correspondence between equivalence classes of relative
  $G$-structures on $Q$ and equivalence classes of $G \times_A
  G$-structures on $Q \oplus f^*R$.
\item Suppose that $Q \to M$ and $R \to N$ are Euclidean vector
  bundles over manifolds $M,N$ and $f: M \to N$ is a smooth map with
  $w_2(Q) = f^* w_2(R) $.  There is a one-to-one correspondence
  between isomorphism classes of relative spin structures on $Q$ and
  isomorphism classes of spin structures on $Q \oplus f^*R$.
\end{enumerate} 
\end{proposition}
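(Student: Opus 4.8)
The plan is to prove (1) and deduce (2) by specialization. Write $q\colon G\times_A G\to (G/A)\times(G/A)$ for the obvious surjection, where $G\times_A G:=(G\times G)/\{(a,a)\mid a\in A\}$ (a group since $A$ is central), and note $\ker(q)\cong A$ via $(a_1,a_2)\mapsto a_1a_2^{-1}$, so $1\to A\to G\times_A G\overset{q}{\to}(G/A)^2\to 1$ is a central extension. First I would identify the obstruction to lifting the structure group along $q$: for a $(G/A)^2$-bundle with cocycle $(z_1,z_2)$, picking lifts $a_i\in C^1(\cU,G)$ of $z_i$, the image of $(a_1,a_2)$ in $C^1(\cU,G\times_A G)$ is a cocycle iff $\partial a_1=\partial a_2$ in $C^2(\cU,A)$; since changing $a_i$ alters $\partial a_i$ by a coboundary while $[\partial a_i]$ represents the characteristic class $c(z_i)$, such lifts exist iff $c(z_1)=c(z_2)$. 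Applied to the pair $(z,\psi^*\bar r)$ representing $Q\oplus f^*R$, where $\bar r\in C^1(\cV,G/A)$ represents $R$ and $\psi\colon\cU\to\cV$ is a morphism of covers, this shows $Q\oplus f^*R$ carries a $G\times_A G$-structure iff $c(Q)=f^*c(R)$ --- the hypothesis --- so both sides of the asserted bijection are nonempty.

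Next I would exhibit the bijection on cochains. Fix a lift $\tilde r\in C^1(\cV,G)$ of $\bar r$, so $b_R:=\partial\tilde r\in Z^2(\cV,A)$ represents $c(R)$; restricting to relative $G$-structures $(a,b)$ whose background cocycle $b$ equals $b_R$ (after a common refinement --- this is the meaning of ``relative $G$-structure with background class $c(R)$''), we have $\partial a=\psi^*b_R$. Send $(a,b_R)$ to the image $\hat a\in C^1(\cU,G\times_A G)$ of $(a,\psi^*\tilde r)$: since $\partial(a,\psi^*\tilde r)=(\psi^*b_R,\psi^*b_R)$ lies in the diagonal, $\hat a$ is a cocycle, and $q_*\hat a=(z,\psi^*\bar r)$, so $\hat a$ is a $G\times_A G$-structure on $Q\oplus f^*R$. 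Conversely, lifting a $G\times_A G$-cocycle $\hat a$ to $(a,a')\in C^1(\cU,G\times G)$ gives lifts $a$ of $z$ and $a'$ of $\psi^*\bar r$ with $\partial a=\partial a'$; writing $a'=(\psi^*\tilde r)c'$ with $c'\in C^1(\cU,A)$ and replacing $a$ by $a(c')^{-1}$ (still a lift of $z$, as $c'$ is central) yields $\partial(a(c')^{-1})=\psi^*b_R$, hence a relative $G$-structure. These are mutually inverse, and a routine check --- using that isomorphisms of relative $G$-structures and of $G\times_A G$-structures are both implemented by $A$-valued $0$-cochains on $\cU$ --- shows the correspondence descends to equivalence classes and is compatible with refinement, hence passes to the direct limit.

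For (2) take $G=\Spin(r)$, $A=\Z_2$, $G/A=\SO(r)$ (assume $r\ge 3$), so $c=w_2$ and the hypothesis of (1) becomes $w_2(Q)=f^*w_2(R)$. It remains to identify $\Spin(r)\times_{\Z_2}\Spin(r)$-structures on the $\SO(r)\times\SO(r)$-bundle $Q\oplus f^*R$ with spin structures on the rank-$2r$ bundle $Q\oplus f^*R$. This follows from the standard fact that the pullback of the double cover $\Spin(2r)\to\SO(2r)$ along $\SO(r)\times\SO(r)\hookrightarrow\SO(2r)$ is $\Spin(r)\times_{\Z_2}\Spin(r)$: a lift of the $\SO(r)^2$-structure to $\Spin(r)\times_{\Z_2}\Spin(r)$ is then the same as a $\Spin(2r)$-lift of the induced $\SO(2r)$-structure, i.e.\ a spin structure, with matching notions of isomorphism. (As a consistency check, the Whitney sum formula for oriented bundles gives $w_2(Q\oplus f^*R)=w_2(Q)+f^*w_2(R)$, matching the obstruction $c(Q)-f^*c(R)$ of (1).) The case $r\le 2$, where $\Spin(2)$ is taken to be the universal cover of $\SO(2)$, needs only the evident modification ($\Z$ in place of $\Z_2$, together with $\Z\to\Z_2$ on kernels) and I would leave it to the reader.

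I expect the main difficulty to be organizational rather than conceptual: verifying that the cochain-level bijection is independent of the auxiliary choices (the refinements, the morphism $\psi$, the cocycle and lift $\tilde r$ representing $R$) and is equivariant for the two notions of isomorphism, so that it descends to a bijection between the direct limits defining $H^1(f,G,z)$ and the set of equivalence classes of $G\times_A G$-structures. Once the identification of central extensions in the first paragraph is set up, the remainder is a lengthy but routine diagram chase patterned on the abelian constructions recalled earlier in the section.
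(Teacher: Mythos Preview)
Your proposal is correct and follows essentially the same approach as the paper: both arguments work at the \v{C}ech cochain level, lift a cocycle for $R$ to $G$, and pair it with the relative $G$-structure to produce a $G\times G$-valued cochain whose coboundary lies in the diagonal copy of $A$, hence descends to a $G\times_A G$-cocycle. Your treatment is in fact somewhat more careful than the paper's---you explicitly identify the central extension $1\to A\to G\times_A G\to(G/A)^2\to 1$ and, for part (2), spell out the identification of $\Spin(r)\times_{\Z_2}\Spin(r)$ with the pullback of $\Spin(2r)\to\SO(2r)$ along $\SO(r)^2\hookrightarrow\SO(2r)$, whereas the paper simply declares (2) a special case of (1).
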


\begin{proof}  For the first statement, suppose that $(a,b) \in C^1(M,G) \times C^2(N,A)$ is a
relative $G$-structure on $Q$.  Let $c \in C^1(N,G/A)$ be a cocycle
representing $R$, mapping to $b \in C^2(N,A)$ under the coboundary
map.  By definition $c$ has a lift 
$$d \in C^1(N,G), \quad \partial d = b + \partial e \ \text{for some}
\ e \in C^1(M,A) .$$
The cochain
$$(a,f^* d - e) \in C^1(M,G) \times C^1(M,G) \cong C^1(M,G \times G)$$
has boundary 
$$ \partial (a, f^*d - e) = (-f^*b, f^* b) \in C^2(M,A \times A) .$$
The image of $(a, f^* d - e)$ in $C^1(M,G/A \times G/A)$ represents $Q
\oplus f^*R$.  Conversely, suppose $Q \oplus f^* R$ is equipped with a
$G \times_A G$-structure.  Any lift of the transition maps is of the form
$$(a,f^*d) \in C^1(M,G \times G), \quad \partial a = - \partial
f^* d .$$  
Thus the pair $(a,f^*d)$ defines a relative $G$-structure on $Q$ with
$b = f^*d$.  This proves the first part of the statement of the
Proposition.  The second statement is the special case $G = \Spin(r),
A =\Z_2$.
\end{proof}

The usual operations of duals, direct sums, and tensor products extend
to the relative spin case: In addition, there is also a canonical
relative spin structure on the ``double'' of any oriented vector
bundle:

\begin{proposition}  \label{double} 
{\rm (Relative spin structures on direct sums and tensor products)}
\begin{enumerate} 
\item  \label{fi}
Let $f: M \to N$ be a smooth map and $E_1,E_2 \to M$ and oriented
Euclidean vector bundles equipped with relative spin structures for
the map $f$.  There are canonical relative spin structures on $E_1
\oplus E_2$ and $E_1 \otimes E_2$ for the map $f$.
\item \label{se}
Let $E \to M$ be an oriented vector bundle.  The direct sum $E \oplus
E$ has a canonical spin structure.
\item \label{th}
Let $f: M \to N$ be a smooth map, $E \to M$ an oriented vector bundle
and $F \to N$ an oriented vector bundle such that $f^*F \cong G \oplus
G$ for some oriented vector bundle $G \to N$.  There is a bijection
between relative spin structures on $E$ for the map $f$ with
background class $b$ and relative spin structures on $E$ for the map
$f$ with background class $b + w_2(F)$.
\end{enumerate} 
\end{proposition}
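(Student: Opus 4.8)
The plan is to reduce all three parts to the description of relative spin structures recorded in the Remark following Proposition~\ref{realiz}: a relative spin structure on an oriented Euclidean bundle $E\to M$ for a map $f$ is, up to isomorphism, a trivialization of the image of $w_2(E)$ in the relative cochain group $C^2(f,\Z_2)$, and the background class is the induced class in $H^2(N,\Z_2)$. Concretely, after fixing a good cover $\cU$ of $M$ refining $f^*\cV$ with a morphism $\psi\colon\cU\to\cV$, such a trivialization is a pair $(\alpha_1,\alpha_2)\in C^1(\cU,\Z_2)\times C^2(\cV,\Z_2)$ with $\partial\alpha_2=0$ and $\partial\alpha_1+\psi^*\alpha_2=w_2(E)$, where $w_2(E)$ denotes a fixed \v{C}ech representative; the background class is $[\alpha_2]$. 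In this language each assertion becomes additive bookkeeping with $\Z_2$-coefficients, the only subtle point being the assertions of \emph{canonicity}.

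For part~(a) I would invoke the Whitney formulas for oriented bundles $E_1,E_2$ of ranks $r_1,r_2$, namely $w_2(E_1\oplus E_2)=w_2(E_1)+w_2(E_2)$ and $w_2(E_1\otimes E_2)=r_2\,w_2(E_1)+r_1\,w_2(E_2)$, and then add the given trivializations: $(\alpha_1^{(1)}+\alpha_1^{(2)},\,\alpha_2^{(1)}+\alpha_2^{(2)})$ for the direct sum and $(r_2\alpha_1^{(1)}+r_1\alpha_1^{(2)},\,r_2\alpha_2^{(1)}+r_1\alpha_2^{(2)})$ for the tensor product. These are the asserted canonical relative spin structures, with background classes $b_1+b_2$ and $r_2b_1+r_1b_2$; independence of the choice of \v{C}ech representatives and of the cover (up to isomorphism of relative spin structures) is checked by the refinement arguments already used to define $H^1(f,\Spin(r),E)$. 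For part~(b), $w_2(E\oplus E)=2w_2(E)=0$, so what must be produced is a \emph{canonical} trivialization; I would get it from the homomorphism $\Spin(r)\to\Spin(r)\times_{\Z_2}\Spin(r)\hookrightarrow\Spin(2r)$ induced by the diagonal $\Spin(r)\to\Spin(r)\times\Spin(r)$. Since $-1\in\Spin(r)$ goes to $(-1,-1)$, which is the identity of $\Spin(r)\times_{\Z_2}\Spin(r)$, this map factors through $SO(r)=\Spin(r)/\{\pm1\}$ and gives a canonical lift $\widehat\nu\colon SO(r)\to\Spin(2r)$ of $g\mapsto\diag(g,g)$; applying $\widehat\nu$ to a transition cocycle of $E$ produces an honest $\Spin(2r)$-valued cocycle lifting that of $E\oplus E$, i.e.\ an honest (equivalently, background-class-zero) spin structure, canonical because $\widehat\nu$ is.

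For part~(c), the hypothesis $f^*F\cong G\oplus G$ together with part~(b) endows $f^*F$ with a canonical spin structure; at the cochain level this is a canonical $\mu\in C^1(\cU,\Z_2)$ (well defined up to isomorphism) with $\partial\mu=\psi^*\eta$, where $\eta\in C^2(\cV,\Z_2)$ is a cocycle representing $w_2(F)$. In particular $f^*w_2(F)=0$, so $b$ and $b+w_2(F)$ are both compatible with $E$. The bijection I would define sends a relative spin structure $(\alpha_1,\alpha_2)$ with $[\alpha_2]=b$ to $(\alpha_1+\mu,\,\alpha_2+\eta)$, which satisfies $\partial(\alpha_1+\mu)+\psi^*(\alpha_2+\eta)=w_2(E)$ and has background class $b+w_2(F)$; adding $\mu$ a second time recovers the original since $2\mu=0$, so the map is an involution-type bijection. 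Well-definedness on isomorphism classes and independence of the auxiliary choices of $\eta$ and the cover follow as in part~(a).

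I expect the main obstacle to be exactly the \v{C}ech bookkeeping in part~(c): one must verify that the canonical spin structure on $f^*F$ supplied by~(b) genuinely yields a well-defined cochain $\mu$ with $\partial\mu=\psi^*\eta$ once the covers $\cU$ on $M$ and $\cV$ on $N$ are matched through $\psi$, since the transition functions of $f^*F$ agree with $\psi^*$ of those of $F$ only up to a \v{C}ech coboundary, and this coboundary must be kept coherent in the direct limit over covers. Everything else is the standard bookkeeping already developed in Section~\ref{Gstructures}.
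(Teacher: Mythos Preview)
Your argument is correct. For part (b) you and the paper do exactly the same thing: the paper phrases it as ``the diagonal $SO(r)\to SO(2r)$ is trivial on $\pi_1$ and hence lifts to $\Spin(2r)$,'' and you make that lift explicit via $\Spin(r)\to\Spin(r)\times_{\Z_2}\Spin(r)\hookrightarrow\Spin(2r)$.

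For parts (a) and (c) you take a genuinely different, though closely related, route. The paper argues directly at the level of $\Spin$-valued cocycles: part (a) is deduced in one line from the existence of the group homomorphisms $\Spin(r_1)\times\Spin(r_2)\to\Spin(r_1+r_2)$ and $\Spin(r_1)\times\Spin(r_2)\to\Spin(r_1 r_2)$, and part (c) is obtained by combining (a) and (b) (implicitly via Proposition~\ref{realiz}, passing through spin structures on $E\oplus f^*R$). You instead pass through the bijection in the Remark after Proposition~\ref{realiz} and work entirely with $\Z_2$-valued cochains trivializing $w_2$, using the Whitney formulas $w_2(E_1\oplus E_2)=w_2(E_1)+w_2(E_2)$ and $w_2(E_1\otimes E_2)=r_2\,w_2(E_1)+r_1\,w_2(E_2)$ for oriented bundles. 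Your approach has the advantage of making the resulting background classes completely explicit (the paper's one-line proof of (a) leaves this to the reader), and your bijection in (c) is visibly an involution. The paper's approach has the advantage of avoiding any appeal to the Whitney formula for tensor products, which, while correct for oriented bundles (it follows from the $\pi_1$ computation underlying the very homomorphism the paper invokes), is perhaps less universally familiar than the direct-sum formula. The technical concern you flag at the end --- that the canonical spin structure on $f^*F=G\oplus G$ really does produce a $\mu$ with $\partial\mu=\psi^*\eta$ once covers are matched --- is genuine but routine, exactly as you say.
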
  

\begin{proof}  
\eqref{fi} Let $r_1,r_2$ denote the ranks of $E_1,E_2$.  The claim on
the tensor product and direct sum follows from the existence of the
group homomorphisms:
$$ \Spin(r_1) \times \Spin(r_2) \to \Spin(r_1 + r_2), \quad \Spin(r_1)
  \times \Spin(r_2) \to \Spin(r_1 r_2) .$$
\eqref{se} The claim on the self-sum follows from the fact that, if
$r \ge 1$ denotes the rank of the bundle $E$, then the diagonal
homomorphism $SO(r) \to SO(2r)$ induces the trivial map on fundamental
groups and so lifts to $\Spin(2r) .$ \eqref{th} The third item follows
by combining the first two: by \eqref{se}, $f^*F \cong G \oplus G$ has
a canonical spin structure.  By \eqref{fi}, relative spin structures
on $E$ with background class $b$ are in one-to-one correspondence with
relative spin structures on $E$ with background class $b + w_2(F)$.
\end{proof} 

The relevance of relative spin structures in Floer theory is provided
by the following proposition.  In particular the proposition implies
that relative spin structures for boundaries of surfaces give stable
trivializations:

\begin{proposition} \label{spintriv}
{\rm (Stable trivializations via relative spin structures)} Suppose
that $S$ is a compact, oriented surface with boundary $\partial S$,
and $Q \to \partial S$ is an $SO(r)$-bundle.  There is a canonical
bijection between the set of isomorphism classes of relative spin
structures on $Q$ for the inclusion $\partial S \to S$ and the set of
homotopy classes of stable trivializations of $Q$.  
\end{proposition}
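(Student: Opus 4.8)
The plan is to reduce the statement, in two steps, to a comparison of spin structures and stable trivializations over the $1$-complex $\partial S$. We may assume $S$ is connected with $\partial S \neq \emptyset$, the case $\partial S = \emptyset$ being trivial since then both sides of the asserted bijection are singletons. Under this assumption $S$ is homotopy equivalent to a finite wedge of circles, so $H^2(S;\Z_2) = 0$, while $\partial S$ is a finite disjoint union of circles, so $H^2(\partial S;\Z_2) = 0$; in particular $w_2(Q) = 0$, and since $Q$ is an $SO(r)$-bundle (hence oriented) it is stably trivial over $\partial S$.

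\emph{Step 1: from relative spin structures to spin structures on $Q$.} Because $H^2(S;\Z_2) = 0$, the background class of any relative spin structure on $Q$ for the inclusion $\iota\colon \partial S \to S$ vanishes. Let $R := \ul{\R}^N \to S$ be the trivial bundle of some rank $N$, so that $\iota^* w_2(R) = 0 = w_2(Q)$. Applying the second part of Proposition~\ref{realiz} with $M = \partial S$, $N = S$ and $f = \iota$ then yields a canonical bijection between isomorphism classes of relative spin structures on $Q$ for $\iota$ and isomorphism classes of spin structures on $Q \oplus \iota^* R = Q \oplus \ul{\R}^N$. Composing with the canonical identification of spin structures on $Q \oplus \ul{\R}^N$ with spin structures on $Q$ — induced by the stabilization homomorphism $\Spin(r) \to \Spin(r+N)$ together with the trivial spin structure on $\ul{\R}^N$, cf.\ Proposition~\ref{double} — we obtain a canonical bijection between isomorphism classes of relative spin structures on $Q$ for $\iota$ and spin structures on $Q$.

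\emph{Step 2: from spin structures on $Q$ to stable trivializations.} A stable trivialization $\tau$ of $Q$, i.e.\ a trivialization of $Q \oplus \ul{\R}^N$ for $N$ large, pulls back the unique spin structure of $\ul{\R}^{r+N}$ to a spin structure on $Q \oplus \ul{\R}^N$, hence to a spin structure $s_\tau$ on $Q$; homotopic stable trivializations give the same $s_\tau$ and the construction commutes with further stabilization, so $\tau \mapsto s_\tau$ descends to a map from homotopy classes of stable trivializations of $Q$ to spin structures on $Q$. Over the $1$-complex $\partial S$ this map is a bijection: both sides are nonempty ($Q$ is stably trivial, and $w_2(Q)=0$, respectively) and carry free transitive actions of $H^1(\partial S;\Z_2)$ — for stable trivializations via $[\partial S, SO] = H^1(\partial S;\pi_1 SO) = H^1(\partial S;\Z_2)$, where the passage to the stable range is precisely what identifies $\pi_1 SO$ with $\Z_2$ uniformly in $r$, and for spin structures via the standard action — and $\tau \mapsto s_\tau$ intertwines these actions through the identity of $H^1(\partial S;\Z_2)$, so that being an equivariant map of torsors over the same group it is bijective. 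The construction is manifestly canonical.

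Composing the bijections of Steps 1 and 2 produces the asserted canonical bijection. The step requiring the most care is the second one: one must check that $\tau \mapsto s_\tau$ is equivariant via the \emph{identity} of $H^1(\partial S;\Z_2)$ — so that it is a genuine bijection rather than a collapse — and that stabilization is truly needed to cover the low-rank cases $r \le 2$, where $\pi_1(SO(r))$ is not $\Z_2$. By contrast Step 1 is essentially bookkeeping once one observes that $H^2(S;\Z_2)=0$ forces the background class to vanish, permitting Proposition~\ref{realiz} to be applied with a trivial auxiliary bundle over $S$.
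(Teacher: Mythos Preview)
Your proof is correct and follows essentially the same route as the paper's: both invoke Proposition~\ref{realiz} to pass from relative spin structures to ordinary spin structures on $Q\oplus f^*R$, use that $S$ retracts onto a wedge of circles, and then identify spin structures with stable trivializations over the $1$-complex $\partial S$. Your exposition is in fact cleaner --- by observing $H^2(S;\Z_2)=0$ at the outset you take $R$ trivial with its canonical trivialization and finish via a torsor comparison, bypassing the paper's discussion of the ambiguity in trivializing $R$ (whose stated resolution, the vanishing of $H^1(S;\Z_2)\to H^1(\partial S;\Z_2)$, already fails for the annulus).
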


\begin{proof}   
We first show that any relative spin structure induces a stable
trivialization.  Let $f: \partial S \to S$ be the inclusion of the
boundary.  Since $S$ is two-dimensional, any cohomology class $w \in
H^2(S,\Z_2)$ is the second Stiefel-Whitney class of some oriented
bundle:
% C
$$ \exists R \to S, \quad w = w_2(R) .$$
Indeed, the third Postnikov truncation of $BSO$ is the
Eilenberg-Maclane space $K(\Z_2,2)$.  From Proposition \ref{realiz}
(or the homotopy definition) we obtain a bundle $R \to S$ together
with a spin structure on $Q \oplus f^* R$.  We may assume that
$\partial S$ is non-empty, since otherwise the statement is vacuous.
Thus $S$ is homotopy equivalent to a bouquet of circles:
$$ S \cong S^1 \vee \ldots \vee S^1 .$$  
Since $\pi_2(S)$ is trivial, the bundle $R \to S$ is trivial.  The
relative spin structure gives a stable trivialization of $Q$.  If $S$
is a disk, then the trivialization of $R$ (and therefore also the
stable trivialization of $S$) is unique up to homotopy.  In general,
two stable trivializations differ by a map $S \to SO(r)$ for some $r$
sufficiently large.  Since
$$[S,SO(r)] \cong [S,(SO(r))_2] \cong
H^1(S,\Z_2)$$ 
(where $(SO(r))_2$ is the Postnikov truncation) there is no longer a
distinguished stable trivialization.  However, the image of
$H^1(S,\Z_2) \to H^1(\partial S,\Z_2)$ is trivial.  This implies that
$f^* R$ has a distinguished trivialization.  Hence $Q$ has a
distinguished stable trivialization.  Conversely, any stable
trivialization of $Q$ induces a relative spin structure (by taking $R$
to be the trivial bundle).  These two constructions are inverses of
each other by construction and this gives the bijection.
\end{proof}

The following lemma will be used later to show that quilted Floer
cohomology is unaffected, in a certain sense, by ``insertion of a
diagonal seam'', see Proposition \ref{delta} below.  We consider the
following situation.  Let $(S,\partial S)$ be a surface with boundary
and let $f: \partial S \to S$ denote the inclusion of the boundary as
above, and $Q \to \partial S, R \to S$ vector bundles. Suppose that
$Q \oplus f^* R$ is equipped with a spin structure, giving rise to a
relative spin structure $\sigma_1$ on $Q$ with background class
$w_2(R)$ and a stable trivialization of $Q$.  We consider the effect
of ``shifting'' the relative spin structure on $Q$ by adding another
bundle whose restriction to the boundary is the complexification of a
real bundle.

\begin{lemma} \label{togglesame} Let $Q,R,\sigma_1$ be as above.
  Suppose that $U \to S,V \to \partial S$ are bundles equipped with an
  isomorphism
$$U|_{ \partial S} \to V \oplus V  .$$ 
Let $\sigma_2$ be the induced relative spin structure on $Q$ with
background class $w_2(R) \oplus w_2(U)$ as in Proposition
\ref{double}.  Then the stable trivializations of $Q$ induced by
$\sigma_1,\sigma_2$ are equivalent resp. opposite if the Maslov index
of $(U,V)$ is equal to $0$ resp. $2$ mod $4$.
\end{lemma} 

\begin{proof} 
The pullback $ f^*U = V \oplus V$ is stably trivialized on the
boundary $\partial S$ using the triviality of the diagonal map
$\pi_1(SO(n)) \to \pi_1(SO(2n))$.  On the other hand, the
identification of stable trivializations of $Q \oplus f^* R$ and $Q
\oplus f^*R \oplus f^* U$ uses the trivialization of $U$ over the
disk.  The two trivialization differ exactly if the Maslov index of
the pair $(U,V)$ is equal to $2$ mod $4$.
\end{proof}

\section{Orientations for families of operators}

In this section we define orientations for quilted Cauchy-Riemann
operators from an orientation and relative spin structure on the
totally real boundary condition, and investigate their behavior under
gluing.  These results are generalizations of results from Fukaya et
al \cite{fooo}, Seidel \cite{se:bo}, and Solomon \cite{so:lb}.  The
constructions of this section are for families of quilted maps over
smooth bases.  That is, while the determinant line bundle exists for a
family of quilts with varying type over a stratified base, our purpose
here is to construct trivializations for families of a fixed type.  We
then compute the gluing signs relating these trivializations for
different types.

\subsection{Construction of orientations for surfaces without strip-like ends}

First we construct orientations on the determinant lines arising from
Cauchy-Riemann operators on surfaces without strip-like ends with
relative spin structures on the boundary conditions:

\begin{proposition}  \label{nostrips} {\rm (Orientations via relative spin structures)} 
Suppose that $S \to B$ is a family of nodal surfaces without
strip-like ends, $(E,F) \to B$ is a family of complex vector bundles
$E\to S$ with oriented totally real boundary conditions $F\subset
E|_{\pd S}$, and $D_{E,F}$ is a family of real Cauchy-Riemann
operators for $(S,E,F)$.  A relative spin structure for the bundle $F
\to \partial S$, if it exists, defines an orientation
$$o_{E,F}: B \to \Or(D_{E,F}) = \det(D_{E,F})^\times/\R_{>0}$$
for the determinant line bundle $\det(D_{E,F}) \to B$.
\end{proposition}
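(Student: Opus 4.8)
\emph{Strategy.} The plan is to reduce an arbitrary datum $(S,E,F)$ with relative spin structure to the single model of a disk carrying a totally real boundary condition along its boundary circle, whose determinant line is canonically oriented, and to propagate that orientation through the gluing isomorphisms of Section \ref{crglue}; the family statement then follows by continuity of those isomorphisms. I would begin by removing the given nodes: the chain of isomorphisms \eqref{unred}, \eqref{unredred}, \eqref{nodal} identifies $\det(D_{E,F})$ with $\Lambda^{\max}\bigl(\bigoplus_i \rE_{z_i^+}\oplus\bigoplus_j \rF_{w_j^+}\bigr)^\dual\otimes\det(D_{\rE,\rF})$ over the normalization $\rS$. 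Each node fiber $\rE_{z_i^+}$ is a complex vector space, hence canonically oriented, and each $\rF_{w_j^+}$ is oriented by the given orientation of $F$; the ordering conventions of the nodal remark fix the signs. A relative spin structure on $F$ pulls back to one on $\rF$, so the claim reduces to smooth $\rS$, and then \eqref{caniso} together with the chosen ordering of connected components reduces further to $S$ connected. If $S$ is closed, the affine space of real Cauchy--Riemann operators on $(S,E)$ consists of Fredholm operators, so $\det(D_{E,F})$ is canonically (up to homotopy) isomorphic to the determinant of a complex-linear operator, whose kernel and cokernel are complex vector spaces; one takes this complex orientation. Hence assume $S$ smooth, connected, with $\partial S\neq\emptyset$.

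\emph{Step 1 (reduction to the disk).} Such an $S$ is a regular neighborhood of a ribbon graph, hence is obtained from a disk $D$ by a sequence of boundary-node deformations as in Definition \ref{gluing} (equivalently, by attaching bands along the boundary). Over $D$ the bundle $E$ is trivial, $E\cong D\times\C^n$, the space of trivializations being connected, and the boundary condition becomes a totally real subbundle $F_D\subset (D\times\C^n)|_{\partial D}$; the orientation and relative spin structure of $F$ restrict to such on $F_D$ and on the fibers $\rF_{w_j^+}$ at the auxiliary boundary nodes. By Corollary \ref{deformcor} and the associativity of Proposition \ref{assoc}, together with \eqref{nodal}, it suffices to orient $\det(D_{D\times\C^n,\,F_D})$ and the node-fiber determinant lines $\Lambda^{\max}(\rF_{w_j^+})^\dual$, and the latter are oriented by the orientation of $F$. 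Thus everything comes down to a Cauchy--Riemann operator on the disk with a totally real boundary condition $F_D$ along $\partial D\cong S^1$ of some Maslov index, necessarily even since $F_D$ is orientable. Here the orientation of $F_D$ together with the relative spin structure — which by Proposition \ref{spintriv} is exactly a homotopy class of stable trivialization of $F_D\to S^1$ compatible with the orientation — pins down an orientation of $\det(D_{D\times\C^n,F_D})$ by the conventions of Fukaya--Oh--Ohta--Ono \cite{fooo} and Seidel \cite{se:bo}: the orientation of $F_D$ determines the orientation up to the ambiguity measured by the Maslov index modulo $4$, and the stable trivialization resolves that ambiguity, precisely as in Lemma \ref{togglesame} and Examples \ref{diffs}, \ref{diffs2}.

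\emph{Step 2 (independence of choices and the family version).} The orientation so obtained a priori depends on the trivialization of $E$, on the ribbon-graph presentation and the order in which the auxiliary boundary nodes are deformed, and on the orderings of components, nodes and boundary components. Each space of choices is connected, or its connected components differ by signs already recorded in \eqref{caniso} and \eqref{backwards}, and the gluing isomorphisms are coherent by Proposition \ref{assoc}; a standard connectedness argument then shows that the outcome depends only on the orientation of $F$ and the equivalence class of the relative spin structure. Finally, all constructions above — trivializations, ribbon-graph presentations, node deformations — can be carried out continuously in families, locally over $B$ and then patched, and the gluing isomorphisms of Corollary \ref{deformcor} and Proposition \ref{glueprop} vary continuously, so the orientation is a continuous section $o_{E,F}\colon B\to\Or(D_{E,F})$ of the double cover, equivalently a homotopy class of trivialization of the line bundle $\det(D_{E,F})\to B$ guaranteed by Proposition \ref{doublecover}.

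\emph{Main obstacle.} The crux is the disk model in Step 1: assigning a canonical orientation to the determinant of a Cauchy--Riemann operator on the disk with a path of totally real subspaces along the boundary, and checking that it transforms correctly under the operations (duals, direct sums, changes of stable trivialization) through which it is glued up. This is where the Maslov-index-modulo-$4$ subtleties interact with the sign conventions of \eqref{caniso}, \eqref{backwards} and the nodal remark, and it amounts to a lengthy but essentially routine adaptation of the arguments of \cite{fooo} and \cite{se:bo} to the present bookkeeping.
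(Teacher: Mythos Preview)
Your closed-surface and nodal reductions agree with the paper, but for smooth compact $S$ with $\partial S\neq\emptyset$ you take a genuinely different and less direct route. The paper uses the stable trivialization of $F$ furnished by the relative spin structure (Proposition~\ref{spintriv}) to trivialize $E$ in a collar of $\partial S$, and then pinches off one disk per boundary component via \emph{interior} nodes. The resulting nodal surface is a closed piece $\iS$ (oriented complex-linearly as in your closed case) glued at interior nodes to disks $\oS$ carrying the \emph{trivial} pair $(\C^n,\R^n)$ of Maslov index zero, whose determinant is oriented directly by the orientation of a fiber of $F$. No disk with nontrivial Maslov index ever appears, and the auxiliary choices (collar, extension of the trivialization) form a contractible space, so well-definedness is immediate.

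Your route instead reduces via \emph{boundary}-node deformations to a single disk $D$ that carries the entire Maslov index $I(E,F)$ in its boundary condition $F_D$, and then defers orienting $\det(D_{D\times\C^n,F_D})$ to \cite{fooo,se:bo}. But the construction in those references \emph{is} the interior-node pinching just described, specialized to $S=D$: one stably trivializes $F_D$, pinches off a trivial disk, and is left with a sphere. So your Step~1 is not a genuine reduction --- the ``main obstacle'' you isolate is precisely the content the paper supplies in full. Your detour also incurs a real cost: Step~2 must establish independence of the ribbon-graph presentation, and two arc systems cutting $S$ to a disk are related by handle-slides, under which invariance of the orientation is a nontrivial gluing-sign computation (of the sort carried out later in Theorem~\ref{gluingsigns}), not a ``standard connectedness argument.'' The paper's pinching avoids this entirely.
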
    

Here $B$ is a smooth open base, so $\pd S=\bigcup_{b\in B} \pd S_b$ is a bundle
over $B$ whose fibers are the boundaries of the fibers of $S$. 

\begin{proof}[Proof of Proposition \ref{nostrips}.] 
%\hspace{1mm}\\
\noindent {\em Step 1: Orientations for families of smooth, closed
  surfaces:} Suppose that $S\to B$ and $(E,F)$ are as in the statement
of the Proposition and $S$ has empty boundary.  Consider a family 
$$D_E = (D_{E,b})_{b \in B}$$
of real Cauchy-Riemann operators acting on sections of a family of
complex vector bundles $E = (E_b \to S)_{b \in B}$.  Since the space
of real Cauchy-Riemann operators is an affine space containing the
complex linear Cauchy-Riemann operators, there exists a homotopy from
$$ (D_{E,b})_{ b\in B} \sim (D'_{E,b})_{b \in B} $$
where $D'_{E,b}$ is a family of complex linear operators.  The complex
structure on the kernels and cokernels of $D_E'$ induce orientations
for $D_E'$.  These pull back under the isomorphism of determinant
lines
$ \det(D_E) \to \det(D_E') $
to orientations of $D_E$.  Any two homotopies are related by a
homotopy of homotopies, since the spaces of real Cauchy-Riemann
operators and complex-linear Cauchy-Riemann operators are
contractible.  So the orientation on $\det(D_E)$ is independent of the
choice of $D_E'$ and homotopy to $D_E'$.

\vskip .1in
\noindent {\em Step 2: Orientations for smooth, compact surfaces with
  boundary:} Suppose that the base of the family $B=\{\pt\}$ is a
point, $S$ is a smooth, compact surface with boundary, and $E,F$ are
as above.  The relative spin structure on $F$ gives a homotopy class
of stable trivializations of $F \to \partial S$ by Proposition
\ref{spintriv}.  We first fix a stable trivialization of $F$ and
construct an orientation for $D_{E,F}$; later we will show that the
orientation depends only on the homotopy class of stable
trivializations.  The real Cauchy-Riemann operator $D_{{E},{F}}$ acts
on sections of $E \to S$ with totally real boundary conditions ${F}
\subset {E} |_{\partial {S}}$.  We may assume, after adding a trivial
bundle, that $F$ is trivialized.  The trivialization ${F}\cong
\R^n\times\pd S$ induces a trivialization
$${E} |_{\partial {S}}=F\oplus i F\cong \C^n\times\pd S $$ 
which extends to a neighborhood $U\subset S$ of $\partial {S}$.

Deform the surface to a nodal surface by pinching off a disk for each
boundary component, as follows. Choose a tubular neighborhood of the
boundary
$$U=\sqcup_i U_i\subset S$$
equal to a disjoint union of annuli
$$U_i\cong [-1,1]\times S^1, \quad \pd
S\cap U_i \cong\{1\}\times S^1 .$$  
Replacing $U_i$ with complex annuli of increasing radius produces a
family of surfaces.  The limit is the nodal surface
$$\nnS = S / \left( U_i \mapsto (D_i^- \sqcup D_i^+)/(z_i^- \sim
z_i^+), i = 1,\ldots, n \right) $$
obtained by replacing $U_i$ with two disks $D^-_i\sqcup D_i^+$ glued
at an interior node 
$$\{z_i^-,z_i^+\}, \ z_i^-=0\in D_i^-, \ z_i^+=0\in D_i^+ .$$
Here $D_i^+$ is the unit disk with standard complex structure $j_{\rm
  std}$ and boundary $\pd D_i^+$ identified with $\{1\}\times
S^1\subset\pd U_i$, whereas $D_i^-$ is the unit disk with complex
structure $-j_{\rm std}$ and boundary $\pd D_i^-$ identified with
$\{-1\}\times S^1\subset\pd U_i$.  So the nodal surface $\nnS$ has
resolution $\nnS^\rho=\iS\sqcup\oS$, consisting of a closed surface
and a union of disks
$$ \iS=(S\setminus U)\cup\sqcup_i D_i^-, \quad  \oS=\sqcup_i D_i^+ .$$ 
The identifications needed to produce $\nnS$ from $\nnS^{\rho}$ are a
collection of interior nodes 
$$Z=\{\{z_i^-,z_i^+\}\}, \quad z_i^-\in\iS, \quad z_i^+\in\oS ,$$
see Figure \ref{diskpinch}.
\begin{figure}
\includegraphics[height=2in]{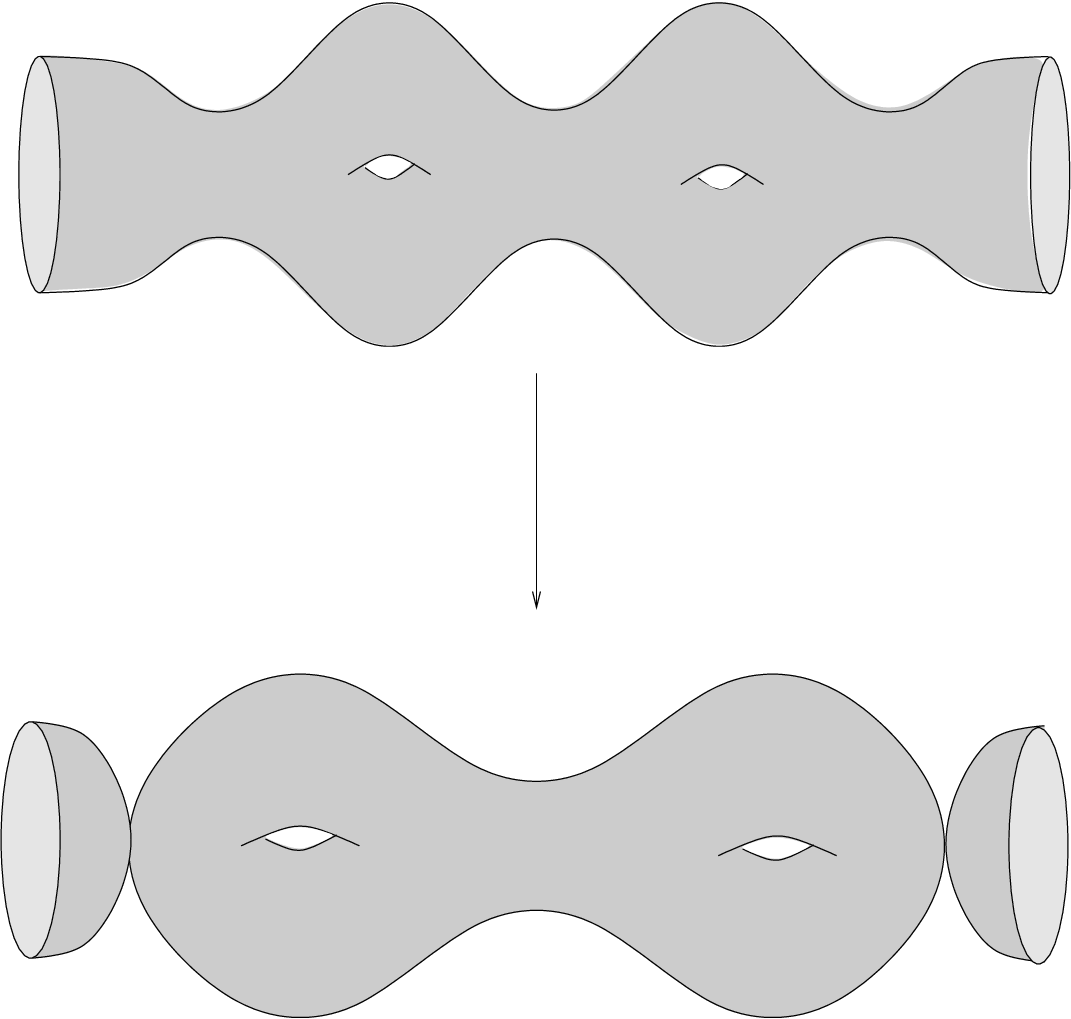} 
\caption{Pinching off a set of disks}
\label{diskpinch}
\end{figure} 

The pinching construction extends to define a complex vector bundle
and totally real boundary condition as follows: Let $\iE\to\iS$ be the
complex vector bundle defined by gluing together $E|_{S\setminus U}$
(which is trivialized $\cong \C^n\times\sqcup_i \pd D_i^-$ on the
boundary) with the trivial bundle on $\sqcup_i D_i^-$.  Consider the
trivial bundles
$$\oE= \C^n\times
\sqcup_i D_i^+, \quad
\oE|_{\pd\oS} \supset 
\rhF = \R^n\times \sqcup_i \pd D_i^+\cong F .$$  
Then $\nnE\to\nnS$ is obtained from $\rhE:=\iE\sqcup\oE\to\rhS$ and
identification at the nodes $Z$. Similarly the boundary condition
$\nnF=\oF$ is induced from $\rhF$.  Conversely, $(S,E,F)$ is obtained
from $(\nnS,\nnE,\nnF)$ by gluing at the interior nodes.

We use the canonical identification of determinant lines produced by
the homotopy above to produce an orientation on the determinant line
of the original surface.  By \ref{detiso2} the pinching of bundles
induces an isomorphism of determinant lines
$$\det(D_{\nnE,\nnF}) \to \det(D_{E,F})  .$$
Combining with \eqref{unred}, \eqref{unredred}, and \eqref{nodal} we
obtain an isomorphism
\begin{equation}\label{doit}
\det(D_{E,F}) \to \Lambda^{\max} \Bigl( \bigoplus_i \rhE_{z_i^+} \Bigr)^\dual 
\otimes \det(D_{{\rhE},\rhF}) .
\end{equation}
Here the first factor is oriented by the complex structure on
$\rhE_{z_i^+}$.
The second factor decomposes into
$$\det(D_{{\rhE},\rhF}) = \det(D_{\iE} \oplus D_{\oE,\oF}) 
\cong \det(D_{\iE}) \otimes \det(D_{\oE,\oF}) .$$
The operator $D_{\iE}$ has an orientation given by the previous step,
since $\iS$ is smooth and closed.  On the other hand, by construction
the operator $D_{\oE,\oF}$ is the direct sum of real Cauchy-Riemann
operators on the disk.  After a homotopy, the operators on the disks
are the standard Cauchy-Riemann operators which are surjective.  Their
kernel is isomorphic to a sum of fibers via evaluation at points
$\ul{s} = (s_i\in\pd D^+_i\subset\pd S)$ on the boundary:
$$ \ker D_{\oE,\oF} \to \oplus_i F_{z_i} , \quad \ul{\xi} \mapsto
\ul{\xi}(\ul{s}) .$$
The orientation of the boundary condition $F$ (induced by the
trivialization) thus defines an orientation on $D_{\oE,\oF}$.  The
orientation on $D_{E,F}$ is induced from the isomorphism \eqref{doit}.

We claim that the orientation is independent of the auxiliary choices:
the trivialization of $F$, the extension of the induced trivialization
of $E$ to the neighborhood $U$, and the choice of coordinates on $U$.
Any two choices of extensions and coordinates on $U$ are homotopic.
Any two trivializations of ${F}\to\pd S$ differ by a map
$$ \partial {S} \to SO(\rank({F})) .$$  
Hence there are two trivializations up to homotopy for each boundary
component if $\rank({F}) > 2$, infinitely many if $\rank({F}) = 2$,
and a unique trivialization if $\rank({F}) =1$.  So there are two
stable homotopy classes of stable trivializations of $F$, for any
rank.  Consider two choices of extensions and coordinates, and a
homotopic pair of trivializations
$$ \tau_\delta: F \to \partial S \times \R^k $$
of $F$.  The homotopies $\tau_\delta$ gives continuous families of
nodal surfaces and bundles $\nnS_\delta,\nnE_\delta,\nnF_\delta$,
Cauchy-Riemann operators $D_{\nnE_\delta,\nnF_\delta}$, and
isomorphisms
$$\det(D_{E,F}) \to
\det(D_{\nnE_\delta,\nnF_\delta}), \quad \delta\in[0,1] .$$  
The construction fixes an orientation for each
$\det(D_{\nnE_\delta,\nnF_\delta})$ from the orientations for the
nodal fibers $(\hat{E}^\rho_\delta)_{z^i_+(\delta)}$, the operators
$D_{(\iE)_\delta}$ on complex bundles over closed surfaces, and the
operators $D_{(\oE)_\delta,(\oF)_\delta}$ on trivial bundles over
disks.  Each of these orientations is continuous in families.  Hence
the orientations on $\det(D_{\nnE_\delta,\nnF_\delta})$ vary
continuously in $\delta$.  It follows that the map
$\det(D_{\nnE_0,\nnF_0})\to\det(D_{\nnE_1,\nnF_1})$ induced by the
homotopy of operators $(D_{\nnE_t,\nnF_t})_{\delta \in[0,1]}$
preserves the given orientations.  The composition of this map with
$\det(D_{E,F}) \to \det(D_{\nnE_0,\nnF_0})$ is homotopic to
$\det(D_{E,F}) \to \det(D_{\nnE_1,\nnF_1})$.  Hence the two
isomorphisms induce the same orientation on $\det(D_{E,F})$.

Finally we show that trivializations of the boundary condition that
are homotopic after stabilization also define the same orientation on
the determinant line of the Cauchy-Riemann operator.  For $\rank({F})
> 2$ there is nothing to show, since the trivializations are homotopic
iff they are stably homotopic.  Let $F_{{\tau}}$ be the trivial
$\R^k$-bundle over $\pd S$, and $E_{{\tau}}$ the trivial $\C^k$-bundle
over $S$.  Consider two trivializations
$$ \tau_i: F \to \R^k \times  \partial S, \quad  i \in \{ 0 ,1 \} $$
of $F$ such that the induced trivializations of 
$$ \widehat{\tau}_i : F_{{\tau}}\oplus F \to \R^{2k } \times \partial
S $$
are homotopic.  By the previous discussion the trivializations
$\widehat{\tau}_i$ define the same orientation 
$o_{{E_{{\tau}}\oplus
    E}, F_{{\tau}} \oplus F}$ for
$$D_{{E_{{\tau}}\oplus E}, F_{{\tau}} \oplus F}
:=
D_{E_{{\tau}},F_{{\tau}}} \oplus D_{{E},{F}} ,$$ 
where $D_{E_{{\tau}},F_{{\tau}}}$ is the standard Cauchy-Riemann
operator.  On the other hand, applying the direct sum isomorphism
(\ref{caniso}) provides an orientation $o_{E_{\tau},F_\tau}$ of
$$\det(D_{E_{{\tau}},F_{{\tau}}}) \otimes
\det(D_{{E},{F}})\cong\det(D_{{E_{{\tau}}\oplus E}, F_{{\tau}} \oplus
  F}) .$$  
The orientation $o_{E,F,i}$ induced by $\tau_i$ for $i \in \{ 0, 1\}$
is related to $o_{{E_{{\tau}}\oplus E}, F_{{\tau}} \oplus F}$ by a
universal sign that only depends on the combinatorics of the surface
and the ranks of the bundles.  Hence $o_{E,F,0} = o_{E,F,1}$ as
claimed.

\vskip .1in {\noindent \em Step 3: Orientations for families of
  smooth, compact surfaces with boundary:} We now consider the case of
families.  Let $\det(D_{E,F})_b, b \in B$ be a family of
Cauchy-Riemann operators.  It suffices to show that the orientations
constructed above vary continuously in $B$.  For this it suffices to
consider family $S \to B$ of smooth surfaces with $B$ contractible.  A
trivialization of $F\to \pd S$ induces a trivialization of ${E}$ near
the boundary $\partial {S}$:
$$ \tau:  E |_{\partial S} \cong F \oplus F \to \R^{2k} \times \partial S .$$   
Deforming the conformal structure on $S \to B$ as in the previous step
produces a family of nodal surfaces $\nnS\to B$.  The family $\nnS$
consists of a family of disks $\oS\to B$, a family of closed surfaces
$\iS\to B$ (obtained by gluing a disk bundle to $\pd S$), and
identifications of $\oS$ and $\iS$ at families of interior nodes.
This deformation provides an isomorphism of determinant line bundles
$$
\det(D_{E,F}) \to \det(D_{\nnE,\nnF}) \to 
\Lambda^{\max} \Bigl( \bigoplus_i \rhE_{z_i^+} \Bigr)^\dual 
\otimes \det(D_{{\iE}}) \otimes \det(D_{{\oE},{\oF}}).
$$ 
This isomorphism defines the orientation on $\det(D_{E,F})$ by
pullback from the right hand side.  To see that these orientations
vary continuously, note that the orientation on the first factor is
induced from the complex structure on $\rhE_{z_i^+}\to B$.  On the
second factor the orientation is given by the previous construction
for families of closed surfaces.  The third factor is isomorphic
(using a homotopy to the standard Cauchy-Riemann operator on disks) to
$\Lambda^{\max}(\oplus_i F_{z_i})$ for a smooth family of boundary
points $z_i\subset\pd S$ in each connected component. These fibers of
$F\to\pd S$ are oriented by assumption, inducing a continuous
orientation on $\det(D_{{\oE},{\oF}})$ and hence on
$\det(D_{{E},{F}})$.

\vskip .1in
\noindent {\em Step 4: General definition of orientations:} Finally,
we consider a general family of nodal (but compact) surfaces.  Let
$\nS\to B$ be such a family equipped with families of complex vector
bundles $\nE\to\nS$ and totally real boundary conditions $\nF$, and a
family of real Cauchy-Riemann operators $D_{\nE,\nF}$.  By assumption
the family of operators $D_{\nE,\nF}$ is produced from identifications
of families of nodes from a family of real Cauchy-Riemann operators
$D_{\rE,\rF}$ for families of bundles $\rE\to\rS$ and $\rF\to\pd\rS$
over the family of smooth resolutions $\rS\to B$.  We fix a
trivialization of $\nF$, that is a trivialization of $\rF\to\pd\rS$
that is compatible with the identifications at nodes.  From
\eqref{unred}, \eqref{unredred}, and \eqref{nodal} we have a bundle
isomorphism
\begin{equation} \label{bundleiso}
\det(D_{\nE,\nF}) \to \Lambda^{\max} \Bigl( \bigoplus_i
\rE_{z_i^+} \oplus \bigoplus_j \rF_{w_j^+}\Bigr)^\dual \otimes
\det(D_{{\rE},\rF}) \end{equation}
Here an orientation on $D_{\rE,\rF}$ is defined by the previous step,
the complex fibers of $E$ are naturally oriented, and the fibers of
$F$ are oriented by assumption.  Hence this isomorphism defines
orientations on $D_{\nE,\nF}$.
\end{proof} 

\begin{remark} {\rm (Orientation of the trivial operator)} 
 Suppose that $S$ is a disk, $(E,F)$ are trivial and $D_{E,F}$ is a
 trivial Cauchy-Riemann operator.  In this case the constructed
 orientation on $\det(D_{E,F})$ is isomorphic to the given orientation
 on $\Lambda^{\max}(F)$, via the identification $\ker(D_{E,F}) \to F_z$ for any
 point $z \in \partial S$.  Indeed, in this case the Maslov index is
 already zero.
\end{remark} 

We now investigate the behavior of orientations with respect to basic
operations:

\vskip .1in 

\begin{remark} \label{crem}
\begin{enumerate} 
\item {\rm (Conjugates)} \label{cs} Let $(E,F)$ a bundle with totally real
  boundary condition, and suppose that $F$ is equipped with a relative
  spin structure.  Let $E^-$ the complex conjugate of $E$, and
  $F^-$ the subspace $F$ considered as a totally real subspace of
  $F$.  Let $S^-$ denote the surface $S$ with complex structure
  $\ol{j} = - j$.  Given a Cauchy-Riemann operator $D_{E,F}$ let
  $D_{E^-,F^-}$ denote the same operator on the conjugate spaces
  (minus complex structures), as Section \ref{cr}.  The kernel and
  cokernel of $D_{E,F}$ are canonically identified with those of
  $D_{E^-,F^-}$ as real vector spaces, hence $\det(D_{E,F})$ is
  canonically identified with that of $\det(D_{E^-,F^-})$.  However,
  the complex structures on the kernel and cokernel of $D_{E,F}$ are
  reversed.  It follows that the orientations are related by
\begin{equation} \label{dualo}
 o_{E^-,F^-} = (-1)^{(\Ind(D_{E,F}) - \rank(F))/2} o_{E,F} .\end{equation}
\item {\rm (Direct Sums)} Let $(E_j,F_j), j = 0,1$ be bundles with
  real boundary conditions over a closed surface with boundary $S$,
  and $(E,F) = (E_0,F_0) \oplus (E_1,F_1)$.  The isomorphisms
$$ \ker D_{E_0,F_0} \oplus \ker D_{E_1,F_1} \to \ker D_{E,F}, \quad
\coker D_{E_0,F_0} \oplus \coker D_{E_1,F_1} \to \coker D_{E,F} $$
induce an isomorphism
\begin{equation} \label{Diso}
p: \det(D_{E_0,F_0}) \otimes \det(D_{E_1,F_1}) \to \det(D_{E,F}) .\end{equation}
By definition the isomorphism \eqref{caniso} is continuous in
families. For each $ j = 0,1$ the orientation $o_{E_j,F_j}$ is defined
via an isomorphism
$$ \det(D_{E_j,F_j}) \to \det(D_{E_j'}) \otimes
\Lambda^{\max}(E'_{j,z})^{-1} \otimes \Lambda^{\max}(F_{j,w}) $$
where $z$ is the node in the deformed surface and $w$ a point in the
boundary of the deformed surface.  Since the operators $D_{E_j'}$ have
complex linear kernel and cokernel, their indices are even
dimensional.  Similarly the fiber at the node has even dimension.  It
follows that the $\Lambda^{\max}(F_{j,w})$ factor commutes with the other
factors, and 
$$o_{E,F} = p (o_{E_0,F_0} \otimes o_{E_1,F_1}) $$ 
is the image of $ o_{E_0,F_0} \otimes o_{E_1,F_1}$ under \eqref{Diso}:
\item {\rm (Disjoint Unions)} Let $(E_j,F_j)$ denoted bundles with
  totally real boundary condition over surfaces $S_j$ for $j = 0,1$.
  Then 
$$(E,F) = (E_0,F_0) \sqcup (E_1,F_1)$$ 
is a bundle with totally real boundary condition over $S = S_0 \cup
S_1$.  We take $o_{E,F}$ to be the image of $o_{E_0,F_0} \otimes
o_{E_1,F_1}$ under the canonical isomorphism, by a special case of the
previous paragraph.
\end{enumerate}
\end{remark} 

\subsection{Construction of orientations for surfaces with strip-like ends}

As in the case of Morse theory, in order to construct orientations we
must make auxiliary choices.  In our case, these auxiliary choices
involve a once-punctured disk $S_1$ with a complex structure such that
a neighborhood of the puncture corresponds to an incoming strip-like
end.  We identify its boundary $\pd S_1\cong\R$, preserving the
orientation.

\begin{definition} \label{endorient}
\begin{enumerate} 
\item {\rm (End Datum)} An {\em end datum} is a tuple
  $(E,F_-,F_+,\cH)$ consisting of
\begin{enumerate}
\item  a finite-dimensional complex vector
  space $E$, 
\item a pair $(F_-,F_+)$ of transverse, oriented, totally real
  subspaces of half-dimension, equipped with spin structures, and
\item $\cH$ a normal form for a Cauchy-Riemann operator on the strip
  as in \eqref{asymlimit}.
\end{enumerate}
\item {\rm (Orientation for an end datum)} An {\em {orientation}} for
  an end datum $(E,F_-,F_+,\cH)$ consists of
\begin{enumerate} 
\item a smooth path 
$$\Gamma:\R \to \on{Real}(E), 
\quad \Gamma(\pm\infty)=F_\pm $$
of totally real subspaces connecting $F_\pm$.  We view $\Gamma$ as a
totally real boundary condition $\Gamma\subset E\times\pd S_1$ for the
trivial bundle $E\times S_1$;
\item a real Cauchy-Riemann operator 
$$D_\Gamma: \Omega^0(E,\Gamma) \to \Omega^{0,1}(E) $$ 
on $S_1$ for sections with values in the trivial bundle $E$ and
boundary values in $\Gamma$, with asymptotic limit $\lim_{s \to
  \infty} \eps^* D_\Gamma$ given by $\cH$ in the sense of
\eqref{asymlimit};
\item an orientation for $D_\Gamma$;
\item  a spin structure on $\Gamma$, extending the given 
spin structures on $F_-,F_+$.
\end{enumerate}
\end{enumerate} 
\end{definition}  

\begin{remark} 
\begin{enumerate}
\item 
{\rm (Conjugates)} Let $(E,F_-,F_+,\cH)$ be an end datum equipped with
an orientation.  The {\em dual} end datum $(E^-,F_+^-,F_-^-,\cH^-)$
has a canonical orientation given by the same path on $\partial S_1$
with the complex structure (hence direction on the boundary) of $S_1$
reversed, the same Cauchy-Riemann operator $D_\Gamma$, (now complex
linear with respect to the reversed complex structures on the domain
and codomain), the given orientation on $D_\Gamma$, and the given spin
structure on $\Gamma$.

\item 
{\rm (Direct Sums)} Let $(E_j,F_{j,0},F_{j,1},\cH_j)$ be end data
equipped with orientations for $j \in \{ 0,1 \}$.  The direct sum
$$E = E_0 \oplus E_1,  \quad F_k = F_{0,k} \oplus F_{1,k}, \quad \cH = \cH_0
  \oplus \cH_1$$ 
has an orientation given by the direct sum of the paths and
Cauchy-Riemann operators, the orientation given by the direct sum
isomorphism \eqref{caniso}, and the direct sum spin structure.
\end{enumerate}
\end{remark} 

\begin{definition} {\rm (End orientations)} 
Let $S$ be a surface with strip-like ends and $\ol{S}$ the surface
obtained by adding the points at infinity:
$$ \ol{S} = S \cup \{ z_e, e \in \mE(S) \} .$$
Let $E \to S$ be a complex vector bundle and $F\subset E|_{\pd S}$
totally real boundary conditions.  For each end $e\in\cE(S)$, the
corresponding point at infinity is denoted $z_e \in \ol{S}$, and the two real
boundary conditions meeting it are denoted $F_{e}, F_{e-1}$.  By assumption,
these are constant transverse subspaces, so that $F_{e,0}\oplus
F_{e,1}= E_e$ over the strip-like end.  We suppose we have chosen
relative spin structures on the fibers $F_{e,0},F_{e,1}$ at infinity,
and there are asymptotic limits $\cH = (\cH_e, e \in \cE(S))$.
%%
%\begin{enumerate} 
%\item 
Let $S$ be a surface with strip-like ends $S$ equipped with a bundle
with totally real boundary condition $(E,F)$ and a spin structure
$\cH$ on $F$.  An {\em {orientation}} for this datum is an orientation
for each end
$$(\Gamma_e,D_e,o_e, \Spin(\Gamma_e)), \quad e\in\cE(S) .$$
%%
%\item A {\em relative spin structure} on $F$ is a relative spin
%structure extending the given relative spin structures on
%$F_{e,0},F_{e,1}$ at infinity on each end $e \in
%\cE(S)$.  \end{enumerate}
%%
\end{definition}  

Orientations for the ends and a relative spin structure suffice to
induce orientations on the determinant line of a family of
Cauchy-Riemann operators:

\begin{proposition}  \label{disktriv} 
{\rm (Orientations for determinant line bundles via relative spin
  structures and end orientations)} Let $S \to B$ be a family of nodal
surfaces with strip-like ends, and $E \to B$ a family of complex
vector bundles with totally real boundary conditions $F \to B$.  A
choice of a relative spin structure on $F$, if it exists, and
{orientation}s for the ends of $(S,E,F, \cH)$ induce an orientation of
the determinant line bundle $\det(D_{E,F}) \to B$.
\end{proposition}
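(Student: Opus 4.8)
The plan is to reduce to the case of surfaces without strip-like ends treated in Proposition \ref{nostrips}, by capping off each strip-like end with the once-punctured disk $S_1$ carrying the data of the chosen end orientation. For each $e \in \cE(S)$ let $(\Gamma_e, D_e, o_e, \Spin(\Gamma_e))$ be the given orientation of the end datum $(E_e, F_{e,0}, F_{e,1}, \cH_e)$, so that $\Gamma_e \subset E_e \times \partial S_1$ is a totally real boundary condition on the trivial bundle over $S_1$ and $D_e$ is a real Cauchy-Riemann operator on $S_1$ with asymptotic limit $\cH_e$ at the puncture. If $e$ is an outgoing end of $S$, glue $S_1$ (whose puncture corresponds to an incoming strip-like end) directly to $e$; if $e$ is incoming, glue instead the conjugate punctured disk $S_1^-$ with the dual end datum and its canonical orientation as in Definition \ref{endorient}. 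In either case the fiber identification \eqref{fiberid} is the identity of the underlying real vector space of $E_e$ and the matched asymptotic operators agree by construction, so Proposition \ref{glueprop} applies. Carrying this out over all ends, using the given ordering of $\cE_\pm(S)$, produces a family $\hat S \to B$ of compact nodal surfaces without strip-like ends (the nodes of $S$ survive, and each strip-like end is replaced by a compact neck), together with families of complex vector bundles $\hat E \to \hat S$, totally real boundary conditions $\hat F \subset \hat E |_{\partial \hat S}$, and real Cauchy-Riemann operators $D_{\hat E, \hat F}$.

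The boundary condition $\hat F$ is obtained by gluing $F$ over the true boundary components of $S$ to the paths $\Gamma_e$ over the capping arcs at the $2\,|\cE(S)|$ junction points. Since $F_{e,0} \oplus F_{e,1} = E_e$ is a transverse splitting and $\Spin(\Gamma_e)$ extends the spin structures on $F_{e,0}$ and $F_{e,1}$, the relative spin structure on $F$ and the spin structures $\Spin(\Gamma_e)$ assemble to a relative spin structure on $\hat F \to \partial \hat S$; in particular $\hat F$ is oriented. (By Proposition \ref{spintriv} this amounts, since $\hat S$ is a surface, to a stable trivialization of $\hat F$, and the compatibility conditions at the junctions are exactly what is needed to patch the local stable trivializations, continuously over $B$.) Applying Proposition \ref{nostrips} to $(\hat S, \hat E, \hat F, D_{\hat E, \hat F})$ now yields an orientation $o_{\hat E, \hat F}$ of the determinant line bundle $\det(D_{\hat E, \hat F}) \to B$.

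It remains to transfer this orientation back to $\det(D_{E,F})$. Before gluing, the disjoint union $S \sqcup \bigsqcup_e S_1$ carries the Cauchy-Riemann operator $D_{E,F} \oplus \bigoplus_e D_{\Gamma_e}$, with determinant line canonically identified via \eqref{caniso} with $\det(D_{E,F}) \otimes \bigotimes_{e \in \cE(S)} \det(D_{\Gamma_e})$; iterating the gluing isomorphism of Proposition \ref{glueprop} over the ends produces a canonical isomorphism of line bundles over $B$,
$$ \det(D_{E,F}) \otimes \bigotimes_{e \in \cE(S)} \det(D_{\Gamma_e}) \;\overset{\sim}{\longrightarrow}\; \det(D_{\hat E, \hat F}). $$
We define $o_{E,F}$ to be the unique orientation of $\det(D_{E,F})$ for which this isomorphism carries $o_{E,F} \otimes \bigotimes_e o_e$ to $o_{\hat E, \hat F}$. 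This is continuous over $B$ because Proposition \ref{nostrips}, the isomorphism \eqref{caniso}, and the gluing isomorphisms are all constructions of families. It is independent of the order in which the ends are glued by associativity of gluing (Proposition \ref{assoc}), independent of the gluing parameters because the gluing isomorphisms are canonical up to deformation (Proposition \ref{glueprop}, Corollary \ref{deformcor}), and independent of the remaining auxiliary choices (the complex structure on $S_1$ compatible with its strip-like end, and the operator $D_e$ beyond its prescribed asymptotic limit and orientation) because these vary in contractible families and the orientation depends continuously on them. The main obstacle is the verification in the preceding paragraph that the relative spin data on $F$ and the spin structures $\Spin(\Gamma_e)$ glue to a relative spin structure on $\hat F$ that is continuous over $B$ — which is where the transversality of the ends and the compatibility clause of Definition \ref{endorient} are used — together with the bookkeeping of the conjugation conventions that make the incoming ends gluable.
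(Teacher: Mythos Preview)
Your proof is correct and follows the same strategy as the paper: cap off each strip-like end with the once-punctured disk carrying the given end-orientation data, glue the relative spin structures, apply Proposition \ref{nostrips} to the resulting compact surface, and transfer back through the gluing isomorphism and the orientations $o_e$.

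The one substantive difference in execution is that the paper does not glue the caps directly along the strip-like ends. Instead it first deforms the boundary conditions near each end to $\Gamma_e\#\Gamma_e^{-1}$ and recognises the result as the resolution of a nodal surface $\hat S$ in which the caps are attached to the compactification $\bar S$ at \emph{boundary nodes}. Applying \eqref{nodal} then yields the explicit ordered decomposition \eqref{order}, which carries the extra node-fiber factors $\Lambda^{\max}(\Gamma_e(0)^\dual)$ and places the incoming caps (in reversed order) before $\det(D^c)$ and the outgoing caps after. Your direct strip-end gluing collapses these node factors into the gluing isomorphism and leaves the ordering implicit. For the bare statement of the proposition either route suffices, and the two orientations agree up to a universal sign; the point of the paper's more elaborate bookkeeping is that the specific convention \eqref{order} is exactly what is unwound in the gluing-sign computations of Theorem \ref{gluingsigns} and Remark \ref{cases}, where one needs to track how adjacent cap and node factors pair off. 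If you intend to compute those signs later, you will eventually have to recover an ordered formula of that shape from your construction.
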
 

\begin{proof}  First consider the case that the boundary of 
$S$ is connected.  A deformation of $S$ is obtained by ``bubbling
  off'' disks with strip-like ends for each strip-like end, see Figure
  \ref{closed} below, using the path of totally real subspaces
  specified by the end datum.  Using the orientations for the ends and
  the behavior of determinant lines under deformation, this reduces
  the construction of orientations to the case without strip-like
  ends.  Namely, on each strip-like end $e$ consider the deformation
  $F_{e,\pm,\delta}$ of the boundary conditions $F_{e,\pm}$ on a
  neighborhood of infinity to the boundary condition formed by
  concatenating the restriction of $\Gamma$ with $\Gamma^{-1}$:
$$ F_{e,\pm,0} = F_{e,\pm}, \quad F_{e,\pm,1} = \Gamma \# \Gamma^{-1} .$$  
The sub-bundle $\Gamma \# \Gamma^{-1}$ has a canonical deformation to
the boundary condition with constant value $\Gamma(\infty)$.  The
resulting boundary value problem is obtained by deformation of the
nodes of a nodal surface $\nS$ with vector bundles
$$ (\nE,\nF) = \coprod_e (E_e,F_{e,0} \sqcup F_{e,1}) \# (E^c,F^c) $$
obtained by gluing together the problems $(E_e,F_{e,0} \sqcup
F_{e,1})$ and a problem $(E^c,F^c)$ on a closed (possibly nodal)
surface with bundles obtained by gluing $(D_e,E_e,F_{e,0},F_{e,1})$
onto $(S,E,F)$ for each end $e$.  The procedure is illustrated in
Figure \ref{closed}.  The nodal surface $\hat{S}$ has a canonical
order of patches given by taking the ordering of the additional
patches and the boundary nodes so that the original component $\ol{S}$
is ordered first, and the nodes $\ul{w}_e, e \in \cE(S)$ are ordered
in the ordering of the strip-like ends $e \in \cE(S)$.  Let
$(\hat{E},\hat{F})$ denote the vector bundles on the nodal surface.
\begin{figure}[ht]
\includegraphics[width=4in,height=1in]{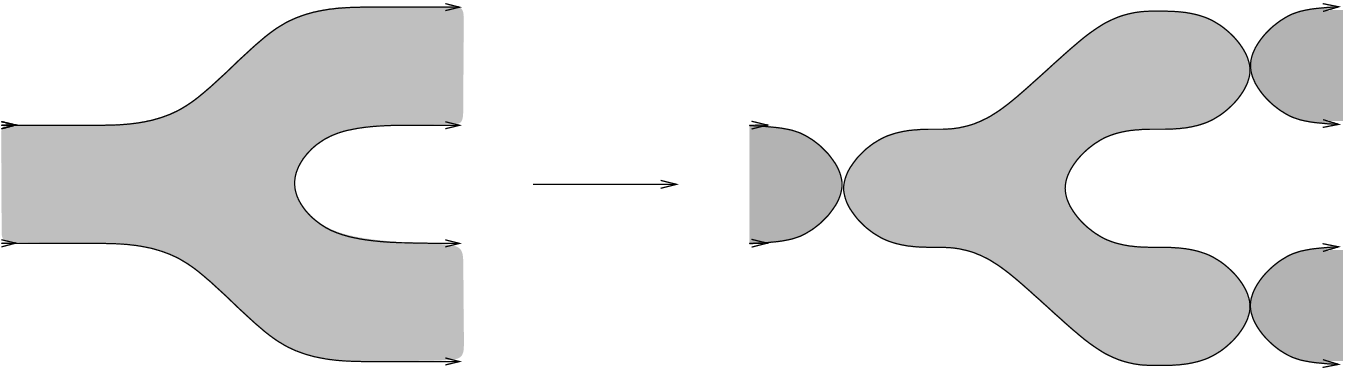}
\caption{Bubbling off the strip-like ends}
\label{closed}
\end{figure}
The equation \eqref{linearglue} gives an isomorphism of determinant
lines
\begin{equation} \label{glueEF}
 \det(D) \to \det(\hat{D}) .\end{equation}
The equation \eqref{nodal} gives an identification
\begin{equation} \label{order} \det(\hat{D}) \to 
\bigotimes_{e \in \mE_-} 
\left(
 \det(D_e^-) \otimes 
\Lambda^{\max}(\Gamma_e(0)^\dual) 
 \right) \otimes \det(D^c) 
 \otimes 
\left( \bigotimes_{e \in \mE_+} 
\Lambda^{\max}(\Gamma_e(0)^\dual) \right) 
\otimes 
\left( \bigotimes_{e \in \mE_+} 
 \det(D_e^+) \right) 
\end{equation}
where $\Gamma_e(0)$ is the fiber given by evaluating the corresponding
path $\Gamma_e$ at $0$, and the order of the two products over $\mE_-$
is {\em reversed}.  This choice of order means that when gluing, we
obtain a product over ends of an incoming end, and outgoing end, and a
determinant line of a dualized fiber
$$ \det(D_e^+) \otimes \det(D_e^-) \otimes
\Lambda^{\max}(\Gamma_e(0)^\dual) .
$$ 
Each of these is canonically trivial by the construction of end
orientations, leaving a product of determinant lines of dualized
fibers $\Lambda^{\max}(\Gamma_e(0)^\dual)$ and the determinant lines
$\det(D^c)$ for the surfaces without strip-like ends.  
%Note that we
%have chosen an ordering for the determinant lines in \eqref{order}
%that is not a special case of the convention for nodal surfaces.  
The
relative spin structure on $F$ and the bundles $\Gamma_e$ define a
relative spin structure on $F^c$.  This construction gives an
orientation on the corresponding index by Proposition \ref{nostrips}.

In the case that $S$ is disconnected, we define the orientation on $S$
as a product of expressions in the right hand side of \eqref{order}
for each component $S_i$.  Since in our applications, all but at most
one of these expressions $\det(D_{E_i,F_i})$ corresponds to an even
Fredholm index, so the determinant lines $\det(D_{E_i,F_i})$ commute
and the particular details of the ordering are irrelevant.
\end{proof}

We investigate the behavior of the constructed orientations under
various operations.  The behavior of orientations under duals and
direct sum is the same as that in the case of no strip-like ends.

\begin{remark} \label{permuteremark}
\begin{enumerate}  
\item {\rm (Conjugates)} Suppose that $S$ is equipped with a bundle
  with totally real boundary conditions $(E,F)$, a spin structure
  $\cH$ on $F$, and orientations for the ends.  Let $S^-, E^-, F^-$
  denote the complex-conjugate surface and bundles, and suppose these
  have been equipped with the orientations given by the dual
  construction in Remark \ref{duals}.  Let 
$$D_{E^-,F^-}: \Omega^0(E^-,F^-) \to \Omega^{0,1}(E^-) $$
be the dual Cauchy-Riemann operator as in \ref{crem} \eqref{cs}.  The
determinant lines $D_{E,F}, D_{E^-,F^-}$ are oriented by \eqref{order}
using gluing on the orientations for the ends and bubbling off the
boundary components on disks with Maslov index zero.  Complex
conjugation acts on the resulting products \eqref{order} of
determinant lines 
$$ \bigotimes_{e \in \mE_-} 
\left(
 \det(D_e^-) \otimes 
\Lambda^{\max}(\Gamma_e(0)^\dual)  
 \right) \otimes \det(D^c)  
 \otimes  
\left( \bigotimes_{e \in \mE_+} 
\Lambda^{\max}(\Gamma_e(0)^\dual) \right)  
\otimes  
\left( \bigotimes_{e \in \mE_+} 
 \det(D_e^+) \right)  
$$ 
preserving the orientation on the determinant lines
of the disks and the orientations on the ends.  However it reverses
the complex structure on the bundles $(E^c,F^c)$ on the closed
surfaces and interior nodes.  Thus the total sign change is
$$  \det(D_{E^-,F^-}) \cong \det(D_{E,F}) 
^{(-1)^{ (\Ind(D^c) + \# \pi_0(\partial S) \rank_\C(E))/2}} .$$
\item {\rm (Disjoint union)} Suppose that $S = S_1 \cup S_2$, and
  $S_j$ has $d_j^\pm$ incoming resp. outgoing ends for $j=1,2$.
  Consider the identification
$$\det(D_{E_0,F_0}) \otimes \det(D_{E_1,F_1}) \to \det(D_{E,F}) .$$  
The difference between the orientations is given by
$$ (-1)^{\rank(F) ( \# \pi_0(\partial S_2) + d_2^+)( \sum_{e \in
    \mE_{-,1}} (\rank(E)/2 + \Ind(D_e)) )} $$
from the reordering of the determinant lines of the ends.
\item {\rm (Re-ordering components or ends)}
\label{permute} 
\begin{enumerate}
\item Suppose that $S'$ is a nodal surface obtained by re-ordering a
  boundary node:
$$ S' = S/ ((w_+,w_-) \mapsto (w_-,w_+)) .$$  
Let $D'_{E,F}$ be the Cauchy-Riemann operator obtained from $D_{E,F}$.
The isomorphism $\det(D_{E,F}) \to \det(D'_{E,F})$ of determinant
lines induced by the isomorphism of kernel and cokernel acts on
orientations by $(-1)^{\rank(F)}$.
\item Suppose that $S'$ is a nodal surface obtained by transposing two
  patches $S_i,S_j$ of $S$.  The isomorphism $\det(D_{E,F}) \to
  \det(D'_{E,F})$ of determinant lines induced by the isomorphism of
  kernel and cokernel acts on orientations by $(-1)^{\Ind(D_{E_i,F_i})
  \Ind(D_{E_j,F_j})}$.
\item Suppose that $S'$ is a nodal surface obtained by re-ordering the
  boundary components (resp. boundary nodes) by a permutation
  $\sigma$ of the set of nodes
$$ \sigma: \{ w_1,\ldots, w_m \} \to \{ w_1,\ldots, w_m \}  .$$  
The isomorphism $\det(D_{E,F}) \to \det(D'_{E,F})$ of determinant
lines induced by the isomorphism of kernel and cokernel acts on
orientations by $\det(\sigma)^{\rank(F)}$.
\item Suppose that $S'$ is a nodal surface obtained by transposing a
  pair $e,e'$ of consecutive strip-like ends.  The isomorphism
  $\det(D_{E,F}) \to \det(D'_{E,F})$ of determinant lines induced by
  the isomorphism of kernel and cokernel acts on orientations by
  $(-1)^{\Ind(D_e) \Ind(D_{e'})}$.
\end{enumerate}
These follows from the behavior of determinant lines under
permutations \eqref{signs}, the behavior of the isomorphism with the
trivial determinant \eqref{tD}, and the definition of the orientation
on nodal surfaces \eqref{nodal}.
\end{enumerate} 
\end{remark} 

\subsection{Effect of gluing on orientations} 
\label{glueor}

We have already introduced in Section \ref{cr} three types of gluing
for Cauchy-Riemann operators: gluing along strip-like ends, gluing at
an interior node, and gluing at a boundary node.  Let $\nS$ be a nodal
surface with strip-like ends, and $\dS$ a nodal surface obtained by
either deforming away a boundary node, deforming away an interior
node, or gluing two strip-like ends. Let $\nE$ be a complex vector
bundle with totally real boundary condition $\nF$, and $\dE,\dF$ the
vector bundles on $\dF$ obtained by gluing.  Similarly let $D_{E,F}$
be a real Cauchy-Riemann operator with non-degenerate limits that are
equal along the strip-like ends $e_\pm$, and $D_{\ti{E},\ti{F}}$ an
operator obtained by gluing the ends $e_+,e_-$.

\begin{definition} {\rm (Compatibility of end orientations)}   Let $S$ be a surface with strip-like ends
$\mE = \mE_- \cup \mE_+$ and $o_e, e\in \mE(S)$ a set of orientations
  on the ends $D_{e^\pm}$.  Let $D_{e^\pm}$ denote Cauchy-Riemann
  operators on the caps $S_{e^\pm}$ added to the outgoing and incoming
  ends in \eqref{glueEF}.  Gluing together the caps $S_{e^\pm}$
  produces a surface
$$\ti{S}_e = S_{e^-} \# S_{e_+} $$
diffeomorphic to the disk with zero Maslov index.  By the previous
constructions the Cauchy-Riemann operator $\tilde{D}_e$ on $\ti{S}_e$
obtained by gluing from $D_e$ is equipped with an orientation.  We say
that the orientations $o_e, e \in \mE_\pm$ are {\em compatible} if the
gluing isomorphism
\begin{equation} \label{glueDe}
\det(D_{e^-}) \otimes \det(D_{e^+}) \to \det(\tilde{D}_e)
\end{equation}
is orientation preserving.
\end{definition} 

Compatibly chosen orientations are compatible with the basic
operations on Cauchy-Riemann operators: 

\begin{remark} 
\begin{enumerate} 
\item {\rm (Conjugates)} Suppose that $E,F$ is a pair of bundles for a
  surface with strip like ends $S$, and $E^-,F^-$ denote the conjugate
  bundles over the conjugate surface $S^-$.  Suppose that a set of
  orientations $o_e, e \in \mE(S)$ for the operators $D_{e^\pm}$ have
  been chosen compatibly.  Then the isomorphisms $\det(D_{e^\pm}) \to
  \det(D^-_{e^\pm})$ induce a collection of orientations 
$$o^-_e \in \det(D^-_{e^\pm}) $$ 
for $E^-,F^-$.  Since complex conjugation induces an isomorphism of
orientations for disks with Maslov index zero, the orientations
$o^-_e$ are also compatible.
\item  
{\rm (Direct Sums)} Suppose that $E_j,F_j$ are bundles over a surface
 with strip-like ends $S$.  Let $o_{e,j}$ be a set of orientations for
 $D_{e^\pm,j}$ the operators for $E_j,F_j$ at end $e \in \mE(S)$.  Let
 $E,F$ denote the direct sum bundles and $D_{e^\pm}$ the direct sum
 Cauchy-Riemann operators.  Let $o_{e^\pm}$ denote the orientations
 induced by the isomorphism of determinant lines 
$$ \det(D_{e^\pm,0}) \otimes \det(D_{e^{\pm,1}}) \to \det(D_{e^\pm})
 .$$
The gluing isomorphism $ \det(D_{e^-}) \otimes \det(D_{e^+}) \to
\det(\ti{D})$ is orientation preserving hence the orientations $o_e$
are compatible.
\end{enumerate}
\end{remark} 

In the following, in the case of gluing boundary nodes or strip-like
ends we assume that the boundary components joined by the gluing are
adjacent in ordering; then we give the boundary components of $\ol{S}$
the ordering obtained by inserting the new boundary component(s) in
place of the old in the ordered sequence.

\begin{theorem} \label{gluingsigns}  {\rm (Behavior of orientations
under gluing)} Suppose that a surface with strip-like ends $\ol{S}$ is
  obtained from $S$ by gluing.  The isomorphism of determinant lines
$$\cG_{E,F}: \det(D_{\ol{E},\ol{F}}) \to \det(D_{E,F})$$ 
from \eqref{linearglue} has the following signs 
$$ \Or(\cG_{E,F}): \Or(\det(D_{\ol{E},\ol{F}})) \to \Or(
\det(D_{E,F})) $$
in the respective cases below with respect to the constructed
orientations $o_{\ol{E},\ol{F}}, o_{E,F}$.
\begin{enumerate} 
\item {\rm (Gluing at interior nodes)} 
$$(\Or(\cG_{E,F})) (o_{\ol{E},\ol{F}}) = ( o_{E,F}) .$$
\item {\rm (Gluing at a boundary node for a nodal surface with a
  single node $(w_+,w_-)$ joining two distinct boundary components
  adjacent in ordering)} 
$$(\Or(\cG_{E,F})) (o_{\ol{E},\ol{F}}) = (
  o_{E,F}) (\pm 1)^{\rank(F)} ,$$ 
with positive sign if and only if the ordering of $w_-,w_+$ agrees with the
ordering of the boundary components for the pre-glued surface; that
is, the component $(\partial S)_-$ containing $w_-$ is ordered before
$(\partial S)_+$ if and only if $w_-$ is ordered before $w_+$, and both boundary
components $(\partial S)_\pm$ are ordered before the node $w_\pm$.
\item {\rm (Gluing at a boundary node for a nodal surface with a
  single node $(w_+,w_-)$ joining a single boundary component)}
$$(\Or(\cG_{E,F})) (o_{\ol{E},\ol{F}}) = ( o_{E,F}) (\pm
  1)^{\rank(F)} ,$$ 
with positive sign if and only if the ordering of the boundary components of the
glued surface has the boundary component corresponding to the segment
from $w_-$ to $w_+$ ordered first;
\item {\rm (Gluing of strip-like ends of distinct components $S_-,S_+$
  such that $S_\pm$ have connected boundary, the end $e_+$ is the last
  outgoing end of $S_-$ and the end $e_-$ is the first incoming end of
  $S_+$, and the ordering of the ends on the glued surface is induced
  by the ordering of ends on $S_-,S_+$.)}  
$$(\Or(\cG_{E,F}))
  (o_{\ol{E},\ol{F}}) = ( o_{E,F}) (\pm 1)^{\rank(F)} \heartsuit \diamondsuit $$ 
with positive sign in $(\pm 1)$ if and only if the ordering of
$e_-,e_+$ is $(e_-,e_+)$, with
$$ \heartsuit := (-1)^{(\sum_{e \in \mE_-(S_+) \ssm \{ e_- \}}
  \rank(F) - \Ind(D_e)) (\sum_{f \in \mE_+(S_-) \ssm \{ e_+ \}}
  \rank(F) - \Ind(D_f))} $$
times 
$$ \diamondsuit := (-1)^{ (\sum_{f \in \mE_+(S_+)} \rank(F) ) (\sum_{f
    \in \mE_+(S_-) \ssm \{ e_+ \}} \rank(F) - \Ind(D_f))} ;$$
In particular, for one outgoing end or one incoming end and ordering
$(e_-,e_+)$, the gluing sign is positive.
\end{enumerate}
\end{theorem}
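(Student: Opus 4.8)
The plan is to reduce the whole statement to a careful bookkeeping exercise with the tensor-sign convention \eqref{tens} and the nodal-ordering convention \eqref{nodal}, together with the two gluing isomorphisms already established: Proposition \ref{glueprop} for strip-like ends and Corollary \ref{deformcor} for nodes. In each case the orientation $o_{E,F}$ is, by construction (Proposition \ref{disktriv}), obtained by bubbling off disks at every end, splitting off disk neighborhoods of every boundary component, and arriving at a product of determinant lines of the form \eqref{order}, together with the orientation on the ``core'' $\det(D^c)$ coming from Proposition \ref{nostrips}. The strategy is to compare the two bubbling-off constructions — the one for the pre-glued surface $S$ and the one for the glued surface $\ol{S}$ — and to observe that they differ only in (i) which nodes/ends get bubbled off, and (ii) the ordering convention used to lay out the resulting determinant lines. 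Associativity of gluing (Proposition \ref{assoc}) guarantees that the order in which we perform the bubbling-off and the glueing does not matter, so we are free to first glue and then bubble off, or vice versa; this is what lets us reduce to a finite-dimensional sign count.

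First I would dispose of the interior-node case (a): here no boundary data and no orientations on $F$-fibers are disturbed, all the relevant index contributions are even (the complex-linear models have even-dimensional kernel and cokernel, and the interior node fiber $E_z$ is a complex vector space, hence even-dimensional), so the permutation signs in \eqref{signs} and \eqref{tens} are all trivial. Combined with the limit statement in Theorem \ref{fourtermthm} (the reduced operator converges to the difference-of-evaluations map $D^\red_{\nE,\nF}$), the gluing isomorphism simply identifies the two orientations: $(\Or(\cG_{E,F}))(o_{\ol{E},\ol{F}}) = o_{E,F}$. Next, the two boundary-node cases (b) and (c): here Theorem \ref{fourtermthm} together with Example \ref{diffs2} shows that deforming the node contributes a factor $(-1)^{\rank(F)}$ relative to the orientation of $\det(\nF)$ coming from the diagonal, and the nodal-ordering convention of \eqref{nodal} (as spelled out in the last paragraph of the ``unreduced and reduced'' remark) contributes the permutation sign needed to move the boundary-node determinant line $\Lambda^{\max}(F_w)$ past the boundary-component determinant lines. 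Matching this against the stated ordering hypotheses — $(\partial S)_- $ before $(\partial S)_+$ iff $w_-$ before $w_+$, both before the node in case (b); segment-component ordered first in case (c) — gives precisely the stated positivity criteria.

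The main work is case (d), gluing strip-like ends. Here I would proceed as follows. Write the orientation of $\det(D_{E,F})$ via \eqref{order}: it is a product, over all ends $e$, of the trivialized blocks $\det(D_e^+)\otimes\det(D_e^-)\otimes\Lambda^{\max}(\Gamma_e(0)^\dual)$ (for the two glued-off ends these contribute $+1$ by compatibility, Definition of compatible end orientations), times $\det(D^c)$, with the incoming product written in reversed order. Now compare with the orientation of $\det(D_{\ol{E},\ol{F}})$: the glued surface $\ol S$ has one fewer incoming and one fewer outgoing end, and its bubbling-off construction produces the same core $D^c$ (up to the gluing isomorphism, which is orientation-preserving on $D^c$ by Proposition \ref{nostrips} applied to the induced relative spin structure) but lays out the remaining end-blocks in the ordering induced from $S_-,S_+$. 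The difference between the two layouts is a permutation of graded lines whose parity I would compute using \eqref{signs}: moving the block for $e_+$ past the remaining outgoing blocks of $S_-$ and the block for $e_-$ past the remaining incoming blocks of $S_+$, and — crucially — noting that the reversal convention for incoming ends interacts with this reordering. Each end-block $\det(D_e^\pm)$ has parity $\Ind(D_e) \bmod 2$, while the dualized fiber $\Lambda^{\max}(\Gamma_e(0)^\dual)$ has parity $\rank(F)\bmod 2$ (half the complex rank of $E$); this is where the combinations $\rank(F)-\Ind(D_e)$ appear. Careful accounting yields exactly $\heartsuit$ (the cross-term between the leftover incoming ends of $S_+$ and the leftover outgoing ends of $S_-$) and $\diamondsuit$ (the cross-term between the outgoing ends of $S_+$ and the leftover outgoing ends of $S_-$, arising because in the glued surface the outgoing ends of $S_-$ and $S_+$ become a single ordered block), plus the overall $(\pm 1)^{\rank(F)}$ governed by the $(e_-,e_+)$ versus $(e_+,e_-)$ ordering exactly as in the boundary-node case. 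The hardest part will be getting the reversal-of-incoming-ends convention from \eqref{order} to mesh correctly with the permutation in \eqref{signs} so that the final exponents come out as written rather than with a few extra spurious cross-terms; I would check it on the two degenerate cases (one outgoing end, or one incoming end) where the claimed sign is $+1$, and use that as a consistency check on the general formula. The special case at the end (one end on one side, ordering $(e_-,e_+)$) then follows since all the leftover sums are empty and $\heartsuit=\diamondsuit=+1$.
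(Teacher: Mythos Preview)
Your proposal has the right overall shape for (a) and (d), but there is a genuine gap in cases (b) and especially (c).

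For (b), you invoke Example~\ref{diffs2} directly, but that example only treats the model case of two disks with trivial bundles joined at a node. You need a reduction from the general $(S,E,F)$ to this model. Associativity (Proposition~\ref{assoc}) alone does not give it: you must show that pinching off a disk neighborhood of each boundary component \emph{commutes with} deforming the boundary node, so that the sign of the general boundary gluing equals the sign of gluing two index-zero disks. The paper organizes this into a commutative diagram (their Figure labelled ``sixterm'') whose vertices are the various partially-glued surfaces and whose edges are the gluing/deformation isomorphisms; commutativity of the diagram forces the general gluing sign $\psi_1$ to equal the disk-gluing sign $\delta$. Without this reduction your appeal to Example~\ref{diffs2} is not justified.

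For (c), the gap is more serious. When a single boundary component is self-glued at a node, the deformed surface $\dS$ is an annulus, and its orientation (by Proposition~\ref{nostrips}) is defined by pinching off the two boundary circles to get two disks and a sphere joined at \emph{interior} nodes. So you must compare two different degenerations of the annulus: the strip-neck degeneration coming from the boundary node, versus the cylindrical-neck degeneration defining $o_{\dE,\dF}$. The reduced operators for these two degenerations have cokernels identified with $\nF_{w^\pm}$ and $i\nF_{w^\pm}$ respectively, and the paper shows the identification between them is multiplication by $\pm i$ (depending on whether $w_-$ or $w_+$ is ordered first), which is exactly where the sign $(\mp 1)^{\rank(\nF)}$ comes from. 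Your proposal does not distinguish (c) from (b) and does not mention this annulus comparison at all; the sign criterion in (c) (``segment from $w_-$ to $w_+$ ordered first'') cannot be extracted from Example~\ref{diffs2} and the nodal-ordering convention alone.

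For (d), your approach---directly comparing the two layouts of \eqref{order} and tracking permutation signs---is different from the paper's, which instead reduces (d) to (b) and (c) via another commutative diagram (their Figure ``pinchglue''): bubbling off caps at the glued ends $e_\pm$ converts strip-like gluing into a pair of boundary-node gluings, and the diagram shows these have the same sign. Your direct approach could in principle work, but the reduction-to-(b,c) route is cleaner because it avoids having to re-derive the $(\pm 1)^{\rank(F)}$ from scratch and localizes all the delicate sign analysis in the disk/annulus models already handled.
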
 

\begin{proof}
\noindent {\em Case (a), Interior Gluing:} In the case of interior
gluing we deduce preservation of orientations from complex linearity.
Let $\rS$ denote the resolution of $\nS$, and $\rE,\rF$ the
corresponding vector bundles.  First we assume that $\rS$ has empty
boundary.  Consider a deformation 
$$ D_{\rE,\rF,\delta}: \Omega^0(E,F) \to \Omega^{0,1}(E,F), \quad
D_{\rE,\rF,0} = D_{\rE,\rF}, \quad D_{\rE,\rF,1} \ \text{complex
  linear} $$
of $D_{\rE,\rF}$ to a complex-linear operator.  The gluing isomorphism
\ref{detiso2} induces an identification of determinant lines for each
bundle in the homotopy.  Since the identification of determinant lines
for the complex-linear operators is complex linear, the identification
of determinant lines is orientation-preserving, for each bundle in the
homotopy.  

Next suppose the boundary is non-empty.  A deformation of $(\rE,\rF)$
to the connect sum of a problem on a closed surface $\iS$, glued to a
trivial problem on a union of disks $\oS$, induces a corresponding
deformation for the glued problem $(\dE,\dF)$.  Compatibility of
orientations for gluing closed surfaces implies that the gluing map is
orientation preserving.

\vskip .1in

\noindent {\em Cases (b,c), Boundary Gluing}: We reduce to the case of
gluing disks of index zero by the following argument.  Suppose that
$(w^+,w^-) $ is a boundary node of $\nS$, and $\dS,\dE,\dF$ a surface
and bundles obtained by deforming the node.  Consider the diagram of
indices shown in Figure \ref{sixterm}; for self-gluing of a disk, see
also Figure \ref{selfglue}.  In the diagram $\phi_1,\phi_2$ are the
gluing maps for the determinant lines for $\iS \cup \oS$ to those of
$S$ and $\iS \cup \oS$ to $\nS$, and are orientation preserving by
definition.  The surface $S_{\times}$ at bottom left is obtained as
follows.  First, glue at the boundary.  Second, degenerate the circles
used for the degeneration of $\nS$.  The gluing map $\phi_3$ for
$S_{\times,+} \cup S_{\times,-}$ to $S_{\times}$ is orientation
preserving by definition.  The map $\delta$ represents gluing of a
collection of disks equipped with trivialized boundary condition,
while $\beta,\psi_2$ represent gluing at an interior node.  The
corresponding isomorphism of determinant lines are orientation
preserving by the previous section.  Both the lower square and the
upper left triangle in the diagram commute by associativity of gluing
Proposition \ref{assoc}. Therefore, the map $\psi_1$ representing
gluing of determinant lines from $\dS$ to $\nS$, induces the same sign
on orientations as $\delta$.

\begin{figure}[ht]
\begin{picture}(0,0)%
\includegraphics{sixterm.pstex}%
\end{picture}%
\setlength{\unitlength}{4144sp}%
\begingroup\makeatletter\ifx\SetFigFont\undefined%
\gdef\SetFigFont#1#2#3#4#5{%
  \reset@font\fontsize{#1}{#2pt}%
  \fontfamily{#3}\fontseries{#4}\fontshape{#5}%
  \selectfont}%
\fi\endgroup%
\begin{picture}(4014,4452)(-1753,-3158)
\put(-1374, 35){\makebox(0,0)[lb]{{{{$\psi_1$}%
}}}}
\put(-1366,-1674){\makebox(0,0)[lb]{{{{$\psi_2$}%
}}}}
\put( 70,-1378){\makebox(0,0)[lb]{{{{$\delta$}%
}}}}
\put( 78,-701){\makebox(0,0)[lb]{{{{$\phi_2$}%
}}}}
\put(131,826){\makebox(0,0)[lb]{{{{$\phi_1$}%
}}}}
\put(1529,-1758){\makebox(0,0)[lb]{{{{$\beta$}%
}}}}
\put( 55,-2388){\makebox(0,0)[lb]{{{{$\phi_3$}%
}}}}
\end{picture}%

\caption{Gluing at a boundary point}
\label{sixterm}
\end{figure} 

We next determine the sign for boundary gluing of disks of index zero,
Suppose that $\nS$ is obtained from a pair $\rS$ of disks by joining
them at a boundary node $w^\pm$.  After deformation we may assume that
the Cauchy-Riemann operator $D_{\rE,\rF}$ is the trivial operator.
Then the kernel is isomorphic to $F_{w^\pm} \oplus F_{w^\mp}$ (via the
two evaluation maps on the boundary) and the cokernel vanishes. The
reduced operator of \eqref{reduced} is
\begin{equation} \label{explicit}
 D_{\nE,\nF}^\red : F_{w^\pm} \oplus F_{w^\mp} \to F_w, \ \
 (f_\pm,f_\mp) \mapsto f_+ - f_- .\end{equation}
The ordering of the factors is determined by the ordering of the
boundary components of $S$.  By \eqref{tD} and \eqref{Dred} the
induced map $ \det(D_{\nE,\nF}^\red) \to \det(D_{\ti{E},\ti{F}}) $
changes the defined orientations by a sign 
$$ \eps(\nS,\dS,\rank(F)) = (\pm 1)^{\rank(F)}$$ 
depending on whether the ordering of the pair $w^\pm$ agrees with the
ordering of the boundary components of $\rS$.  See Example
\ref{diffs2}.

Next we consider the case of a single disk joined to itself by a
boundary node and the Cauchy-Riemann operator is the trivial one. Thus
the boundary component is split into two, as in Figure \ref{selfglue}.
On the normalization $\rS$ of the nodal disk $S$ the kernel
$\ker(D_{{\rE},\rF})$ is isomorphic to $\nF_{w^\pm}$ via evaluation at
a boundary point and trivial cokernel.  The reduced operator of
\eqref{reduced} is
\begin{equation} \label{explicit2}
 D_{\nE,\nF}^\red : \nF_{w^\pm} \to \nF_{w^\pm}, \ \ f \mapsto 0.
 \end{equation}
The kernel is isomorphic to $\nF_{w^\pm}$ and the cokernel is
isomorphic to $\nF_{w^\pm}$.  The deformed surface $\dS$ is an
annulus, equipped with trivial bundles $\dE,\dF$.  The orientation for
$D_{\dE,\dF}$ is induced from pinching off a pair of disks so that
$\nS_1$ is obtained by joining two disks and a sphere at interior
points.  A choice of ordering of boundary components on $\dS$ induces
an ordering of the nodes $z_-,z_+$ of $\nS_1$.  On the normalization
$\rS_1$ the reduced operator can be identified with
\begin{equation} \label{explicit3} 
 D_{\nE_1,\nF_1}^{\red}
: \nF_{z^-} \oplus E_{z^\pm} \oplus \nF_{z^+}
 \to E_{z^-} \oplus E_{z^+}, \ \ (f_-,e,f_+) \mapsto (f_- - e, f_+ - e
 ) .\end{equation} 
The kernel is isomorphic to $\nF_{z^\pm}$, via evaluation at any
boundary point.  On the other hand, the cokernel is isomorphic to
$E_{z^\pm}/\nF_{z^\pm} = i \nF_{z^\pm}$, via projection onto the
second factor of the codomain. 
%Let
%%
%$$ e_1,\ldots,e_n, f_1,\ldots,f_{2n}, g_1,\ldots,g_n \in \nF_{z^+}
%   \oplus E_{z^\pm} \oplus \nF_{z^-} $$
%%
%be a basis for the domain and
%%
%$$ h_1,\ldots,h_{2n}, i_1,\ldots,i_{2n} \in E_{z^-} \oplus E_{z^+} $$
%%
%a basis for the codomain.  The operator $D_{\nE_1,\nF_1}^{\red}$ is
%given by
%%
%$$ D_{\nE_1,\nF_1}^{\red}
%: e_j \mapsto h_j, \ \ f_j \mapsto - h_j -
%i_j, \ \ g_j \mapsto i_j , \quad j = 1,\ldots, n.$$
%%
%%
\begin{figure}[ht]
\includegraphics[width=4in,height=1in]{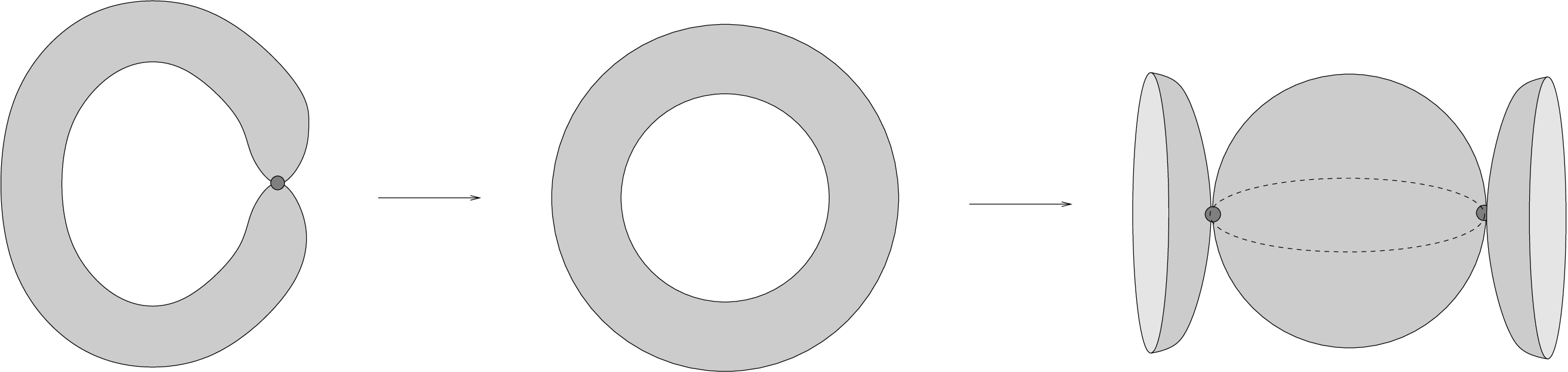}
\caption{Gluing a disk to itself}
\label{selfglue}
\end{figure}
%%
%The isomorphism \eqref{tD} induces for the zero operator the
%orientation defined by the basis
%%
%\begin{multline}
%i_{2n}^\dual \wedge \ldots \wedge i_{n+1}^\dual 
%\wedge (- h_{2n})^\dual \wedge \ldots \wedge (- h_{n+1})^\dual \\
%\wedge (- i_{n})^\dual \wedge \ldots \wedge (- i_1)^\dual \wedge 
%h_{n}^\dual \wedge \ldots \wedge h_1^\dual 
%\wedge  e_1 \wedge \ldots \wedge e_n 
%\wedge f_1 \wedge \ldots \wedge f_{2n} \\
%\wedge (e_1 + f_1 + g_1) \wedge \ldots \wedge ( e_n + f_n + g_n).
%\end{multline}
%%
%This orientation differs from the orientation for the zero operator by
%a sign $(-1)^{\rank(\nF)^2} = (-1)^{\rank(\nF)}$.

We compare the orientations coming from the two degenerations of the
annulus above.  Let $\nS_0= \nS,\nS_1$ be the nodal surfaces obtained
by stretching the two different directions. We compare the
identifications of the kernel and cokernels
$$ \ker(D_{E_0,F_0}) \cong \ker(D_{E_1,F_1}), \quad
\coker(D_{E_0,F_0}) \cong \coker(D_{E_1,F_1}) .$$
In the first case, the reduced operator in \eqref{explicit2} is the
trivial operator on the space of sections with values in $\nF$.  The
kernel and cokernel are
$$ \ker(D_{E_0,F_0}) \cong \coker(D_{E_0,F_0}) \cong \nF_{w^\pm} .$$
via isomorphisms given by evaluation at a boundary point resp.
evaluation at a boundary point on the strip-like neck, see Figure
\ref{twonecks}.  For the surface $\nS_1$ the reduced operator in
\eqref{explicit3} has kernel and cokernel
\begin{equation} \label{kercoker} 
\ker( D_{\nE_1,\nF_1}^{\red}) \cong \nF_{w^\pm} \quad \coker(
D_{\nE_1,\nF_1}^{\red}) \cong i\nF_{w^\pm} .\end{equation}
The isomorphisms in \eqref{kercoker} are given by evaluation at a
point on one of the cylindrical necks for $\nS_1$.
\begin{figure}[ht]
\includegraphics[width=3in]{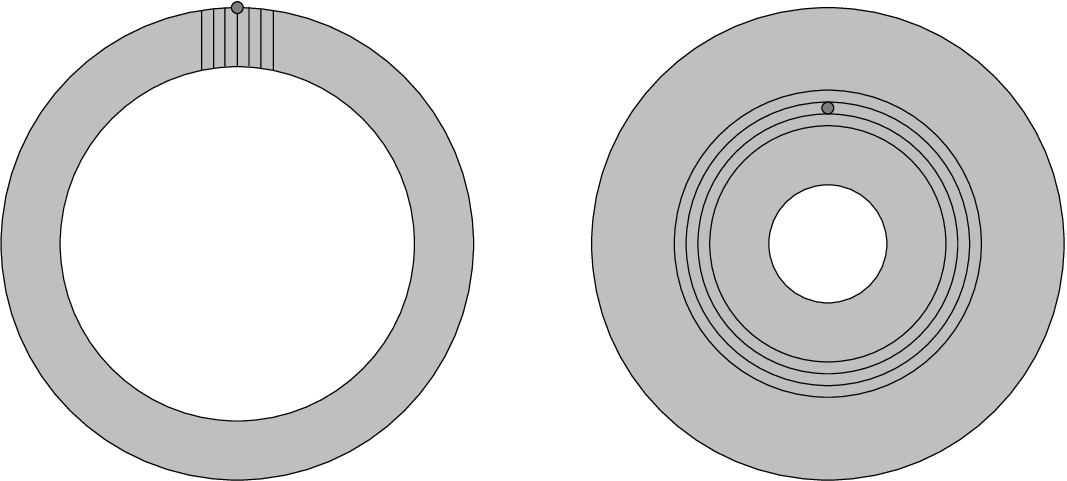}
\caption{Two kinds of neck}
\label{twonecks}
\end{figure}
By construction, the bundle $\dE$ is trivial.  Choose a homotopy
between the two conformal structures on the annulus:
$$ j_t \in \J(S), t \in [0,1], \quad (S,j_\delta) \cong \nS^\delta,
\quad (S,j_{1-\delta}) \cong \nS_1^\delta .$$ 
Taking the trivial bundle $(E_t,F_t)$ over the homotopy gives a family
of trivial operators $D_{E_t,F_t})$ each with kernel and cokernel
isomorphic to $F_{w^\pm}$.  We can also deform the evaluation maps to
all lie on the boundary of $S$, without changing the induced
orientations.  It remains to compare the orientations of 
$$ \nF_{w^\pm} \cong \coker(D_{E_0,F_0}) \cong \coker(D_{E_1,F_1})
\cong i \nF_{w^\pm} $$
given by \eqref{fourterm}.  In each case, $\nF_{w^\pm}$ is identified
with the cokernel via wedge product with a one-form supported on the
neck.  For the surface $\nS_0$, the local coordinates on the neck
region depend on the ordering of the pair $w^\pm$.  On the strip-like
neck region on the left in Figure \ref{twonecks}, choose horizontal
coordinate $s$ and vertical coordinate $t$.  The local complex
coordinate is $s + it$ if $w_-$ is numbered first, and $-s -it$ if
$w_+$ is numbered second.  On the other hand, the coordinates on the
cylindrical neck on the right in Figure \ref{twonecks} are (on the
intersection of the two necks) $t - is$.  The identifications in
\eqref{explicit2}, \eqref{explicit3} are related by multiplication by
$\pm i$ if $w_\mp$ is ordered first.  It follows that the gluing map
on determinant lines $\det(D_{E_0,F_0}) \to \det(D_{E_1,F_1})$ acts by
the sign $(\mp 1)^{\rank(\nF)}$, if $w_\mp$ is ordered first.

\label{endsgluing}

\vskip .1in
\noindent {\em Cases (d), Gluing of strip-like ends:} First, we
consider the case of a disconnected surface $\nS = S_- \cup S_+$ with
a single pair of strip-like ends, and $\nE \to \nS$ a complex vector
bundle over $\nS$ equipped with totally real boundary conditions
$\nF$. Let $e_-,e_+$ be ends and $(\nE_{e_+},\nF_{e_+}) \to
(\nE_{e_-},\nF_{e_-})$ an identification of the corresponding fibers.
Let $\tilde{S}$ denote the surface obtained by gluing $\nS$ together
along the ends, and $(\dE,\dF)$ the elliptic boundary value problem
obtained by gluing $\nE,\nF$.  See for example Figure 3 of
\cite{we:co}.  Adding in the points at infinity gives surfaces without
strip-like ends
$$ \ol{\nS} = \nS \cup \bigcup_{e} s_e, \ \ \ol{\tilde{S}} = \tilde{S}
\cup \bigcup_{ e \neq e^\pm} s_{e} .$$
Choose an ordering of the boundary components of $\ol{\dS}$.  The
strip-like ends of $S$ inherit an ordering from the ordering of the
ends of $\nS$.  We claim that the isomorphism of determinant lines
from $S$ to $\dS$ has the same sign as the isomorphisms of determinant
lines from $\ol{S}$ to $\ol{\dS}$.

To compute the gluing sign, we compare the deformation used in the
definition of the orientations on the surface with strip like ends to
the isomorphism of determinant lines induced by gluing strip-like
ends.  Consider the diagram of indices shown in Figure
\ref{pinchglue}.  The top left picture represents $\det(D_{E,F})$.
The maps are defined as follows:
\begin{itemize}
\vskip .1in
\item[] ($\phi_1,\phi_6$) The map $\phi_1$ represents the isomorphism of determinant lines
  induced by deforming the boundary conditions $F_\pm$ as in the proof
  of Proposition \ref{disktriv}.  This deformation results in a
  boundary problem that is obtained from the nodal surface on the
  upper right of Figure \ref{pinchglue} by gluing.  The map $\phi_6$
  represents a similar isomorphism of determinant lines induced by a
  deformation to split form.
\vskip .1in
\item[] ($\phi_2,\phi_7$) The map $\phi_2$ represents the isomorphism
  of determinant lines induced by gluing at the two boundary nodes in
  the upper right surface.  Similarly the map $\phi_7$ represents
  isomorphism induced by gluing along two nodes $w_1 = (w_{1,+},
  w_{1,-})$ and $w_2 = (w_{2,+}, w_{2,-})$ on the boundary.
\vskip .1in
\item[] ($\phi_3,\phi_4,\phi_5$) The maps $\phi_3,\phi_4,\phi_5$ are
  gluing isomorphisms for the gluing of strip-like ends $e_+,e_-$.

\end{itemize} 

In order to compute the gluing sign we establish commutativity of the
diagram.  The first square in the diagram commutes because deformation
commutes with gluing. The second commutes by associativity of gluing
for determinant lines in Proposition \ref{assoc}:
$$ \phi_4 \phi_1 = \phi_6 \phi_3, \quad \phi_5 \phi_2 = \phi_7 \phi_4
.$$
\begin{figure}[ht]
\begin{picture}(0,0)%
\includegraphics{pinchglue.pstex}%
\end{picture}%
\setlength{\unitlength}{4144sp}%
\begingroup\makeatletter\ifx\SetFigFont\undefined%
\gdef\SetFigFont#1#2#3#4#5{%
  \reset@font\fontsize{#1}{#2pt}%
  \fontfamily{#3}\fontseries{#4}\fontshape{#5}%
  \selectfont}%
\fi\endgroup%
\begin{picture}(6288,1755)(669,-2856)
\put(2481,-1188){\makebox(0,0)[lb]{\smash{{\SetFigFont{5}{6.0}{\rmdefault}{\mddefault}{\updefault}{$\phi_1$}%
}}}}
\put(4628,-1201){\makebox(0,0)[lb]{\smash{{\SetFigFont{5}{6.0}{\rmdefault}{\mddefault}{\updefault}{$\phi_2$}%
}}}}
\put(1537,-1958){\makebox(0,0)[lb]{\smash{{\SetFigFont{5}{6.0}{\rmdefault}{\mddefault}{\updefault}{$\phi_3$}%
}}}}
\put(3708,-1952){\makebox(0,0)[lb]{\smash{{\SetFigFont{5}{6.0}{\rmdefault}{\mddefault}{\updefault}{$\phi_4$}%
}}}}
\put(6065,-1952){\makebox(0,0)[lb]{\smash{{\SetFigFont{5}{6.0}{\rmdefault}{\mddefault}{\updefault}{$\phi_5$}%
}}}}
\put(4580,-2818){\makebox(0,0)[lb]{\smash{{\SetFigFont{5}{6.0}{\rmdefault}{\mddefault}{\updefault}{$\phi_7$}%
}}}}
\put(2427,-2794){\makebox(0,0)[lb]{\smash{{\SetFigFont{5}{6.0}{\rmdefault}{\mddefault}{\updefault}{$\phi_6$}%
}}}}
\end{picture}%
\caption{Orientations for gluing strip-like ends}
\label{pinchglue}
\end{figure}
By definition the composition of $\phi_1,\phi_2$ is orientation
preserving.  $\phi_5$ is orientation preserving by construction, and
$\phi_6$ is orientation preserving since it is induced by a
deformation.  Hence $\phi_3$ has the same sign as $\phi_7$.  By
definition $\phi_7$ is the composition of gluing isomorphisms for
resolution of the first node $w_1 = (w_{1,+}, w_{1,-})$, then second
boundary node $w_2 = (w_{2,+}, w_{2,-})$.  Choose the ordering of the
boundary components so that the disk boundary $\partial S_{\on{disk}}
\subset \partial S$ is ordered first and boundary nodes so that the
node on the disk $w_{k,+} \in \partial S_{\on{disk}}$ is ordered
first.  Then the first gluing isomorphism is orientation preserving,
and the resulting surface is $\ol{S}$.  Hence $\phi_3$ has the same
sign as the isomorphism of determinants induced by the second gluing
operation.  By part (b), this has the sign claimed in the statement of
part (c).

The additional signs in the case of multiple ends arise from permuting
the remaining outgoing ends and nodes of the $S_-$, with the incoming
ends and nodes of $S_+$, and also the outgoing ends of $S_-$ with the
nodes associated to the outgoing ends of $S_+$, per the convention
\eqref{order}.  The sign resulting from permuting the remaining
incoming ends of $S_+$ past the caps and nodes for $e_\pm$ and
determinant line on the closed surface $\ol{S}_-$ is $(-1)^{3\rank(F)
  + \Ind(D_{e_-}) + \Ind(D_{e_+})} = (-1)^{4\rank(F)} = 1$.  This
completes the proof.
\end{proof} 

Finally we check the signs for various special cases needed later.

\begin{remark} \label{cases} 
\begin{enumerate} 
\item 
{\rm (Annulus)} Let $A = [0,1] \times S^1$ with trivial bundle $E$ and
two transverse, constant boundary conditions $F = (F_0,F_1)$.  By
definition the orientation on the determinant line $\det(D_{E,F})$ in
induced from the isomorphism with the sphere with two bubbled-off
disks,
$$ \det(D_{E,F}) \to \Lambda^{\max}( (E^2)^\dual) \otimes
\Lambda^{\max}(F_0 \oplus E \oplus F_1) ,$$
see Figure \ref{selfglue}.  The operator $D_{E,F}$ has trivial kernel
and cokernel, and so $\det(D_{E,F}) = \R$.  The induced orientation on
the determinant line $\det(D_{E,F}) = \R$ is the standard one if and
only if the isomorphism $F_0 \oplus F_1 \to E$ is orientation
preserving.  The quilted case is similar and left to the reader.
\item {\rm (Strip)} Let $S$ denote the strip $[0,1] \times \R$.  Let
  $e^0$ denote the incoming end, and $e^1$ the outgoing end of $S$.
  Let $E$ be the trivial bundle and $F^j, j= 0,1$ denote constant,
  transverse boundary conditions.  Choose a path 
$$\Gamma = (\Gamma^t)_{t \in [-\infty,\infty]}, \quad \Gamma^{-\infty}
  = F^0 , \quad \Gamma^{\infty} = F^1 $$
from $F^0$ to $F^1$, and compatible orientations on the resulting
operators on the once-punctured disks $D_{e^j}, j = 0,1$.
Compatibility means that the gluing map for strip-like ends
$$ \det(D_{e^0}) \otimes \det(D_{e^1}) \to \det(D_{\disk}) $$
to the operator $D_{\disk}$ with homotopically trivial boundary
conditions is orientation preserving.  The orientation on $\det(D)$ is
defined so that the gluing isomorphism
\begin{equation} \label{strip0}
 \det(D_{e^0}) \otimes
\Lambda^{\max}((\Gamma(0))^{\dual}) \otimes
\det(D_{\disk}) \otimes
\Lambda^{\max}((\Gamma(0))^{\dual}) \otimes
\det(D_{e^1}) \to \det(D_{E,F}) \end{equation}
is orientation preserving. Permuting the factor $\det(D_{e^1})$ to the
beginning produces a factor of
$$(-1)^{\Ind(D_{e^1}) (\rank(F)+ \Ind(D_{e^0}) )} =
(-1)^{\Ind(D_{e^1})} .$$
Using compatibility of orientations and gluing to the annulus gives an
orientation preserving isomorphism
\begin{multline} \label{strip} \det(D_{e^1})  \otimes 
\det(D_{e^0}) \otimes \Lambda^{\max}((\Gamma(0))^{\dual}) \otimes
\det(D_{\disk}) \otimes \Lambda^{\max}((\Gamma(0))^{\dual}) \\ \to
\det(D_{\disk}) \otimes \Lambda^{\max}((\Gamma(0))^{\dual}) \otimes
\det(D_{\disk}) \otimes \Lambda^{\max}((\Gamma(0))^{\dual}).
\end{multline}
By gluing this tensor product is isomorphic to the determinant line
for the Cauchy-Riemann operator on the annulus.  By the previous item,
the gluing isomorphism has orientation $(-1)^{\Ind(D_{e^1})}$.  The
arguments above give a total sign of $+1$ for the isomorphism of the
determinant line of the operator on the strip with the trivial line.
\item {\rm (Cup and Cap)} Suppose that $S_\cup,S_\cap$ are the disks
  with two outgoing resp. incoming ends of Example 4.1.7 of
  \cite{we:co}.  Suppose these are equipped with constant vector
  bundles $E_\cup,E_\cap$ and constant real boundary conditions
  $(F^0_\cup,F^1_\cup) = (F^1_\cap,F^0_\cap)$.  Let $D_{\cup},
  D_{\cap}$ be the corresponding Cauchy-Riemann operators.  For the
  two ends of $S_\cup$ we can choose the paths
  $\Gamma_\cup,\Gamma_\cap$ on the two ends $e_\cup^0,e_\cup^1$ to be
  related by time-reversal.  Choose the orientations on $D_{e_\cup^0},
  D_{e_\cup^1}$ so that the gluing map
\begin{equation} \label{cup}
 \det(D_{\disk}) \otimes \Lambda^{\max}(\Gamma_\cup(0)^\dual)^2
 \otimes \det(D_{e_\cup^1}) \otimes \det(D_{e_\cup^0}) \to
 \det(D_\cup) \end{equation}
induces the standard orientation on $\det(D_\cup) = \R$.  Note that we
have isomorphisms
$$ \det(D_{e^1_\cap}) \cong \det(D_{e_\cup^0}), \ \ \det(D_{e^0_\cap}) \cong
\det(D_{e_\cup^1}) .$$
The compatibility condition for $D_{e_\cup^0},D_{e_\cap^0}$ differs
from that for $D_{e_\cup^1},D_{e_\cap^1}$ by a sign
$$(-1)^{\Ind(D_{e^1_\cup}) \Ind(D_{e_\cup^0})} = (-1)^{(\rank(F)
  - \Ind(D_{e_\cup^0})) \Ind(D_{e_\cup^0})} $$
given by the transposition of factors.  Changing the order of factors
in \eqref{cup} to that in \eqref{strip0} produces a sign $
(-1)^{\Ind(D_{e_\cup^0})^2} $ by \eqref{signs}.  The choice of sign
orientation for $\det(D_{e^1_\cup})$ that makes the orientations
positive is this sign times the induced orientation from
$\det(D_{e^0_\cap})$.  The orientation on $D_\cap$ is defined by the
gluing isomorphism
\begin{equation} \label{cap}
 \det(D_{e_\cap^0}) \otimes \Lambda^{\max}(\Gamma_\cup(0)^\dual)
\otimes \det(D_{e_\cap^1}) \otimes
\Lambda^{\max}(\Gamma_\cup(0)^\dual) \otimes \det(D_{\disk}) \to
\det(D_\cap) .\end{equation}  
Changing the order of factors to match that of \eqref{strip} produces
by \eqref{signs} a sign
$$(-1)^{\Ind(D_{e_\cap^0})^2 + \Ind(D_{e_\cap^1}) \rank(F) }.$$
Hence the orientation on $\det(D_{\cap}) = \R$ is related
to the standard one by a sign 
$$ (-1)^{\rank(F)  \Ind(D_{e_\cup^0}) + 2 \Ind(D_{e_\cup^0})^2
+ (\rank(F) - \Ind(D_{e_\cup^0})) \Ind(D_{e_\cup^0})} 
= (-1)^{\Ind(D_{e_\cup^0})} $$
as in Example 4.1.7 of \cite{we:co}.
\label{disksigns}
\end{enumerate}
\end{remark} 

\subsection{Orientations for quilted surfaces}
\label{oqs}

In this section we define orientations for Cauchy-Riemann operators on
quilted surfaces.  Let $\ul{S} \to B$ be a family of quilted surfaces
possibly with strip-like ends and $D_{\ul{E},\ul{F}}$ be a family of
Cauchy-Riemann operators for $(\ul{E},\ul{F})$, and suppose that the
$\ul{F}$ is equipped with a collection of relative spin structures.
We define an orientation for $D_{\ul{E},\ul{F}}$ by deforming the seam
conditions to split form.

\begin{proposition} \label{noends} {\rm (Orientations for quilted Cauchy-Riemann operators
via relative spin structures)}
 Let $\ul{S},\ul{E},\ul{F}$ be a family
  of quilted surfaces with bundles and boundary/seam conditions over a
  base $B$.  A relative spin structure on $\ul{F}$ and orientations
  for the ends of each component together induce an orientation on the
  determinant line bundle $\det(D_{\ul{E},\ul{F}}) \to B$.
\end{proposition}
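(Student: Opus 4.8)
The plan is to reduce the quilted problem to a disjoint union of unquilted ones by deforming each seam condition into split form, and then to invoke Proposition \ref{disktriv}. As in the proof of Proposition \ref{nostrips}, I would first reduce to a single fiber of the family, observing that every construction below is manifestly continuous in the base point $b\in B$, so the family statement follows by the argument of Steps 3--4 there. Working with a single quilted surface $\ul{S}$, the first move is to bubble off a disk with strip-like end at each strip-like end of each patch, exactly as in Proposition \ref{disktriv} and using the prescribed end orientations; the constant seam conditions are transported onto these disks, and the compatibility built into the quilted end orientations provides, on the disks, a deformation of each constant seam limit to a \emph{split} totally real subbundle together with compatible spin structures. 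With this in place, over each seam arc $I_\sigma$ I would deform the seam condition $F_\sigma \subset E^-_{p_+(\sigma)}|_{I_\sigma} \times E_{p_-(\sigma)}|_{I_\sigma}$, rel its now-split limits, through totally real subbundles to a split subbundle $F_{\sigma,+}^- \times F_{\sigma,-}$, carrying the relative spin structure along and using Proposition \ref{double} to split it into relative spin structures on the factors $F_{\sigma,\pm}$.

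Once every seam is in split form, the seam condition along $I_\sigma$ decouples into two independent boundary conditions $\xi_{p_\pm}|_{I_\sigma}\in F_{\sigma,\pm}$, so the quilted operator $D_{\ul{E},\ul{F}}$ becomes the direct sum $\bigoplus_{p\in\PP} D_{E_p,F_p}$ of unquilted Cauchy-Riemann operators on the patches $S_p$, where $F_p$ is the totally real boundary condition assembled from the true boundary conditions and the split seam pieces. The deformation yields a canonical isomorphism of determinant lines $\det(D_{\ul{E},\ul{F}}) \to \det(\bigoplus_p D_{E_p,F_p}) \cong \bigotimes_p \det(D_{E_p,F_p})$, the last step by \eqref{caniso} in the chosen order of patches. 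Each factor is oriented by Proposition \ref{disktriv}, since $F_p$ inherits a relative spin structure from $\ul{F}$ and the splittings while $S_p$ comes with end orientations; pulling back the product orientation defines $o_{\ul{E},\ul{F}}$.

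The remaining work, and the main obstacle, is to show that $o_{\ul{E},\ul{F}}$ is independent of the auxiliary choices: the splitting deformations, the split subbundles $F_{\sigma,\pm}$ and their spin structures, the bubbling-off data, and the choices internal to Proposition \ref{disktriv}. The last are handled there; for the splitting deformations I would argue as in Step 2 of Proposition \ref{nostrips}, namely that any two choices are joined by a homotopy producing a continuous family of deformed operators and hence of orientations, so that the two induced orientations agree provided no intermediate orientation flips. The delicate point is that two splitting deformations of a given seam differ by a loop of totally real subbundles over $I_\sigma$, which upon bubbling off contributes a disk whose boundary condition winds with some Maslov index; the relative spin structure on $F_\sigma$ together with the induced spin structures on the factors trivialize the corresponding $\Z_2$-obstruction, so that by Lemma \ref{togglesame} this Maslov index is $\equiv 0 \bmod 4$ and the orientation is unchanged. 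I expect this $\Z_2$-bookkeeping --- together with the sign accounting relating the bubbled-off description to the direct one, which is controlled by associativity of gluing (Proposition \ref{assoc}) and the gluing signs of Theorem \ref{gluingsigns}, and the permutation signs for reordering the determinant lines of the patches and split seams, already built into \eqref{caniso} --- to be the most laborious part. Finally, when $\ul{F}$ is already split the construction reduces to the disjoint union of the constructions of Proposition \ref{disktriv} over the patches, which fixes the normalization; continuity of every step in $b$ then gives the statement for families.
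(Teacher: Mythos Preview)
Your overall strategy---deform every seam condition to split form, so the quilted operator decomposes as a direct sum over patches, then apply Proposition~\ref{disktriv} to each patch---is exactly the route the paper takes. The order in which you do the end-capping versus the seam-splitting differs slightly from the paper (the paper first deforms the seam conditions on the strip-like ends to split form, keeping transversality, and then homotopes the seams to split form over the whole surface), but this is cosmetic.

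The genuine gap is in your independence argument. Your plan is to compare two splitting deformations by interpreting their difference as a loop of totally real subbundles, then to invoke Lemma~\ref{togglesame} to force the associated Maslov index to be $\equiv 0 \bmod 4$. But Lemma~\ref{togglesame} does not say this: it compares two relative spin structures on the \emph{same} bundle $Q$ that differ by tensoring in a bundle of the form $V\oplus V$, and concludes that the induced stable trivializations agree or disagree according to the Maslov index of $(R,Q)$ mod $4$. It gives no control over the Maslov index of an arbitrary loop of split totally real subbundles arising from two different homotopies, and there is no mechanism in your outline that forces such a loop to have Maslov index divisible by $4$.

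The paper's independence argument avoids this entirely by a homotopy-theoretic observation: after stabilizing so that all ranks are at least two, the inclusion
\[
\prod_j U(n_j)/SO(n_j) \;\longrightarrow\; U\!\Bigl(\textstyle\sum_j n_j\Bigr)\big/SO\!\Bigl(\textstyle\sum_j n_j\Bigr)
\]
induces a surjection on $\pi_1$ and $\pi_2$. Surjectivity on $\pi_1$ guarantees that a deformation to split form exists (with Maslov index zero along the deformation), and surjectivity on $\pi_2$ guarantees that any two such deformations are homotopic through totally real subbundles. Once you know any two deformations to split form are homotopic, the induced orientations on $\det(D_{\ul{E},\ul{F}})$ automatically agree, and no Maslov-index bookkeeping or appeal to Lemma~\ref{togglesame} is needed. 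I would replace your independence paragraph with this argument.
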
 

\begin{proof}    The proof uses a deformation of the seam conditions to split form, 
after which we may apply the construction in the unquilted case.  For
simplicity, we assume that the Hamiltonian perturbations on the
strip-like ends vanish.  We may assume that the ranks of the bundles
are at least two, after stabilizing by adding trivial bundles.  Note
that the map of Grassmannians of totally real subspaces
$$ \frac{U(n_1)}{SO(n_1)} \times \frac{U(n_2)}{SO(n_2)} \to \frac{U(n_1 + n_2)}{SO(n_1 +
n_2)} $$ 
induces a surjection of first and second homotopy groups, by the long
exact sequence of homotopy groups and the isomorphisms 
$$\pi_1(SO(n_1)) \to \pi_1(SO(n_1+n_2)), n_1 > 1; \quad \pi_1(U(n_1)) \to
\pi_1(U(n_1+n_2)) .$$ 
It follows that there exists a deformation of the seam conditions on
the strip-like ends to split form in the product of Grassmannians
$(U(\sum n_j)/SO(\sum n_j))^2 $ (where $n_1,\ldots,n_k$ are the
dimensions of the boundary and seam conditions) such that the path has
Maslov index zero:
$$ [\ul{F}_{e,\pm, \delta}] \in (U(\Sigma_j n_j)/SO(\Sigma_j n_j)), \quad
\delta \in [0,1], \quad [\ul{F}_{e,\pm, 1}] \in \prod_j
U(n_j)/SO(n_j) .$$
Any such path has a deformation with no crossing points, that is, so
that every set of conditions in the deformation are transversal:
$$ \ul{F}_{e,-,\delta} \cap \ul{F}_{e,+,\delta} = \{ 0 \}, \quad
\forall \delta \in [0,1] .$$
This deformation produces a family of Fredholm operators, and hence an
isomorphism of the determinant line of the original problem with the
problem with split form on each strip-like end.  The given path can be
deformed into a path in partially split form, that is, a path into
$$ U(n_1)/SO(n_1) \times U(n_1 + n_2)/SO(n_1 + n_2) \times \ldots
\times U(n_k)/SO(n_k) $$
uniquely up to homotopy of homotopies.  Finally we homotope the seam
conditions $\ul{F}$ via a homotopy
$$ \ul{F}_\delta \subset \ul{E} |_{\partial \ul{S}}, 
\quad \ul{F}_0 = \ul{F}, \quad 
\ul{F}_1 = \ul{F}^{\on{split}} $$
to a set of boundary and seam conditions $\ul{F}^{\on{split}}$ of
split form over the entire surface.  

Having completed the deformation to split form, we now reduce to the
unquilted case.  The index problem on $(\ul{E},\ul{F}^{\on{split}})$
splits into a sum of problems on the various components:
$$ D_{\ul{E},\ul{F}^{\on{split}}}
\cong \bigoplus_{p \in \PP}  D_{E_p,F_p^{\on{split}}} $$
 splits into a sum of problems on the various patches.  The
 constructions in the unquilted case give orientations on the various
 determinant lines
$$\det(D_{E_p,F_p^{\on{split}}}), \quad p \in \PP  .$$  
These are then pulled back under the deformations to an orientation on
the determinant line on the original family of operators
$\det(D_{\ul{E},\ul{F}})$.  The resulting orientation on
$\det(D_{\ul{E},\ul{F}})$ is independent of the choice of deformation
$\ul{F}_\delta$ to split form, since any two deformations to split
form are homotopic.
\end{proof} 

\begin{remark} \label{dependence}
\begin{enumerate}
\item {\rm (Dependence on ordering of components)} Recall that for a
  disconnected unquilted surface $S$ with boundary value problem
  $E,F$, the orientation constructed on a Cauchy-Riemann operator
  $D_{E,F}$ depends on an ordering of the components of $S$.  In
  particular for a quilted surface, the orientation depends on an
  ordering of the patches 
$$ \PP = \{ p_1,\ldots, p_l \}, \quad l = | \cP| .$$
However, if $\ul{S}$ is connected and $D_{\ul{E},\ul{F}}$ has index
zero resp. one then the orientation on $D_{\ul{E},\ul{F}}$ is
independent of the ordering of the connected components of the patches
$\ul{S}$.  Indeed, since the orientation constructed is independent of
the choice of deformation to split form, we may deform $\ul{F}$ to
boundary bundles of split form such that the index is zero on each
resp. all but one patch of $\ul{S}$. Then the determinant lines for
all connected components commute, see Remark \ref{permuteremark}
\eqref{permute}.
\item {\rm (Orientations for the constant bundle on a quilted
  surface)} \label{trivbun} Suppose that $\ul{S}$ is a quilted strip
  and $\ul{E},\ul{F}$ are trivial, and $D_{\ul{E},\ul{F}}$ is the
  trivial Cauchy-Riemann operator.  Then the kernel and cokernel of
  $D_{\ul{E},\ul{F}}$ are trivial, hence $\det(D_{\ul{E},\ul{F}})$ are
  trivial.  We claim that the orientation on $D_{\ul{E},\ul{F}}$
  constructed by deformation to seam conditions of split form is also
  trivial.  Indeed, by the proof of Proposition \ref{noends}, the seam
  conditions can be deformed to split form through a path of seam
  conditions 
$$ \ul{F}_\delta \subset \ul{E}, \quad \delta \in [0,1] $$
that are still transversal at each end $e \in \mE(\ul{S})$, for all
$\delta \in [0,1]$.  Then the determinant line is isomorphic to the
determinant lines on the patches:
$$ \det(D_{\ul{E},\ul{F}}) \cong \bigotimes_{p \in \cP}
\det(D_{E_p,F_p^1}) .$$
The orientation on the determinant lines $ \det(D_{E_p,F_p^1}) $ for
each patch is trivial by \eqref{strip0}.  This proves the claim.
\item {\rm (Effect of gluing on orientations)} In the quilted case,
  there are four types of gluing to consider: gluing at the interior,
  gluing on the true boundary, gluing at the seams, and gluing along
  strip-like ends.  Suppose that 
$D_{\ul{E},\ul{F}}$ has index zero or
  one and suppose that $D_{\ul{\ti{E}},\ul{\ti{F}}}$ is obtained by
  gluing of one of these types.  We claim that the gluing sign in the isomorphism 
$$  \det(D_{\ul{E},\ul{F}}) \to \det(D_{\ul{\ti{E}},\ul{\ti{F}}}) $$ 
produced by Corollary \ref{deformcor} is the product of gluing signs
for the unquilted connected components.  Indeed, in this case we can
find a deformation of $\ul{F}$ to split form $\ul{F}^{\on{split}}$ so
that the index of $D_{\ul{E},\ul{F}^{\on{split}}}$ is one on at most
one of the unquilted connected components:
$$ \# \{ p \in \cP \ | \ \Ind(D_{E_p,F^{\on{split}}_p}) = 1 \} \leq 1
  .$$
The gluing operations then reduce to the corresponding gluing
operations on disconnected unquilted surfaces, after deformation of
the boundary conditions to split form.  The determinant lines
corresponding to the various unquilted operators commute, by the index
assumption.  Permuting the connected components to be glued adjacent
in the ordering
$$ \cP = \{ \ldots, p_-, p_+,\ldots \} $$
and applying the gluing operation for the unquilted case results in a
collection of operators that again have at most one with index $1$,
and permuting the connected components into the desired ordering does
not change the gluing sign.  In particular, in the case that
$\ul{S}_-,\ul{S}_+$ are obtained by thickening the boundary of an
unquilted surface, and $\ul{S}_-$ has a single outgoing end, this
convention leads to a positive sign in the gluing rule.  This argument
gives the associativity relation in the generalized Fukaya category,
see Section \ref{dfsec} below.
\end{enumerate}
\end{remark} 

\subsection{Inserting a diagonal for Cauchy-Riemann operators}

In this section we explore the effect of adding an additional seam
with seam condition given by a diagonal. 

\begin{definition} 
\begin{enumerate} 
\item {\rm (Adding a seam to a quilted surface)} Let $\ul{S}$ be a
  quilted surface, $S_p$ a patch, and $I \subset S_p$ an embedded
  one-manifold.  The {\em surface obtained by adding a seam} is the
  surface $\ul{S}^{\sDel}$ dividing the patch $S_p= S_p' \cup_I S_p''
  $ into two patches $S_p',S_p''$ joined by the seam $\sigma$ with
  image $I$:
$$\ul{S}^{\sDel} = \ul{S}/ (S_p \mapsto S_p', S_p'') .$$
The ordering of the connected components of the patches
$\ul{S}^{\sDel}$ is such that $S_p''$ follows $S_p'$ immediately (or
vice-versa), and the new boundary component of $S_p'$ (resp. $S_p''$)
is last (resp. first) in the ordering of boundary components.  See
Figure \ref{insert} below.
\item {\rm (Adding a diagonal seam condition)} Let $(\ul{E},\ul{F})$
  be an collection of bundles with totally real seam and boundary
  conditions on $\ul{S}$, and suppose that $\ul{S}^{\sDel}$ is
  obtained by adding a seam.  The pair
$$ \ul{E}^{\sDel} := (\ul{E}/(E_p \mapsto (E_p,E_p)), \quad F^{\sDel}
  = (\ul{F},\Delta) $$
obtained by {\em adding a diagonal seam condition} is the pair
obtained from $\ul{E},\ul{F}$ by replacing $E_p$ with two copies and
assigning to the new seam the diagonal sub-bundle $\Delta$ of $E_p
\oplus E_p$.
\end{enumerate}
\end{definition}  

\begin{figure}[ht]
\begin{picture}(0,0)%
\includegraphics{insert.pstex}%
\end{picture}%
\setlength{\unitlength}{4144sp}%
\begingroup\makeatletter\ifx\SetFigFont\undefined%
\gdef\SetFigFont#1#2#3#4#5{%
  \reset@font\fontsize{#1}{#2pt}%
  \fontfamily{#3}\fontseries{#4}\fontshape{#5}%
  \selectfont}%
\fi\endgroup%
\begin{picture}(3528,1581)(889,-1369)
\put(3566,-579){\makebox(0,0)[lb]{\smash{{$\Delta$}%
}}}
\end{picture}%
\caption{Inserting a seam in a quilted surface}
\label{insert}
\end{figure}

\begin{remark} \label{DDp}
 {\rm (Identification of determinant lines obtained by adding a seam)}
 Suppose that $\ul{F}$ is equipped with a relative spin structure.
 Let $\ul{D}$ be a Cauchy-Riemann operator for $(\ul{E},\ul{F})$,
 $\ul{S}^{\sDel},\ul{E}^{\sDel}, \ul{F}^{\sDel}$ are obtained by
 adding a seam with diagonal seam condition, and $\ul{D}^{\sDel}$ is
 the Cauchy-Riemann operator for $(\ul{E}^{\sDel},\ul{F}^{\sDel})$
 obtained from $\ul{D}$.  There is a canonical identification of
 kernels and cokernels
$$ \ker(\ul{D}) \to \ker(\ul{D}^{\sDel}), \quad 
\coker(\ul{D}) \to \coker(\ul{D}^{\sDel}) $$ 
given by patching together the restrictions to the two components
obtained by the division.  Hence we have an isomorphism of determinant
lines 
$$ \det(\ul{D}) \to \det(\ul{D}^{\sDel}) .$$
\end{remark} 

\begin{definition} {\rm (Relative spin structures for bundles obtained
by inserting seams)} \label{relspinstrs} Let $\ul{S}$ be a quilted
  surface and $\ul{S}^{\sDel}$ the surface obtained by inserting a seam
  into a patch $S_p$.  Let $\ul{E},\ul{F}$ be bundles with
  boundary/seam conditions on $\ul{S}$.
\begin{enumerate} 
\item {\rm (The inserted seam is separating)}.  Suppose that the
  inserted seam $\sigma$ divides $\ul{S}^{\sDel}$ into quilted
  surfaces $\ul{S}^{\s\Del} = \ul{S}_+ \cup \ul{S}_-$.  Let $\ul{S}$
  be connected.  The collection $\ul{F}^{\sDel}$ has a canonical
  relative spin structure, given a choice of component of the
  complement of $\sigma$.  Indeed, the diagonal $\Delta_p$ is
  isomorphic to $E_p$, via projection on the second factor, hence has
  a canonical relative spin structure as in \ref{double}.  The
  background classes
$$ b(F_\sigma^{\sDel}) = b(F_\sigma) + (w_2(E_{p_-(\sigma)}) ,
  w_2(E_{p_+(\sigma)}) ), \quad F_\sigma \subset \ul{S}_\pm $$
for the relative spin structure on the components $F_\sigma$ of
$\ul{F}^{\sDel}$ corresponding to seams in $\ul{S}_\pm$ differ from
those of $\ul{F}$ by adding $w_2(E_p)$ to all the background classes
for components on one side:
$$ b_p \mapsto b_p + w_2(E_p),  S_p \subset \ul{S}_\pm $$
where $\ul{S}_\pm$ is either $\ul{S}_+$ or $\ul{S}_-$, one side of the
new seam $\sigma$.
\item {\rm (The inserted seam is not separating)} The same
  construction assigns to $\ul{F}^{\sDel}$ a canonical relative spin
  structure after adding {\em two} new seams, separating $\ul{S}$ into
  two components.  The patches in one of the components have shifted
  background classes.
\end{enumerate} 
\end{definition}  

We wish to show that in each of these cases the isomorphism in
\ref{DDp} preserves orientations.  We begin with the following simple
case:

\begin{proposition}  \label{circle}  {\rm (Preservation of orientations
for insertion of a separating circle)} Suppose that $\ul{S}^{\sDel}$
  is a quilted surface obtained by inserting a seam $\sigma$ into a
  quilted surface $\ul{S}$.  Suppose that $\sigma$ is separating and
  diffeomorphic to a circle, that is, does not meet any strip-like
  ends. Let $(\ul{E},\ul{F})$ be a family of bundles with totally real
  seam and boundary conditions, and $(\ul{E}^{\sDel},\ul{F}^{\sDel})$
  the family for $\ul{S}^{\sDel}$ obtained by inserting a diagonal.
  Equip $\ul{F}^{\sDel}$ with with either of the canonical relative
  spin structures defined in Definition \ref{relspinstrs}.  The
  isomorphism of \ref{DDp} maps the orientation $o_{\ul{E},\ul{F}}$
  given by the relative spin structure on $\ul{E},\ul{F}$ to the
  orientation $o_{\ul{E}^{\sDel},\ul{F}^{\sDel}}$ determined by either
  of the relative spin structures on $\ul{E}^{\sDel},\ul{F}^{\sDel}$.
\end{proposition}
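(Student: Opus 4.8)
The plan is to unwind both orientations through the definition by deformation to split form (Proposition~\ref{noends}) and then reduce the claim to the Maslov-index computation of Lemma~\ref{togglesame}.

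\emph{Reduction to one patch and one circle.} I would first observe that every patch of $\ul S$ other than $S_p$, and every seam other than the inserted $\sigma$, contributes the same Cauchy--Riemann operator and the same orientation to both sides of the isomorphism of Remark~\ref{DDp}, and that the ordering convention of Definition~\ref{relspinstrs} simply replaces $S_p$ in place by the ordered pair $S_p',S_p''$ and appends the new boundary component(s) at the end of the relevant list. Using the deformation to split form one may arrange that at most one patch carries an operator of odd index, so all determinant lines involved commute and no reordering sign arises (Remark~\ref{permuteremark}\,\eqref{permute}). Hence it suffices to treat $\ul S = S$ unquilted, $\sigma\subset S$ a separating circle with $S = S'\cup_\sigma S''$, bundle $E$, true boundary condition $F$, and $\ul S^{\sDel}=S'\cup_\sigma S''$ carrying the diagonal seam $\Delta\subset E^-\oplus E$ over $\sigma$ with one of the relative spin structures of Definition~\ref{relspinstrs}.

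\emph{Unwinding the orientations.} By Propositions~\ref{noends} and~\ref{nostrips} the orientation $o_{\ul E,\ul F}$ is computed by deforming $F$ to split form and pinching off a disk from each true boundary component, while $o_{\ul E^{\sDel},\ul F^{\sDel}}$ is computed by first deforming $\Delta$ over $\sigma$ to a split condition and then running the same procedure on $S'$ and $S''$. I would choose the split form of $\Delta$ to be the \emph{doubled} one $G\times G$ for a totally real subbundle $G\subset E|_\sigma$, together with an explicit path of totally real subspaces of $E^-\oplus E$ from $\Delta$ to $G\times G$; after this deformation $D_{\ul E^{\sDel},\ul F^{\sDel}}$ splits as $D_{E',F'}\oplus D_{E'',F''}$, where $F'$ (resp.\ $F''$) restricts to $F$ on the old boundary and to $G$ on $\sigma$. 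One must then show that the composite $\det(\ul D)\to\det(\ul D^{\sDel})\to\det(D_{E',F'})\otimes\det(D_{E'',F''})$ (the first map from Remark~\ref{DDp}, the second the homotopy deforming $\Delta$ to split form) respects the constructed orientations.

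\emph{The comparison and the key sign.} On the other side, $o_{\ul E,\ul F}$ may be reorganized, using the pinching/gluing isomorphisms of Section~\ref{crglue} along the circle $\sigma$ exactly as inside the proof of Proposition~\ref{nostrips}, into the tensor product of the orientation built from $(S',F)$ and that built from $(S'',F)$ relative to a doubled condition $G$ on $\sigma$ equipped with the \emph{canonical} spin structure of Proposition~\ref{double}\,\eqref{se}. The content of the proposition is then that the relative spin structure placed on $\Delta\cong E_p$ in Definition~\ref{relspinstrs}, with its background class shifted by $w_2(E_p)$ on one side of $\sigma$, induces via the deformation above precisely this doubled trivialization of $G$. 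This is exactly the situation of Lemma~\ref{togglesame}: comparing the two stable trivializations along $\sigma$ amounts to a Maslov index mod $4$, and since the diagonal $\Delta$ over the circle $\sigma$ is unwound through a path of Maslov index $0$ (in a trivialization in which $\Delta$ is constant), that index is $\equiv 0\bmod 4$, so the trivializations agree and no sign appears. Running the same comparison with the background shift on the $\ul S_+$ side versus the $\ul S_-$ side then shows the two canonical relative spin structures of Definition~\ref{relspinstrs} give the same orientation, which settles the ``either'' clause. I expect the main obstacle to be precisely this last point --- verifying that the relevant Maslov/stable-triviality data is $\equiv 0\bmod 4$, and keeping the ordering conventions of Definition~\ref{relspinstrs} matched throughout the deformation --- rather than anything in the formal reduction.
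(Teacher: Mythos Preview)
Your reduction to a single unquilted patch $S$ with an interior separating circle $\sigma$ is fine, and lines up with the paper's first move. The trouble is in your ``comparison'' paragraph, where the argument becomes circular and the appeal to Lemma~\ref{togglesame} does not close the gap.

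On the original surface $S$ the circle $\sigma$ is \emph{interior}, so $o_{\ul E,\ul F}$ is built (via Proposition~\ref{nostrips}) by pinching only the genuine boundary components carrying $F$; it records no data whatsoever along $\sigma$. Your assertion that $o_{\ul E,\ul F}$ ``may be reorganized \ldots\ into the tensor product of the orientation built from $(S',F)$ and that built from $(S'',F)$ relative to a doubled condition $G$ on $\sigma$'' is exactly the content of the proposition, not something available in advance. There is no a~priori factorization of $\det(D_{E,F})$ along an interior circle with a prescribed totally real condition $G$: the pinching of Proposition~\ref{nostrips} applies to boundary circles, whereas pinching an interior $\sigma$ produces an \emph{interior} node with complex-oriented fiber $E_z$, not a pair of disks with boundary $G$. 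Since there is no boundary bundle on the $o_{\ul E,\ul F}$ side at $\sigma$, Lemma~\ref{togglesame}---which compares two stable trivializations of a boundary bundle---has nothing to act on. (Separately, the Maslov index in Lemma~\ref{togglesame} is that of a pair $(U,V)$ over a surface with boundary, not the index of your deformation path $\Delta\rightsquigarrow G\times G$; these are different invariants.)

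The paper's argument avoids this by a different localization. Instead of trying to split $S$ at $\sigma$, it pinches two nearby parallel translates $\sigma_\pm$ of $\sigma$ to interior nodes. Both $\ul S$ and $\ul S^{\sDel}$ then decompose as the \emph{same} two outer pieces together with a middle piece, which is an ordinary sphere for $\ul S$ and a quilted sphere (two disks joined along $\sigma$ with seam condition $\Delta$) for $\ul S^{\sDel}$. Interior-node gluing is orientation-preserving on both sides (Theorem~\ref{gluingsigns}(a)), so the entire comparison collapses to the sphere. There one computes directly: take $E$ trivial on the sphere and deform $\Delta$ through \emph{constant} seam conditions $F_t^{\sDel}$; each operator $\ul D_t^{\sDel}$ is surjective with kernel isomorphic to the fiber $(F_t^{\sDel})_z$ at any seam point, so the orientation on $\det(\ul D^{\sDel})$ is the fiber orientation of $\Delta$. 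This agrees with the complex orientation on $E_z\cong\ker(\ul D)$ because projection of $\Delta\subset E^-\times E$ to either factor is orientation-preserving---which simultaneously handles the ``either relative spin structure'' clause without any Maslov bookkeeping.
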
 

\begin{proof}  We deal first with the case of a single unquilted two-sphere. 
That is, suppose that $\ul{S}$ has a single component $S$ isomorphic
to the two-sphere with standard complex structure, $E \to S$ is
trivial, and $D$ is the standard Cauchy-Riemann operator.  The
orientation for $\ul{D}^{\sDel}$ is defined by deforming $\Delta$ to a
condition of split form as in Proposition \ref{noends}.  Let
$F_t^{\sDel} \to \sigma, t \in [0,1]$ denote the family of seam
conditions in the deformation.  If $F_t^{\sDel}$ to be constant along
the seam (that is, a trivial bundle for each $t \in [0,1]$) then the
corresponding family $\ul{D}_t^{\sDel}$ of Cauchy-Riemann operators is
surjective, with kernel isomorphic to any fiber of $F_t^{\sDel}$ by
evaluation at a point $z \in S$ on the seam:
$$  \ker(\ul{D}_t^{\sDel}) \cong (F_t^{\sDel})_z, \quad \xi \mapsto \xi(z) .$$
Hence the orientation on $\ul{D_0}^{\sDel}$ is induced by evaluation
at a point on the seam, and the orientation on the fibers of $\Delta$.
On the other hand, the orientation on $\det(D)$ is induced by the
complex structure on $E$.  The proposition follows since the projection of $\Delta$ on either factor is orientation preserving.

We reduce the general case to case of a single two-sphere by deforming
the surface to a surface with nodes, so that an unquilted two-sphere
is created by the deformation.  Suppose that $S_p$ is the component of
$\ul{S}$ containing $\sigma$.  Choose a trivialization of $E_p$ in a
neighborhood of $\sigma$.  Let $\sigma_\pm$ be small translates of the
seam $\sigma$ to either side.  Contracting the lines $\sigma_\pm$ to
nodes one obtains a nodal surface
$$\ul{S}_{\delta} = (\ul{S}_{-,\delta} \sqcup \ul{S}_{0,\delta} \sqcup
\ul{S}_{+,\delta})/\sim $$
consisting of quilted surfaces $\ul{S}_{-,\delta}, \ul{S}_{0,\delta},
\ul{S}_{+,\delta}$, with $\ul{S}_{0,\delta}$ a sphere.  Applying the
same construction to $\ul{S}^{\sDel}$ yields a surface
$$\ul{S}^{\sDel}_{\delta}= (\ul{S}_{-,\delta} \sqcup
\ul{S}^{\sDel}_{0,\delta} \sqcup \ul{S}_{+,\delta})/ \sim $$
with $\ul{S}^{\sDel}_{0,\delta}$ a quilted sphere.  The deformation is
illustrated in Figure \ref{pinchseam}.
\begin{figure}[ht]  
\begin{picture}(0,0)%
\includegraphics{pinchseam.pstex}%
\end{picture}%
\setlength{\unitlength}{4144sp}%
\begingroup\makeatletter\ifx\SetFigFont\undefined%
\gdef\SetFigFont#1#2#3#4#5{%
  \reset@font\fontsize{#1}{#2pt}%
  \fontfamily{#3}\fontseries{#4}\fontshape{#5}%
  \selectfont}%
\fi\endgroup%
\begin{picture}(5248,1290)(1057,-1455)
\put(1600,-1367){\makebox(0,0)[lb]{\smash{{\SetFigFont{8}{9.6}{\rmdefault}{\mddefault}{\updefault}{$\ul{S}$}%
}}}}
\put(3359,-1398){\makebox(0,0)[lb]{\smash{{\SetFigFont{8}{9.6}{\rmdefault}{\mddefault}{\updefault}{$\ul{S}_{-,\delta}$}%
}}}}
\put(4635,-1398){\makebox(0,0)[lb]{\smash{{\SetFigFont{8}{9.6}{\rmdefault}{\mddefault}{\updefault}{$\ul{S}^{\sDel}_{0,\delta}$}%
}}}}
\put(5612,-1398){\makebox(0,0)[lb]{\smash{{\SetFigFont{8}{9.6}{\rmdefault}{\mddefault}{\updefault}{$\ul{S}_{+,\delta}$}%
}}}}
\end{picture}%
\caption{Pinching off a seam}
\label{pinchseam}
\end{figure}
By gluing for quilted surfaces, the isomorphisms of determinant lines
induced by gluing 
$$\det(\ul{D}) \to \det(\ul{D}_{\delta}), \quad  \det(\ul{D}^{\sDel})
\to \det(\ul{D}^{\sDel}_{\delta})$$ 
are orientation preserving.  By the previous paragraph, the gluing
isomorphism $\det(D_{0,\delta}) \to \det(D_{0,\delta}^{\sDel})$ is
orientation preserving.  Since the isomorphisms of determinant lines
induced by gluing commute with the isomorphisms induced by
deformation, this proves the Proposition.
\end{proof} 

We prove a similar result when the added seam labelled with the
diagonal meets the strip-like ends.  

\begin{definition} \label{newordering} {\rm (Ordering of components and boundary components
of a surface with an inserted seam)} Suppose that $\ul{S}^{\sDel}$ is
  obtained from $\ul{S}$ by inserting a new seam connecting two ends
  in a patch $S_j$, as in Figure \ref{insert}.  
\begin{enumerate} 
\item The ordering of the components of $\ul{S}$ induces an ordering
  of the components of $\ul{S}^{\sDel}$, by replacing the index of the
  old components with those of the new component and ordering the
  component $S_{j,-}$ before $S_{j,+}$.  
\item An ordering of the ends of the components of $\ul{S}$ induces
  an ordering of the ends of $\ul{S}^{\sDel}$, since these are in bijection.
\item The ordering of boundary components of $\ul{S}$ induces an
  ordering of the boundary components for each component of
  $\ul{S}^{\sDel}$: For each old component, the ordering is the same,
  while for the new components $S_{j,\pm}^{\sDel}$ one puts the new
  seam last (resp. first) for $S_{j,-}$ resp. $S_{j,+}$, and the other
  components ordered as before.
\end{enumerate} 
\end{definition}

\begin{proposition} {\rm (Preservation of orientations for insertion 
of separating diagonals)} Suppose $\ul{S}^{\sDel}$ is obtained by
  adding a seam so that the new seam $\sigma$ is separating and
  diffeomorphic to $\R$.  Suppose that $\ul{E}^{\sDel},
  \ul{F}^{\sDel}$ are obtained from $(E,F)$ by labelling the new seam
  by the diagonal.  Suppose that the {orientation}s for the ends of
  $\ul{S}^{\sDel}$ as well as the orderings of the components and
  boundary components are induced from the orientations and orderings
  from $\ul{S}$ as in Definition \ref{newordering}.  Then the
  isomorphism of determinant lines $\det(\ul{D}) \to
  \det(\ul{D}^{\sDel})$ is orientation preserving.
\end{proposition}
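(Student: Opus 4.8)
The plan is to imitate the proof of Proposition \ref{circle}: first dispose of a model case in which the surface is a single strip, and then reduce the general case to that model case by a neck-stretching degeneration that isolates a ``quilted strip'' neighbourhood of the inserted seam.

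For the model case I would take $\ul{S}$ to be the strip $[0,1]\times\R$ regarded as a single patch $S_p$ with one incoming and one outgoing end, with $E=E_p$ trivial and $D$ the trivial Cauchy--Riemann operator with transverse constant boundary conditions. Inserting $\sigma=\{1/2\}\times\R$ with the diagonal label then produces a quilted strip $\ul{S}^{\sDel}$ with two patches and trivial bundles; the induced relative spin structure on $\ul{F}^{\sDel}$ is the trivial one since $w_2(E_p)=0$, and the orderings are those of Definition \ref{newordering}. Both $D$ and $\ul{D}^{\sDel}$ have trivial kernel and cokernel, so both determinant lines are canonically $\R$ and the patching isomorphism of Remark \ref{DDp} is the identity $\R\to\R$. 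By Remark \ref{cases} (the case of the strip) the orientation on $\det(D)$ is the standard one, and by Remark \ref{dependence}\eqref{trivbun} the orientation on $\det(\ul{D}^{\sDel})$ constructed by deformation to split form is also the standard one. Hence the identification of Remark \ref{DDp} is orientation preserving in the model case, provided one tracks the orderings of patches and boundary components carefully so that no spurious sign appears.

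For the general case I would choose a trivialization of $E_p$ near $\sigma$ and small translates $\sigma_\pm$ of $\sigma$ bounding a strip-like neighbourhood of $\sigma$, and then stretch the conformal structure across $\sigma_\pm$. This degenerates $\ul{S}$ to a nodal quilted surface of the form $(\ul{S}_+\sqcup\ul{S}_0\sqcup\ul{S}_-)/\!\sim$ with $\ul{S}_0$ the unquilted model strip, and simultaneously degenerates $\ul{S}^{\sDel}$ to $(\ul{S}_+\sqcup\ul{S}_0^{\sDel}\sqcup\ul{S}_-)/\!\sim$ with $\ul{S}_0^{\sDel}$ the model quilted strip carrying the diagonal seam (compare Figure \ref{pinchseam}); the background-class shift by $w_2(E_p)$ is thereby localized on $\ul{S}_0^{\sDel}$, where $E_p$ is trivial, so the relative spin structure data match up along the necks. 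The gluing isomorphisms $\det(\ul{D})\to\det(\ul{D}_\delta)$ and $\det(\ul{D}^{\sDel})\to\det(\ul{D}^{\sDel}_\delta)$ are orientation preserving: after reducing to the index $0$ or $1$ case as in Remark \ref{dependence}\eqref{trivbun} and using the orderings of Definition \ref{newordering}, all the gluing signs of Theorem \ref{gluingsigns} reduce to $+1$. These gluing isomorphisms commute with the deformations of the seam conditions to split form used to define the orientations, by associativity of gluing (Proposition \ref{assoc}), and they are intertwined by the patching isomorphism of Remark \ref{DDp}. Combining these facts with the model case completes the proof. I expect the main obstacle to be exactly this bookkeeping step: arranging the degeneration so that the neck regions at the two ends through which $\sigma$ passes peel off precisely the model quilted strip, and then verifying that the ordering conventions fixed in Definition \ref{newordering} are the unique ones that make all the gluing signs of Theorem \ref{gluingsigns} cancel.
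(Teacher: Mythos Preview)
Your model case is fine: for the trivial strip with trivial bundles and a diagonal seam inserted, both determinant lines are canonically $\R$, and Remark \ref{cases} together with Remark \ref{dependence}\eqref{trivbun} give the standard orientation on each side.

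The gap is in your reduction step. The seam $\sigma$ is diffeomorphic to $\R$: it runs between two strip-like ends $e_-,e_+$ of the patch $S_p$. Its translates $\sigma_\pm$ are therefore also noncompact arcs running out to those same ends, not closed circles or arcs between boundary points. Stretching the conformal structure across such arcs is not one of the degenerations covered by the paper's gluing theorems (interior node, boundary node, or gluing of strip-like ends as in Proposition \ref{glueprop}, Theorem \ref{fourtermthm}, Corollary \ref{deformcor}, Theorem \ref{gluingsigns}). There is no nodal limit of the form $(\ul{S}_+\sqcup\ul{S}_0\sqcup\ul{S}_-)/\!\sim$ in the sense of Section \ref{nodalsec}: pinching a noncompact arc does not produce a node, and the analogy with Figure \ref{pinchseam} breaks down precisely because there $\sigma$ was a circle. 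So you cannot invoke the gluing isomorphisms or their sign computations, and the reduction to the model strip does not go through.

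The paper's fix is to go in the opposite direction: rather than trying to cut out a strip neighbourhood of $\sigma$, it \emph{glues the two ends $e_\pm$ to each other}. This converts $\sigma$ from a line into a circle inside the glued surface $S^\sharp$, and converts the diagonal seam in $\ul{S}^{\sDel}$ into a circular diagonal seam in $\ul{S}^{\sDel,\sharp}$. Now Proposition \ref{circle} applies directly to $S^\sharp$ and $\ul{S}^{\sDel,\sharp}$, and the remaining work is to check that the gluing isomorphisms $\det(D)\to\det(D^\sharp)$ and $\det(\ul{D}^{\sDel})\to\det(\ul{D}^{\sDel,\sharp})$ (which \emph{are} covered by the strip-like-end gluing of Theorem \ref{gluingsigns}) carry the same sign under the ordering conventions of Definition \ref{newordering}. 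The general quilted case then reduces to the unquilted one by deforming all \emph{other} seam conditions to split form, exactly as you anticipated.
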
  

\begin{proof}  
The proof is by a reduction to the case that the new seam is a circle
in Proposition \ref{circle}.  First, suppose that $\ul{S} = S$ is an
unquilted surface with strip-like ends.  Gluing the ends $e_\pm$,
produces a surface $\ul{S}^{\sharp}$ with two fewer strip-like ends
and a Cauchy-Riemann operator $\ol{D}^{\sharp}$, see Figure
\ref{splitglue}.

\begin{figure}[h]
\begin{picture}(0,0)%
\includegraphics{splitglue.pstex}%
\end{picture}%
\setlength{\unitlength}{4144sp}%
\begingroup\makeatletter\ifx\SetFigFont\undefined%
\gdef\SetFigFont#1#2#3#4#5{%
  \reset@font\fontsize{#1}{#2pt}%
  \fontfamily{#3}\fontseries{#4}\fontshape{#5}%
  \selectfont}%
\fi\endgroup%
\begin{picture}(3324,2663)(364,-1995)
\put(1685, 64){\makebox(0,0)[lb]{{{$e_+$}%
}}}
\put(3390, 26){\makebox(0,0)[lb]{{{$\ul{e}_+^{\sDel}$}%
}}}
\put(379, 21){\makebox(0,0)[lb]{{{$e_-$}%
}}}
\put(2078, 32){\makebox(0,0)[lb]{{{$\ul{e}_-^{\sDel}$}%
}}}
\put(2649,514){\makebox(0,0)[lb]{{{$\ul{S}^{\sDel}$}%
}}}
\put(1075,550){\makebox(0,0)[lb]{{{$S$}%
}}}
\put(1032,-1941){\makebox(0,0)[lb]{{{$S^{\sharp}$}%
}}}
\put(2511,-1936){\makebox(0,0)[lb]{{{$\ul{S}^{\sDel,{\sharp}}$}%
}}}
\end{picture}%

\caption{Inserting a diagonal and gluing the ends together}
\label{splitglue}
\end{figure}

We compute the effect of adding a seam by studying the gluing signs
for gluing the two quilted ends.  Suppose that the ordering of the
boundary components of the glued surface is such that the first
boundary component of the new surface corresponds to the boundary of
$S$ between $e_-$ and $e_+$.  Let $\ul{D}$ denote the Cauchy-Riemann
operator on the quilted surface $\ul{S}$ obtained by inserting a seam
labelled by a diagonal.  Consider the gluing isomorphisms from
\eqref{linearglue}
$$ \cG: \det(D) \to \det(D^{\sharp}), \quad \ul{\cG}: \det(\ul{D}) \to \det(\ul{D}^{\sharp}) .$$%
The isomorphism $\cG$ is orientation preserving by Proposition
\ref{qpres}.  By assumption on the ordering of the boundary
components, the gluing isomorphism $\ul{\cG}$ is orientation
preserving.  Now by Remark \ref{dependence} \eqref{trivbun} the
natural isomorphism
$$\det(D^{\sharp}) \to \det(\ul{D}^{\sharp})$$ 
is orientation preserving, and similarly for the glued surfaces.
Since gluing along the strip-like ends $\ul{e}_\pm^{\sDel}$ commutes
with these isomorphisms, this proves the Proposition in this case.

In general, the orientations on the quilted Cauchy-Riemann operator
are defined by deforming the seam conditions to split form.  After
deforming all seam conditions except the inserted seam to split form,
the Cauchy-Riemann operator splits as a sum of unquilted
Cauchy-Riemann operators for the patches.  This argument reduces the
proof to the previous case.
\end{proof}

\subsection{Orientations for compositions of totally real correspondences} 

\begin{definition}  {\rm (Smooth composition of linear seam conditions)} 
Let $\ul{S}$ be a quilted surface with two adjacent patches $S_0,S_2$,
equipped with complex vector bundles $\ul{E}$ and boundary and seam
conditions $\ul{F}$.  Suppose that $E_1$ is a complex vector bundle
over the seam joining $S_0,S_2$.  Let 
$$F_{01} \subset \ol{E}_0 \times
E_1, \quad F_{12} \subset \ol{E}_1 \times E_2 $$
be totally real seam conditions.  We say that
$$ F_{02} := \pi_{02} ( F_{01} \times_{\Delta_1} F_{12}) $$ 
is a {\em smooth composition} of totally real subbundles if the
intersection 
$$(F_{01} \times F_{12}) \pitchfork (E_0 \times \Delta_1 \times
E_2)$$ 
is transverse.
\end{definition}

\begin{remark}   Let $F_{02}$ be a smooth composition of seam conditions $F_{01}$ and $F_{12}$. 
\begin{enumerate} 
\item {\rm (Relative spin structure for the composition)} Relative
  spin structures for $F_{01}, F_{12}$, and the diagonal induce a
  relative spin structure for $F_{02}$, because of the isomorphism
$$ \pi_{02}^* F_{02} \oplus \Delta_1^\perp \to F_{01} \oplus F_{12}, 
\quad \Delta_1^\perp := \{ (e, - e) \in E_1 \oplus E_1, e \in E_1 \}$$
and the discussion in Proposition \ref{double}.
\item {\rm (Quilted surface obtained by composition)} Let $\ul{S}'$
  denote the quilted surface with two additional seams separating
  $S_0,S_2$.  The surface $\ul{S}'$ contains, in comparison with
  $\ul{S}$, two additional patches $S_1^-,S_1^+$ each isomorphic to
  strips.  Let $\ul{E}'$ be the collection of complex vector bundles,
  equal to $\ol{E}$ on all but the new components where given by $E_0$
  (pulled back by projection onto the seam).  Let
$$\ul{F}' = \ul{F}/ (F_{02} \mapsto (F_{01}, \Delta_1, F_{12})) $$
be the collection of boundary and seam conditions obtained by
replacing $F_{02}$ with $F_{01}, \Delta_1, F_{12}$.
\item {\rm (Identification of determinant lines)} 
 Let $\ul{D},\ul{D}'$ denote the corresponding
  Cauchy-Riemann operators.  The natural identifications 
$$ \ker(\ul{D})\to \ker(\ul{D}'), \quad \coker(\ul{D}) \to
  \coker(\ul{D}') $$
induce an identification $\det(\ul{D})
  \to \det(\ul{D}')$.
\item {\rm (Orientations for the ends)} Orientations for the ends of
  $\ul{S}$ induce orientations for the ends of $\ul{S}'$: Given a
  {orientation} at the end $\ul{e}_\pm$, define an orientation for
  $\ul{e}'_\pm$ as follows.  First choose a path of subspaces
  connecting $(F_{01} \oplus F_{12})_{\ul{e}_\pm}$ to $(F_{02} \oplus
  \Delta_1^\perp)_{\ul{e}_\pm}$.  Choose a deform of the subspaces
  $(\Delta_1)_{\ul{e}_\pm},(\Delta_1^\perp)_{\ul{e}_\pm}$ to split
  form so that every space in the family has transverse intersection.
  We obtain from the deformation to split form orientations of
  $\det(\ul{D}_{\ul{e}_\pm})$ resp. $\det(\ul{D}'_{\ul{e}_\pm})$.
\end{enumerate}
\end{remark} 

\begin{proposition}\label{quiltcyl} {\rm (Preservation of orientations for composition
of linear seam conditions)} Suppose that the {orientation}s for the ends of
  $\ul{S}'$ are induced from a choice of {orientation}s for $\ul{S}$,
  the ordering of components, ends, and boundary components of
  $\ul{S}$ is induced from those of $\ul{S}'$, the components $S_1$
  and $S_1'$ are adjacent in the ordering.  Then the isomorphism of
  determinant lines $\det(\ul{D}) \to \det(\ul{D}')$ is orientation
  preserving.
\end{proposition}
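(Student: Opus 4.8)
The plan is to reduce the statement, by deformation and gluing arguments of the kind already used in Propositions \ref{circle} and \ref{noends}, to two elementary facts: that collapsing the diagonal strip in $\ul{S}'$ preserves orientations, and that the linear composition isomorphism is orientation preserving once all seam conditions are in split form.

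First I would deal with the pair of new patches $S_1^-, S_1^+$ of $\ul{S}'$, which are joined along a seam labeled by the diagonal $\Delta_1 \subset \ol{E}_1 \times E_1$. Exactly as in the proof of Proposition \ref{circle} and the preceding separating-diagonal proposition --- using that projection of $\Delta_1$ onto either factor is orientation preserving, together with the compatibility of orientations with gluing (Theorem \ref{gluingsigns}) and with deformation to split form (Proposition \ref{noends}) --- the natural identification of kernels and cokernels with those of the surface $\ul{S}''$ obtained by amalgamating $S_1^- \cup_{\Delta_1} S_1^+$ into a single strip patch $S_1$ carrying the pullback of $E_1$ is orientation preserving. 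Here the orderings of patches and boundary components prescribed in Definition \ref{newordering}, and the induced relative spin structure of the composition, are precisely what guarantee that no extra sign is introduced. After this step it remains to compare the Cauchy--Riemann operator $\ul{D}''$ on $\ul{S}''$ (patches $S_0, S_1, S_2$, with $S_1$ a strip and seams $F_{01}$ between $S_0, S_1$ and $F_{12}$ between $S_1, S_2$) with $\ul{D}$ on $\ul{S}$.

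Next I would invoke the deformation-to-split-form definition of the orientations: by Proposition \ref{noends} both $o_{\ul{E},\ul{F}}$ and $o_{\ul{E}'',\ul{F}''}$ are computed by deforming all seam conditions to split form, independently of the chosen deformation. I would pick compatible deformations --- a path $F_{01,\delta}, F_{12,\delta}$ to split form whose fiberwise composition with $\Delta_1$ stays transverse, and the induced path $F_{02,\delta} := \pi_{02}(F_{01,\delta} \times_{\Delta_1} F_{12,\delta})$, which by the isomorphism $\pi_{02}^* F_{02} \oplus \Delta_1^\perp \cong F_{01} \oplus F_{12}$ of the composition Remark is a path of totally real seam conditions ending in split form, and is precisely the deformation of $F_{02}$ adapted to the induced relative spin structure and the induced end orientations of Definition \ref{newordering}. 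Since the natural identification of kernels and cokernels commutes with these deformations, the problem reduces to the case that every seam condition is split. In that case the operator on $\ul{S}''$ decomposes as a direct sum of unquilted Cauchy--Riemann operators on $S_0, S_1, S_2$ and that on $\ul{S}$ as a direct sum on $S_0, S_2$; deforming the $S_1$ factor further to the trivial operator with constant boundary conditions (whose determinant line is trivial, as in Remark \ref{dependence}\eqref{trivbun}) and using the evaluation-at-a-point identification of its kernel, the surviving contribution is exactly the finite-dimensional composition map, which on split-form fibers is orientation preserving by the very definition of the orientation on $F_{02}$ induced from those on $F_{01}, \Delta_1, F_{12}$. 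Reassembling gives $\det(\ul{D}) \to \det(\ul{D}')$ orientation preserving.

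I expect the main obstacle to be entirely combinatorial: tracking the orderings of patches, boundary components, and ends through the collapse of the diagonal strip and through the split-form direct-sum decomposition, so as to certify that every permutation of determinant lines that intervenes is trivial. As in Remark \ref{dependence}, one arranges the split-form deformation so that $\ul{D}$ has index $0$ or $1$ with at most one patch carrying an odd-index operator and all others carrying even-index (in particular complex-linear) operators; then the relevant determinant lines commute and the orderings are immaterial --- this is the point requiring care, and it is what makes the reductions above legitimate.
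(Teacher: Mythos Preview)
Your approach is correct in spirit but takes a genuinely different route from the paper's proof. You proceed in two stages: first collapse the diagonal seam between $S_1^-$ and $S_1^+$ using the separating-diagonal proposition, then deform both $(F_{01},F_{12})$ and the induced $F_{02}$ to split form in parallel and compare the resulting unquilted problems patchwise. The paper instead performs a single direct deformation: it invokes a lemma from \cite{we:co} giving a canonical path, inside the totally real Grassmannian of $E_0\oplus E_1\oplus E_1\oplus E_2$, from $F_{01}\oplus\Delta_1\oplus F_{12}$ to (a permutation of) $F_{02}\oplus\Delta_1^\perp\oplus\Delta_1$. After this deformation the problem on $\ul{S}'$ factors as $\ul{D}$ tensored with the trivial-index problem on the two strips with boundary conditions $\Delta_1,\Delta_1^\perp$, and the latter has trivial orientation by Remark~\ref{dependence}\eqref{trivbun}.

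What each buys: the paper's argument is shorter and avoids any appeal to the diagonal-insertion proposition, at the cost of importing an external deformation lemma. Your argument is more self-contained and modular --- it reuses only machinery already built in this paper --- but you must verify that a deformation of $F_{01},F_{12}$ to split form exists along which the composition remains transverse (so that $F_{02,\delta}$ stays totally real of the correct rank throughout), and you must be a bit more explicit about why the split-form endpoint really witnesses the \emph{induced} relative spin structure on $F_{02}$ (this is where the isomorphism $\pi_{02}^*F_{02}\oplus\Delta_1^\perp\cong F_{01}\oplus F_{12}$ enters). Both points are routine, but the paper's single canonical deformation sidesteps them entirely.
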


\begin{proof} 
To compare orientations we deform the seam conditions to the composed
seam conditions plus a trivial factor.  Namely there is a canonical
deformation of $F_{01} \oplus F_{12} \oplus \Delta_1$ to
$\sigma_{1423}(F_{02} \oplus \Delta_1^\perp) \oplus \Delta_1$ within
the space of totally real sub-bundles, where $\sigma_{1423}$ is the
isomorphism
$$E_0 \oplus E_1 \oplus E_1 \oplus
  E_2 \to E_0 \oplus E_2 \oplus E_1 \oplus E_1$$ 
  given by permutation of factors. Indeed, any complex vector bundle
  admits a Hermitian, hence a symplectic structure.  The fiber bundle
  of totally real subspaces of maximal dimension is canonically
  isomorphic to the Lagrangian Grassmannian.  Hence the claim follows
  from the symplectic case, considered in \cite[Lemma 3.1.9]{we:co}.

As a result of this deformation, the determinant lines for the
original problem are identified with the determinant lines for the
corresponding problem with composed seam conditions.  More precisely,
the orientations for $D_{\ul{E}',\ul{F}'}$ are those induced by the
deformation of the totally real subbundles $F_{01} \oplus \Delta_1
\oplus F_{12}$ resp.  $F_{02}$ to split form.  Now the orientation is
independent of the deformation of $F_{01} \oplus \Delta_1 \oplus
F_{12}$ to split form; hence we may take the deformation to be induced
by a deformation of $F_{02}, \Delta_1^\perp$ and $\Delta_1$ to split
form.  In this way we obtain an identification of the determinant line
$\det(\ul{D}')$ with the tensor product of $\det(\ul{D})$ with that
for the problem on $S_1,S_1'$ with boundary conditions
$\Delta_1,\Delta_1^\perp$.  The latter has trivial index and
orientation by definition (recall $S_1,S_1'$ are strips or annuli) so
the orientation on $\det(\ul{D})$ is that induced by $\det(\ul{D}')$.
\end{proof} 

\subsection{Orientability of families of quilted Cauchy-Riemann operators} 

\begin{proposition}  \label{trivial}
{\rm (Trivializability of the orientation double cover of a family
  with nodal degeneration)} Let $\ul{S}_b, \ul{E}_b, \ul{F}_b, b \in
B$ be a family of complex vector bundles with totally real boundary
conditions on quilted surfaces with strip-like ends over a stratified
space $B$.  Suppose that $\ul{F}$ are equipped with relative spin
structures, and the link of each stratum $B_\Gamma$ in $B$ is
connected.  Then the determinant line bundle $\det(D_{\ul{E},\ul{F}})
\to B$ is trivializable.
\end{proposition}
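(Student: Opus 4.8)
The plan is to trivialize $\det(D_{\ul E,\ul F})\to B$ by trivializing its orientation double cover $\Or(D_{\ul E,\ul F})\to B$ from \eqref{doublecov}; by Proposition \ref{doublecover} the line bundle is the bundle associated to this double cover. Since $B$ is paracompact Hausdorff and, by the cone-neighborhood structure of Definition \ref{stratified}, locally contractible, it is enough to show that the monodromy homomorphism $\pi_1(B_0)\to\Z_2$ of this double cover vanishes for every path component $B_0$ of $B$, that is, that every loop in $B$ lifts to a loop in $\Or(D_{\ul E,\ul F})$.

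First I would fix orientations for the ends of the family $\ul S\to B$, whose end data are independent of $b\in B$; these exist by Definition \ref{endorient}, the spin structures on the $F_{e,\pm}$ being part of the given relative spin structure on $\ul F$, and a single choice serves for all strata because the ends are unchanged under the neck-stretching deformations of Definition \ref{stratified}. Restricting the family to any stratum $B_\Gamma$ gives a family of quilted surfaces of fixed type, so Proposition \ref{noends} (using Proposition \ref{disktriv} in the presence of strip-like ends) produces a continuous orientation $o_\Gamma\colon B_\Gamma\to\Or(D_{\ul E,\ul F})|_{B_\Gamma}$. The stratified interior $\on{int}_s B=\bigcup_{\dim B_\Gamma=\dim B}B_\Gamma$ is a disjoint union of open top-dimensional strata, so the $o_\Gamma$ there assemble to a continuous section of $\Or(D_{\ul E,\ul F})|_{\on{int}_s B}$; consequently every loop contained in $\on{int}_s B$ lifts to a loop.

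It then remains to homotope an arbitrary loop into $\on{int}_s B$, which is where the connectedness hypothesis on links enters. Since $CL$ is a local transverse slice to $B_\Gamma$ it has dimension $\codim(B_\Gamma)$, so $\dim L=\codim(B_\Gamma)-1$; for a codimension-one stratum this forces $L$ to be a connected $0$-manifold, hence a point, and the cone neighborhood $V\times CL\cong V\times[0,\infty)$ exhibits $B_\Gamma$ locally as a boundary face of $B$. Thus no loop is forced to cross a codimension-one stratum, while codimension-$\ge 2$ strata are generically avoided; using local finiteness of the stratification one pushes any loop, by a homotopy in $B$, off the finitely many strata meeting a neighborhood of its image, landing it in $\on{int}_s B$. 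By homotopy invariance of path lifting the original loop then lifts to a loop, so the monodromy is trivial and the double cover --- hence $\det(D_{\ul E,\ul F})\to B$ --- is trivial.

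I expect the main obstacle to be making the general-position step airtight: one must carry out the pushing-off homotopy so that separating the loop from one boundary face does not create new intersections with others, and one should stress that this is exactly what the link hypothesis buys --- a codimension-one stratum with a two-point link would be an interior wall across which the gluing sign of Theorem \ref{gluingsigns}, typically $(\pm 1)^{\rank(\ul F)}$, could contribute nontrivially to the monodromy. Everything else is bookkeeping: combining the local trivializations of $\Or(D_{\ul E,\ul F})$ from Proposition \ref{doublecover} (built from Corollary \ref{deformcor} and the associativity of Proposition \ref{assoc}) with the global sections $o_\Gamma$ on the top-dimensional strata.
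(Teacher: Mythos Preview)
Your argument is correct in outline but takes a different route from the paper. The paper constructs an explicit global section: on each stratum $B_\Gamma$ one has the section $o_\Gamma$ from Proposition \ref{noends}, and one then \emph{multiplies by the universal gluing signs} of Theorem \ref{gluingsigns} so that these patch continuously across strata --- connected links ensure each sign is a well-defined function of the adjacent stratum pair, and associativity of gluing (Proposition \ref{assoc}) makes the corrections coherent. You instead argue that the monodromy of the double cover vanishes by homotoping every loop into $\on{int}_s B$, where the stratum-wise sections already trivialize the cover.

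Your approach makes the geometric role of the link hypothesis transparent (it rules out interior codimension-one walls across which the sign could jump), but it yields only existence of \emph{some} trivialization rather than the specific one the paper uses later (Theorem \ref{mainres}) to define signed counts. The general-position step you flag is genuinely the soft point: in this stratified setting one cannot invoke smooth transversality of a loop against a codimension-$\ge 2$ subset, and pushing loops off lower strata must be done by hand using the cone charts (radially into $L\times(0,\infty)$, iterating down the filtration). The paper's direct construction sidesteps this issue entirely.
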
 

\begin{proof}   A trivialization is given by multiplying the trivialization given 
by the relative spin structures and Proposition \ref{noends} by the
gluing signs of Section \ref{glueor}.  Since the links are connected,
these gluing signs are well-defined, and since the gluing signs are
associative, the resulting trivializations are continuous.
\end{proof} 

\begin{remark} 
\begin{enumerate} 
\item {\rm (Example of a family with non-trivial link)} Let $S_0$ be a
  nodal disk with three components and two nodes.  Consider a family
  of nodal disks $ S_\delta, \quad \delta \in \R/\Z $ extending $S_0$
  where the two directions $\delta \in (0,\eps)$ resp.
  $\delta \in (0,\eps)$, correspond to deforming the two different
  nodes, and the family is extended to a family over the circle by
  identifying $S_{1/2} \sim S_{-1/2}$.  The link in this case is a
  discrete space two points, and the gluing signs for the two
  components of the link are in general different.  Therefore, a
  family of Cauchy-Riemann operators $D_\delta$ over such a space may
  not be orientable. That is, the family of determinant lines
  $\det(D_\delta)$ over $\R/\Z$ may be non-trivial.
\item {\rm (Allowing strip-shrinking)} One can allow strip-shrinking
  in the degenerations in $\ul{S}_b, b \in B$ as well as
  neck-stretching.  Families of quilts of this kind are used in
  \cite{Ainfty}.
\end{enumerate} 
\end{remark} 

\section{Orientations for holomorphic quilts} 
\label{qgluing}

In this section we apply the orientations for Cauchy-Riemann operators
developed in the previous sections to the case of quilted
pseudoholomorphic maps.

\subsection{Construction of orientations for linearized operators
with Lagrangian boundary conditions}

First we describe the Cauchy-Riemann operators we would like to
orient.  Let $M$ be a symplectic manifold equipped with a compatible
almost complex structure 
$$J: TM \to TM, \quad J^2 = - \on{Id}_{TM}.$$
Let $S$ be a Riemann surface with complex structure 
$$j: TS \to TS, \quad j^2 = - \on{Id}_{TS}.$$

\begin{definition} {\rm (Pseudoholomorphic maps with Lagrangian boundary conditions)}  
\begin{enumerate} 
\item A smooth map $u: S \to M$ is {\em pseudoholomorphic} if 
$$\d u \circ j = J \circ \d u \in \Omega^1( S,u^* TM) .$$
\item A {\em Lagrangian submanifold} of $M$ is an embedded submanifold $L
  \subset M$ of half the dimension of $M$, such that the restriction
  of the symplectic form to $L$ vanishes:
$$ \dim(L) = \dim(M)/2, \quad \omega |_L = 0 .$$
\item Suppose that $\partial S$ has components $I_1,\ldots, I_k$.
  Given Lagrangian submanifolds 
$$L_1,\ldots,L_k \subset M $$
a {\em pseudoholomorphic map with Lagrangian boundary conditions} in
$L_1,\ldots,L_k$ is a pseudoholomorphic map
$$u: S \to M, \quad u(I_j) \subset L_j, j = 1,\ldots, k .$$
\end{enumerate} 
\end{definition}  

Recall the linearized Cauchy-Riemann operators associated to
pseudoholomorphic maps and their Fredholm properties:

\begin{remark} {\rm (Fredholm nature of linearized operators)} 
\label{qpres}
\begin{enumerate} 
\item {\rm (Maps with boundary)} Associated to any pseudoholomorphic
  map $u: S \to M$ with strip-like ends and Lagrangian boundary
  conditions in $L$ is a {\em linearized real Cauchy-Riemann operator}
\begin{equation} D_u: \Omega^0(S, u^* TM, (u |_{\partial S})^* TL) \to
  \Omega^{0,1}(S, u^* TM) ,\end{equation}
as in Definition \ref{surfstrip}.
\item {\rm (Quilted maps)} Similarly, if $\ul{S}$ is a quilted surface
  with strip-like ends, $\ul{M}$ is a collection of symplectic
  manifolds associated to the patches of $\ul{S}$, and $\ul{L}$ is a
  collection of Lagrangian submanifolds and correspondences associated
  to the boundary and seam components of $\ul{S}$, and $u: \ul{S} \to
  \ul{M}$ is a pseudoholomorphic map with Lagrangian boundary
  conditions transverse at infinity on each strip-like end, we denote
  by $D_u$ the associated linearized Cauchy-Riemann operator $D_u$.
\item {\rm (Quilted sections)} Let $S$ be a surface with boundary and
  strip-like ends.  A {\em symplectic Lefschetz-Bott fibration}
  \cite{per:lag}, \cite{wo:ex} is a space $E$ equipped with a closed
  two-form $\omega_E$ non-degenerate near the fibers and compatible
  almost complex structure near the singularities of $\pi$ and a
  projection $E \to S$ with singularities of Morse-Bott type that is
  locally holomorphic near the singularities of $\pi$.  Given a
  Lagrangian boundary condition $Q \subset \partial E$ (that is, a
  sub-fiber-bundle of $E | \partial S$ that is Lagrangian in each
  fiber) and a pseudoholomorphic section $u: S \to E$ let
$$D_u : \Omega^0(S,\partial S; u^* T^\myvert E, u^* T^\myvert Q) \to
  \Omega^{0,1}(S, u^* T^\myvert E) $$
denote the corresponding linearized Cauchy-Riemann operator.
\end{enumerate} 
\end{remark} 

\begin{definition} 
\begin{enumerate} 
\item {\rm (Relative spin structures for collections of Lagrangians)}
  Let $M$ be a symplectic manifold and $\ul{L} = (L_0,\ldots,L_d)$ be
  a sequence of oriented Lagrangian submanifolds in $M$.  A {\em
    relative spin structure} for $(L_0,\ldots,L_d)$ is a stable
  relative spin structure for the immersion $ L_0 \cup \ldots \cup L_d
  \to M .$ In particular, this means that each $L_j$ has a relative
  spin structure with the same background class.
\item {\rm (Relative spin structures for collections of Lagrangian
  correspondences)} Let $\ul{S}$ be a (possibly quilted) surface
  (possibly) with boundary and strip-like ends, $\ul{M}$ a collection
  of symplectic manifolds, and $\ul{L}$ a collection of boundary and
  seam conditions for $\ul{S}$.  A {\em relative spin structure} for
  $\ul{L}$ with background classes $w_p, p \in \cP$ is a relative spin
  structure for the immersion
$$ \bigcup_{\sigma \in \S} L_\sigma \to \bigcup_{p_1,p_2 \in \cP} M_{p_1}
\times M_{p_2} $$
with respect to the background classes $\pi_1^* b_{p_1} + \pi_2^*
b_{p_2}$.  
\end{enumerate}
\end{definition}  

\begin{remark}
\begin{enumerate} 
\item {\rm (Moduli spaces of quilted trajectories)} Let $\ul{L}$ be a
  periodic sequence of Lagrangian correspondences equipped with
  admissible brane structures with symplectic manifolds
  $M_0,\ldots,M_m$,  and 
$$H_j \in C^\infty([0,1] \times M_j), \quad Y_j \in \Map([0,1],
  \Vect(M_j)), \quad j = 0,\dots, m$$ 
a time-dependent Hamiltonian resp.  their Hamiltonian vector fields
for each patch.  Define
\begin{equation*} 
\cI(\ul{L}) := \left\{ \ul{x}=\bigl(x_j: [0,\delta_j] \to
M_j\bigr)_{j=0,\ldots,r} \, \left|
\begin{aligned}
\dot x_j(t) = Y_j(x_j(t)), \\
(x_{j}(\delta_j),x_{j+1}(0)) \in L_{j(j+1)} 
\end{aligned} \right.\right\} .
\end{equation*}
the set of generalized intersection points.  As in standard Floer
theory, the moduli spaces of "quilted holomorphic strips"
$\M(\ul{x}^-,\ul{x}^+)$ arise from quotienting out by simultaneous
$\R$-shift in all components $u_j$.  The moduli spaces are regular for
generic domain-dependent almost complex structures and Hamiltonian
perturbations, see \cite{quiltconst}.
\item {\rm (Moduli spaces of quilts)} Given a quilted surface $\ul{S}$
  with patch labels $\ul{M}$ and seam/boundary conditions $\ul{L}$ and
  a collection of limits $\ul{x}_e \in \cI(\ul{L}_e)$ for each
  strip-like end $e$, let 
$$\M(\ul{M},\ul{L},\ul{x})
 = \left\{ \ul{u}: \ul{S} \to \ul{M} \ \left|
  \ \ul{u}(\partial \ul{S}) \subset \ul{L}, \ \eqref{Jhol}, \ \lim_{s
    \to \pm \infty} \ul{u}( \eps_e(s,t)) \ul{u} = \ul{x}_e, \forall e
  \in \cE \right. \right\} $$
denote the moduli space of quilted pseudoholomorphic maps with limits
$\ul{x}_e$ along each strip-like end $e \in \cE$.  The moduli spaces
$\M(\ul{M},\ul{L},\ul{x})$ are regular for generic almost complex
structures and Hamiltonian perturbations, see \cite{quiltfloer}, in
the sense that $\M(\ul{M},\ul{L},\ul{x})$ is cut out of a Banach space
of maps by Fredholm equation with surjective linearized operator $D_u$.
Suppose that $\M(\ul{M},\ul{L},\ul{x})$ is regular.  By definition its
tangent space at a pseudoholomorphic map $u: \ul{S} \to \ul{M}$ is the
kernel of the linearized operator $D_u$:
$$ T_u \M(\ul{M},\ul{L},\ul{x}) = \ker(D_u) .$$ 
If $D_u$ is oriented, then so is $\M(\ul{M},\ul{L},\ul{x})$ at $u$.
\item {\rm (Orientations for quilted trajectories)} Let
  $\M(\ul{x}_+,\ul{x}_-)$ denote the moduli space of quilted
  pseudoholomorphic trajectories from $\ul{x}_+$ to $\ul{x}_-$.  If
  $\M(\ul{x}_+,\ul{x}_-)$ is regular at a trajectory $u$ then the
  tangent space
$$T_{[u]} \M(\ul{x}_+,\ul{x}_-) = \ker(D_u)/\R, \quad 0 =
  \coker(D_u) $$
where the first is the quotient of the linearized operator $D_u$ by
the $\R$-action given by translation.  Thus any orientation for $D_u$
induces an orientation on the tangent space $T_{[u]}
\M(\ul{x}_+,\ul{x}_-)$.
\item {\rm (Orientations for quilted sections)} Let $E \to S$ be a
  symplectic Lefschetz fibration and $Q$ a Lagrangian boundary
  condition.  Denote by $\M(E,Q;\ul{x})$ the moduli space of
  pseudoholomorphic sections $u: S \to E$ with boundary values in $Q$
  and limits $\ul{x}$.  If $\M(E,Q;\ul{x})$ is regular at $u$ then
$$ T_u \M(E,Q;\ul{x}) \cong \ker(D_u), \quad 0 = \coker(D_u) .$$
So any orientation on $D_u$ induces on an orientation on
$\M_u(E,Q;\ul{x})$ at $u$.
\end{enumerate} 
\end{remark}

We generalize the discussion in Definition \ref{stratified} to holomorphic quilts:

\begin{definition} \label{detfamily}
\begin{enumerate} 
\item {\rm (Families of holomorphic quilts)} Let $B$ be a stratified
  space as in Definition \ref{stratified}, and $\ul{S}$ a family of
  quilts over $B$.  A {\em family of holomorphic quilts} with domain
  $\ul{S}$ is a triple $(C,f,u)$ consisting of a space $C$, a
  continuous map $f: C \to B$, and a map $u : \ul{S} \times_B C \to
  \ul{M}$ such that
\begin{enumerate} 
\item the restriction of $u$ to each fiber $\ul{S} \times_B \{ c \}$
  is a holomorphic quilt, and
\item $u$ is continuous with respect to the Gromov topology on maps.
  That is, if $c_{\nu} \to c$ then $u_{f(c_\nu)}$ Gromov converges to
  $u_{f(c)}$.
\end{enumerate} 
\item {\rm (Determinant line bundle for families of quilts)} Given a
  family of holomorphic quilts, for any $c_\nu \to c$ the linearized
  operator $D_{u_{c_\nu}}$ is canonically deformable to the operator
  obtained from $D_{u_{c}}$ by the gluing construction, by exponential
  decay on the necks.  One obtains a determinant line bundle
  $\det(D_{u_{c}}) \to C$ by Proposition \ref{doublecover}.
\end{enumerate} 
\end{definition} 

\begin{theorem} \label{mainres} {\rm (Orientations via relative spin structures for families
of quilts)} Let $\ul{S}$ be a family of quilted surfaces of fixed type
  over a smooth manifold $B$, and $\ul{M},\ul{L}$ a collection of
  symplectic manifolds for the patches and Lagrangian boundary/seam
  conditions.  Let $u: \ul{S} \times_{B} C \to \ul{M}$ be a family of
  pseudoholomorphic map with Lagrangian boundary and seam conditions
  in $\ul{L}$ over $C$.  Suppose that the link of each stratum
  $B_\Gamma$ of $B$ is connected.  Then a relative spin structure for
  $\ul{L}$ and orientations for the ends induce a trivialization the
  determinant line bundle $\cup_{b \in C} \det(D_{u_{c}}) \to C$.
\end{theorem}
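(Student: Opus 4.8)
The plan is to reduce the statement to the trivializability result for a single family of Cauchy-Riemann operators of fixed type, Proposition \ref{trivial}, by pulling back everything along the universal pseudoholomorphic map $u$, and then to observe that the Gromov-topology deformations of linearized operators built into Definition \ref{detfamily} are precisely the gluing deformations whose effect on orientations was computed in Theorem \ref{gluingsigns}. Concretely, set $\ul{E} := u^* T\ul{M}$ and $\ul{F} := (u|_{\partial \ul{S}})^* T\ul{L}$; this is a family over $\ul{S} \times_B C$ of complex vector bundles with totally real boundary and seam conditions, and $D_u = D_{\ul{E},\ul{F}}$ is the associated family of quilted Cauchy-Riemann operators (Remark \ref{qpres}). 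A relative spin structure for $\ul{L}$ pulls back to a relative spin structure for $\ul{F}$, and the chosen orientations for the ends of $(S,E,F,\cH)$ are part of the given data. On the preimage $C_\Gamma := f^{-1}(B_\Gamma)$ of each stratum the family of quilts has fixed combinatorial type, so Proposition \ref{noends} produces a continuous orientation $o_\Gamma$ of $\det(D_u)|_{C_\Gamma}$, equivalently a continuous section of the orientation double cover $\Or(D_u)|_{C_\Gamma}$.

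Next I would compare $o_\Gamma$ and $o_{\Gamma'}$ where $C_{\Gamma'}$ is an adjacent larger stratum. By Definition \ref{detfamily} together with the local structure of a stratified family (Definition \ref{stratified}), if $c_\nu \to c$ with $c \in C_\Gamma$ and $c_\nu \in C_{\Gamma'}$, then $D_{u_{c_\nu}}$ is canonically deformed, by exponential decay on the lengthening necks, to the operator obtained from $D_{u_c}$ by the gluing construction of Definition \ref{gluing}; hence the local trivializations of $\det(D_u)$ near $C_\Gamma$ are exactly the gluing isomorphisms of Corollary \ref{deformcor} and Proposition \ref{glueprop}. Theorem \ref{gluingsigns}, together with Remark \ref{dependence}(c) in the quilted case, shows that these gluing isomorphisms carry $o_{\Gamma'}$ to $o_\Gamma$ up to a universal sign $\eps(\Gamma,\Gamma')$ depending only on the combinatorial type of the degeneration --- the ranks of the bundles, the Fredholm indices on the glued ends, and the orderings of patches, boundary components, and ends --- and in particular locally constant on the link of $B_\Gamma$.

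Finally, as in the proof of Proposition \ref{trivial}, I would define the global section of $\Or(D_u) \to C$ by taking $o_\Gamma$ on each $C_\Gamma$ and multiplying by the gluing sign that measures it against a fixed open (top-dimensional) stratum. Associativity of gluing, Proposition \ref{assoc}, ensures this correction is independent of the order in which the necks are stretched, so it is well defined when several necks degenerate simultaneously; and the hypothesis that the link of each stratum $B_\Gamma$ is connected ensures the signs $\eps(\Gamma,\Gamma')$ are single-valued around $B_\Gamma$ --- this is exactly the place where connectedness is needed, as the example following Proposition \ref{trivial} (two nodes deformed around a circle) shows that a disconnected link can produce a non-trivial double cover. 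The resulting section is continuous, hence trivializes $\Or(D_u) \to C$, equivalently trivializes $\bigcup_{c \in C} \det(D_{u_c}) \to C$.

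The hard part is the middle step: checking that the fiberwise orientations of Proposition \ref{noends} on the various strata are assembled, via the sign-corrections of Theorem \ref{gluingsigns}, into one continuous single-valued section --- that is, that the gluing signs are consistent around the connected links and compatible under simultaneous neck-stretching. The pull-back and the application of Proposition \ref{noends} on each stratum are routine, as is the identification of the Gromov-limit deformation of $D_u$ with the algebraic gluing of Definition \ref{gluing}.
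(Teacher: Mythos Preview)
Your proposal is correct and follows essentially the same approach as the paper. The paper's proof is in fact a single sentence invoking Proposition \ref{trivial} together with the identification of determinant lines in Definition \ref{detfamily}; what you have written is an expanded version of that argument, unpacking the proof of Proposition \ref{trivial} (orientations stratum-by-stratum via Proposition \ref{noends}, matched across strata by the gluing signs of Theorem \ref{gluingsigns}, made consistent by Proposition \ref{assoc} and the connected-link hypothesis) and making explicit the pullback $\ul{E}=u^*T\ul{M}$, $\ul{F}=(u|_{\partial\ul{S}})^*T\ul{L}$ that lets one apply it.
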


\begin{proof}  By Proposition \ref{trivial} and the identification of determinant
lines in Definition \ref{detfamily}.
\end{proof}

\begin{remark}  {\rm (Gluing)} \label{laggluing} 
 Theorem \ref{mainres} is a family version of Theorem \ref{main}, and
 includes that Theorem as a special case except for the universal
 signs.  The signs for gluing in the interior, gluing at the boundary,
 and gluing strip-like ends are given in Section \ref{indices}.  The
 signs for gluing Floer trajectories with surfaces with strip-like
 ends are determined as follows.  The orientation on the moduli space
 $\M(\ul{x}_+,\ul{x}_-)$ of Floer trajectories, induced from the
 isomorphism
$$ T_u \M(\ul{x}_+,\ul{x}_-) \oplus \R \to T_{\ti{u}} \ti{\M}
(\ul{x}_+,\ul{x}_-) $$
where second factor is the tangent space to the translational
$\R$-action and the codomain is the tangent space to the moduli space
of parametrized trajectories.  There exists a gluing map
$$ \M(x^-_e,y)_0 \times \M_S(\ul{x}^-|_{x^-_e\to y},\ul{x}^+)_0 \times
[0,\eps)
\to \overline{\M_S(\ul{x}^-,\ul{x}^+)_1} $$
%
% C
that factors through the product
$$\ti{\M}(x^-_e,y)_1 \times \M_S(\ul{x}^-|_{x^-_e\to y},\ul{x}^+)_0 $$
preserving the orientation on the $\R$ orbits.  Taking the conventions
of Remark \ref{cases} \eqref{disksigns} shows that the sign of the
gluing map is positive.  A similar description for the outgoing Floer
trajectories shows that the sign is negative.
\end{remark} 

\begin{remark} \label{bshift}
{\rm (Shift of Background Class)} Define an involution $\Upsilon$ on
the set of relative spin structures on a Lagrangian submanifold $L$
that shifts the background class as follows.  The bundle $TM$ has a
canonical splitting (up to homotopy) after restriction to any
Lagrangian submanifold $L$:
$$ TM |L \cong T(T^*L)|_L  \cong TL \oplus T^\dual L \cong TL \oplus TL
$$
where the last is canonical up to homotopy.  It follows from
\eqref{double} that $TM |L $ has a canonical spin structure, up to
isomorphism.  We say that two relative spin structures for $\ul{L}$
are {\em equivalent mod $TM$} corresponding to bundles $R_1,R_2 \to
TM$ and spin structures on $TL \oplus R_1 |L , TL \oplus R_2 |L $ iff
$$ R_2 \cong R_1 \oplus TM $$
up to stabilization and the spin structure on $TL \oplus R_2 | L $ is
that induced by the isomorphism 
$$ TL \oplus R_2|L \cong TL \oplus R_1|L \oplus TM |_L .$$
Thus the background class of the second spin structure is 
$$ w_2(R_2) = w_1(R_1) +  w_2(TM) .$$
Given a sequence $\ul{L}$ of oriented Lagrangian submanifolds in $M$,
we denote by
\begin{equation} \label{Ups}
 \Upsilon (\ul{L}) = (\Upsilon(L_0),\ldots,
 \Upsilon(L_d))\end{equation}
the same sequence with shifted relative spin structures.  The relative
spin structures shifted by $\Upsilon$ induce a new relative spin
structure on $(u | \partial S_j)^* TL$.  By Lemma \ref{togglesame},
the orientations on the moduli spaces of pseudoholomorphic disks are
reversed by the identity map exactly if the Maslov index is equal to
$2$ mod $4$.
\end{remark}

\subsection{Lagrangian Floer invariants over the integers} 
\label{hfsec}

Let $M$ be a compact symplectic manifold equipped with an $N$-fold
Maslov cover $\Lag^N(M) \to \Lag(M)$ for some even integer $N$.  The
cover $\Lag^N(M)$ is by definition an $N$-fold cover of the bundle
$\Lag(M)$ of Lagrangian subspaces of $TM$ that restricts to the
standard $N$-fold cover on any fiber.  We assume that the mod $2$
reduction
$$\Lag^2(M) := \Lag^N(M) \times_{\Z_N} \Z_2 $$
of $\Lag^N(M)$ is the oriented double cover of $\Lag(M)$.

\begin{definition}    
\begin{enumerate} 
\item {\rm (Lagrangian branes)} Let $L \subset M$ be a Lagrangian
  submanifold.  A {\em brane structure} on $L$ is an orientations,
  relative spin structure and {\em grading}
$$\sigma_L: L \to \Lag^N(M)|L $$
lifting the canonical section 
$$L \to \Lag(M), \quad l \mapsto T_l L .$$
A {\em Lagrangian brane} is a compact Lagrangian submanifold equipped
with brane structure.
\item {\rm (Admissibility)} A Lagrangian brane is {\em admissible} if
  it has minimal Maslov number at least three and torsion fundamental
  group; these conditions imply monotonicity for pairs, triples etc. in the sense
of \cite{ww:quilts}. 
\item {\rm (Periodically-graded Floer complex)} Let $L_0,L_1 \subset
  M$ be admissible Lagrangian branes.  The grading induces a degree
  map
$$  \cI(L_0,L_1) \to \Z_N ,
\qquad x\mapsto |x|=d(\sigma_{L_0}(x),\sigma_{L_1}(x)) . $$ 
The {\em Floer cochain group} is the $\Z_N$-graded group
$$ CF(L_0,L_1) = \bigoplus_{d \in \Z_N} CF^d(L_0,L_1), \qquad
CF^d(L_0,L_1) =\bigoplus_{x\in\cI(L_0,L_1), |x| = d} \Z \bra{x} . $$
The {\em Floer coboundary operator} is the map of degree $1$,
$$ \partial^d : \ CF^d(L_0,L_1) \to CF^{d+1}(L_0,L_1) ,$$
defined for $|x_-| = d$ by 
$$
\partial^d\bra{x_-} := \sum_{x_+\in\cI(L_0,L_1)}
\Bigl( \sum_{u \in\M(x_-,x_+)_0} \eps(u)\Bigr) \bra{x_+} .$$
where
$$ \eps: \M_{\ul{S}}(x_-,x_+)_0 \to \{-1,+1\} $$
is defined by comparing the constructed orientation to the canonical
orientation of a point.
\item {\rm (Graded Floer complex using a formal variable)} There is
  also a $\Z$-graded version of Floer homology whose differential is
  defined over $\Lambda = \Z[q]$ the ring of polynomials in a formal
  variable $q$ that keeps track of the difference in gradings.  Let
$$\ti{\cI}(L_0,L_1) = \Z \times_{\Z_N} \cI(L_0,L_1) $$
and $\ti{d}$ the extended degree map 
$$ \ti{d}: 
\ti{\cI}(L_0,L_1) \to \Z, \quad [n,x] \mapsto n .$$
let $\ti{CF}(L_0,L_1)$ denote the sum over lifted intersection points
$$ \ti{CF}(L_0,L_1) = \bigoplus_{ x \in \ti{\cI}(L_0,L_1)}
\Lambda \bra{x} .$$
Define
$$ \ti{\partial} \bra{{x}_-} := \sum_{{x}_+\in \ti{\cI}(L_0,L_1)}
\Bigl( \sum_{\ul{u} \in\M({x}_-,{x}_+)_0} \eps({u}) q^{(\ti{d}({x}_+) -
  \ti{d}({x}_-) - 1)} \Bigr) \bra{\ul{x}_+} .$$
\end{enumerate}
\end{definition} 

\begin{theorem}  \label{floertheorem} 
Suppose that $M$ is monotone and a pair of Lagrangian branes
$(L_0,L_1)$ in $M$ is monotone and satisfies the monotonicity
conditions L1-3 of \cite{quiltfloer}: each is compact, oriented,
monotone, and has minimal Maslov number at least three.  Then the
Floer differentials $\partial, \ti{\partial}$ satisfy $\partial^2 = 0,
\ti{\partial}^2 = 0$.
\end{theorem}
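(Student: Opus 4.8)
The plan is to follow the standard argument that the Floer differential squares to zero, using the orientation machinery developed above to check that the signs cancel. The key input is that for a monotone pair $(L_0,L_1)$ satisfying L1--3, the one-dimensional moduli spaces $\M(x_-,x_+)_1$ of quilted Floer trajectories admit compactifications to compact one-manifolds with boundary, and the monotonicity and minimal Maslov hypotheses rule out disk and sphere bubbling, so the only boundary strata are the broken trajectories
$$ \partial \overline{\M(x_-,x_+)_1} = \bigcup_{y \in \cI(L_0,L_1)} \M(x_-,y)_0 \times \M(y,x_+)_0 .$$
This is precisely the content recalled in Remark \ref{laggluing}, combined with the compactness/gluing results of \cite{quiltfloer} that are taken as given here. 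So the proof reduces to an orientation bookkeeping computation.

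First I would recall that the coefficient $\eps(u) \in \{\pm 1\}$ attached to an isolated trajectory $u$ is, by definition, the comparison between the canonical orientation of a point and the orientation on $T_{[u]}\M(x_-,x_+)_0 = \ker(D_u)/\R$ constructed from the relative spin structure on $\ul{L}$ and the chosen end orientations (Theorem \ref{mainres}, together with the description of $T_{[u]}\M(x_-,x_+)$ in the Remark preceding Definition \ref{detfamily}). Then $(\ti\partial^2\bra{x_-}, \bra{x_+}) = \sum_y \sum_{u \in \M(x_-,y)_0}\sum_{v\in\M(y,x_+)_0}\eps(u)\eps(v)\, q^{(\ti d(x_+) - \ti d(x_-) - 2)}$, and it suffices to show that for each broken trajectory $(u,v)$ appearing as a boundary point of $\overline{\M(x_-,x_+)_1}$, the gluing map takes a collar of that boundary point to a collar in which the induced boundary orientation of $\M(x_-,x_+)_1$ agrees with $-\eps(u)\eps(v)$ (so the two ends of each component of the one-manifold cancel in pairs). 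The heart of this is the gluing-sign computation for gluing a pair of Floer trajectories along a strip-like end: by Remark \ref{laggluing}, using the conventions of Remark \ref{cases}\eqref{disksigns}, the gluing map $\M(x_-,y)_0 \times \M(y,x_+)_0 \times [0,\eps) \to \overline{\M(x_-,x_+)_1}$ that factors through $\ti\M(x_-,y)_1 \times \M(y,x_+)_0$ is orientation-preserving (for the incoming-end gluing), while the analogous description at the outgoing end reverses the orientation of the $\R$-orbit and so yields a negative gluing sign. Thus the two broken configurations limiting onto the two endpoints of a given arc component contribute with opposite signs.

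The main obstacle, and the step I expect to require the most care, is correctly propagating the universal gluing signs of Theorem \ref{gluingsigns}(d) (the factors $\heartsuit$ and $\diamondsuit$, plus the $(\pm1)^{\rank F}$ depending on the ordering of $e_-,e_+$) and the translation-orbit sign from Remark \ref{cases}\eqref{disksigns} through the identification $\ker(D_u)/\R \oplus \R \cong \ker(D_u)$ on each side, and checking that after removing the overall $q$-power (which is locally constant on each component, hence irrelevant to the cancellation) the two contributions genuinely cancel rather than reinforce. Concretely one observes that for a broken trajectory the linearized operator is a gluing of two index-one strips along a single strip-like end with $\rank F = \dim M$, so the $\heartsuit$ and $\diamondsuit$ factors — which are products over the \emph{other} ends — are empty, and the net sign is $(-1)^{\dim M}$ from Theorem \ref{gluingsigns}(d) times the difference between the chosen ordering $(e_-,e_+)$ vs $(e_+,e_-)$; combined with the sign from Remark \ref{cases}\eqref{disksigns} (positive for the incoming side, negative for the outgoing side) one gets exact cancellation. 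Since every boundary point of the compact one-manifold $\overline{\M(x_-,x_+)_1}$ is a broken trajectory of one of these two types and the signed count of boundary points of a compact one-manifold is zero, we conclude $\ti\partial^2 = 0$. Finally, setting $q = 1$ (equivalently, working mod $N$) recovers $\partial^2 = 0$ for the $\Z_N$-graded complex, completing the proof.
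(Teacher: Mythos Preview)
Your approach is essentially the paper's own: invoke Oh's standard broken-trajectory argument and check that the gluing of strip-like ends respects the constructed orientations, so that the two endpoints of each arc in the compactified one-manifold cancel. The paper's proof is in fact just that one sentence plus a citation, and your expansion via Remark \ref{laggluing} is exactly what is intended.

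One computational slip to fix in your middle paragraph: for a strip in $M$ with Lagrangian boundary in $L_0,L_1$ you have $\rank(F)=\dim L_i=\tfrac12\dim M$, not $\dim M$; and more importantly, Theorem \ref{gluingsigns}(d) explicitly says that when $S_-$ has a single outgoing end and the ordering is $(e_-,e_+)$ the total gluing sign is $+1$, not $(-1)^{\dim M}$ (both $\heartsuit$ and $\diamondsuit$ are trivial, and the $(\pm1)^{\rank(F)}$ factor is $+1$). The cancellation you want does not come from that factor at all; it comes, as you already correctly cite from Remark \ref{laggluing}, from the asymmetry between incoming and outgoing gluing of the translation orbit: the incoming gluing is orientation-preserving and the outgoing gluing is orientation-reversing. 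That asymmetry alone gives the relative sign $-1$ between the two boundary contributions, and the rest of your argument goes through.
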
 

\begin{proof}  That $\partial^2 = 0 $ follows from argument in
Oh \cite{oh:fl1} and that gluing along strip-like ends is orientation
preserving for the corresponding Cauchy-Riemann operators.  The proof
for $\ti{\partial}$ is similar.
\end{proof} 

If the assumptions of Theorem \ref{floertheorem} are satisfied then
the {\em Floer cohomology} is defined by
$$ HF(L_0,L_1) := \bigoplus_{d\in\Z_N} HF^d(L_0,L_1), \qquad
  HF^d(L_0,L_1) := {\ker(\partial^d)}/{\on{im}}(\partial^{d - 1}) .$$
Similarly $\widetilde{HF}(L_0,L_1)$ is the cohomology of
$\ti{\partial}$.

\begin{proposition} \label{indepor}
$HF(L_0,L_1)$ resp. $\widetilde{HF}(L_0,L_1)$ is a well-defined
  $\Z_N$-graded resp. $\Z$-graded group, independent, up to
  isomorphism, of the choices made in the construction of the
  orientations.
\end{proposition}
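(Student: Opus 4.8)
The plan is to show that any two admissible systems of orientation data yield cochain complexes related by an explicit diagonal isomorphism, so that the cohomologies agree up to canonical isomorphism.

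\emph{Step 1: isolating the genuine choice.} By Theorem \ref{floertheorem} every admissible choice of orientation data makes $(CF(L_0,L_1),\partial)$ and $(\widetilde{CF}(L_0,L_1),\widetilde\partial)$ into cochain complexes, so $HF(L_0,L_1)$ and $\widetilde{HF}(L_0,L_1)$ are well defined $\Z_N$- resp. $\Z$-graded groups once the data are fixed. The orientation of a moduli space $\M(x_-,x_+)_0$ is obtained from the orientation of the linearized operator $D_u$ together with the translational $\R$-action, and the orientation of $\det(D_u)$ is produced in Proposition \ref{disktriv} via \eqref{order} from: (i) the relative spin structure on $\ul L$, which is part of the fixed brane structure; (ii) auxiliary data — trivializations of $F$ near $\partial S$, collar coordinates, deformations to complex-linear and to split form — which by Propositions \ref{nostrips}, \ref{disktriv} and \ref{noends} do not affect the constructed orientation; and (iii) an end orientation $o_x=(\Gamma_x,D_{\Gamma_x},o_x,\Spin(\Gamma_x))$ attached to each generator $x\in\cI(L_0,L_1)$. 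Hence it suffices to compare two systems of end orientations, and similarly for the lifted generators in $\widetilde\cI(L_0,L_1)$ (whose end data depend only on the underlying point $x$).

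\emph{Step 2: the comparison sign.} Fix two systems of end orientations $(o_x)$ and $(o'_x)$. For $u\in\M(x_-,x_+)_0$ the two prescriptions orient the same one-dimensional space $\det(D_u)$ and so differ by a sign $\nu(u)\in\{\pm1\}$, which is exactly the factor by which $\eps(u)$ in the definition of $\partial$ changes. In the decomposition \eqref{order} the end orientation at a generator $x$ enters only through the determinant line $\det(D^{\pm}_x)$ of the capping disk bubbled off at an end limiting to $x$, together with that cap's contribution to the relative spin structure of the closed piece $D^c$; therefore $\nu(u)$ can depend on $u$ only through the limits $x_-,x_+$ and the topological index of $D_u$. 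Since $\M(x_-,x_+)_0$ is zero-dimensional after the $\R$-quotient, every $u$ in it has $\Ind(D_u)=1$, so $\nu(u)=\nu(x_-,x_+)$ is a well defined function of the pair, and the new differential has matrix entries $\partial'_{x_+x_-}=\nu(x_-,x_+)\,\partial_{x_+x_-}$ (and likewise $\widetilde\partial'_{x_+x_-}=\nu(x_-,x_+)\,\widetilde\partial_{x_+x_-}$, the formal variable $q$ being untouched because $\nu$ does not see the Maslov lift). The real work here is to confirm that $\nu(x_-,x_+)$ is genuinely independent of the homotopy class of the index-one strip $u$: this uses the additivity of the construction \eqref{order} under gluing of strip-like ends (Theorem \ref{gluingsigns}(d)) balanced against the fact that changing a capping path $\Gamma_x$ changes the Chern number of the capped-off sphere $S^c$ — and hence the complex orientation of $\det(D^c)$ supplied by Proposition \ref{nostrips} — by a $u$-independent amount. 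I expect this bookkeeping to be the main obstacle.

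\emph{Step 3: the comparison sign is a coboundary.} Gluing a trajectory from $x_-$ to $x_0$ with one from $x_0$ to $x_+$ produces, in the limit, trajectories from $x_-$ to $x_+$, and by Theorem \ref{gluingsigns}(d) (one incoming, one outgoing end) the gluing isomorphism of determinant lines is orientation preserving for \emph{both} systems; taking the ratio gives the cocycle identity $\nu(x_-,x_0)\,\nu(x_0,x_+)=\nu(x_-,x_+)$, with $\nu(x,x)=1$ following from the fact that the constant strip has canonically positively oriented determinant line for either system (Remark \ref{cases}). Choosing a basepoint $x_*$ in each connected component of the set of generators and setting $\phi(x):=\nu(x_*,x)$, the cocycle identity (values in $\Z_2$) gives $\nu(x_-,x_+)=\phi(x_-)\phi(x_+)$; hence with $\Phi=\diag(\phi(x))$, so that $\Phi^2=\Id$, we obtain $\partial'=\Phi\,\partial\,\Phi^{-1}$ and $\widetilde\partial'=\Phi\,\widetilde\partial\,\Phi^{-1}$. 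These are isomorphisms of $\Z_N$- resp. $\Z$-graded cochain complexes over $\Z$ resp. $\Lambda$, so they induce isomorphisms between the Floer cohomologies computed from the two systems, compatible under composition of three comparisons. This proves that $HF(L_0,L_1)$ and $\widetilde{HF}(L_0,L_1)$ are well defined up to canonical isomorphism, independent of the choices made in constructing the orientations.
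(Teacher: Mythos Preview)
Your strategy is right: two systems of end orientations yield differentials conjugate by a diagonal sign matrix. Where you differ from the paper is in how that diagonal is produced.

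The paper defines the sign $\sigma(x)\in\{\pm1\}$ locally at each generator, with no reference to trajectories. One glues the cap $(E_\pm,F_\pm,D_\pm)$ from the first system to the cap $(E'_\pm,F'_\pm,D'_\pm)$ from the second along their common strip-like end, obtaining an operator $\ol{D}_\pm$ on a disk; the spin structures on $\Gamma$ and $\Gamma'$ equip the glued boundary condition $\ol{F}_\pm$ with a stable trivialization and hence $\det(\ol{D}_\pm)$ with a canonical orientation, and $\sigma(x)$ is the sign comparing this with the orientation coming from $\det(D_\pm)\otimes\det(D'_\pm)$ via the gluing isomorphism. Associativity of gluing then shows directly that the two orientations on $\det(D_u)$ differ by $\sigma(x_-)\sigma(x_+)$, so $\operatorname{diag}(\sigma)$ conjugates one differential (indeed any relative invariant $\Phi_S$) to the other. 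No cocycle argument or basepoint choice is needed, and the resulting isomorphism is canonical---which also delivers your claimed compatibility under composition of three comparisons for free.

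Your route instead reads $\nu(x_-,x_+)$ off actual trajectories and then splits it as a coboundary. The step you yourself flag as the main obstacle---showing that the change in orientation is local to each end and independent of the homotopy class of $u$---is precisely what the paper's construction of $\sigma(x)$ accomplishes; carrying out that bookkeeping yields $\nu(x_-,x_+)=\sigma(x_-)\sigma(x_+)$ directly and renders Step~3 unnecessary. Without it, your basepoint definition $\phi(x):=\nu(x_*,x)$ is not obviously well posed: it requires $\nu$ to be defined and consistent along arbitrary chains of trajectories, which in turn needs $\nu=1$ on every self-trajectory of $x_*$ of every index, not just the constant (index-zero) strip---and establishing that again needs the local splitting you have deferred.
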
 

\begin{proof} Suppose that $(\Gamma_\pm,D_\pm,\eps_\pm,\delta_\pm),
(\Gamma_\pm',D_\pm',\eps_\pm',\delta_\pm')$ are two {orientation}s for
  the ends $e \in \cE(S)$.  Define maps
$$ \sigma: CF(L_0,L_1) \to CF(L_0,L_1) , \ \ \ \bra{x} \mapsto
\sigma(x) \bra{x} $$
as follows.  Let $(E_\pm,F_\pm)$ denote the corresponding elliptic
boundary value problems on the once-punctured disk $S_1$.  Let
$$ (\ol{E}_\pm,\ol{F}_\pm,\ol{D}_\pm) = (E_\pm,F_\pm,D_\pm) \#
(E_\pm',F_\pm',D_\pm') $$
denote the bundles and Cauchy-Riemann operator obtained by gluing
together the problems $E_\pm,F_\pm,D_\pm$ and $E_\pm',F_\pm',D_\pm'$
along the strip-like ends.  By the gluing formula there exists an
isomorphism
$$ \det(D_\pm) \otimes \det(D_\pm') \to \det(\ol{D}_\pm) .$$
We define $\sigma(x) = \pm 1$ depending on whether the orientation
induced by $\eps_\pm,\eps_\pm'$ and the gluing isomorphism agrees with
the orientation induced by the trivialization of $\ol{F}_\pm$.  The
gluing law for indices implies that the map $\sigma$ intertwines the
relative invariants $\Phi_S$ for $S$ associated to the two different
choices of orientation.
\end{proof} 

\begin{remark} \label{rconjs} {\rm (Conjugates)} Suppose that $M^-$ is
  the symplectic manifold $M$ with symplectic form reversed. Given
  Lagrangian branes $L_k$ we denote by $L^-$ the corresponding
  branes in $M^-$, with background class   
shifted by \ref{bshift}:
$$b(L^-) = b(L) + w_2(M) .$$
For any intersection point $x_\pm$ we denote by $x^-_\pm$
the corresponding intersection point of $L_j^-$ and
$\M(x^-_+,x^-_-)$ the moduli space of Floer trajectories.
Each trajectories $u(s,t)$ for $(L_0,L_1)$ defines a trajectory
$u(1-s,t)$ for $(L_1^-,L_0^-)$ giving a bijection
$$\M(x_+,x_-) \to \M(x^-_-,x^-_+) $$
from trajectories for $(L_0,L_1)$ to trajectories for $(L_1^-,L_0^-)$.
The bijection acts on orientations at a trajectory $u$ by a sign given
by
$$(-1)^{(\Ind(D_{x_+}) - \Ind(D_{x_-}) - 1 - \dim T_{\ul{u}}
  \M(x_+,x_-))) /2} .$$
By Lemma \ref{togglesame}, this shift cancels out the shift in the
background class.  Hence as oriented manifolds
$$ \M(x_+,x_-) \cong \M(x_+^-,x_-^-) .$$
Counting points in the zero-dimensional component implies that the
Floer operators are equal so that
\begin{equation} \label{hfduals} 
HF(L_1^-,L_0^-) \cong HF(L_0,L_1) .\end{equation} 

In the $\Z$-graded version one can also define an involution in the
power series ring
$$\Z[q] \to \Z[q], \quad q \mapsto (-1)^{N/2} q .$$  
This involution extends to an involution of $\widetilde{HF}(L_0,L_1)$.
The natural identification 
$$\widetilde{CF}(L_0,L_1) \to \widetilde{CF}(L_1^-,L_0^-)$$
composed with the involution intertwines with the Floer differentials
$\ti{\partial}$, $\ti{\partial}^-$.

The same considerations apply for the Floer homologies $HF(\ul{L}),
\widetilde{HF}(\ul{L})$ of a periodic sequence of Lagrangian
correspondences equipped with admissible brane structures.  In
particular, suppose that $\ul{L}$ is the diagonal.  The Floer
cohomology satisfies 
$$HF(\ul{L}) = \widetilde{HF}(\ul{L}) / (q-1) = QH(M)/(q-1) $$
where $QH(M)$ is the quantum cohomology.  Then $QH(M)/(q-1)$ is {\em
  not} isomorphic via the identity map to $QH(M^-)/(q-1)$.

For example let $M$ be complex projective $n$-space.  Then
$$QH(M)/(q-1) = \Z[x]/(x^{n+1} - 1) .$$  
On the other hand, $M^-$ is also isomorphic to the projective space,
but now the hyperplane class is $-x$, and 
$$QH(M^-)/(q-1) = \Z[-x]/((-x)^{n+1} -1).$$ 
The latter is isomorphic to $QH(M)/(q-1)$ via the map $x \mapsto -x$,
but not via the identity if $n$ is even.  On the other hand,
$$QH(M) = \Z[x,q]/ (x^n - q) \cong QH(M^-) = \Z[-x,q]/((-x)^n -q)$$
via the map $x \mapsto x, q \mapsto -q$.
\end{remark}

\begin{remark} {\rm (Disk potentials)} If the minimal Maslov number of 
  either Lagrangian is only two, then the Floer operator may not 
  square to zero due to the presence of pseudoholomorphic disks.  We 
  denote by 
$$\M_1(L): \{ ( u : (D,\partial D) \to (X,L), z) | \partial_J u = 0 \}
/ \Aut(D) \cong SL(2,\R) .$$
the moduli space of pseudolomorphic disk with a single marking on the
boundary.  Consider the evaluation map $\ev_1:\M_1(L) \to L$.  A
generic element $\ell \in L$ is a regular value and we define
following Oh \cite{oh:fl1}
\begin{equation} \label{wL} w(L) = 
 \sum_{u \in\ev_1^{-1}(l) \subset \M(L)_{\dim(L)}} \eps(u) \in 
\Z\end{equation}
  the {\em disk potential} of $L$.  One sees by a parametrized version 
  of the moduli space that the number $w(L)$ is independent of the 
  choice of regular value.  The boundary of 
  $\M(L_0,L_1)_1$ then two extra components corresponding to bubbling 
  off disks at the left and right boundaries of the strip:
\begin{multline} 
 \partial \M(L_0,L_1)_1 =  \left( \M(L_0,L_1)_0 \times_{L_0 \cap 
    L_1} \M_1(L_0,L_1)_0 \right) \\ \cup  ( \M_1(L_0) \times_M L_1 ) \cup
(\M_1(L_1) \times_M L_0 ).\end{multline}
The second and third configurations consist of a disks attached via a
node to a constant strip.  Recall that automorphisms of the disk
fixing a point form a two-dimensional subgroup of $\Aut(D)$ generated
by translations and dilations, once the complement of the marking is
identified with the half-space.  Under the gluing map the
one-parameter subgroup consisting of translations on the disk becomes
identified with the opposite translation resp. translation on the
strip.  As a result the square of the Floer operator is the difference
in disk potentials:
$$ \partial^2 = w(L_1) - w(L_0) .$$
\end{remark}

We investigate the effect of reversing the symplectic form on disk
potentials.  If we consider $L^- \subset M^-$ the same Lagrangian in
the symplectic manifold with reversed symplectic form, then we have a
diffeomorphism of moduli space of pseudoholomorphic disks
$$ \M_1(L) \to \M_1(L^-), \quad u \mapsto u^-, \ u^-(z) = u(\ol{z}) $$

\begin{lemma} For the shifted relative spin structure on $L^-$ with
  background class $b(L) + w_2(M)$, the diffeomorphism
  $\M_1(L) \to \M_1(L^-)$ is orientation reversing, hence the disk
  potentials for $L, L^-$ are related by $ w(L^-) = - w(L) .$
\end{lemma}

\begin{proof} The complex conjugation on the domain reverses the
  orientation on the group of automorphisms of the disk fixing a
  point.  Indeed after identifying the complement of a point with the
  upper half plane, this group of automorphisms is generated by
  dilations and translations, while conjugation becomes identified
  with a reflection.  While dilations are invariant under conjugation
  by the reflection, translations are reversed producing a sign
  change.  On the other hand, complex conjugation on the index of the
  linearized operator produces a sign change exactly if the Maslov
  index is equal to $2$ mod $4$.  The additional signs produce by the
  shift of relative spin structure produced in Lemma \ref{togglesame}
  imply that the orientation on the moduli spaces are reversed.
\end{proof}

These results extend to the quilted case as follows.  

\begin{definition} 
\begin{enumerate}
\item {\rm (Quilted Floer cohomology)} The quilted Floer coboundary operator 
$$\partial^d : \ CF^d(\ul{L}) \to CF^{d+1}(\ul{L}) $$ 
is defined by
$$
\partial^d\bra{\ul{x}_-} := \sum_{\ul{x}_+\in\cI(\ul{L})}
\Bigl( \sum_{\ul{u} \in\M(\ul{x}_-,\ul{x}_+)_0} \eps(\ul{u})\Bigr) \bra{\ul{x}_+} ,
$$ 
where the signs 
$$ \eps: \M(\ul{x}_-,\ul{x}_+)_0 \to \{ \pm 1 \} $$
are
defined by comparing the given orientation to the canonical
orientation of a point.  By studying the ends of the one-dimensional
moduli spaces as in the unquilted case one obtains $\partial^2 = 0$.
The {\em quilted Floer cohomology} defined in \cite{ww:quiltfloer} is
$$ HF(\ul{L}) := \bigoplus_{d\in\Z_N} HF^d(\ul{L}), \qquad
HF^d(\ul{L}) := {\ker(\partial^d)}/{\on{im}}(\partial^{d - 1}) $$
and is a $\Z_N$-graded group.  In case that the Lagrangians are
$N$-graded the datum associated to each intersection point $\ul{x}$ is
equipped with a canonical $\mod \Z_{2N}$ orientation given by the path
induced by the grading.  Similarly $\widetilde{HF}(\ul{L})$ is defined
as the $\Z$-graded group over the formal power series ring in a formal
variable $q$.
\item {\rm (Relative invariants)} Suppose that $\ul{S}$ is a quilted
  surface with strip-like ends.  Let $\ul{L}$ be a collection of
  Lagrangian boundary and seam conditions for a collection $\ul{M}$ of
  compact monotone symplectic manifolds attached to the patches of
  $\ul{S}$.  A {\em brane structure} for $\ul{L}$ is a collection of
  gradings and a relative spin structure.  Let $\ul{x}$ be a
  collection of perturbed intersection points at infinity, and
  $\M(\ul{x})$ the moduli space of perturbed pseudoholomorphic maps
  $\ul{S} \to \ul{M}$ with boundary values in $\ul{L}$ and limits
  $\ul{x}$.  A choice of relative spin structure for $\ul{L}$, if it
  exists, together with {orientation}s on the ends, induces an
  orientation on $\M(\ul{x})$, by Remark \ref{qpres}.  Assuming
  suitable monotonicity conditions on the tuple $\ul{L}$ that rule out
  sphere and disk bubbling in zero and one-dimensional moduli space,
  there is a cochain level relative invariant constructed in
  \cite{we:co} defined by
$$ C\Phi_{\ul{S}} : \bigotimes_{e \in \mE_-(\ul{S})} CF( \ul{L}_e) \to
  \bigotimes_{e \in \mE_+(\ul{S})} CF( \ul{L}_e) $$
$$ \otimes_{e \in \mE_-(\ul{S})} \bra{\ul{x}_e} \to \sum_{
    (\ul{x}_e)_{e \in \mE_+(\ul{S})}, u \in \M(\ul{x}) } \eps(u)
  \otimes_{e \in \mE_+(\ul{S})} \bra{\ul{x}_e} .$$
For rational coefficients we obtain a cohomology level invariant
$$ \Phi_{\ul{S}} : \bigotimes_{e \in \mE_-(\ul{S})} HF(\ul{L}_e) \to
\bigotimes_{e \in \mE_+(\ul{S})} HF(\ul{L}_e) .$$
\end{enumerate} 
\end{definition} 

\begin{proposition} \label{indeporinv}
The invariants $C\Phi_{\ul{S}}$ descend to cohomology and the
resulting cohomological invariants $\Phi_{\ul{S}}$ are independent up
to isomorphism of the choice of perturbation data and orientations.
\end{proposition}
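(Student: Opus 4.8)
The plan is to reduce Proposition \ref{indeporinv} to the independence statement already established for the Floer differential (Proposition \ref{indepor}) together with the invariance of Floer cohomology established in \cite{ww:quiltfloer}. The statement has two halves: first, that the cochain-level invariants $C\Phi_{\ul{S}}$ descend to well-defined maps on cohomology, and second, that the resulting cohomology-level maps $\Phi_{\ul{S}}$ do not depend on the auxiliary choices (perturbation data, almost complex structures, end orientations, and the representative of the relative spin structure class).

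First I would establish descent to cohomology. The standard argument is to study the boundary of the one-dimensional moduli spaces $\M(\ul{S},\ul{x}_-,\ul{x}_+)_1$: by Gromov compactification together with the monotonicity hypotheses that rule out sphere and disk bubbling, the only boundary strata arise from breaking off a Floer trajectory at one of the incoming or outgoing strip-like ends. Counting these with signs gives the chain-homotopy identity $C\Phi_{\ul{S}} \circ \partial = \partial \circ C\Phi_{\ul{S}}$ (up to signs depending on the end conventions). The key input here is that gluing a Floer trajectory onto $\ul{S}$ along a strip-like end is orientation preserving for incoming ends and orientation reversing for outgoing ends, which is exactly the content of Remark \ref{laggluing} together with Theorem \ref{gluingsigns}(d) and the sign computations in Remark \ref{cases}\eqref{disksigns}. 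Once this identity is in place, $C\Phi_{\ul{S}}$ maps cocycles to cocycles and coboundaries to coboundaries, so it descends.

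Next I would handle independence of the choices. Given two sets of choices, one constructs a one-parameter family of quilted surfaces (or more precisely of perturbation data / almost complex structures / end orientations) interpolating between them, yielding a family of moduli spaces over $[0,1]$ whose boundary gives a chain homotopy between the two realizations of $C\Phi_{\ul{S}}$; this is the usual continuation-map argument, and the signs work out because the relevant gluing maps are orientation compatible by Section \ref{glueor}. For the dependence on the choice of end orientations specifically, the explicit comparison is already carried out in the proof of Proposition \ref{indepor}: the sign $\sigma(x) = \pm 1$ obtained by gluing the once-punctured disk problems $(E_\pm,F_\pm,D_\pm)$ and $(E_\pm',F_\pm',D_\pm')$ defines an isomorphism of Floer complexes intertwining the two versions of the relative invariant. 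The same argument applies verbatim in the quilted case to each quilted end. For the representative of the relative spin structure, one uses that two representatives of the same equivalence class differ by a coboundary and hence the induced orientations on $\det(D_u)$ agree after the corresponding isomorphism.

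The main obstacle, and the only place requiring genuine care, is \textbf{bookkeeping of the signs in the chain-homotopy identity}: one must verify that the signs coming from the boundary of $\M(\ul{S},\ul{x}_-,\ul{x}_+)_1$ combine to give precisely $C\Phi_{\ul{S}} \circ \partial - (-1)^{?}\partial \circ C\Phi_{\ul{S}} = 0$ with the correct Koszul sign, using the ordering conventions for incoming versus outgoing ends fixed in Section \ref{oqs} and the permutation signs of Remark \ref{permuteremark}\eqref{permute}. This is entirely parallel to the proof that $\partial^2 = 0$ in Theorem \ref{floertheorem} and the analogous verification in Oh \cite{oh:fl1}, just with $\partial$ on one end replaced by $C\Phi_{\ul{S}}$; no new analytic input is needed beyond the gluing theorems already proved. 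Everything else—compactness, transversality, the structure of one-dimensional moduli spaces—is quoted from \cite{ww:quiltfloer} and \cite{we:co}, and the orientation-theoretic facts are quoted from the earlier sections of the present paper.
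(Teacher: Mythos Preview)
Your proposal is correct and follows essentially the same approach as the paper: descent to cohomology via the gluing signs for strip-like ends (the paper simply cites Section \ref{qgluing}, while you spell out the relevant inputs from Remark \ref{laggluing}, Theorem \ref{gluingsigns}(d), and Remark \ref{cases}), and independence of orientations via the intertwining map $\sigma$ of Proposition \ref{indepor}. The paper's proof is a three-sentence sketch pointing to \cite{ww:quilts} for the unsigned statement and to the gluing signs for the rest; your write-up is a faithful expansion of that sketch.
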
 

\begin{proof}  Without signs, this is the main result of \cite{ww:quilts}.  
The cochain level invariants descend to cohomology by the
determination of the gluing signs in Section \ref{qgluing}.  It
follows again from gluing that the maps for two choices of
orientations intertwine with the maps in the proof of Proposition
\ref{indepor}.
\end{proof} 

\begin{proposition}\label{annulus} {\rm (Floer invariant of the torus
as the graded dimension)} Let $\ul{L}$ be a periodic sequence of
  Lagrangian correspondences equipped with admissible brane
  structures.  Let $\ul{T}$ denote the quilted torus, with one seam
  for each element of $\ul{L}$. Then the relative invariant
  $\Phi_{\ul{T}}$ is the graded dimension of $HF(\ul{L})$,
$$ \Phi_{\ul{T}} = \rank HF^{\on{even}}(\ul{L}) - \rank HF^{\on{odd}}(\ul{L}) .$$
\end{proposition}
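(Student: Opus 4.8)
The plan is to compute the relative invariant $\Phi_{\ul{T}}$ directly by degenerating the quilted torus $\ul{T}$ along a circle that separates it into two quilted infinite cylinders, each carrying the same periodic sequence $\ul{L}$ of Lagrangian correspondences. Concretely, cut $\ul{T}$ along a quilted ``time slice'' perpendicular to the $S^1$-direction; this produces a quilted cylinder $\ul{Z} = \ul{S}^1 \times [0,1]$ (thickened suitably by the patches) whose two quilted ends are incoming and outgoing copies of the same generalized intersection datum $\cI(\ul{L})$. The relative invariant of $\ul{Z}$ is, by the standard gluing/TQFT property established via Theorem \ref{mainres} and the composition results of Section \ref{qgluing}, the identity-like map: counting quilted holomorphic cylinders from $\ul{x}_-$ to $\ul{x}_+$ with the appropriate orientations gives precisely the chain map on $CF(\ul{L})$ whose homology-level effect is the identity on $HF(\ul{L})$. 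Then $\Phi_{\ul{T}}$ is obtained by gluing the two ends of $\ul{Z}$ to each other, i.e.\ by taking the (super)trace of this endomorphism.

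The key steps, in order, are as follows. First I would set up the degeneration: realize $\ul{T}$ as the result of gluing the outgoing end of $\ul{Z}$ to the incoming end of $\ul{Z}$ (a single quilted surface with two quilted ends, glued to itself), and invoke Theorem \ref{gluingsigns}(d) together with Remark \ref{dependence}(iii) to check that the gluing sign for gluing the last outgoing end to the first incoming end of a connected quilted cylinder is $+1$ — this is exactly the ``single outgoing end, ordering $(e_-,e_+)$'' case where the theorem guarantees a positive sign. Second, identify the relative invariant $C\Phi_{\ul{Z}}$: by Remark \ref{dependence}(ii) (``Orientations for the constant bundle on a quilted surface'') the orientation on the constant solutions is trivial, and the continuation-type argument (as in Proposition \ref{indeporinv} and the monotonicity hypotheses ruling out bubbling) shows $C\Phi_{\ul{Z}}$ is chain-homotopic to the identity on $CF(\ul{L})$, hence $\Phi_{\ul{Z}} = \on{id}$ on $HF(\ul{L})$. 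Third, compute the closed-up invariant as a supertrace: gluing the two ends of an endomorphism-valued relative invariant computes $\sum_{\ul{x}} \eps = \operatorname{str}$, and since $\Phi_{\ul{Z}}$ is the identity this supertrace is $\rank HF^{\on{even}}(\ul{L}) - \rank HF^{\on{odd}}(\ul{L})$. The $\Z_N$-grading with $N$ even makes the even/odd decomposition well-defined, so the supertrace is the graded dimension as claimed.

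The main obstacle I expect is the careful bookkeeping of signs in the self-gluing step, i.e.\ verifying that closing up $\ul{Z}$ into $\ul{T}$ introduces exactly the sign needed so that the count equals $\operatorname{str}(\Phi_{\ul{Z}})$ rather than, say, its negative or a sign twisted by the Maslov/index data on the quilted end. This requires matching the convention \eqref{order} for orientations on strip-like ends — in which an incoming end's determinant factor is paired with an outgoing end's and a dualized fiber, all three combining to a canonically trivial line — against the supertrace sign $(-1)^{|\ul{x}|}$ that must appear when the same object is used as both input and output. The index-parity computation $(-1)^{3\rank(F) + \Ind(D_{e_-}) + \Ind(D_{e_+})} = (-1)^{4\rank(F)} = 1$ appearing at the end of the proof of Theorem \ref{gluingsigns} is the prototype of the cancellation one needs here; the work is to run the analogous parity count in the self-gluing setting and confirm it produces the standard supertrace. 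Everything else — regularity, absence of bubbling, identification of tangent spaces with kernels — is supplied by the monotonicity hypotheses on $\ul{L}$ and the results of \cite{ww:quiltfloer}, \cite{quiltfloer} already invoked above.
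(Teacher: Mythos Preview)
Your approach is genuinely different from the paper's, and in principle valid, but the sign step is not handled correctly as written.

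The paper's proof is a direct one-line computation: the index-zero pseudoholomorphic quilts on $\ul{T}$ (for suitable perturbation data) are the constant solutions at each $\ul{x}\in\cI(\ul{L})$, and the sign $\eps(\ul{x})$ is read off from the orientation of the determinant line on the (quilted) annulus worked out in Remark~\ref{cases}(a). That remark shows the orientation on $\det(D_{E,F})\cong\R$ is standard exactly when $F_0\oplus F_1\to E$ is orientation-preserving, which under the grading conventions translates to the contribution $(-1)^{|\ul{x}|}$. Summing over $\ul{x}$ gives the graded dimension. No strip-like ends are ever introduced on $\ul{T}$, so no end-gluing signs enter.

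Your TQFT picture (cut $\ul{T}$ into a quilted cylinder $\ul{Z}$, identify $\Phi_{\ul{Z}}=\on{id}$, close up to get a supertrace) is the standard conceptual route, but the citation you give for the gluing sign does not apply: Theorem~\ref{gluingsigns}(d) is explicitly stated for strip-like ends on \emph{distinct} components $S_-,S_+$, not for self-gluing of a single connected surface. The self-gluing case instead passes through the mechanism of case~(c) (a boundary node on a single boundary component) after the pinch-and-glue reduction of Figure~\ref{pinchglue}, and the resulting sign genuinely depends on $\Ind(D_{e_\pm})$, hence on $|\ul{x}|$. Indeed your first paragraph asserts the self-gluing sign is $+1$, while your last paragraph correctly anticipates that the supertrace sign $(-1)^{|\ul{x}|}$ must emerge --- these are in tension, and resolving that tension \emph{is} the whole content of the proposition. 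The paper's direct approach via Remark~\ref{cases} sidesteps this entirely; if you want to complete your route, you need to carry out the self-gluing sign analysis (adapting the argument behind cases (b),(c)) rather than invoking (d).
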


\begin{proof}  The equality in the proposition follows
from the discussion of the annulus signs in Remark \ref{cases}. Indeed
that remark shows that the contribution to the invariant
$C\Phi_{\ul{T}}$ from $x \in \cI(\ul{L})$ is $(-1)^{|x|}$.
\end{proof}

We investigate the behavior of the quilted Lagrangian Floer invariants
under the following basic operations:

\begin{remark} \label{ops}
\begin{enumerate}
\item {\rm (Products)} Let $\ul{L}_j, j= 0,1$ denote two periodic
  sequences of Lagrangian correspondences of the same length equipped
  with admissible brane structures so that the Floer homology groups
  $HF(\ul{L}_j),j =0,1$ are well-defined.  Let $\ul{L}$ denote the
  sequence obtained from $\ul{L}_j,j =0,1$ by direct sum.  Let $\ul{x}
  = (\ul{x}_0,\ul{x}_1)$ be a generalized intersection point for
  $\ul{L}$.  Consider the natural map
\begin{equation} \label{product} \M(\ul{x}_0) \times \M(\ul{x}_1) \to \M(\ul{x}), \quad (u_0,u_1)
\to u_0 \times u_1 .\end{equation}
The linearized operator $D_{u_0 \times u_1}$ is naturally isomorphic
to the direct sum $D_{u_0} \oplus D_{u_1}$.  It follows that the map
\eqref{product} is orientation preserving, by the discussion on direct
sums in Section \ref{cr}.  The Floer complex $CF(\ul{L})$ is the
graded tensor product of $CF(\ul{L}_0)$ and $CF(\ul{L}_1)$, and
similarly for the relative invariants.  Thus if the cohomologies are
torsion free then $HF(\ul{L})$ is the graded tensor product of
$HF(\ul{L}_j), j = 0,1$.
\item {\rm (Disjoint Unions of domains)} let $S_j ,j = 0,1$ be
  surfaces with strip-like ends.  Suppose that $S = S_1 \cup S_2$, and
  $S_j$ has $d_j^\pm$ incoming resp. outgoing ends for $j=1,2$.  A
  pair $u = (u_1,u_2)$ of pseudoholomorphic maps of index zero has
  determinant line with orientation related to the orientations of the
  determinant lines
\begin{eqnarray*}
 \eps(u) &=& \eps(u_1) \eps(u_2) (-1)^{\rank(F) ( \# \pi_0(\partial
S_2) + d_2^+)( \sum_{e \in \mE_{-,1}} (\dim(M)/2 + \Ind(D_e)) )}  \\
&=& \eps(u_1) \eps(u_2) (-1)^{|\Phi_{S_2}| \sum_{e \in \mE_{-,1}} 
|x_e|}.
\end{eqnarray*}
This formula implies that the relative invariant $C\Phi_S$ is the
graded tensor product
\begin{eqnarray*} \label{discon} 
C\Phi_S( \otimes_{e \in \mE_-(S)} \bra{x_e} ) &=& (-1)^{|\Phi_{S_2}| \sum_{e \in
    \mE_{-,1}} |x_e|} C\Phi_{S_1}( \otimes_{e \in \mE_-(S_1)} \bra{x_e}
) \otimes C\Phi_{S_2}( \otimes_{e \in \mE_-(S_2)} \bra{x_e}) \\
&=&
  (C\Phi_{S_1} \otimes C\Phi_{S_2})( \otimes_{e \in \mE_-(S_1) \cup
    \mE_-(S_2)} \bra{x_e}).
\end{eqnarray*} 
With rational coefficients, it follows that $\Phi_S$ is the graded
tensor product of $\Phi_{S_1}$ and $\Phi_{S_2}$.
\item {\rm (Folding of quilts)} \label{qfold} A ``quilt folding'' isomorphism was
  considered in \cite{ww:quiltfloer}.  Let
$$ \ul{L} = (L_{01}, L_{12}, \ldots, L_{k0}) $$
by a cyclic Lagrangian correspondence with $k$ odd.  In \cite[Section
  5]{ww:quiltfloer} we identified
\begin{equation} \label{folded}
 HF(\ul{L}) \cong 
HF(L_{01} \times L_{23} \times .... L_{(k-1)k},
 \sigma(L_{12}^- \times L_{34}^- \times \ldots L_{k0}^-)) \end{equation}
where
$$\sigma: M_1^- \times M_2 \times \ldots \times M_0 \to M_0
\times M_1^- \times \ldots \times M_k^- $$
is the cyclic shift.  To justify \eqref{folded} with integer
coefficients we note that the orientations are invariant under
deformation of the linearized seam conditions to split form.  So we
may assume $L_{(j-1)j} = L_{j-1} \times L_j'$ for each $j$, and the
isomorphism \eqref{folded} follows from the identification of both
sides with the tensor product of the Floer cohomology groups $
HF(L_j',L_j), j = 0,\ldots, k$ and the identifications $HF( L_j',L_j)
\cong HF(L_j^-,L_j^{',-})$ from \eqref{hfduals}.
  \end{enumerate}
\end{remark} 

\begin{remark} {\rm (Quilted sphere bubbling)} In the case that some
  correspondences $L_{(j-1)j}$ have minimal Maslov number equal to
  two, the quilted Floer operator may not square to zero.  Let
  $\ul{S}$ denoted a quilted sphere, with a single marking on the
  seam, and $\M_1(L_{(j-1)j}, M_{j-1},M_j)$ the moduli space of
  holomorphic quilted spheres.  Let $l \in L_{(j-1)j}$ be a regular value
  of the evaluation map
$$ \ev_1:  \M_1(L_{(j-1)j}, M_{j-1},M_j) \to L_{j-1} \times L_j .$$
Define the {\em quilted sphere potential}
$$ w(L_{(j-1)j},M_{j-1}, M_j) = 
\sum_{u \in\ev_1^{-1}(l) \subset \M(L_{(j-1)j},M_{j-1},M_j)_{\dim(L)}}
\eps(u) \in \Z.$$
Then the square of the Floer coboundary operator satisfies
$$ \partial^2 = w(L_{01},M_0,M_1) + w(L_{12},M_1,M_2) + \ldots + 
w(L_{n0}, M_n, M_0) .$$
  In the case of the folding identification, folding produces an 
identification of moduli spaces 
$$ \M_1(L_{(j-1)j},M_{j-1},M_j) \cong \M_1(L_{(j-1)j}) \cong
\M_1(L_{(j-1)j}^-)^{\on{op}}$$
where the superscript $\on{op}$ denotes the opposite orientation.
Hence
$$ w(L_{01},M_0,M_1) = w(L_{01},M_0^- \times M_1) = -w(L_{01},M_0
\times M_1^-) .$$
This equality is compatible with the folding identification of spaces
with Floer operators
 $$  CF(\ul{L}) \cong CF(L_{01} \times L_{23} \times .... L_{(k-1)k},
 \sigma(L_{12} \times L_{34} \times \ldots L_{k0})) $$
from \ref{ops} \eqref{qfold} since 
$$ \sum w(L_{(j-1)j}, M_{j-1}, M_j) =  
w(L_{01} \times L_{23} \times .... L_{(k-1)k}) - 
 w(L_{12}^- \times L_{34}^- \times \ldots L_{k0}^-)) .$$
\end{remark}

\subsection{Fukaya category} 
\label{dfsec}

The Fukaya category is defined by counting pseudoholomorphic disks
with boundary lying in Lagrangian submanifolds.  In this section we
explain how the results on orientations naturally define the Fukaya
category and its quilted version over the integers.

First we recall the definition of Stasheff's associahedron via nodal
disks.  A {\em nodal disk} $D$ is a contractible space obtained from a
union of disks $D_i,i =1,\ldots,l$ (called the components of $D$) by
identifying pairs of points $w_j^+,w_j^-, j =1,\ldots, k$ on the
boundary (the {\em nodes} in the resulting space)
$$ D = \sqcup_{i=1}^l D_i / (w_j^+ \sim w_j^-, j = 1,\ldots,k ) $$
so that each node $w_j\in D$ belongs to exactly two disk components
$D_{i_-(j)}, D_{i_+(j)}$.  A {\em set of markings} is a set
$\{ z_0,\ldots,z_d \}$ of the boundary $\partial D$ in
counterclockwise order, distinct from the singularities.  A {\em
  marked} nodal disk is a nodal disk with markings.  A {\em morphism
  of marked nodal disks} from $(D,\ul{z})$ to $(D',\ul{z}')$ is a
homeomorphism $\varphi:D \to D'$ restricting to a holomorphic
isomorphism $\varphi|_{D_i}$ on each component $i=1,\ldots,l$ and
mapping the marking $z_j$ to $z_j'$.  A marked nodal disk $(D,\ul{z})$
is {\em stable} if it has no automorphisms or equivalently if each
disk component $D_i \subset D$ contains at least three nodes or
markings.  The {\em combinatorial type} of a nodal disk with markings
is the tree
$$\Gamma = (\Ver(\Gamma),\Edge(\Gamma)), \quad \Edge(\Gamma) = \Edge_{<\infty}(\Gamma)
\sqcup \Edge_\infty(\Gamma) $$ 
obtained by replacing each disk with a vertex $v \in \Ver(\Gamma)$,
each node with a finite edge $e \in \Edge_{<\infty}(\Gamma)$, and each
marking with a semi-infinite edge $e \in \Edge_{\infty}(\Gamma)$.  The
semi-infinite edges $\Edge_\infty(\Gamma)$ are labelled by
$0,\ldots,d$ corresponding to which marking they
represent. 

We introduce the following notation for moduli spaces of disks.  For
each combinatorial type $\Gamma$ let $\RR_\Gamma^d$ denote the set of
isomorphism classes of semistable nodal $d+1$-marked disks of
combinatorial type $\Gamma$, and
$$ \ol{\RR}^d = \bigcup_{\Gamma} \RR^d_{\Gamma} $$
the moduli space of stable disks.  Each stratum is naturally oriented
by identifying each open stratum $\RR^d \cong (0,1)^{d-3} $ by
identifying the unit disk with upper half space, so that the first and
last markings map to $0,1$ while the zeroth maps to infinity.

Associated to a tuple of Lagrangians is a moduli space of
pseudoholomorphic polygons.  Namely given Lagrangian branes
$L_0,\ldots L_d$ intersecting pairwise transversally and intersection
points $x_j \in L_{(j-1)} \cap L_j$ for $j = 0,\ldots, d$ let
$\M(x_0,\ldots,x_d)$ denote the moduli space of stable disks $S$
equipped with pseudoholomorphic maps $u: S \to M$ mapping the part of
the boundary between $z_{j-1}$ and $z_j$ to $L_{j}$.  For a comeager
set of almost complex structures the moduli space is smooth, the
zero-dimensional component is finite, and the one-dimensional
component is compact up to bubbling off Floer trajectories at the
markings.  More precisely, the one-dimensional component
$\ol{\M}^d(\ul{x})_1$ has a compactification as a one-manifold with
boundary the union
 \begin{equation} \label{bound} \partial \ol{\M}^d_1(\ul{x}) = \bigcup_{\Gamma}
  {\M}^{d}_{\Gamma,1}(\ul{x}) \end{equation}
  of strata $\M^d_{\Gamma,1}(\ul{x})$ of $\ol{\M}^d_{1}(\ul{x})$
  corresponding to trees with two vertices (where either (1) $\Gamma$
  is stable with two vertices, or (2) $\Gamma$ is unstable and 
  corresponds to bubbling of a Floer trajectory).   

  In the stable range each moduli space of polygons is oriented by
  identifying its determinant line with the product of determinant
  lines for the associahedron and the linearized operator.  By
  deforming the parametrized linear operator to the linearized
  operator plus a trivial operator and bubbling off marked disks on
  each strip like end we may identify the determinant line with the
  product of determinant lines for the associahedron and the
  linearized operator for $d \ge 3$:
$$  \det( T_{[C,u]} {\M}^d (x_0,\ldots,x_d)_0) 
    \cong\det(T_{[C]}\RR^d) \otimes 
\det(D_u) .$$
Using \eqref{order}, we re-write the determinant line of the
linearized operator as follows.  For any intersection point $x_j$ let
$$\DD^\pm_{x_j} =  \det(D_{e_j}^\pm) \otimes 
\Lambda^{\max}(\Gamma_{e_j}(0)^\dual) $$
denote the determinant line associated to $x_j$, given as the tensor
product of the determinant line of the Lagrangian with the determinant
of a Cauchy-Riemann operator on the disk with a single end depending
on a choice of path from $T_{x_j} {L}_{j-1}$ to $T_{x_j} {L}_{j}$ in
$T_{x_j} {M}$, as in \eqref{order}.  The determinant line for the
surface with boundary but without ends is the product of determinant
lines for the trivial operator, isomorphic to $\det(TL)$, and the
determinant line on a complex space which may be ignored for the
purposes of the sign computation.  Thus (omitting tensor products to
save space) 
  \begin{equation} \label{convent}
 \det( T {\M}^d (x_0,\ldots,x_d)_0) 
    \to \det(T\RR^d) 
\DD^-_{x_1} \ldots 
    \DD^-_{x_d} 
\det(T {L}) 
 \DD^+_{x_0} 
.\end{equation}
  The chosen orientations for the ends, the associahedron, and the
  Lagrangian induce an orientation on the moduli space
  $\M^d(x_0,\ldots,x_d)$. In the unstable range $d \leq 2$ a similar
  construction, after replacing $T \RR^d$ with the Lie algebra of the
  automorphism group of a disk with $1$ or $2$ markings on the
  boundary, gives an orientation on the moduli space of holomorphic
  polygons.

\begin{definition} \label{dfn DonFuk}
The {\em Fukaya category}
$\Fuk(M) := \Fuk(M,\Lag^N(M),\omega,b)$
is defined as follows:

\begin{enumerate}
\item The objects of $\Fuk(M)$ are admissible Lagrangian branes in $M$
  with background class $b$.
\item The morphism spaces of $\Fuk(M)$ are the $\Z_N$-graded Floer
  cochain groups 
$$ \Hom(L,L') := CF(L,L') .$$
\item
The composition law in the category $\Fuk(M)$ is defined by counting
holomorphic polygons:
$$ \mu^d:  \Hom(L^0,L^1) \times \ldots \times \Hom(L^{d-1},L^d) 
\to \Hom(L^0,L^d) $$
by 
\begin{equation} \label{highercomp} \mu^d(\bra{x_1},\ldots,\bra{x_d})
  = (-1)^\heartsuit \sum_{u \in \M^d(x_0,\ldots,x_d)_0} \eps(u)
  \bra{x_0}
\end{equation}
where 
\begin{equation} \label{heartsuit} \heartsuit = {\sum_{i=1}^d i|x_i|}
  .\end{equation}
\end{enumerate}
\end{definition}

\begin{theorem} The higher compositions in $\Fuk(M)$ satisfies the
  \ainfty axiom. The resulting category $\Fuk(M)$ is independent, up
  to homotopy equivalence of \ainfty categories, of choices of
  perturbation data and orientations.
\end{theorem}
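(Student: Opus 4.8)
The plan is to reduce the statement to results already established in the paper, treating the $A_\infty$ axioms and the independence statement as two separate tasks. For the $A_\infty$ relations, the strategy is the standard one: the relation $\sum \pm \mu^{k}(\ldots,\mu^{j}(\ldots),\ldots)=0$ is obtained by examining the boundary of the one-dimensional components of the moduli spaces $\M(\ul{x})$ of holomorphic polygons. The codimension-one boundary strata correspond to breaking off a polygon with fewer inputs along one of the strip-like ends, which is precisely a gluing of Cauchy-Riemann operators along strip-like ends. First I would invoke Theorem \ref{gluingsigns}(d) (and its quilted analogue via Remark \ref{dependence}) together with Remark \ref{cases}\eqref{disksigns} to see that the gluing signs for attaching a disk-with-ends to a polygon along an incoming end are exactly the universal signs appearing in the $A_\infty$ formalism, so that the count of boundary points of each one-dimensional moduli space vanishes with the correct signs. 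The cohomological identity axiom — that $1_L = \Phi_S$ for the one-marked disk acts as a unit up to homotopy — follows from the same degeneration analysis applied to a disk with one extra marked point, together with the orientation computation for the trivial operator on the disk in the Remark following Proposition \ref{nostrips} (the Maslov index is zero, so the constructed orientation agrees with the given orientation on $\Lambda^{\max}(F)$, and no sign correction enters).

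For the independence statement, the plan is to produce, for any two choices of Floer data and orientation data, an $A_\infty$ homotopy equivalence between the resulting categories. On objects the functor is the identity (after perturbing the branes if necessary). On morphisms one uses the continuation maps $\sigma: CF(L_0,L_1) \to CF(L_0,L_1)$ of Proposition \ref{indepor}, which are defined by gluing the once-punctured-disk problems of the two orientation data along their strip-like ends and comparing the induced orientation with the trivialization coming from the relative spin structure; these are already known to intertwine the differentials. The higher components of the functor are the relative invariants $C\Phi_{\ul S}$ associated to polygons equipped with the interpolating (domain-dependent) data, exactly as in the construction of a continuation $A_\infty$-morphism. That these components satisfy the $A_\infty$-functor equations is again a boundary-of-moduli-space argument, and the signs work out by the gluing results of Section \ref{qgluing}: Proposition \ref{indeporinv} already records that the cochain-level invariants descend and intertwine with the $\sigma$ maps, which is the $k=1$ case; the general case is the same degeneration analysis with the gluing signs of Theorem \ref{gluingsigns}. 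Composing the continuation functor one way with its inverse the other way is homotopic to the identity by a further moduli-space-with-one-parameter argument, giving homotopy equivalence rather than mere quasi-isomorphism.

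The main obstacle I expect is bookkeeping of signs in the higher $A_\infty$-functor relations, rather than any conceptual difficulty: one must track the universal signs $\heartsuit, \diamondsuit$ and the $(\pm1)^{\rank(F)}$ factors of Theorem \ref{gluingsigns}(d), the reordering signs of Remark \ref{permuteremark}\eqref{permute}, and the disjoint-union signs of Remark \ref{rconjs}, and verify that they assemble into exactly the Koszul signs of the $A_\infty$ formalism. The paper's own conventions (ordering patches/ends/boundary components, placing the index-one component last, choosing orientations on ends compatibly) are designed so that gluing along a single outgoing end is sign-free (the last sentences of Theorem \ref{gluingsigns}(d) and Remark \ref{dependence}\eqref{trivbun}), which is what makes the count of boundary points clean; the remaining signs are then forced. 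Given the paper's stated expository intent, I would keep this proof short: cite Seidel \cite{se:bo} for the structure of the argument in the unquilted case, note that all the analytic input (transversality of the polygon moduli spaces, exponential decay, gluing) is available from \cite{ww:quilts}, \cite{ww:quiltfloer}, \cite{quiltfloer}, and emphasize that the only new point is that the gluing maps on determinant lines computed in Sections \ref{crglue}--\ref{qgluing} carry the universal signs required, so that $\partial^2=0$ upgrades to the full $A_\infty$ relations and the continuation maps upgrade to $A_\infty$-functors over $\Z$.
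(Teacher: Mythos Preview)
Your proposal is correct in spirit and follows the standard strategy, but it is considerably more ambitious than what the paper actually does. The paper does not prove the full $A_\infty$ relations here at all: it cites \cite{Ainfty} (and Charest, Sheridan) for the general statement and only \emph{sketches the sign computation for the cohomological version}, i.e.\ associativity of composition on $HF$. Concretely, the paper takes two disk-with-ends surfaces $S_1,S_2$ glued along one end each, imposes the index-zero condition $\sum_{e\in\mE(S_j)}\Ind(D_e)=0$ on each component (which holds for rigid polygons contributing to the cohomological product), and then observes that the sign from permuting the glued ends to be adjacent (Remark \ref{permuteremark}\eqref{permute}) exactly cancels the sign from cyclically reordering the ends of the glued surface. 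That two-line cancellation is the entire sign argument; the $\heartsuit,\diamondsuit$ factors of Theorem \ref{gluingsigns}(d) are never unpacked. Independence is then dispatched by citing Propositions \ref{indepor} and \ref{indeporinv}.

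So the difference is one of scope: you outline how one would actually verify the full $A_\infty$ axiom and build the $A_\infty$ continuation functor, tracking $\heartsuit,\diamondsuit$ and the Koszul signs; the paper instead restricts to the cohomological (index-zero) situation where those extra factors become trivial and the only surviving signs are the end-permutation signs, which visibly cancel. Your approach buys an honest self-contained proof; the paper's buys brevity at the cost of deferring the real work to \cite{Ainfty}. There is no gap in your proposal, but if you are trying to match the paper you should replace the $\heartsuit,\diamondsuit$ bookkeeping by the index-zero cancellation argument and cite \cite{Ainfty} for the rest.
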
 

\begin{proof}
We consider the difference of orientations in the identification 
  $$ \partial \ol{\M}^d_1(\ul{x}) = \bigcup_{\Gamma}
  {\M}^{d}_{\Gamma,1}(\ul{x})$$
  of \eqref{bound}.  Let $x_j \in \cI({L}^j,{L}^{j+1})$ for
  $j = 0,\ldots, d$ indexed mod $d+1$.  Consider the gluing map
  \begin{equation} \label{gmap} \M^m({y},{x}_{n+1},\ldots,
    {x}_{n+m})_0 \times \M^{d-m
      +1}({x}_0,{x}_1,\ldots,{y},\ldots,{x}_d)_0 \to
    \M^d({x}_0,\ldots,{x}_d)_1 .\end{equation}
The gluing map \eqref{gmap} takes the form
(omitting tensor products from the notation to save space)
\begin{multline} 
 \det(T\RR^m) 
\DD^-_{x_{n+1}} \ldots
 \DD^-_{x_{n+m}} \det(T {L})   \DD^+_y 
 \\ \det(T\RR^{d-m+1}) 
 \DD^-_{x_1} \ldots \DD^-_y \ldots \DD^-_{x_d}  \det(T {L}) 
 \DD^+_{x_0}
\\ \to \det(T\RR^d)
 \DD^-_{x_1} \ldots \DD^-_{x_d}  \det(TL) 
 \DD^+_{x_0}
.\end{multline}
To determine the sign of this map, first note that the gluing map
$(0,\eps) \times \RR^m \times \RR^{d-m+1} \to \RR^d$ on the
associahedra is given in coordinates (using the automorphisms to fix
the location of the first and last marked point for $\RR^m$ and
$\RR^{d- m + 1}$) by
\begin{multline} \label{signs}
 (\delta,(z_2,\ldots,z_{m-1}), (w_2,\ldots,w_{d-m})) \\ \to (w_2,
  w_3,\ldots, w_{n+1}, w_{n+1} + \delta z_2, \ldots, w_{n+1} + \delta
  z_{m-1} , w_{n+1} + \delta, w_{n+2},\ldots, w_{d-m}) .\end{multline}
This map acts on orientations by $mn + m + n + 1 $ mod $2$.  These
signs combine with the contributions $\heartsuit$ in the definition of
$\mu^d$, a contribution $m(d-m)$ from permuting $\det(T\RR^m)$ with
$ \DD^-_{x_1} \ldots \DD^-_y \ldots \DD^-_{x_d} \det(T {L})
\DD^+_{x_0}$,
and a contribution $m (|y| + \sum_{i \leq n} |x_i|)$ from permuting
the ends into their correct order.  Comparing the contributions from
$(-1)^\heartsuit$ with an overall sign of $(-1)^\square$, where
$\square = \sum_{k=1}^d k |x_k|$, one obtains a sign contribution of
$(-1)$ to the power
$|y| + nm + (m-1)( \sum_{k=1}^{d-m-n} |x_{n+m+k}|)$.  On the other
hand, the sign in the \ainfty axiom contributes
$\sum_{k=1}^n (|x_k| - 1)$.  Combining the signs one obtains in total
\begin{multline} 
 m \left( \sum_{k=m+n+1 }^d |x_k|\right)
 + mn + 1 - n - m +
 nm 
  + (m-1) \left(\sum_{k=m+n+1}^{d} |x_{k}| \right) + |y| +  
\sum_{k=1}^n (|x_k| - 1)
 \\
= \sum_{k=1}^n (|x_k| - 1) + |y|  
 + 1 + \sum_{k=m+n+1 }^d |x_k| 
\cong_2 1 + \sum_{k=1}^d |x_k|
  \end{multline}
  which is independent of $n$.  The gluing computation in unstable
  range $d-m+1 \leq 2$ or $m \leq 2$ is similar and the
  \ainfty-associativity relation follows.
\end{proof} 

\begin{remark}
\begin{enumerate} 
\item {\rm (Duals)} Let $M^-$ denote $M$ with symplectic form
  reversed.  We have a natural identification of objects of $\Fuk(M)$
  and $\Fuk(M^-)$ obtained by considering each brane $L$ as a brane
  $L^-$ for $M^-$.  If all minimal Maslov numbers are divisible by $4$
  then then this identification extends to isomorphism of categories
$$\Fuk(M) \to \Fuk(M^-) .$$  
In general, continuing Remark \ref{rconjs}  the category
${\Fuk}(M,b)$ is isomorphic to the opposite category of ${\Fuk}(M^-,b
+ w_2(M))$ as observed by Fukaya.  Indeed by Remark \ref{rconjs} we
have 
isomorphisms 
\begin{eqnarray*} \Hom_{\Fuk(M,b)}(L_0,L_1) &\cong& CF(L_0,L_1) \\
&\cong&  CF(L_1^-,L_0^-) \\
&\cong& \Hom_{\Fuk(M^-,b + w_2(M))}^{\on{op}}(L_0,L_1) .\end{eqnarray*}
\item {\rm (Disjoint Unions)} Let $\cC_j, j= 0,1$ be categories
  enhanced in groups.  Define a {\em disjoint union} category $\cC$ by
  taking an object to be an object in $\cC_0$ or $\cC_1$ and morphism
  groups to be trivial unless the two objects are objects of the same
  category $\cC_j, j = 0,1$. Suppose that $M_j, j = 0,1$ are compact
  monotone symplectic manifolds, and $M = M_0 \sqcup M_1$.  Then
$$\Fuk(M_0 \sqcup M_1) = \Fuk(M_0) \sqcup \Fuk(M_1) .$$
\item {\rm (Products)} Let $\cC_j, j = 0,1$ be categories enhanced in
  $\Z_N$ graded cochain complexes.  The {\em product category} $\cC$
  is the category whose objects are pairs of objects of $\cC_0$ and
  $\cC_1$, and whose morphism spaces are graded tensor product of
  morphism spaces of $\cC_j, j = 0,1$.  Let $M_j , j =0,1$ be compact
  monotone symplectic manifolds and $M = M_0 \times M_1$. Then
  $H(\Fuk(M))$ is the category obtained by taking the cohomology of
  the cochain-level categories underlying
  $H(\Fuk(M_0)), H(\Fuk(M_1))$.  In particular, if all cohomologies
  are torsion-free (for example, by working over a field) then
  $$H(\Fuk(M_0 \times M_1)) = H(\Fuk(M_0)) \otimes H(\Fuk(M_1)) .$$
The A-infinity version of this result is addressed in Amorim
\cite{amor:tens}. 
\end{enumerate} 
\end{remark}

\begin{remark} 
\begin{enumerate}
\item {\rm (Extension to quilts)} The quilted versions are similar.
  In particular, there is a quilted Fukaya category $\Fuk^{\sharp}(M)$
  whose objects are generalized Lagrangian branes $\ul{L}$ (sequences
  of correspondences from a point to $M$) equipped with relative brane
  structures, and whose morphism spaces are quilted Floer cochain
  groups $\Hom(\ul{L},\ul{L}') = CF(\ul{L},\ul{L}')$, defined over the
  integers.
\item {\rm (Extension to Lefschetz fibrations)} In the case of a
  Lefschetz fibration $E$ over $S$ with Lagrangian boundary condition
  $Q$, the gluing signs are the same as for pseudoholomorphic
  surfaces.  In the case $Q$ is oriented and has minimal Maslov number
  at least two, working with rational coefficients $(E,Q)$ defines a
  relative invariant
$$\Phi_{E,Q}: \otimes_{e \in \mE_-(\ul{S})} HF(\ul{L}_e) \to
  \otimes_{e \in \mE_+(\ul{S})} HF(\ul{L}_e) $$
mapping the tensor product of Floer homologies for the incoming ends
to the product for the outgoing ends.
\end{enumerate} 
\end{remark}

\subsection{Inserting a diagonal for pseudoholomorphic quilts} 

In this and the following section we investigate the effect of
composition of seam conditions on holomorphic quilt invariants.  The
first step is to investigate the effect of the insertion of a diagonal
seam insertion.

\begin{definition}  {\rm (Inserting a diagonal Lagrangian seam condition)} 
A triple $(\ul{S}^{\sDel}, \ul{M}^{\sDel},\ul{L}^{\sDel})$ is obtained
from a labelled quilted surface $(\ul{S}$, $\ul{M}$, $\ul{L})$ by {\em
  inserting a diagonal} iff
\begin{enumerate} 
\item the quilted surface $\ul{S}^{\sDel}$ is obtained from $\ul{S}$
  by inserting a new seam $\sigma$ into a patch $S_p$ of $\ul{S}$;
\item the labels $\ul{M}^{\sDel},\ul{L}^{\sDel}$ are obtained by
  inserting a diagonal seam condition in the previous subsection.
  That is, if $M_p$ is the symplectic manifold labelling $S_p$ then
  the patches $S_p', S_p''$ are labelled $M_p$, and the new seam is
  labelled $\Delta_{M_p}$.
\end{enumerate} 
\end{definition} 

\begin{proposition} \label{delta}  {\rm (Isomorphism of Floer homologies
and relative invariants for insertion of separating diagonals)}
  Suppose that the new seam is inserted into the component $S_p$, and
  that the new seam is separating.  Then there exists a collection of
  isomorphisms 
$$HF(\ul{L}_{\ul{e}}) \to HF(\ul{L}_{\ul{e}^{\sDel}}^{\sDel}), \quad e
  \in \mE(\ul{S}) .$$
In the case of rational coefficients these intertwine with the
relative invariants $\Phi_{\ul{S}}, \Phi_{\ul{S}^{\sDel}}$ defined by
  $\ul{S},\ul{S}^{\sDel}$.
\end{proposition}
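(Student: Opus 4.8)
The plan is to reduce this statement to the already-established fact (Proposition \ref{circle} and the proposition on separating diagonals diffeomorphic to $\R$) that the natural identification of determinant lines $\det(\ul{D}) \to \det(\ul{D}^{\sDel})$ from Remark \ref{DDp} is orientation preserving, once $\ul{F}^{\sDel}$ is equipped with the canonical relative spin structure of Definition \ref{relspinstrs}. The key point is that ``inserting a diagonal'' is a purely linear operation on the level of Cauchy-Riemann operators: for a pseudoholomorphic quilt $\ul{u}: \ul{S} \to \ul{M}$ with a diagonal seam $\Delta_{M_p}$ inserted, the map $\ul{u}^{\sDel}: \ul{S}^{\sDel} \to \ul{M}^{\sDel}$ that restricts $\ul{u}$ to the two halves of $S_p$ is automatically pseudoholomorphic, its linearized operator $D_{\ul{u}^{\sDel}}$ is obtained from $D_{\ul{u}}$ by exactly the formal construction of adding a diagonal seam condition to the bundle $(\ul{u}^* T\ul{M}, (\ul{u}|\pd\ul{S})^*T\ul{L})$, and this construction gives a bijection of moduli spaces $\M(\ul{M},\ul{L},\ul{x}) \to \M(\ul{M}^{\sDel},\ul{L}^{\sDel},\ul{x}^{\sDel})$ together with a canonical identification of kernels and cokernels of the linearized operators.

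First I would spell out this bijection of moduli spaces and check it is a homeomorphism for the relevant topologies; this is essentially formal since $\ul{u}$ and $\ul{u}^{\sDel}$ have the same underlying maps. Second, I would observe that under this bijection the linearized operator at $\ul{u}^{\sDel}$ is the operator $\ul{D}^{\sDel}$ obtained from $\ul{D} = D_{\ul{u}}$ by the seam-insertion recipe, so the determinant lines are canonically identified as in Remark \ref{DDp}. Third, I would invoke the proposition on preservation of orientations for insertion of separating diagonals (the one proved just above, covering both the circle case via Proposition \ref{circle} and the $\R$ case via gluing the ends): given that the orientations for the ends of $\ul{S}^{\sDel}$, the orderings of patches, and the orderings of boundary components are all induced from those of $\ul{S}$ as in Definition \ref{newordering}, the identification $\det(\ul{D}) \to \det(\ul{D}^{\sDel})$ preserves the constructed orientations. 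Applying this fiberwise over the moduli space gives that the induced orientation of $\M(\ul{M}^{\sDel},\ul{L}^{\sDel},\ul{x}^{\sDel})$ agrees with that of $\M(\ul{M},\ul{L},\ul{x})$ under the bijection. In particular, restricting to once-punctured disks recovers the grading data and shows the degree maps match, and restricting to the infinite quilted strips $\ul{S}=\ul{e}$ with its diagonal insertion gives the identification $HF(\ul{L}_{\ul{e}}) \cong HF(\ul{L}^{\sDel}_{\ul{e}^{\sDel}})$ on the chain level, compatible with the coboundary operator because $\pd$ counts rigid trajectories with signs and these signs are preserved.

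For the statement about relative invariants with rational coefficients, I would note that $C\Phi_{\ul{S}}$ and $C\Phi_{\ul{S}^{\sDel}}$ are defined by the same signed counts over the matched moduli spaces, so the chain-level maps literally agree under the identifications $CF(\ul{L}_{\ul{e}}) \cong CF(\ul{L}^{\sDel}_{\ul{e}^{\sDel}})$, and descending to cohomology (using Proposition \ref{indeporinv}) gives the intertwining $\Phi_{\ul{S}} \leftrightarrow \Phi_{\ul{S}^{\sDel}}$. The main obstacle, as usual in this circle of arguments, is bookkeeping: one must be careful that the canonical relative spin structure chosen on $\ul{L}^{\sDel}$ in Definition \ref{relspinstrs} — which shifts the background classes on one side of $\sigma$ by $w_2(E_p)=w_2(TM_p)$ — is precisely the one for which Proposition \ref{circle} and its $\R$-analogue apply, and that the induced orientations on the ends $\ul{e}^{\sDel}$ are exactly those produced by the recipe in the last item of the remark preceding Proposition \ref{quiltcyl} (deforming $\Delta$ and $\Delta^\perp$ to split form). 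Once those conventions are matched up, no further analysis is required; everything reduces to the already-proven linear-algebra statement about orientations.
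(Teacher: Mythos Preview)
Your proposal is correct and follows essentially the same approach as the paper's own proof: choose perturbation data for $\ul{L}^{\sDel}$ induced from that of $\ul{L}$, use the resulting canonical bijection of generalized intersection points and moduli spaces, and reduce to the already-established linear statement that the determinant-line identification $\det(\ul{D})\to\det(\ul{D}^{\sDel})$ preserves orientations. The paper's proof is a two-sentence version of exactly this; your write-up simply unpacks the steps and flags the bookkeeping about the shifted background classes, which is the one place where care is needed.
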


\begin{proof}
We take the perturbation data for $\ul{L}^{\sDel}$ to be induced by
perturbation data for $\ul{L}$.  Then 
% C
$\cI(\ul{L}_{\ul{e}})$ and $\cI(\ul{L}^{\sDel}_{\ul{e}^{\sDel}})$ are
canonically in bijection.  The Proposition follows from the linear
case in the previous paragraph, taking the map on cochain complexes to
be the identity on cochain complexes.
\end{proof} 

\begin{remark}  {\rm (The spin case)}  In the case that $M_p$ is spin, the diagonal 
$\Delta_p$ is also spin.  So $\Delta_p$ has a relative spin structure
  with background classes $(0,0)$.  Thus the periodic Floer cohomology
  $HF(\Delta_p)$ is well-defined.  (In general without the spin
  assumption, the quilted Floer cohomology $HF(\Delta_p)$ may be
  defined as the periodic Floer cohomology $HF(\Id_{M_p})$ of the
  identity on $M_p$, that is, treating $\Delta_p$ as the generalized
  Lagrangian correspondence of length $0$.)  The isomorphism of Floer
  homology groups can be defined as follows from the isomorphisms
  $HF(\Delta_p) \to QH(M_p)$.  Let
$$\phi_e: HF(\ul{L}_{\ul{e}}) \to HF(\ul{L}^{\sDel}_{\ul{e}(\sDel)}),
  \quad \psi_e : HF(\ul{L}^{\sDel}_{\ul{e}(\sDel)}) \to
  HF(\ul{L}_{\ul{e}}) $$
denote the morphism associated to the quilted surface shown in Figure
\ref{insertid} resp. the reversed surface.  
\begin{figure}[ht]
\begin{picture}(0,0)%
\includegraphics{insertid.pstex}%
\end{picture}%
\setlength{\unitlength}{4144sp}%
\begingroup\makeatletter\ifx\SetFigFont\undefined%
\gdef\SetFigFont#1#2#3#4#5{%
  \reset@font\fontsize{#1}{#2pt}%
  \fontfamily{#3}\fontseries{#4}\fontshape{#5}%
  \selectfont}%
\fi\endgroup%
\begin{picture}(2499,1046)(2689,-1427)
\put(3939,-1028){\makebox(0,0)[lb]{$1_{M_p}$}%
}
\put(3072,-1028){\makebox(0,0)[lb]{$\Delta_{M_p}$}%
}
\end{picture}%
\caption{Isomorphism of Floer homologies after inserting a seam}
\label{insertid}
\end{figure}
In other words, to the infinite strip we add a cylindrical end in the
component separated by the seam $\sigma$, and insert at that
cylindrical end the identity in $1_{M_p} \in HF(\Delta_{M_p})$.  The
identities
$$ \psi_e \phi_e = 1_{HF(\ul{L}_{\ul{e}})}, \ \ \phi_e \psi_e = 1_{HF(\ul{L}^{\sDel}_e)} $$
follow from the results of the previous section applied to the surface
on the inner circle in Figure \ref{removeseam}.  Compatibility with
the relative invariants is proved in the same way.   This ends the remark. 
\end{remark}

\begin{figure}[ht]
\begin{picture}(0,0)%
\includegraphics{removeseam.pstex}%
\end{picture}%
\setlength{\unitlength}{4144sp}%
\begingroup\makeatletter\ifx\SetFigFont\undefined%
\gdef\SetFigFont#1#2#3#4#5{%
  \reset@font\fontsize{#1}{#2pt}%
  \fontfamily{#3}\fontseries{#4}\fontshape{#5}%
  \selectfont}%
\fi\endgroup%
\begin{picture}(6506,1057)(214,-1442)
\put(2342,-1028){\makebox(0,0)[lb]{\smash{{\SetFigFont{5}{6.0}{\rmdefault}{\mddefault}{\updefault}{$1_{M_p}$}%
}}}}
\put(1051,-1042){\rotatebox{360.0}{\makebox(0,0)[rb]{\smash{{\SetFigFont{5}{6.0}{\rmdefault}{\mddefault}{\updefault}{$1_{M_p}$}%
}}}}}
\put(1502,-997){\makebox(0,0)[lb]{\smash{{\SetFigFont{5}{6.0}{\rmdefault}{\mddefault}{\updefault}{$\Delta_{M_p}$}%
}}}}
\put(6003,-1036){\makebox(0,0)[lb]{\smash{{\SetFigFont{5}{6.0}{\rmdefault}{\mddefault}{\updefault}{$1_{M_p}$}%
}}}}
\put(4712,-1050){\rotatebox{360.0}{\makebox(0,0)[rb]{\smash{{\SetFigFont{5}{6.0}{\rmdefault}{\mddefault}{\updefault}{$1_{M_p}$}%
}}}}}
\end{picture}%
\caption{Removing a seam}
\label{removeseam}
\end{figure}

\subsection{Orientations for compositions of Lagrangian 
correspondences}
\label{comprelspin}

In this final step we investigate the effect of replacing a triple of
adjacent seam conditions, the middle of which is a diagonal, with the
composed condition.

\begin{definition}  {\rm (Composed Lagrangian seam conditions)}  
Let $\ul{S}$ denote a quilted surface, $\ul{M}$ a set of symplectic
manifolds for the components of $\ul{S}$, and $\ul{L}$ a collection of
Lagrangian and seam conditions.  Suppose that $\ul{S}$ contains a pair
of adjacent components $M_1,M_1$ diffeomorphic to infinite strips,
with boundary conditions $L_{01},\Delta_1,L_{12}$.  Let $\ul{S}^\circ$
denote the surface obtained by removing the $M_1$ components.  The
{\em composed Lagrangian seam conditions}
$$\ul{L}^\circ = \ul{L}/ 
(L_{01},\Delta_1,L_{12}) \mapsto   L_{01} \circ L_{12} ) $$ 
assuming that the composition is smooth and embedded by the projection
onto $M_0^- \times M_2$.  If $L_{01},L_{12}$ are equipped with
relative spin structures, then $L_{02} := L_{01} \circ L_{12}$
inherits a relative spin structure with background class shifted by
$w_2(M_2)$.
\end{definition}  

\begin{proposition}  \label{collapse} {\rm (Geometric composition theorem)} 
Suppose that $\ul{S}^\circ$ is obtained from $\ul{S}$ and the quilt
data for $\ul{S}^\circ$ is obtained from quilt data $\ul{M},\ul{L}$
for $\ul{S}$ by replacing a triple of seams $L_{01}, \Delta_1, L_{12}$
with the geometric composition $L_{02}$.  Suppose that $\ul{M}$ are
compact monotone with the same monotonicity constants and
$\ul{L},\ul{L}^\circ$ are admissible correspondences so that the
quilted Floer cohomologies and relative invariants are well-defined.
For each quilted end $\ul{e}$ changed by the replacement to a quilted
end $\ul{e}^\circ$ there exists an isomorphism
$$HF(\ul{L}_{\ul{e}})
\to HF(\ul{L}_{\ul{e}^\circ}^\circ)$$ 
such that the tensor products over the negative and positive ends of
$\ul{S},\ul{S}^\circ$ intertwine the relative invariants
$\Phi_{\ul{S}}, \Phi_{\ul{S}^{\circ}}$ for $\ul{S},\ul{S}^\circ$.
\end{proposition}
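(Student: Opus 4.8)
The plan is to combine the geometric composition isomorphism of \cite{ww:isom} with the linear orientation statement Proposition \ref{quiltcyl}, the only new ingredient being a sign check. First I would put $\ul{S}$ in normal form. By Proposition \ref{delta} (and the Remark following it) one may insert a separating diagonal seam $\Delta_1$ into the patch carrying $M_1$ and thicken it to a strip, without changing the quilted Floer cohomologies $HF(\ul{L}_{\ul{e}})$ or the relative invariants $\Phi_{\ul{S}}$; so it suffices to assume $\ul{S}$ already has a pair of adjacent strip patches labelled $M_1,M_1$ with boundary conditions $L_{01},\Delta_1,L_{12}$, exactly as in the definition of composed Lagrangian seam conditions, and to compare it with $\ul{S}^\circ$ obtained by deleting these two strip patches and labelling the new seam by $L_{02}=L_{01}\circ L_{12}$.

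Next I would recall the analytic input from \cite{ww:isom}: for the width $\delta_1$ of the $M_1$-patch sufficiently small, strip-shrinking produces, for each collection of limits, a regularity-preserving diffeomorphism $\M(\ul{M},\ul{L},\ul{x})\to\M(\ul{M}^\circ,\ul{L}^\circ,\ul{x}^\circ)$ which preserves the Fredholm index; in particular $\cI(\ul{L}_{\ul{e}})$ and $\cI(\ul{L}^\circ_{\ul{e}^\circ})$ are canonically in bijection, and on the level of unoriented moduli spaces the coboundary operators and relative invariants match. What remains is purely a sign statement: that the identification of determinant lines underlying this diffeomorphism carries the orientation of Proposition \ref{noends} for $(\ul{E},\ul{F})$ to the one for $(\ul{E}^\circ,\ul{F}^\circ)$, where $\ul{F}^\circ$ carries the relative spin structure induced from $\ul{F}$ with the background class on the $M_2$-side shifted by $w_2(M_2)$ (as built into the definition of the composed relative spin structure and matching Proposition \ref{quiltcyl}).

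To prove this sign is $+1$ I would reduce the geometric degeneration to the linear model. At a solution $u$ with thin middle strip, the linearized operator $D_u$ is deformation-equivalent to $D_{u^\circ}\oplus D_{\mathrm{strips}}$, where $D_{\mathrm{strips}}$ is the Cauchy--Riemann operator on the $(\Delta_1,\Delta_1^\perp)$-labelled strip patches; this is exactly the deformation of $F_{01}\oplus\Delta_1\oplus F_{12}$ to $\sigma_{1423}(F_{02}\oplus\Delta_1^\perp)\oplus\Delta_1$ used in the proof of Proposition \ref{quiltcyl}, which is admissible by the surjectivity of the Grassmannian maps on $\pi_1,\pi_2$ together with \cite[Lemma 3.1.9]{we:co}. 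By Remark \ref{dependence}\eqref{trivbun} the factor $D_{\mathrm{strips}}$ has trivial index and trivial constructed orientation, and by Proposition \ref{quiltcyl} the resulting identification $\det(D_u)\to\det(D_{u^\circ})$ is orientation-preserving, compatibly with the above relative spin structures and with the end orientations on $\ul{S}^\circ$ induced from those on $\ul{S}$. Hence the bijection of moduli spaces is orientation-preserving. Applied to the zero-dimensional spaces this gives $\eps(u)=\eps(u^\circ)$, so the maps $CF(\ul{L}_{\ul{e}})\to CF(\ul{L}^\circ_{\ul{e}^\circ})$ (the identity under the canonical bijection of generators) intertwine $\partial$ and $\partial^\circ$ and descend to the asserted isomorphisms on cohomology; applied to the one-dimensional spaces it yields that the tensor products over the negative and positive ends intertwine $\Phi_{\ul{S}}$ and $\Phi_{\ul{S}^\circ}$ over $\Q$.

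The main obstacle is the passage from the geometric strip-shrinking to the linear model: one must argue, uniformly over $u$ and along the whole strip-shrinking family, that $D_u$ is genuinely deformation-equivalent to the split operator above, i.e. that strip-shrinking degeneration at the linearized level is the analogue — with the attendant exponential-decay estimates and continuity of the constructed orientations in families as in Proposition \ref{trivial} — of the gluing and node-deformation statements of Section \ref{crglue}. This analytic content is imported from \cite{ww:isom}; with it in hand, and once one checks the bookkeeping that the $w_2(M_2)$-shift of the background class in the geometric composition agrees with the linear one, the sign is forced to be $+1$ and the proposition follows.
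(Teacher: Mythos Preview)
Your proposal is correct and follows essentially the same route as the paper: both reduce to the $\Z_2$-coefficient result from prior work (the paper cites \cite{we:co}, you cite \cite{ww:isom}), observe that the strip-shrinking map linearizes to projection onto the non-$M_1$ components, and then invoke Proposition~\ref{quiltcyl} to conclude that the resulting identification of determinant lines is orientation-preserving. Your first paragraph (normalizing via Proposition~\ref{delta}) is harmless but unnecessary, since the definition of composed Lagrangian seam conditions already assumes $\ul{S}$ is in that form; otherwise your account is simply a more explicit version of the paper's brief argument.
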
  

\begin{proof}   For $\Z_2$ coefficients this was proved in 
Theorem 5.4.1 of \cite{we:co}.  The map constructed in Section 4 of
\cite{we:co} linearizes to the projection onto the components except
the components labelled $M_1$, up to a small correction.  By Lemma
\ref{quiltcyl} and the identification of the tangent spaces of the
various moduli spaces with kernels of Cauchy-Riemann operators with
totally real boundary and seam conditions, the isomorphism constructed
in \cite{we:co} is orientation preserving, hence the proposition.
\end{proof} 

\begin{corollary}  Given Lagrangian correspondences $L_{01},L_{12},L_{02}, L_{20}$
with admissible brane structures such that $L_{02} := L_{01} \circ
L_{12}$ is smooth and embedded, the canonical bijection 
$$ \cI(L_{01},\Delta_1,L_{12},L_{20}) \to \cI(L_{02},L_{20}) $$ 
induces an isomorphism
$$ HF(L_{01}, \Delta_1, L_{12}, L_{20}) \to
HF(L_{02},L_{20}) $$ 
of quilted Floer cohomology groups with integer coefficients.
\end{corollary}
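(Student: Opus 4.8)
The plan is to deduce this corollary directly from Proposition \ref{collapse}, the geometric composition theorem, applied to a suitably chosen quilted surface. First I would take $\ul{S}$ to be the quilted cylinder whose seams are labeled, in cyclic order, by $L_{01}, \Delta_1, L_{12}, L_{20}$; this is the quilted domain whose associated (quilted) Floer cohomology is $HF(L_{01},\Delta_1,L_{12},L_{20})$, since the generalized intersection points along the quilted ends are exactly the elements of $\cI(L_{01},\Delta_1,L_{12},L_{20})$. The surface $\ul{S}$ contains a patch labeled $M_1$ bounded on one side by $\Delta_1$; more precisely, the diagonal $\Delta_1$ appears as a seam flanked by the correspondences $L_{01}$ and $L_{12}$, so this is precisely the configuration "$L_{01},\Delta_1,L_{12}$" to which the composition procedure of the previous subsection applies. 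Performing geometric composition $L_{02} := L_{01}\circ L_{12}$ replaces $\ul{S}$ with the quilted cylinder $\ul{S}^\circ$ whose seams are labeled $L_{02}, L_{20}$, and whose quilted Floer cohomology is $HF(L_{02},L_{20})$.

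Next I would verify the hypotheses of Proposition \ref{collapse}: by assumption the $L_{ij}$ carry admissible brane structures, so the symplectic manifolds involved are compact monotone with compatible monotonicity constants, and the compositions are smooth and embedded by hypothesis; thus the quilted Floer cohomologies and relative invariants are well-defined on both sides. Proposition \ref{collapse} then produces, for each quilted end $\ul{e}$ affected by the replacement, an isomorphism $HF(\ul{L}_{\ul{e}}) \to HF(\ul{L}^\circ_{\ul{e}^\circ})$ intertwining the relative invariants. For the closed quilted cylinder there is a single quilted end (or, depending on the count of ends one sets up, a matched incoming/outgoing pair carrying the same cohomology), and the induced isomorphism is exactly the map $HF(L_{01},\Delta_1,L_{12},L_{20}) \to HF(L_{02},L_{20})$ asserted, with the underlying map on chain generators being the canonical bijection $\cI(L_{01},\Delta_1,L_{12},L_{20}) \to \cI(L_{02},L_{20})$ coming from the smoothness of the composition (the geometric composition sends a quadruple of matching conditions to the composed pair, and on the level of intersection points this is the obvious identification). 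The relative spin structure on $L_{02}$ is the one induced in the "Composed Lagrangian seam conditions" definition, with background class shifted by $w_2(M_2)$, so that the Floer complexes are defined over the integers on both sides and the comparison of orientations from Proposition \ref{quiltcyl} guarantees the differential is intertwined.

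Finally I would note that the only subtlety — and the place where one must be a little careful — is matching up the orientations (signs) on the two sides: the chain-level map is the identity under the bijection of generators, but one must know it commutes with the Floer differentials over $\Z$. This is exactly what Proposition \ref{collapse} delivers, since it is proved using Proposition \ref{quiltcyl} (preservation of orientations under composition of linear seam conditions) together with the orientation-preserving property of the geometric composition map of \cite{we:co}. So the expected main obstacle is purely bookkeeping: confirming that the brane structure conventions (orderings of patches, boundary components, ends, and the background-class shift by $w_2(M_2)$) for the quilted cylinder with seams $L_{01},\Delta_1,L_{12},L_{20}$ are the ones implicitly used in Proposition \ref{collapse}, so that the isomorphism it produces is the stated one with no spurious sign. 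Once that is checked, the corollary is immediate.
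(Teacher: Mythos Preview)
Your proposal is correct and matches the paper's approach: the corollary is stated without proof immediately after Proposition \ref{collapse}, as an immediate specialization to the case where the quilted end carries the periodic sequence $(L_{01},\Delta_1,L_{12},L_{20})$. Your unpacking of the setup (the two adjacent $M_1$-patches separated by $\Delta_1$, the verification of admissibility, and the orientation bookkeeping via Proposition \ref{quiltcyl}) is exactly what is implicit in the paper's one-line deduction.
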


Functors for Lagrangian correspondences equipped with brane structures
were constructed in \cite{Ainfty} by the authors together with
S. Ma'u.  For each admissible Lagrangian correspondence
$L_{01} \subset M_0^-\times M_1$ equipped with a brane structure
counting quilts with seam in $L_{01}$ defines an \ainfty functor
$$\Phi(L_{01}) : \GFuk(M_0) \to
\GFuk(M_1)$$ 
acting in the expected way on Floer cohomology: for Lagrangian branes
$L_0 \subset M_0, L_1 \subset M_1$ there is an isomorphism with
$\Z_2$-coefficients
$$ H \Hom( \Phi(L_{01})L_0, L_1) \cong HF(L_0 \times L_1, L_{01}) $$
Parametrized versions of the arguments for invariance under geometric
composition  provide the signs necessary
for the following theorem:

\begin{theorem} \label{maincompose}
{\rm (Geometric composition theorem)} Suppose that $M_0,M_1,M_2$ are
monotone symplectic manifolds with the same monotonicity constant.
Let $L_{01} \subset M_0^- \times M_1, L_{12} \subset M_1^- \times M_2$
be admissible Lagrangian correspondences with relative spin structures and
gradings such that $L_{01} \circ L_{12}$ is smooth, embedded in $M_0^-
\times M_2$, and admissible.  Then there exists a homotopy of \ainfty
functors
$$ 
\Phi(L_{12}) \circ \Phi(L_{01}) \cong \Phi(L_{01} \circ L_{12}) \circ
\Phi(\Delta_2) \cong \Phi(\Delta_0) \circ \Phi(L_{01} \circ L_{12}).
$$
\end{theorem}  

\noindent This ``composition commutes with categorification'' theorem is, in
some sense, the main result of \cite{Ainfty}.

%\bibliographystyle{plain}
%\bibliography{../Bib/ref}

\def\cprime{$'$} \def\cprime{$'$} \def\cprime{$'$} \def\cprime{$'$}
  \def\cprime{$'$} \def\cprime{$'$}
  \def\polhk#1{\setbox0=\hbox{#1}{\ooalign{\hidewidth
  \lower1.5ex\hbox{`}\hidewidth\crcr\unhbox0}}} \def\cprime{$'$}
  \def\cprime{$'$}

\end{document}